\documentclass[11pt,leqno]{amsart}

\usepackage{graphicx}
\usepackage{epstopdf,epsfig}
\usepackage[font=small]{caption}
\usepackage{accents}

\usepackage[utf8]{inputenc}
\usepackage[T1]{fontenc}

 \usepackage{lipsum}

\usepackage{xcolor}
\usepackage{url}

\usepackage{palatino}
\usepackage[mathcal]{euler}

\usepackage{amsmath,amsthm,amssymb,slashed, mathtools}

\usepackage{MnSymbol}
\allowdisplaybreaks

\usepackage[all]{xy}
\xyoption{arc}

\usepackage{amsfonts}
\usepackage{enumerate}

 \swapnumbers

\theoremstyle{plain}
\newtheorem*{theorem*}{Theorem} 
\newtheorem*{proposition*}{Proposition} \newtheorem*{lemma*}{Lemma}
\newtheorem*{assumption*}{Assumption}
\newtheorem*{conjecture*}{Conjecture}

\newtheorem{theorem}[equation]{Theorem} 
\newtheorem{lemma}[equation]{Lemma}
\newtheorem{corollary}[equation]{Corollary}
\newtheorem{proposition}[equation]{Proposition}

\theoremstyle{definition}
\newtheorem{definition}[equation]{Definition}

\newtheorem{example}[equation]{Example}
\newtheorem{examples}[equation]{Examples}
\newtheorem{notation}[equation]{Notation}

\newtheorem{remark}[equation]{Remark}
\newtheorem{blank}[equation]{}

\newtheorem*{remark*}{Remark}
\newtheorem*{remarks*}{Remarks}
\newtheorem*{observation*}{Observation}

\theoremstyle{remark}

\numberwithin{equation}{subsection}

\newcommand{\lie}[1]{\mathfrak{#1}}
\DeclareMathOperator{\Ad}{Ad}

\newcommand{\Compact}{\mathfrak{K}}

\newcommand{\N}{\mathbb{N}}
\newcommand{\R}{\mathbb{R}}
\newcommand{\C}{\mathbb{C}}
\newcommand{\Z}{\mathbb{Z}}

 \newcommand{\beq}[1]{\begin{equation} \label{#1}}
\newcommand{\eeq}{\end{equation}}

 \DeclareMathOperator{\Hom}{Hom}

\DeclareMathOperator{\Ind}{Ind}

\DeclareMathOperator{\rank}{rank}

\DeclareMathOperator{\SL}{SL}
\DeclareMathOperator{\GL}{GL}
\DeclareMathOperator{\SU}{SU}

\usepackage[hidelinks]{hyperref}
\usepackage{tikz}

\def\X{X}
\def\R{\mathbb R}
\def\C{\mathbb C}
\def\N{\mathbb N}
\def\Z{\mathbb Z}

\def\R{\mathbb R}
\def\C{\mathbb C}
\def\N{\mathbb N}
\def\Z{\mathbb Z}

\def\q{\mathfrak{q}}

\newcommand{\s}[1]{\langle #1 \rangle}

\newcommand{\fah}{\mathfrak{a}_\mathfrak{h}}
\newcommand{\fbh}{\mathfrak{b}_\mathfrak{h}}
\newcommand{\ftq}{\mathfrak{t}_\mathfrak{q}}

\newcommand{\faq}{\mathfrak{a}_\mathfrak{q}}
\newcommand{\fbq}{\mathfrak{b}_\mathfrak{q}}
\newcommand{\fth}{\mathfrak{t}_\mathfrak{h}}

\newcommand{\fsl}{\mathfrak{sl}}

\newcommand{\bZ}{\mathbb{Z}}
\newcommand{\bR}{\mathbb{R}}
\newcommand{\bC}{\mathbb{C}}

\newcommand{\fa}{\mathfrak{a}}
\newcommand{\fb}{\mathfrak{b}}
\newcommand{\fg}{\mathfrak{g}}
\newcommand{\fh}{\mathfrak{h}}
\newcommand{\fk}{\mathfrak{k}}
\newcommand{\fl}{\mathfrak{l}}
\newcommand{\fp}{\mathfrak{p}}
\newcommand{\fq}{\mathfrak{q}}
\newcommand{\ft}{\mathfrak{t}}
\newcommand{\fm}{\mathfrak{m}}
\newcommand{\fn}{\mathfrak{n}}

\newcommand{\supp}{\mathrm{supp}}
\newcommand{\temp}{\mathrm{temp}}
\newcommand{\ds}{\mathrm{ds}}

\begin{document}

\title[Well-tempered symmetric spaces]{Well-tempered symmetric spaces}

\author[A. Afgoustidis]{Alexandre Afgoustidis}
\author[P. Hochs]{Peter Hochs}
\author[S. Nishikawa]{\linebreak Shintaro Nishikawa} 
\author[Y. Song]{Yanli Song}

\address{A. Afgoustidis: CNRS \& Institut Élie Cartan de Lorraine, Nancy \& Metz, France}
\email{alexandre.afgoustidis@math.cnrs.fr}

\address{P. Hochs: Institute for Mathematics, Astrophysics and Particle Physics, Radboud University, Nijmegen, the Netherlands}
\email{p.hochs@math.ru.nl}

\address{S. Nishikawa: School of Mathematical Sciences, University of Southampton, Southampton, United Kingdom}
\email{s.nishikawa@soton.ac.uk}

\address{Y. Song: Department of Mathematics, Washington University, St. Louis, MO, 63130, United States}
\email{yanlisong@wustl.edu}

\subjclass[2020]{Primary: 22E46, 43A85; Secondary: 22D25}

\keywords{Real symmetric spaces, Representations of real reductive groups, Plancherel theory, Tempered homogeneous spaces}


\begin{abstract}
Let $X=G/H$ be a real reductive symmetric space. 
We consider the support of the Plancherel measure for the regular representation of $G$ on $L^2(X)$, and its Fell topology. 

We first recall how the Plancherel formula for~$X$ (known from work of Delorme and van den Ban--Schlicktrull, among others) yields a description of the support in terms of the discrete spectra of smaller symmetric spaces. We then point out that this description simplifies for a class of real symmetric spaces~$X$ which we call well-tempered. These are the spaces whose tangent space at the identity coset admits a Cartan subspace whose centralizer in the Lie algebra of~$G$ is abelian. Well-tempered spaces are tempered in the sense of  Benoist and Kobayashi. 

Under mild assumptions on the disconnectedness of~$G$, we give a description of the support of the Plancherel measure for well-tempered symmetric spaces in terms of characters of Cartan subgroups of~$G$, and describe the Fell topology of the support. 

If $G$ is a complex group and $H$ a real form, then the support decomposes as a union
of explicit quotients of relative continuous-parameter spaces by finite Weyl groups. As an application, we describe the $C^*$-algebra associated to the regular representation of~$G$ on $L^2(X)$, and its $K$-theory.
\end{abstract}

\maketitle

\tableofcontents

\section{Introduction}

\subsection{The Plancherel theorem for reductive symmetric spaces}

Let \(X=G/H\) be a real reductive symmetric space. Much of harmonic analysis on~$X$ can be framed as the study of the quasi-regular unitary representation
\[
\lambda_X:G\longrightarrow U(L^2(X))
\]
of~$G$ on $L^2(X)$, where $\lambda_X(g)$, for $g \in G$, is the operator $f \mapsto f(g^{-1} \cdot)$ on~$L^2(X)$. For this one wants to understand the decomposition of \(\lambda_X\) into irreducible
unitary representations of \(G\); the corresponding Plancherel measure and the multiplicity spaces in the Plancherel formula; one wants an explicit version of the Fourier transform, etc.

 Plancherel theory for non-Riemannian symmetric spaces developed enormously between the 1970s and 1990s.  Rank-one hyperboloids and related
pseudo-Riemannian examples were studied explicitly in the 1970s: let us mention work of  Strichartz~\cite{Strichartz73}, 
Rossmann~\cite{Rossmann78}, Faraut~\cite{Faraut79}, among others. In this setting they developed explicit spectral decompositions, and methods based on invariant
differential operators methods, or spherical distributions.  Next came the structural work of Oshima and Sekiguchi on affine symmetric spaces and Poisson transformations
\cite{Oshima79,OshimaSekiguchi80}, and a substantial body of work on the discrete series by Flensted-Jensen~\cite{FJ80}, Oshima and Matuski~\cite{OM84}, Schlichtkrull~\cite{S83, S84}, and Vogan~\cite{V88}. This supplied much of the representation-theoretic
foundations on which the general theory later developed.

The general
Plancherel theorem for reductive symmetric spaces was established by
Delorme~\cite{Delorme98} and, independently,  van den Ban and Schlicht-krull
\cite{BS05-1,BS05-2}.  Accessible surveys and complementary
accounts are given in \cite{D05, S05, Ban05, BanFlenstedSchlichtkrull97}. Symmetric spaces of type \(G_{\C}/G_{\R}\) are better understood: in particular, Harinck studied the corresponding orbital integrals and proved a related inversion formula
\cite{Harinck92, Harinck94, Harinck95, Harinck98base, Harinck98orbital}; see Delorme's Bourbaki exposition~\cite{Delorme97Bourbaki}.   

In the setting of symmetric spaces, and in contrast to the situation for reductive groups, the spectral data used in the Plancherel theorem consist
not only of an irreducible representation of \(G\), but also of an
\(H\)-fixed distribution vector. This additional datum is central to the
formulation of the Plancherel formula \cite{Delorme98,BS05-1,BS05-2}.

\subsection{Aims of this paper}

The present paper studies the regular representation $\lambda_X$ of $G$ on $L^2(X)$ at a coarser level: we discuss the irreducible unitary representations of~$G$ that are weakly contained in $\lambda_X$, without the distributional
and multiplicity data. 
Thus our main object of study is the support 
\[
\supp(\lambda_X)
\]
of the quasi-regular representation, viewed as a subset of the unitary dual $\widehat{G}$ of~$G$. The Fell topology of $\widehat{G}$ induces a canonical topology on $\supp(\lambda_X)$, and we wish to understand $\supp(\lambda_X)$ as a topological (rather than a measure) space. This amounts to asking:  which irreducible
representations of \(G\) are seen by \(X\), and what is the ``geometry'' of the corresponding set of representations (or rather its  Fell topology)?

These questions, when applied to reductive groups rather than symmetric spaces, have traditionally been approached through operator algebras.  For the left regular representation of \(G\) on \(L^2(G)\), the corresponding support and operator algebra are the tempered dual \(\widehat G_{\temp}\) and the reduced group
\(C^*\)-algebra \(C_r^*(G)\), respectively. These have been studied in great detail. 

In our setting 
\(\supp(\lambda_X)\) is the also the spectrum of a  \(C^*\)-algebra 
\(C^*_{\lambda_X}(G)\), namely the operator-norm closure of the image of \(L^1(G)\) in the algebra
\(B(L^2(X))\) of bounded operators on $L^2(X)$. But little seems to have been done on either the structure of \(C^*_{\lambda_X}(G)\) or the topology of \(\supp(\lambda_X)\).

The present paper does not reprove the Plancherel theorem, and its goals are modest:  we try to understand what the Plancherel theorem says about the topological space \(\supp(\lambda_X)\), and whether the space \(\supp(\lambda_X)\) can be understood in simple terms -- in the spirit of what is known for reductive groups. We shall only succeed in arriving at a complete picture for certain classes of symmetric spaces which we call \emph{well-tempered}. These include the spaces of type $G_\C/G_\R$.

 We should mention that  considering only the support $\supp(\lambda_X)$ of the Plancherel measure, instead of the full Plancherel formula, disregards more than multiplicity
 questions: as we shall see the description of the support involves generalized \(\sigma\)-principal series representations, some of which are reducible. A finer analysis would have to determine which irreducible
 constituents admit \(H\)-fixed distribution vectors and, for each such
 constituent, identify the tempered \(H\)-fixed distribution vectors that enter
 the Plancherel multiplicity space. These constituent-level questions are
 difficult and fundamental problems of relative harmonic analysis, but they
 are finer than the support problem addressed here. 

In the rest of this introduction, we outline the themes and results of the paper: general observations about the Plancherel support, the definition of well-tempered symmetric spaces and how $\supp(\lambda_X)$ may be understood in that case, and what our results imply in  important examples including the spaces of type $G_\C/G_\R$. 
 
\subsection{The support of the Plancherel measure}\label{subsec-intro-support}

We first recall, following Delorme and van den Ban--Schlichtkrull, what the general Plancherel theorem says about the support $\supp(\lambda_X)$  (see Section \ref{subsec:support_Plancherel} for details). 
We shall assume throughout that $G$ is in Harish-Chandra's class and that $H$~is an open subgroup of the fixed-point group $G^\sigma$ for an involutive automorphism $\sigma$ of~$G$. Among the parabolic subgroups of~$G$ is a distinguished collection, the  \emph{cuspidal \(\sigma\)-parabolic subgroups} (see \S\,\ref{sec-cuspidal-sigma-parabolics}). For each such subgroup \(P=MAN\) of \(G\), the Lie algebra $\fa$ of~$A$ decomposes into the direct sum \(\fa=\fah\oplus\faq\) of $(+1)$ and $(-1)$-eigenspaces of $\sigma$ respectively,  and the symmetric space $M/(M \cap H)$ has \emph{relative discrete series} --- irreducible representations of~$M$ which occur discretely in $L^2(M/(M \cap H))$. 

To each relative discrete series
\(\xi\in\widehat M_{M\cap H,\ds}\) and each imaginary-valued linear form \(\nu\in i\faq^*\), associate the parabolically induced representation
\[
\pi_{P,\xi,\nu}=\Ind_P^G(\xi\otimes e^\nu\otimes 1),
\]
called a \emph{generalized \(\sigma\)-principal series representation}.
Define
\[
S_X
=
\bigcup_{P,\xi,\nu}
\Bigl\{
\tau\in\widehat G
\;\Bigm|\;
\tau \text{ is an irreducible constituent of } \pi_{P,\xi,\nu}
\Bigr\},
\]
where \(P\), \(\xi\), and \(\nu\) range over all such data. The Plancherel theorem gives:

\begin{theorem}[\cite{Delorme98, BS05-2}, see Theorem~\ref{thm_description0}]
Let $X=G/H$ be a reductive symmetric space. The subset 
 $\supp(\lambda_X)$ of $\widehat G$ is the closure of $S_X$ in the Fell topology.
\end{theorem}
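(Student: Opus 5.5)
The plan is to deduce the statement from the Plancherel decomposition of Delorme and van den Ban--Schlichtkrull together with general facts about supports of direct integrals. By \cite{Delorme98, BS05-2}, $\lambda_X$ is unitarily equivalent to a finite direct sum, over representatives $P$ of association classes of cuspidal $\sigma$-parabolic subgroups, of direct integrals
\[
\bigoplus_{P}\ \bigoplus_{\xi\in\widehat M_{M\cap H,\ds}} \int^{\oplus}_{i\faq^{*+}} \pi_{P,\xi,\nu}\otimes \mathcal{M}_{P,\xi,\nu}\, d\nu ,
\]
where $d\nu$ is Lebesgue measure and the multiplicity spaces $\mathcal{M}_{P,\xi,\nu}$ are finite-dimensional spaces of $H$-fixed distribution vectors of the dual representation. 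I would take this as the input and argue in three steps.

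\emph{Step 1 (general lemma).} If $\pi=\int^\oplus_Z \pi_z\,d\mu(z)$ is a direct integral of unitary representations, each of finite length, then $\supp(\pi)$, which equals the closure of the union of $\supp(\pi_z)$ over $z$ in the support of $\mu$, is contained in the closure of $\bigcup_z \supp(\pi_z)$. Conversely, if $z\mapsto\pi_z$ is continuous in the sense that matrix coefficients vary continuously, then every constituent of every $\pi_z$ with $z\in\supp\mu$ lies in $\supp(\pi)$. I would prove this through the kernel of the $C^*$-algebra: for $f\in L^1(G)$ we have $\|\pi(f)\|=\operatorname{ess\,sup}_z\|\pi_z(f)\|$, and $z\mapsto\|\pi_z(f)\|$ is lower semicontinuous for the induced picture, because one realizes all $\pi_{P,\xi,\nu}$ on the fixed space $\Ind_{K\cap M}^K$ and the operators $\pi_{P,\xi,\nu}(f)$ depend norm-continuously on $\nu$. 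Hence the essential supremum equals the supremum over the support of Lebesgue measure, which is all of $i\faq^*$.

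\emph{Step 2 (the inclusion $\supp(\lambda_X)\subset\overline{S_X}$).} By Step 1, $\supp(\lambda_X)$ is the closure of the union of the supports of the summands where $\mathcal{M}_{P,\xi,\nu}\neq0$. Each such summand is a subrepresentation of a multiple of $\pi_{P,\xi,\nu}$, and $\pi_{P,\xi,\nu}$ has finite length, so the support consists of its irreducible constituents. This gives the inclusion.

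\emph{Step 3 (the reverse inclusion).} This is the main obstacle: one needs $\mathcal{M}_{P,\xi,\nu}\neq0$ for almost every $\nu$, and then continuity in $\nu$. Here I would use the theory of generalized $\sigma$-principal series. For every $\xi$ in the relative discrete series, the space $V(\xi)$ of $M\cap H$-fixed distribution vectors is nonzero, and the $H$-fixed vectors $j(P,\xi,\nu,\eta)$ built from $\eta\in V(\xi)$ (the ``standard'' vectors attached to the open $H$-orbits on $G/P$) are meromorphic in $\nu$ and generically nonzero. The Plancherel formula places full multiplicity $V(\xi)$ at almost every $\nu$. So $\pi_{P,\xi,\nu}$ enters for a conull, hence dense, set of $\nu$. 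For the remaining $\nu$, I would use that induction is continuous in the Fell topology: the norms $\|\pi_{P,\xi,\nu}(f)\|$ are continuous in $\nu$. Then each constituent $\tau$ of $\pi_{P,\xi,\nu_0}$ satisfies $\ker$ of $\lambda_X$ $\subset\ker\tau$, since $\|\tau(f)\|\le\|\pi_{P,\xi,\nu_0}(f)\|=\lim\|\pi_{P,\xi,\nu}(f)\|\le\|\lambda_X(f)\|$. This puts all of $S_X$ inside $\supp(\lambda_X)$, and since $\supp(\lambda_X)$ is closed, it contains $\overline{S_X}$. The remaining delicate point is handling the reducible $\pi_{P,\xi,\nu}$: the argument above applies to every constituent, so reducibility causes no trouble for the support, though it does for multiplicities.
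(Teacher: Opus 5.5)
Your analytic mechanism is the same as the paper's. You use the Plancherel decomposition, the identity $\|\lambda_X(f)\|=\sup_{P,\xi}\operatorname{ess\,sup}_\nu\|\pi_{P,\xi,\nu}(f)\|$, and the fact that nonzero finite-dimensional multiplicity spaces do not change supports. You then pass from a conull set of parameters to its closure via norm-continuity of $\nu\mapsto\pi_{P,\xi,\nu}(f)$ in the compact picture. That last step is an explicit replacement for the paper's appeal to Fell's continuity of induction. One small inaccuracy: the equality in Step~1 with the closure of the union over $z\in\supp\mu$ is false without continuity, since fibres can be altered on a null set. Only the inclusion holds in general (or equality over a suitable conull set), and that is all you use.

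The gap is in matching the Plancherel data with $S_X$, which is exactly what the second paragraph of the paper's proof does.

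\emph{The chamber.} Your displayed formula integrates over the chamber $i\faq^{*+}$, so Step~3 only reaches $\nu$ in the closed chamber, not all of $i\faq^*$. Your claim that the support of Lebesgue measure is all of $i\faq^*$ contradicts your own domain of integration.

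\emph{The index set.} You index $\xi$ only by $\widehat M_{M\cap H,\ds}$, for one $P$ per association class, whereas $S_X$ runs over every cuspidal $\sigma$-parabolic subgroup.

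Both problems have the same source. To move $\nu$ into the chamber, or a non-representative $P'$ onto a representative, you must conjugate by some $k\in K$, for instance a representative in $N_K(\faq)$ of a Weyl group element. Such a $k$ need not normalize $H$. It carries $\xi\in\widehat M_{M\cap H,\ds}$ to a discrete series representation of $M/(M\cap kHk^{-1})$, and such terms do not occur in your formula.

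This is why, in \cite{BS05-2}, the discrete data are attached to the spaces $M/(M\cap w^{-1}Hw)$ for $w$ in a finite set of Weyl group representatives. Conjugation by $w$ identifies each such term with a datum $(wPw^{-1},w\cdot\xi)$ of $S_X$, where $w\cdot\xi$ lies in the discrete series of $wMw^{-1}/(wMw^{-1}\cap H)$. Conversely, the constituents of every $\pi_{P,\xi,\nu}$ in $S_X$ with $\nu$ regular arise in this way.

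Only after this identification do Step~2 (every fibre lies in $S_X$) and Step~3 (all of $S_X$, for every $\nu\in i\faq^*$, lies in the support) hold as stated. Once you add this bookkeeping, your Steps~1--3 complete the argument along the same lines as the paper.
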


Since the Plancherel formula is
naturally stated using the direct integral of Hilbert spaces over regular imaginary parameters, it initially yields only
\[
\supp(\lambda_X)=\overline{S_X}.
\]
Our first observation, which we prove in Section~\ref{sec:support} using $C^\ast$-algebras and Hilbert modules, is that  \(S_X\)
 is always closed, for any $X=G/H$ in Harish-Chandra's class.

\begin{proposition}[Theorem~\ref{thm_description}]
Let \(X=G/H\) be a reductive symmetric space.
Then \(S_X\) is a closed subset of \(\widehat G\). Consequently,
\[
\supp(\lambda_X)=S_X. 
\]
In other words, for any irreducible unitary representation \(\pi\) of \(G\),
\(\pi\) belongs to \(\mathrm{supp}(\lambda_{X})\) if and only if there exist a
cuspidal \(\sigma\)-parabolic subgroup \(P=MAN\) of \(G\), a representation
\(\xi \in {\hat{M}}_{M\cap H, \ds}\) in the discrete series of
\(M/(M\cap H)\), and \(\nu \in i\faq^*\), such that \(\pi\) is equivalent to an
irreducible constituent of the generalized \(\sigma\)-principal series
representation \(\pi_{P, \xi, \nu}\).
\end{proposition}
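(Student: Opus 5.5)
Since $\supp(\lambda_X)=\overline{S_X}$ by the preceding theorem, it suffices to show that $S_X$ is closed in $\widehat G$. I would reduce this to a statement about $C^*$-algebras and Hilbert modules, in the spirit of the known proof that the tempered dual is the union of the supports of parabolically induced representations.

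The plan has four steps.

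\textbf{Step 1: finiteness of the data.} Up to conjugacy there are finitely many cuspidal $\sigma$-parabolic subgroups $P=MAN$. For each such $P$, the relative discrete series $\widehat M_{M\cap H,\ds}$ is a discrete set. A finite union of closed sets is closed. So it suffices to show that, for fixed $P$, the set
\[
S_P=\bigcup_{\xi,\nu}\{\text{constituents of }\pi_{P,\xi,\nu}\}
\]
is closed.

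\textbf{Step 2: a $C^*$-algebra for each $P$.} Let $\mathcal D_P$ be the closed two-sided ideal of $C^*_r(M)$ (or of $C^*(M)$) corresponding to the relative discrete series of $M/(M\cap H)$. Each such $\xi$ is a discrete series representation of $M$ twisted by a unitary character, hence isolated in $\widehat M$. Therefore $\mathcal D_P$ is a $c_0$-direct sum of compact-operator algebras $\bigoplus_\xi \Compact(H_\xi)$. Next form
\[
B_P=\mathcal D_P\otimes C_0(i\faq^*),
\]
viewed as a quotient or ideal of $C^*(MA)$ via the characters $e^\nu$, with $\nu$ extended trivially on $\fah$. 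The spectrum of $B_P$ is $\widehat M_{M\cap H,\ds}\times i\faq^*$, which is closed in $\widehat{MA}$.

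\textbf{Step 3: parabolic induction.} Use the standard Hilbert module $C^*_r(G)$–$C^*_r(MA)$ bimodule $E_P$, completing $C_c(G/N)$, which implements parabolic induction. The balanced tensor product $E_P\otimes_{C^*(MA)}B_P$ gives a Hilbert $B_P$-module carrying a left action of $C^*(G)$. This defines a homomorphism
\[
\Phi_P\colon C^*(G)\to \mathcal L(E_P\otimes B_P),
\]
whose composition with the evaluation at $(\xi,\nu)$ is $\pi_{P,\xi,\nu}$. By the standard result, the kernel of $\Phi_P$ is $\bigcap_{\xi,\nu}\ker\pi_{P,\xi,\nu}$. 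Hence the hull of $\ker\Phi_P$ is exactly the closure of $S_P$.

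\textbf{Step 4: the main obstacle.} It remains to show that every irreducible representation whose kernel contains $\ker\Phi_P$ is already a constituent of some $\pi_{P,\xi,\nu}$. I would first show that the image of $C^*(G)$ lies in the compact operators $\Compact(E_P\otimes B_P)$. This is where the admissibility of induced representations and the continuity of the compact pieces enter: $\Compact(E_P\otimes B_P)$ is a continuous field over $\widehat M_{M\cap H,\ds}\times i\faq^*$ whose fibres are the compact operators on the induced spaces.

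Given this, take $\tau$ in the hull. It factors through the image of $C^*(G)$, which is a $C^*$-subalgebra of a $C_0(Y)$-algebra with $Y=\widehat M_{M\cap H,\ds}\times i\faq^*$. Extend $\tau$ to an irreducible representation of the ambient compact-operator field, possibly after passing to the closure of the image in the multiplier algebra. Such a representation is supported at a single point $y=(\xi,\nu)\in Y$. Consequently $\tau$ is weakly contained in the fibre representation $\pi_{P,\xi,\nu}$.

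Because $\pi_{P,\xi,\nu}$ is a finite direct sum of irreducibles (Knapp–Stein, or finite length of induced representations), weak containment in it is the same as being a constituent. The key technical point is closedness of the image of $Y$ under a proper map, and this is where properness of $C_0(i\faq^*)$ as $\nu\to\infty$ is essential: it excludes limits escaping to infinity in the parameter space.

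If the compactness argument causes difficulty, I would instead use the properness of the map $(\xi,\nu)\mapsto$ infinitesimal character together with the finiteness of constituents, to argue directly that limits of constituents along convergent sequences $(\xi_n,\nu_n)$ are constituents of the limit.
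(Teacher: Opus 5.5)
There is a genuine gap, and it sits in Step~2. Your overall architecture matches the paper's: one parabolic-induction Hilbert module per associate class, compactness of the image of $C^*(G)$, extension of pure states, and a finite union.

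The first problem is the claim that every $\xi\in\widehat M_{M\cap H,\ds}$ is a discrete series representation of $M$. This is false in general, because relative discrete series can be non-tempered. If they were always discrete series of $M$, every $\pi_{P,\xi,\nu}$ would be tempered. Then every reductive symmetric space would be tempered, contradicting for instance $\SL(m+n,\R)/S(\GL(m,\R)\times\GL(n,\R))$ with $|m-n|\geq 2$. So $C^*_r(M)$ is not available. Inside $C^*(M)$ there is no reason for $\widehat M_{M\cap H,\ds}$ to be open, which an ideal would require. Even genuine discrete series need not be isolated in the full dual $\widehat M$: the lowest discrete series of $\SL(2,\R)$ are Fell limits of complementary series.

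The more serious problem is that pointwise isolatedness would not give what you actually use next. You need $\widehat M_{M\cap H,\ds}\times i\faq^*$ to be \emph{closed} in $\widehat{MA}$, and $B_P\cong\bigl(\bigoplus_\xi\Compact(V_\xi)\bigr)\otimes C_0(i\faq^*)$ to be a quotient of $C^*(MA)$. An infinite union of isolated points need not be closed. When $X$ has equal rank and $P=G$, this closedness is exactly the closedness of $\widehat G_{H,\ds}$ in $\widehat G$, a statement of the same depth as the one you are proving.

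The same issue reappears in Step~4. Admissibility of each $\pi_{P,\xi,\nu}$ only gives fibrewise compactness. For the image of $C^*(G)$ to lie in $\bigoplus_\xi C_0\bigl(i\faq^*,\Compact(\Ind_P^G V_\xi)\bigr)$, the operators must tend to zero along the infinite discrete family of $\xi$'s, not only as $\nu\to\infty$. Your remark about properness in $\nu$ does not address this.

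The missing ingredient is the uniform admissibility theorem of van den Ban and Schlichtkrull. It says a given $(K\cap M)$-type occurs, with finite total multiplicity, in only finitely many $\xi\in\widehat M_{M\cap H,\ds}$. Hence $C^*(K\cap M)$ acts by compact operators on $\bigoplus_\xi\xi$. Since $C^*(M)C^*(K\cap M)$ is dense in $C^*(M)$, so does $C^*(M)$, which is exactly the hypothesis of Proposition~\ref{prop_closed_induction}.

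With this in hand, your Steps~3--4 go through as in the paper. Tensor the module $C^*(G/N)$, on which $C^*(G)$ acts compactly, with $C_0(i\fa^*,\bigoplus_\xi V_\xi)$, restrict to $i\faq^*$, and apply extension of pure states. That identifies each point of the hull directly with a constituent of some $\pi_{P,\xi,\nu}$, with no separate finite-length argument.

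Your fallback via properness of the infinitesimal character is not a substitute as it stands. It would need three things you do not supply:
\begin{enumerate}
\item a finiteness result for relative discrete series with bounded infinitesimal character;
\item continuity of the infinitesimal character on $\widehat G$;
\item a lemma that Fell limits of constituents of $\pi_{P,\xi,\nu_n}$ with $\nu_n\to\nu$ are constituents of $\pi_{P,\xi,\nu}$.
\end{enumerate}
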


Thus the support problem requires a good understanding of the relative discrete series of the
Levi sub-symmetric spaces \(M/(M\cap H)\) and their behavior under cuspidal
\(\sigma\)-parabolic induction.  To obtain an explicit parame-trization, one
therefore needs a condition which is stable under taking Levi sub-symmetric spaces and which
makes the discrete series on each of these smaller symmetric spaces manageable.

\subsection{Well-tempered symmetric spaces}

This leads to the class of spaces considered in the paper.  Let  \(X=G/H\) be a reductive symmetric space, and let $\fq$ be  the $(-1)$-eigenspace $\fq$ of $\sigma$ on~$\fg$. Recall that a \emph{Cartan subspace} of~$\fq$ is a maximal abelian subspace of~\(\fq\) consisting of semisimple elements. We say $X$ is \emph{well-tempered} if there exists a Cartan subspace $\fbq \subset \fq$ whose Lie algebra centralizer
\(Z_\fg(\fbq)\) is abelian.  When that happens, the centralizer of every Cartan
subspace of \(\fq\) is abelian (Lemma~\ref{lem:welltemp-all-cartan}). An easy but crucial observation is that all Levi sub-symmetric spaces $M/(M\cap H)$ occurring in the Plancherel decomposition are then also well-tempered (see Lemma~\ref{lem:welltemp_hereditary}).

Well-tempered spaces are \emph{tempered} in the sense of Benoist and Kobayashi \cite{BK15}, which means all representations in $\supp(\lambda_X)$ are tempered as representations of~$G$. For complex symmetric spaces, Theorem~\ref{prop:complex-temp-welltemp} shows well-temperedness is
equivalent to temperedness; in general, we shall see that well-temperedness is a slightly stronger condition than temperedness.

The representation-theoretic effect of  well-temperedness  is first
visible when \(X\) has discrete series. Then \(X\) has a ``compact'' Cartan
subspace (see \S\,\ref{sec:cartan_and_rank}), and well-temperedness says that its centralizer is abelian. Much of the theory of the discrete series simplifies. The discrete spectrum is entirely described by the construction of Flensted-Jensen~\cite{FJ80}, and we point out several equivalent descriptions of the discrete series: by cohomological induction from an
abelian fundamental Cartan subgroup and, ultimately, by lowest \(K\)-types (where $K$ is a $\sigma$-stable maximal compact subgroup of~$G$).

Here is a summary. Following~\cite{AA24}, we call a representation
\emph{tempiric} if it is tempered, irreducible and has real infinitesimal character.

\begin{theorem}[Theorem~\ref{thm:ds_para}]
Let \(X=G/H\) be a well-tempered symmetric space. Suppose that \(G\) is a
real algebraic reductive linear group with abelian Cartan subgroups and that \(X\) has discrete
series. For an irreducible unitary representation \((\pi,V)\) of \(G\), the
following conditions are equivalent:
\begin{enumerate}[(1)]
\item \(\pi\) belongs to the discrete series of the symmetric space~$X$; 
\item \(\pi\) is tempiric with regular infinitesimal character, and its unique
lowest \(K\)-type \(\tau_\pi\) has nonzero $(K \cap H)$-fixed vectors; 
\item \(\pi\) is irreducibly induced from a discrete character datum $(TA, \Gamma, \lambda, 0)$ (see Definition~\ref{def:dataC})  based on the fundamental Cartan subgroup
\(TA=Z_G(\fl)\), where \(\lambda\in i\ftq^\ast\) is \(G\)-regular and $
\Gamma|_{T\cap H}=1$.
\end{enumerate}
Moreover, for any \((\pi,V)\) satisfying these equivalent conditions, the space of square-integrable $H$-invariant distribution vectors of~$\pi$ is isomorphic with the space of $(K\cap H)$-invariant vectors of~$\tau_\pi$. In particular,
\[
\dim_\bC (V^{-\infty})^H_{\ds}
=
\dim_\bC \tau_\pi^{K\cap H}.
\]
\end{theorem}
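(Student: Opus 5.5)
The plan is to go through the Flensted-Jensen / Oshima--Matsuki description of the discrete series of $X$, and to use well-temperedness to collapse the relevant $\theta$-stable parabolics to ones with abelian Levi. Because $X$ has discrete series, the Oshima--Matsuki rank condition gives a Cartan subspace $\fbq=\ftq\subset\fq\cap\fk$. Set $\fl=Z_\fg(\ftq)$. By well-temperedness $\fl$ is abelian, so it is a $\theta$- and $\sigma$-stable Cartan subalgebra of the form $\ft\oplus\fa$ with $\ft\supset\ftq$. Since $\ftq$ is compact, $\fl$ is fundamental and $TA=Z_G(\fl)$ is the fundamental Cartan subgroup.

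By Oshima--Matsuki and Vogan~\cite{V88}, every discrete series representation of $X$ is a nonzero $A_\fq(\lambda)$-type module. It is cohomologically induced from a one-dimensional representation of the Levi $L=Z_G(\ftq)$, attached to a $\lambda\in i\ftq^*$ that is $G$-regular and satisfies integrality and $H$-compatibility conditions. Here $L=TA$ is abelian. So the induced module is a derived functor module induced from a character of a fundamental Cartan in the good range, that is, a discrete character datum $(TA,\Gamma,\lambda,0)$. The condition $\Gamma|_{T\cap H}=1$ is the Flensted-Jensen requirement that the relevant distribution vector be $(T\cap H)$-invariant. This proves (1)$\Rightarrow$(3). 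Conversely, for data as in (3), Flensted-Jensen's duality construction~\cite{FJ80} produces a nonzero square-integrable $H$-invariant distribution vector. This proves (3)$\Rightarrow$(1), and the regularity of $\lambda$ guarantees the module is nonzero and irreducible.

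For (3)$\Leftrightarrow$(2), I would use the tempiric parametrization of \cite{AA24}. A module induced from a datum on the fundamental Cartan with $\nu=0$ and regular $\lambda$ is tempered with real infinitesimal character $\lambda+\rho$-shifts. It is therefore tempiric with regular infinitesimal character. By Vogan's theory it has a unique lowest $K$-type $\tau_\pi$, which is the bottom layer $K$-type with highest weight $\lambda+2\rho(\fu\cap\fp)$. Conversely, tempiric representations are classified by their lowest $K$-types, because $\mathfrak{A}$-parameters in the Vogan--Zuckerman sense are determined by the lowest $K$-type. Regularity of the infinitesimal character then forces the underlying Cartan to be fundamental with $\nu=0$. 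I then need to match $\tau_\pi^{K\cap H}\neq0$ with $\Gamma|_{T\cap H}=1$. I would do this with the Cartan--Helgason theorem for the compact symmetric pair $(K,K\cap H)$. Since $\ftq$ is a maximal abelian subspace of $\fk\cap\fq$ (this should follow because $\ftq$ is already a Cartan subspace of all of $\fq$), a $K$-type has $(K\cap H)$-fixed vectors exactly when its highest weight vanishes on $\ft\cap\fh$, satisfies the integrality conditions, and the corresponding character is trivial on $T\cap H$. One checks that this is the same condition as in (3).

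The multiplicity statement is the step I expect to be the main obstacle. The plan is to use the Flensted-Jensen isomorphism between $H$-invariant square-integrable distribution vectors and $(K\cap H)$-spherical data for the dual compact pair, restricted to the bottom layer. A Frobenius reciprocity argument then identifies $(V^{-\infty})^H_{\ds}$ with $\Hom_{K\cap H}(\tau_\pi,\C)$. The hard part is showing that the bottom-layer map is bijective, rather than merely injective, onto square-integrable vectors. I would first try to prove this using Vogan's result~\cite{V88} that $H$-invariant functionals on $A_\fq(\lambda)$ are detected on the lowest $K$-type, combined with the abelianness of $L$, which removes any additional multiplicity coming from $L\cap H$-fixed vectors.
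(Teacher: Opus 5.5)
Several of your ingredients match the paper: the fundamental Cartan $TA=Z_G(\fl)$, the observation that a tempiric representation with regular infinitesimal character comes from the fundamental Cartan with $\nu=0$, and a Cartan--Helgason argument for (2)$\Leftrightarrow$(3). But the dimension formula is not proved. The bijectivity of a ``bottom-layer map'' that you flag is left open, and it is not needed.

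The missing idea is to first treat the case where $G$ and $H$ are connected. There both sides are one-dimensional:
\begin{itemize}
\item Vogan's decomposition of $(L^2(X)_d)_K$ (Theorem~\ref{thm_Vogan_ds}) is multiplicity-free, because distinct parameters $(j,\lambda)$ give distinct lowest $K$-types $\mu_\lambda=\lambda+\rho_{G,j}-2\rho_K$. Hence $\dim(V^{-\infty})^H_{\ds}=1$ by Lemma~\ref{lem_sq_int_dis}.
\item $\dim\tau_\pi^{K\cap H}=1$ by Cartan--Helgason for the connected pair $(K,K\cap H)$.
\end{itemize}
It then only remains to show that the nonzero functional is nonzero on $\tau_\pi$. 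This follows from the Flensted-Jensen function $\psi_{j,\lambda}$, which lies in the lowest $K$-type and satisfies $\psi_{j,\lambda}(eH)=\psi^d_{j,\lambda}(eH^d)>0$ (Remark~\ref{rem:FJfunc}).

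For general $G$ and $H$, restrict $V$ to $G_0$. Restricting functionals to the lowest $K_0$-types gives a $(K\cap H)$-equivariant isomorphism $(V^{-\infty})^{H_0}_{\ds}\cong(\tau_\pi^*)^{K\cap H_0}$. Then take $(K\cap H)$-invariants, using $H=H_0(K\cap H)$.

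Second, you never address disconnectedness. The hypothesis allows $G$, and hence $K$, $T$ and $K\cap H$, to be disconnected, but Vogan's classification and the Cartan--Helgason criterion you quote are statements about connected groups.
\begin{itemize}
\item For disconnected $K$, a $K$-type is not given by a character of $T$. By Lemma~\ref{lem:G-regular-standard-irreducible}, the highest-weight space of $\tau_\pi$ is $\Ind_T^{T^l}\Gamma$ for the large Cartan subgroup $T^l=N_K(\ft,\Sigma_K^+)$.
\item By Corollary~\ref{cor:appendix-parametrization} and Mackey, $\tau_\pi^{K\cap H}\neq0$ if and only if $(s\Gamma)|_{T\cap H}=1$ for some $s\in T^l$. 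So $\Gamma|_{T\cap H}=1$ holds only after replacing the datum by a $T^l$-conjugate.
\item $\dim\tau_\pi^{K\cap H}$ can exceed one. This is why the theorem asserts an equality of dimensions rather than multiplicity one.
\end{itemize}

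The torus form of Cartan--Helgason also applies only to highest weights taken for a $\Sigma_K^+$ compatible with a restricted positive system on $\ftq$. So in (2)$\Rightarrow$(3) you must first conjugate by $N_K(\ft)$ to such a system, and then still deduce $\lambda\in i\ftq^*$. From $d\Gamma\in i\ftq^*$ and $\rho_K\in i\ftq^*$ one gets $\lambda+\rho_G=d\Gamma+2\rho_K\in i\ftq^*$. This element defines the same positive system as $\lambda$. Since no root vanishes on $\ftq$, that system is $\sigma$-anti-invariant, so $\rho_G\in i\ftq^*$ and hence $\lambda\in i\ftq^*$. Your ``one checks'' skips exactly this step.

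Finally, your direct claim in (1)$\Rightarrow$(3) that $\Gamma|_{T\cap H}=1$ is ``the Flensted-Jensen requirement'' is unjustified. Vogan's condition is the $(K\cap H)$-sphericity of $\mu_\lambda$, so this implication must also go through the Cartan--Helgason translation, i.e.\ through (2).
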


The theorem says that, for well-tempered symmetric spaces, the relative discrete spectrum is controlled by the
compact space \(K/(K\cap H)\): the lowest \(K\)-type detects the discrete spectrum of~$X$ inside the tempiric spectrum of~$G$, and also detects the dimension of the square-integrable \(H\)-fixed distribution
space for representations in the discrete series of~$X$.

Let us move beyond the discrete series, to the full Plancherel support $\supp(\lambda_X)$. For a general well-tempered symmetric space, this support is
obtained by inducing discrete data from smaller Levi symmetric spaces.
This leads to a simple parametrization of $\supp(\lambda_X)$, which we shall spell out uniformly in terms of a relative version of
Vogan's notion of character data for~$G$~\cite{Voganbook}.  In Section~\ref{subsec-chardata-X} below, we define a \emph{character datum for \(X\)} to be a quadruple
\((TA,\Gamma,\lambda,\nu)\) where $TA$ is a $\sigma$-stable Cartan subgroup of~$G$, $\Gamma$ is a character of~$T$, and $\lambda$, $\nu$ are linear forms on $\lie{t}$ and $\lie{a}$ respectively, satisfying certain regularity and compatibility conditions determined by~$X$. These include 
\[
\Gamma|_{T\cap H}=1,
\qquad
\lambda\in i\ftq^*,
\qquad
\nu\in i\faq^*,
\]
and regularity and compatibility conditions on \(\Gamma\) and \(\lambda\). We shall see that a character datum for~$X$ is a particular case of the data that Vogan uses to parametrize tempered representations in~\cite{Voganbook}. In particular, we may associate to it a \emph{standard tempered representation of~$G$}, a possibly reducible (but tempered) representation  \(X_G(TA,\Gamma,\lambda,\nu)\) of~$G$. See  Section~\ref{subsec:vogan-character-data} for the details.

We can then recast the description of $\supp(\lambda_X)$ in Section~\ref{subsec-intro-support}, for well-tempered symmetric spaces, into the following statement where the representations are parametrized by concrete character data.

\begin{theorem}[Theorem~\ref{thm:tempered spec}]
Let \(X=G/H\) be a well-tempered symmetric space, and suppose that \(G\) is
a real reductive linear group in Vogan's class. Let \(\pi\in\widehat G\) be an
irreducible unitary representation of \(G\). Then \(\pi\) belongs to
\(\operatorname{supp}(\lambda_X)\) if and only if \(\pi\) is equivalent to an
irreducible constituent of \(X_G(TA,\Gamma,\lambda,\nu)\) for some character
datum \((TA,\Gamma,\lambda,\nu)\) for \(X\).
\end{theorem}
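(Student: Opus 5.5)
The strategy is to start from the description $\supp(\lambda_X)=S_X$ of Theorem~\ref{thm_description}. It then suffices to show that the generalized $\sigma$-principal series $\pi_{P,\xi,\nu}$, as $P=MAN$ ranges over cuspidal $\sigma$-parabolics, $\xi$ over $\widehat M_{M\cap H,\ds}$ and $\nu\in i\faq^*$, are exactly the standard tempered representations $X_G(TA,\Gamma,\lambda,\nu)$ attached to character data for~$X$. More precisely, I will show that each family is contained in the other up to equivalence, with irreducible constituents matching. Both inclusions go through the Levi symmetric space $M/(M\cap H)$. By Lemma~\ref{lem:welltemp_hereditary} this space is well-tempered, so Theorem~\ref{thm:ds_para} applies to it whenever it has discrete series.

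\textbf{From $S_X$ to character data.} Fix $P=MAN$ cuspidal, $\xi$ and $\nu$. Cuspidality means $M/(M\cap H)$ has discrete series, so Theorem~\ref{thm:ds_para}(3) applied to $M$ gives
\[
\xi\cong \Ind(T A_M,\Gamma,\lambda,0),
\]
realized on the fundamental $\sigma$-stable Cartan subgroup $TA_M$ of $M$, with $\lambda\in i\ftq^*$ regular for $M$ and $\Gamma|_{T\cap H}=1$. Before this I must check that $M$ satisfies the hypotheses of Theorem~\ref{thm:ds_para}: it should be real algebraic with abelian Cartan subgroups. I expect this to follow from $G$ being in Vogan's class, possibly after replacing $M$ by a finite-index subgroup and inducing in stages. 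Induction in stages then identifies
\[
\pi_{P,\xi,\nu}\cong X_G\bigl(T(A_MA),\Gamma,\lambda,\nu\bigr).
\]
Here $\nu\in i\faq^*$, extended by zero on $\fa_M$ since the $M$-datum has continuous parameter $0$. It remains to check that this quadruple satisfies the regularity and compatibility conditions in Definition~\ref{def:dataC}. The conditions concerning $\Gamma$ versus $\lambda$ are inherited from the $M$-level datum. The $\sigma$-conditions hold because $\fa_M\subset\fa_{\fh}$-type directions and $\fa=\fah\oplus\faq$.

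\textbf{From character data to $S_X$.} Given a character datum $(TA,\Gamma,\lambda,\nu)$, set $M_0A_0=Z_G(\fa_\fq)$, where $\fa_\fq$ is the $\fq$-part of $\fa$. Choose a $\sigma$-parabolic $P=M_0A_0N$ for which $\nu$ is $P$-dominant. I then need three things. First, $P$ is cuspidal, which should follow because $\lambda\in i\ftq^*$ forces $\ft\subset\fq$-directions to give a compact Cartan subspace of $M_0/(M_0\cap H)$. Second, the $\fah$-part of $\fa$ contributes only through $M_0$, so that $\ft\oplus\fah$ is a fundamental Cartan of $M_0$. Third, the $M_0$-level representation $X_{M_0}(TA_{\fh},\Gamma,\lambda,0)$ is a relative discrete series of $M_0/(M_0\cap H)$. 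This last point is the implication (3)$\Rightarrow$(1) of Theorem~\ref{thm:ds_para}. Induction in stages then gives back $\pi_{P,\xi,\nu}$.

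\textbf{Main obstacle.} I expect the hardest part to be the bookkeeping between Vogan's regularity conditions and the hypotheses of Theorem~\ref{thm:ds_para} for the Levi $M$. One issue is the disconnectedness of $M$, which need not have abelian Cartan subgroups even if $G$ does. Another is that $\lambda$ is only $M$-regular, not $G$-regular, so $X_G$ may be reducible. Reducibility is harmless here, since the statement only concerns irreducible constituents. For disconnectedness, I would first try to handle $M$ via $M=M^\circ Z_M$-type decompositions (finite extensions), and to show that constituents are preserved.
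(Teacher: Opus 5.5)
Your proposal is correct and is essentially the paper's own proof. Both inclusions pass through the Levi subgroup $M'A'=Z_G(\faq)$, apply Lemma~\ref{lem:welltemp_hereditary} and the equivalence (1)$\Leftrightarrow$(3) of Theorem~\ref{thm:ds_para} to $M'/(M'\cap H)$ (with the $\rho$-shifts reconciled by Lemma~\ref{lem:rho_rho_eq}), and conclude via induction in stages and Theorem~\ref{thm_description}.

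The obstacle you anticipate is not one. $M'$ is again in Vogan's class, since for every Cartan subgroup $C$ of $M'$ the group $CA'$ is a Cartan subgroup of $G$, hence abelian. Two small corrections are also in order. The relevant Cartan subgroup of $M'$ is $T\exp(\fah\cap\fm')$ rather than $T\exp(\fah)$, because part of $\fah$ may lie in $\fa'$. And cuspidality of $P'$ follows directly from condition~(1) of Definition~\ref{def:dataC}, not from $\lambda\in i\ftq^*$.
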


The preceding theorem describes the support as a set and, in particular,
places it inside \(\widehat G_{\temp}\). Since we understand the geometry of  \(\widehat G_{\temp}\) very well, this makes it possible to understand $\supp(\lambda_X)$ as a topological space. 

The tempered dual \(\widehat G_{\temp}\) is famously non-Hausdorff, and its lack of Hausdorfness is precisely connected with the reducibility of the representations \(X_G(TA,\Gamma,\lambda,\nu)\). Since this reducibility depends on the representation theory of~$G$, and has little to do with~$X$, to understand the geometry of  $\supp(\lambda_X)$ amounts to describing the image of $\supp(\lambda_X)$ in the universal Hausdorff quotient \(\widehat G_{\temp,\mathrm{Haus}}\) of $\widehat{G}_{\temp}$ (for the notion of universal Hausdorff quotient see Bourbaki~\cite{BourbakiTG}). Therefore, we shall describe the image  $\supp(\lambda_X)_{\mathrm{Haus}}$ of $\supp(\lambda_X)$ in  \(\widehat G_{\temp,\mathrm{Haus}}\). 

It is not difficult to spell out the connected components of  
\(\widehat G_{\temp,\mathrm{Haus}}\) in terms of Vogan's parametrization. Each component is attached to a character datum of the form $(TA, \Gamma, \lambda, 0)$, and is homeomorphic with
\[
i\fa^*/W(\Gamma,\lambda),
\]
where $W(\Gamma, \lambda)$ is a certain finite group acting on $\fa^\ast$. 

An essential point to understand the geometry of $\supp(\lambda_X)$ is to understand which  character data for \(X\) contribute to a given connected component of
\(\widehat G_{\temp,\mathrm{Haus}}\). We shall see that several continuous families of representations can contribute to a fixed connected component of $\supp(\lambda_X)_{\mathrm{Haus}}$. These families are associated to distinct ``discrete character data for~$X$'' (meaning triples $(TA, \Gamma, \lambda)$ as above, where we forget $\nu$). Ultimately, this exhibits a connected component of $\supp(\lambda_X)_{\mathrm{Haus}}$ as the image in $i\fa^*/W(\Gamma,\lambda)$ of a finite union of ``branches'', all Weyl-translates of the subspace $i\faq^*\subset i\fa^*$.

Let us state the result more formally. Let \(\mathcal D_X\) be the set of discrete character data for \(X\), and let
\(\mathcal C_X\) be set of their \(K\)-conjugacy classes as discrete character data
for \(G\). In Section~\ref{subsec-shape-support}, we associate to any element \(c\in\mathcal C_X\) a canonical connected subset $\Sigma_X(c)$ of $\operatorname{supp}(\lambda_X)_{\mathrm{Haus}}$. If we choose a representative 
\(D_0=(TA,\Gamma,\lambda)\) of~$c$, then $\Sigma_X(c)$ identifies with a subset of  $i\fa^*/W(\Gamma,\lambda)$ which is the image of a union of linear subspaces of $i\fa^*$ -- all obtained from the subspace
\(i\faq^*\subset i\fa^*\) under the action of a Weyl group $W(\lambda)$ which is distinct from $W(\Gamma, \lambda)$ in general.

\begin{theorem}[Theorem~\ref{thm:shape-practical}]\label{thm-shape-intro}
Let \(X=G/H\) be a well-tempered symmetric space, where $G$ is a linear reductive group with abelian Cartan subgroups.   Then 
\[
\operatorname{supp}(\lambda_X)_{\mathrm{Haus}}
=
\bigsqcup_{c\in\mathcal C_X}\Sigma_X(c).
\]
\end{theorem}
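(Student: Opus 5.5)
Starting from Theorem~\ref{thm:tempered spec}, the support is the union over character data $(TA,\Gamma,\lambda,\nu)$ for $X$ of the irreducible constituents of $X_G(TA,\Gamma,\lambda,\nu)$. We then push this description into $\widehat G_{\temp,\mathrm{Haus}}$. The basic input is that all irreducible constituents of a single standard tempered representation $X_G(TA,\Gamma,\lambda,\nu)$ have the same image in the universal Hausdorff quotient. Indeed, the connected component of $\widehat G_{\temp,\mathrm{Haus}}$ attached to the discrete datum $(TA,\Gamma,\lambda)$ is $i\fa^*/W(\Gamma,\lambda)$, and the point $[\nu]$ in it is exactly the image of the packet of constituents of $X_G(TA,\Gamma,\lambda,\nu)$. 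Hence $\supp(\lambda_X)_{\mathrm{Haus}}$ is the set of points $[\nu]\in i\fa^*/W(\Gamma,\lambda)$ with $(TA,\Gamma,\lambda)$ ranging over $\mathcal D_X$ and $\nu\in i\faq^*$.

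The second step groups these points by $K$-conjugacy class $c\in\mathcal C_X$. Two discrete data for $X$ that are $K$-conjugate as data for $G$ give the same connected component of $\widehat G_{\temp,\mathrm{Haus}}$. Fix a representative $D_0=(TA,\Gamma,\lambda)$ of $c$. Any other $D\in\mathcal D_X$ in the class $c$ is $k\cdot D_0$ for some $k\in K$. Such a $k$ carries the subspace $i\faq^*$ attached to $D$, which is defined using $\sigma$ and the $\sigma$-stable Cartan $k\cdot TA$, to a subspace $w\cdot i\faq^*$ of $i\fa^*$. Here $w$ lies in the normalizer of $TA$ modulo centralizer, and must fix $\lambda$ up to the appropriate equivalence, i.e.\ $w\in W(\lambda)$. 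Conversely, every $w\in W(\lambda)$ for which $w^{-1}D_0$ is again a datum for $X$ produces a branch. In this way the contributions of class $c$ form the image of $\bigcup_w w\cdot i\faq^*$ in $i\fa^*/W(\Gamma,\lambda)$, which is $\Sigma_X(c)$ as defined in Section~\ref{subsec-shape-support}. It is connected because each branch is a linear subspace containing $0$.

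Disjointness of the union is then formal. Distinct $K$-classes of discrete data for $G$ index distinct connected components of $\widehat G_{\temp,\mathrm{Haus}}$, by Vogan's classification: the datum is recovered from the lowest $K$-type and the infinitesimal-character data. Hence the $\Sigma_X(c)$ for distinct $c$ lie in different components.

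The main obstacle is the second step: checking precisely that the set of $K$-conjugates of $D_0$ that are data for $X$ corresponds exactly to the $W(\lambda)$-translates used in defining $\Sigma_X(c)$. Two things must be verified here. First, conjugating by $k$ preserves the conditions $\Gamma|_{T\cap H}=1$ and $\lambda\in i\ftq^*$ only for suitable $k$. Second, the relevant Weyl group is $W(\lambda)$ rather than $W(\Gamma,\lambda)$. I would handle this by reducing, via Lemma~\ref{lem:welltemp-all-cartan}, to $\sigma$-stable Cartan subgroups in a fixed $K$-orbit, where abelian centralizers make $T\cap H$ and $\ftq$ explicit. I would then use that $H$-conjugacy and $K$-conjugacy of $\sigma$-stable Cartans are related by the finite group $N_K(TA)/Z_K(TA)$.
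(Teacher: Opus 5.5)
\textbf{There is a gap at the one step with real content.} Your architecture is the paper's: Theorem~\ref{thm:tempered spec}, then Theorem~\ref{thm:HC_disjoint} to send each packet $\Phi_G(TA,\Gamma,\lambda,\nu)$ to a single point of $i\fa^\ast/W(\Gamma,\lambda)$, then a per-component transport of relative parameter spaces, then disjointness over $c$.

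The gap is in the transport. Take $D=(T_DA_D,\Gamma_D,\lambda_D)\in\mathcal D_X(c)$ and $k\in K$ with $kD=D_0$. Then $\Ad_k\colon\fb_D\to\fb$ goes between two different Cartan subalgebras, and it need not intertwine $\sigma|_{\fb_D}$ with $\sigma|_{\fb}$. So nothing in your argument shows that $\Ad_k((\fa_D)_\fq)$ has the form $w^{-1}\faq$ at all. Your sentence ``$w$ lies in the normalizer of $TA$ modulo centralizer and fixes $\lambda$'' presupposes exactly this, since $k$ normalizes nothing in general.

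Without this step you only obtain that the component is the image of $\bigcup_D \Ad_k\bigl(i(\fa_D)_\fq^\ast\bigr)$. That is not yet $\Sigma_X(c)$, whose definition through $\operatorname{Br}_X(D_0)$ rests on this normal form.

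Your proposed repair does not supply the step:
\begin{itemize}
\item Lemma~\ref{lem:welltemp-all-cartan} concerns centralizers of full Cartan subspaces $\ftq\oplus\faq$. What is needed is the regularity of $\faq$ inside $\fg^\lambda$.
\item The concern about whether $K$-conjugates of $D_0$ still satisfy $\Gamma|_{T\cap H}=1$ is beside the point, since $D$ is already a datum for $X$.
\item The appeal to $N_K(TA)/Z_K(TA)$ does not say which Weyl element corrects $k$. Nor does it explain why that element can be taken in $W(\lambda)$ and make the transport $\sigma$-equivariant.
\end{itemize}

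\textbf{The missing idea} is the content of Lemma~\ref{lem:weak-weyl-normal-form}.
\begin{enumerate}
\item The $M'$-regularity of $\lambda$ in Definition~\ref{def:dataC}(3) gives $Z_{\fg^\lambda}(\faq)=\fb^\bC$.
\item Hence a generic element of $\faq$ defines a positive system for the roots of $\fg^\lambda$ that $\sigma$ sends to its negative. The same holds for $(\fa_D)_\fq$ inside $\fg^{\lambda_D}$.
\item Compose $k$ with an element $w\in W(\lambda)$ that aligns these two positive systems. The resulting $g=wk$ still sends $\lambda_D$ to $\lambda$, because $w$ fixes $\lambda$.
\item $g$ now intertwines $\sigma$ on $\fb_D$ and $\fb$, so it maps $(\fa_D)_\fq$ onto $\faq$.
\item Therefore $k=w^{-1}g$ carries the relative parameter space of $D$ to $w^{-1}V_\fq$.
\item Changing $k$ or $w$ moves $w$ only within its $W_\fq(\lambda)$--$W(\Gamma,\lambda)$ double coset.
\end{enumerate}
Once this is in hand, the union over $D\in\mathcal D_X(c)$ is $\Sigma_X(D_0)$ by definition, and the theorem follows exactly as you outline.
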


This is an explicit description of $\supp(\lambda)_{\mathrm{Haus}}$ as a topological space, for well-tempered symmetric spaces. We include several results which make it easier to determine the sets $\Sigma_X(c)$ in significant special cases (see Propositions~\ref{prop:same-cartan} and ~\ref{prop:branch-collapse}).

Distinct ``branches'' in one of the sets $\Sigma_X(c)$ have disjoint regular loci (they have no regular element in common), but may meet along
singular strata. They certainly meet at~$0$. We spell out a simple but interesting example in
\(\SL(3,\R)/\GL(2,\R)_0\): one connected component of
\(\operatorname{supp}(\lambda_X)_{\mathrm{Haus}}\) is a tripod consisting of
three closed half-lines meeting at the image of \(0\). 
    \begin{center}
\bigskip
\begin{tikzpicture}
  \coordinate (A) at (0,0);
  \coordinate (C) at (30:1.3);  
  \coordinate (D) at (90:1.3);  
  \coordinate (E) at (150:1.3); 
                              
  \draw[thick] (C) -- (A);
      
  \draw[thick] (D) -- (A);
      
  \draw[thick] (E) -- (A);

\end{tikzpicture}
\bigskip
\end{center}
\begin{center}
\begin{minipage}{0.92\textwidth}
   \footnotesize This is a picture of a connected component of the Plancherel spectrum for $\SL(3,\R)/\GL(2,\R)_0$, studied in Example~\ref{subsec_example_SL3}. The component has three branches. Its topological $K$-theory is $\Z^2$ in degree~$1$. This contrasts to the situation for reductive groups where each component contributes at most one factor~$\Z$ to the $K$-theory.
\end{minipage}
\end{center}
\vspace{0.2cm}
\normalsize

Let us mention that in a finer Plancherel parametrization, which involves keeping track of distributional data, some of the intersections between bran-ches may appear to be artificial intersections.

\subsection{Symmetric spaces of the form $G_{\C}/G_{\R}$}

The final section explains what our more general results imply for symmetric spaces of type
\(G_{\C}/G_{\R}\), and an operator-algebraic corollary of our results for these spaces.

In this class much simplifies: the ambient complex group has connected
Cartan subgroups, so the character \(\Gamma\) in Vogan’s character datum is determined by \(\lambda\), and  \(W(\Gamma,\lambda)=W(\lambda)\). As a result, each of the sets $\Sigma_X(c)$ in Theorem~\ref{thm-shape-intro} collapses to a single branch.  Moreover, Wallach's irreducibility
theorem for the unitary principal series implies that the tempered
standard representations are irreducible, so the tempered dual \(\widehat G_{\temp}\) is Hausdorff. Summing up, we obtain the following description of $\supp(\lambda_X)$ (see \S\,\ref{sec-complex-support} for  notation  on the Weyl groups $W^\sigma(\cdot)$ involved in the statement).

\begin{theorem}[Theorem~\ref{thm:GcGr-main}]\label{thm-gc-gr-intro}
Let \(X=G/H\) be a symmetric space of type \(G_{\mathbb C}/G_{\mathbb R}\).
Then:
\begin{enumerate}[(1)]
\item \(X\) is well-tempered.

\item For every \(\theta\)-stable Cartan subalgebra
\(\fbh=\fth\oplus\fah\subset\fh\), and for every discrete character datum
\((TA,\Gamma,\lambda)\) for \(X\) based on
\(TA=Z_G(\fbh\oplus i\fbh)\), the map
\[
 i\faq^\ast/W^\sigma(\lambda)
 \longrightarrow \widehat G_{\temp},
 \qquad
 \overline\nu\longmapsto X_G(TA,\Gamma,\lambda,\nu),
\]
is a homeomorphism onto its image.  Denote this image by
$\widehat X_{\fbh,\lambda}$; then
\[
\supp(\lambda_X)
=
\bigsqcup_{[\fbh]}
\ \bigsqcup_{[(\Gamma,\lambda)]}
\widehat X_{\fbh,\lambda},
\]
where \([\fbh]\) runs over the \(K\)-conjugacy classes of \(\theta\)-stable
Cartan subalgebras of \(\fh\), and \([(\Gamma,\lambda)]\) runs over the
\(W^\sigma(\fbh)\)-orbits of discrete character data for \(X\) based on
\(TA=Z_G(\fbh\oplus i\fbh)\).

\item \(X\) has discrete series if and only if the real Lie algebra \(\fh\) is
split.  In that case the discrete series of \(X\) is precisely the set of
representations
\[
X_G(TA,\Gamma,\lambda,0)
\]
attached to split Cartan subalgebras of \(\fh\) and to the corresponding
discrete character data for \(X\).  Equivalently, it is the set of tempiric
representations of \(G\) with regular infinitesimal character whose unique
lowest \(K\)-type is \(K\cap H\)-spherical.

\item A representation in \(\supp(\lambda_X)\) belongs to the discrete series
of \(X\) if and only if it is an isolated point of \(\supp(\lambda_X)\).
\end{enumerate}
\end{theorem}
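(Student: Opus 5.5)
We prove the four assertions in order, each time specializing the general results for well-tempered spaces (Theorems~\ref{thm:tempered spec}, \ref{thm:shape-practical} and \ref{thm:ds_para}) to the case $G=G_\C$, $H=G_\R$, $\sigma$ = complex conjugation.

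\emph{Step (1).} Here $\fq=i\fh$. Take a Cartan subalgebra $\fbh$ of $\fh$ that is maximally split. Then $\fbq=i\fbh$ is a Cartan subspace of $\fq$: it is abelian and consists of semisimple elements, and maximality follows because $\fq\cong\fh$ as an $\fh$-module. Its centralizer $Z_\fg(i\fbh)=Z_\fg(\fbh)=\fbh\oplus i\fbh$ is a Cartan subalgebra of $\fg$, hence abelian. So $X$ is well-tempered. This step is routine, and it also explains why the Cartan subgroups in (2) have the form $TA=Z_G(\fbh\oplus i\fbh)$.

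\emph{Step (2).} I would take the $\sigma$-stable Cartan subgroups to be the $Z_G(\fbh\oplus i\fbh)$. I would first check that every $\sigma$-stable, $\theta$-stable Cartan subalgebra relevant to character data for $X$ is $K$-conjugate to some $\fbh\oplus i\fbh$. The $K$-conjugacy classes of these correspond to the $K\cap H$-classes of $\theta$-stable Cartan subalgebras of $\fh$. The components then behave as follows.
\begin{itemize}
\item Cartan subgroups of the complex group $G$ are connected, so $\Gamma$ is determined by $\lambda$ and $W(\Gamma,\lambda)=W(\lambda)$.
\item Wallach's irreducibility theorem makes each $X_G(TA,\Gamma,\lambda,\nu)$ irreducible. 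Hence $\widehat G_{\temp}$ is Hausdorff and equals its Hausdorff quotient, with components $i\fa^*/W(\lambda)$.
\item By Proposition~\ref{prop:branch-collapse}, or a direct argument that all branches in $\Sigma_X(c)$ are $W(\lambda)$-translates of a single $i\faq^*$, each $\Sigma_X(c)$ is one branch. That branch is the image of $i\faq^*$ in $i\fa^*/W(\lambda)$, and this image is identified with $i\faq^*/W^\sigma(\lambda)$, where $W^\sigma(\lambda)$ is the stabilizer of $i\faq^*$ modulo its pointwise fixer.
\end{itemize}
The identification in the last point is the main obstacle. One has to show that two elements of $i\faq^*$ that are $W(\lambda)$-conjugate are already $W^\sigma(\lambda)$-conjugate. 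I would try a Chevalley-type restriction argument: $\fa=\fah\oplus\faq$ with $\faq\cong i\fah$, where $\fa$ is the split part of the Cartan subalgebra $\fbh\oplus i\fbh$ of $\fg$. Then one checks that the stabilizer of a generic point of $\faq$ in $W(\lambda)$ preserves $\faq$.

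Granting this, the map $i\faq^*/W^\sigma(\lambda)\to\widehat G_{\temp}$ is a continuous injection of a closed set into a component homeomorphic to $i\fa^*/W(\lambda)$. It is proper, since infinitesimal characters go to infinity, so it is a homeomorphism onto its image. Theorem~\ref{thm:shape-practical} then gives the disjoint union. The indexing by $[\fbh]$ and by $W^\sigma(\fbh)$-orbits of $(\Gamma,\lambda)$ comes from translating $K$-conjugacy of discrete data into these terms.

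\emph{Step (3).} $X$ has discrete series iff $\fq$ has a compact Cartan subspace (\S\,\ref{sec:cartan_and_rank}). A Cartan subspace $i\fbh$ is compact iff $\fbh$ is split, so $X$ has discrete series iff $\fh$ is split. The discrete series are then the representations $X_G(TA,\Gamma,\lambda,0)$ with $\fbh$ split, that is $\faq=0$. Theorem~\ref{thm:ds_para} gives the lowest $K$-type description, since $G$ is algebraic with abelian Cartan subgroups.

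\emph{Step (4).} If $\faq\neq0$, the point $X_G(TA,\Gamma,\lambda,\nu)$ lies on the branch $i\faq^*/W^\sigma(\lambda)$, a positive-dimensional connected set. Such a point is not isolated, including at $\nu=0$. When $\faq=0$, the branch is a single point and it is a union component of the disjoint decomposition in (2). To conclude it is isolated, I would show that the pieces $\widehat X_{\fbh,\lambda}$ form a locally finite family of closed sets. This should follow from the discreteness of the allowed $\lambda$ and from the infinitesimal character being continuous on each piece. A split piece is then open as well as closed.
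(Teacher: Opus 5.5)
You follow the paper's route for (1), for (3), and for the reduction in (2): Theorem~\ref{thm:tempered spec}, connectedness of $T$ giving $W(\Gamma,\lambda)=W(\lambda)$, Wallach's theorem, and Proposition~\ref{prop:branch-collapse}. However, the step you single out as the main obstacle is left open, and the check you propose cannot close it.

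Part (2) needs $i\faq^\ast/W^\sigma(\lambda)\to i\fa^\ast/W(\lambda)$ to be injective at \emph{every} point. That is, if $\nu_1,\nu_2\in i\faq^\ast$ and $\nu_2=w\nu_1$ with $w\in W(\lambda)$, then $\nu_2=w'\nu_1$ for some $w'\in W^\sigma(\lambda)$.

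As literally stated, your check is vacuous. Since $Z_{\fg^\lambda}(\faq)=\fb^\bC$, the stabilizer in $W(\lambda)$ of a generic point of $\faq$ is trivial. The meaningful generic statement is Lemma~\ref{lem:regular-branch-stabilizer}: an element of $W(\lambda)$ carrying a regular point of $\faq$ into $\faq$ normalizes $\faq$. That gives injectivity only over the regular locus, and injectivity on a dense set does not propagate to the walls. At a singular $\nu_1$ the given $w$ need not normalize $\faq$, since $\faq\cap w^{-1}\faq$ may consist entirely of singular elements. You must then produce a \emph{different} element of $W^\sigma(\lambda)$, which requires knowing that $W^\sigma(\lambda)$ is large enough. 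That is a global fact about $\faq$ being a Cartan subspace, and it is missing from your argument.

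The paper proves this in Lemma~\ref{lem:relative-Weyl}, in four steps.
\begin{enumerate}
\item Passing to the semisimple part of $\fg^\lambda$, one may assume $\lambda=0$ and $Z_\fg(\faq)=\fb$.
\item Then every root is nonzero on $\faq$, so positive systems of $\Sigma(\fg,\fa)$ and $\Sigma(\fg,\faq)$ can be chosen compatibly. This gives $\fa^{++}\cap\faq=\faq^{++}$.
\item By Rossmann's theorem, the Weyl group of the possibly non-reduced restricted root system $\Sigma(\fg,\faq)$ is $W_K(\faq)$. Moreover $W^\sigma$ surjects onto it, because $N_K(\faq)$ normalizes $Z_\fg(\faq)=\fb$.
\item Hence $i\faq^\ast/W^\sigma\cong\faq^{++}$ and $i\fa^\ast/W\cong\fa^{++}$. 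The map $\overline{q_\lambda}$ becomes the inclusion $\faq^{++}\hookrightarrow\fa^{++}$, which gives injectivity and the homeomorphism onto the image at once. Your properness argument is then unnecessary.
\end{enumerate}

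Two smaller corrections.
\begin{itemize}
\item $\faq=i\fth$, not $i\fah$; it is $\ftq$ that equals $i\fah$. You implicitly use the correct relation in Step~(3).
\item All $\theta$-stable Cartan subalgebras $\fbh\oplus i\fbh$ of the complex Lie algebra $\fg$ are $K$-conjugate. So the indexing by $[\fbh]$ must come from $\sigma$-compatible conjugacy of the data, not from $K$-conjugacy of these Cartan subalgebras.
\end{itemize}

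Your Step~(4) via local finiteness is workable, but there is a simpler route, used in Corollary~\ref{cor_singleton}. Components of $\widehat G_{\temp}$ are open by Theorem~\ref{thm:HC_disjoint}. A piece with $\faq=0$ is all of $\supp(\lambda_X)$ inside its component, so it is an isolated point.
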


The last item characterizing discrete series as isolated points in the spectrum holds more generally for any well-tempered symmetric space (see Corollary ~\ref{cor_singleton}). However, to the best of our knowledge, it is unclear whe-ther the same characterization holds for an arbitrary symmetric space. This question seems nontrivial since, for a general  space $X$, some representations in the discrete series of $X$ can have non-regular infinitesimal characters, although their relative parameters are regular in an appropriate sense. 

The description of $\supp(\lambda_X)$ in Theorem~\ref{thm-gc-gr-intro} can be easily upgraded to a complete description of the $C^\ast$-algebra $C^\ast_{\lambda_X}(G)$ attached to the quasi-regular representation $\lambda$.   The
 $C^\ast$-Plancherel theorem for the reduced \(C^*\)-algebra of a connected
complex semisimple group \cite{CCH16}, combined together with the support decomposition above,
leads to the following quotient description.

\begin{corollary}[Proposition~\ref{cor:GcGr-Cstar}]
With the notation of Theorem~\ref{thm-gc-gr-intro}, there is a canonical
isomorphism
\[
C^*_{\lambda_X}(G)
\cong
\bigoplus_{[\fbh]}
\ \bigoplus_{[(\Gamma,\lambda)]}
C_0\bigl(i\faq^\ast/W^\sigma(\lambda),\Compact\bigr).
\]
\end{corollary}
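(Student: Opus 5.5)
The plan is to start from the general description of $C^*_{\lambda_X}(G)$ as a quotient of $C^*_r(G)$. By Theorem~\ref{thm-gc-gr-intro}(1), $X$ is well-tempered, hence tempered. So $\lambda_X$ is weakly contained in the regular representation, and the canonical map $C^*(G)\to C^*_{\lambda_X}(G)$ factors through a surjection
\[
C^*_r(G)\longrightarrow C^*_{\lambda_X}(G).
\]
Its kernel is the closed two-sided ideal $J$ of elements acting by zero on every $\pi\in\supp(\lambda_X)$. Thus $C^*_{\lambda_X}(G)\cong C^*_r(G)/J$, where $J$ is the ideal attached to the closed subset $\supp(\lambda_X)\subset\widehat G_{\temp}$. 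The problem therefore reduces to computing the quotient of $C^*_r(G)$ by the ideal of a closed subset of its spectrum.

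Next I would invoke the $C^*$-Plancherel theorem for connected complex semisimple groups \cite{CCH16}. This gives an isomorphism
\[
C^*_r(G)\cong\bigoplus_{[P,\sigma]} C_0\bigl(i\fa_P^*/W_\sigma,\Compact(\mathcal H_\sigma)\bigr),
\]
where the sum runs over minimal principal series data, i.e.\ characters of the compact torus up to Weyl conjugacy. Here $W_\sigma$ acts freely enough that intertwiners are scalar, by Wallach irreducibility. In Vogan's language, each summand is indexed by a discrete character datum $(TA,\Gamma,\lambda,0)$ for $G$, with $\Gamma$ determined by $\lambda$. The spectrum of the summand is $i\fa^*/W(\lambda)$, with $\fa$ the full split part of the maximally split Cartan.

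For a $C_0(Y,\Compact)$-summand and a closed subset $Z\subset Y$, the quotient by the ideal of functions vanishing on $Z$ is $C_0(Z,\Compact)$. Hence
\[
C^*_{\lambda_X}(G)\cong\bigoplus_c C_0\bigl(\supp(\lambda_X)\cap Y_c,\Compact\bigr),
\]
where $c$ runs over the summands and $Y_c$ is the spectrum of the summand indexed by $c$.

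It remains to identify $\supp(\lambda_X)\cap Y_c$ using Theorem~\ref{thm-gc-gr-intro}(2). The support is the disjoint union of the pieces $\widehat X_{\fbh,\lambda}$, and each piece is homeomorphic to $i\faq^*/W^\sigma(\lambda)$. The map $\overline\nu\mapsto X_G(TA,\Gamma,\lambda,\nu)$ lands in the single component $Y_c$ determined by the $K$-conjugacy class of $(TA,\Gamma,\lambda)$. One must check two things:
\begin{enumerate}[(a)]
\item Distinct pairs $([\fbh],[(\Gamma,\lambda)])$ land in distinct components, or, if two land in the same component, their images are disjoint closed pieces.
\item Each image $\widehat X_{\fbh,\lambda}$ is closed in $Y_c$ and open in $\supp(\lambda_X)\cap Y_c$.
\end{enumerate}
Point (a) is the "single branch" collapse stated in the theorem: the disjoint union in (2) means the pieces are pairwise disjoint. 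Finiteness of the union in each component, together with closedness of $\supp(\lambda_X)$ (Theorem~\ref{thm_description}), would then give that each piece is clopen in $\supp(\lambda_X)$. Then $C_0$ of the union splits as a direct sum, and the homeomorphism in (2) transports $C_0(\widehat X_{\fbh,\lambda},\Compact)$ to $C_0(i\faq^*/W^\sigma(\lambda),\Compact)$.

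The main obstacle is this topological point: showing each $\widehat X_{\fbh,\lambda}$ is clopen in $\supp(\lambda_X)$, rather than only disjoint from the others. A priori two linear branches inside the same $i\fa^*/W(\lambda)$ could meet at singular points, as in the $\SL(3,\R)$ tripod. First I would try to show that in the $G_\C/G_\R$ case, different $[\fbh]$ give Vogan data with different $K$-conjugacy classes of $(\Gamma,\lambda)$, so they lie in different components $Y_c$. The Cartan $Z_G(\fbh\oplus i\fbh)$ together with $\lambda\in i\ftq^*$ should recover $\fbh$ up to conjugacy. Within a single component, only one piece then occurs, and closedness of that piece follows because it is the image of a proper map from $i\faq^*$, a closed linear subspace, to $i\fa^*/W(\lambda)$. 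Finally I would check that the identifications are canonical, i.e.\ independent of the choice of representatives up to the Weyl group actions already divided out.
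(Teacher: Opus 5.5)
Your proposal is correct and takes essentially the same route as the paper. You identify $C^*_{\lambda_X}(G)$ with the quotient of the Penington--Plymen / Clare--Crisp--Higson block decomposition of $C^*_r(G)$ by the annihilator of $\lambda_X$, restrict the block functions to the closed subset $\supp(\lambda_X)$, and identify the resulting pieces with the spaces $i\faq^\ast/W^\sigma(\lambda)$ via Theorem~\ref{thm-gc-gr-intro}(2).

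The clopenness point you flag, which the paper leaves implicit, follows directly from the disjointness in part~(2). Any piece lying in a component $i\fa^\ast/W(\lambda)$ of $\widehat G_{\temp}$ contains the image of $\nu=0$, namely $X_G(TA,\Gamma,\lambda,0)$. So each component contains at most one piece, and that piece is $\supp(\lambda_X)$ intersected with a clopen component.
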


As an application, we compute the operator $K$-theory of $C^\ast_{\lambda_X}(G)$, and the topological $K$-theory of the support $\supp(\lambda_X)$. We explain  when the \(K\)-theory  is nonzero (see Corollary~\ref{thm_K_GcGr}): in the \(G_{\C}/G_{\R}\) case this is equivalent to the
quasi-splitness of the real form \(\fh\), or equivalently to the condition that 
the centralizer of a Cartan subspace in \(\fk\cap\fq\) be abelian.

\subsection*{Organization of the paper} 
Section~\ref{sec:support} recalls the necessary representation-theoretic
preliminaries for reductive symmetric spaces, including \(H\)-fixed
distribution vectors and cuspidal \(\sigma\)-parabolic subgroups, and proves
the closed support theorem extracted from the Plancherel formula. Section~\ref{sec:dis} treats symmetric spaces with a compact Cartan subspace having abelian centralizer, using cohomological induction and lowest \(K\)-types to
describe the discrete series and the square-integrable \(H\)-fixed
distributions.  Section~\ref{sec:welltempered} introduces well-tempered
symmetric spaces, proves their hereditary property, and relates them to the
Benoist--Kobayashi notion of tempered homogeneous space.
Section~\ref{sec:discrete-welltemp} reformulates the support in terms of
relative Vogan character data and proves the finite-branch description of the
Hausdorff support, including the tripod example.  Section~\ref{sec:complex}
specializes the theory to symmetric spaces of type \(G_{\C}/G_{\R}\) and
derives the \(C^*\)-algebraic corollaries.  The appendix proves a torus form
of the compact Cartan--Helgason criterion which is needed in the main text in the presence of disconnectedness for $G$ or its Levi subgroups. 

\subsection*{Acknowledgments}

We are very much indebted to Nigel Higson, who originally brought us together to think about possible connections between harmonic analysis on symmetric spaces and operator algebras. We benefited greatly from many discussions with him. These discussions were partially made possible by the Radboud Excellence Initiative, which funded visits by Higson to Radboud University in 2022 and 2023.

We also thank Erik van den Ban, Toshiyuki Kobayashi, Bernhard Krötz,
Job Kuit, Henrik Schlichtkrull, Haluk S\c{e}ngün, and David Vogan for helpful discussions and correspondence. Their insights clarified several aspects of
the representation theory and harmonic analysis underlying this work. This
project was developed in part during the 2025 trimester 
\emph{Representation Theory and Noncommutative Geometry} at the Institut
Henri Poincaré, whose stimulating environment served as a catalyst for the
work.

Afgoustidis's research was partially supported by project OpART of the Agence Nationale de la Recherche (ANR-23-CE40-0016).

Hochs's research was partially supported by Dutch Research Council grants OCENW.M.21.176, OCENW.M.23.063 and OCENW.M.24.386.

Song’s research was partially supported by the NSF grant DMS-1952557 and Simons Foundation grant MPS-TSM-00014295.

\subsection*{Disclosure}
All mathematical content, including the results, arguments, and proofs, was developed by the authors without computer assistance. The paper was entirely written by the authors; an AI tool was used solely for grammatical correction and stylistic polishing of parts of the exposition, which were then fully reworked by the authors.

\section{{The support of the Plancherel measure}} \label{sec:support}

In this section we recall some of the basics of harmonic analysis on a reductive symmetric space, and record the description of the support of the Plancherel measure on $L^2(G/H)$ that will be used throughout the paper.

\subsection{Setup and Notation}

\begin{blank}
Let $G$ be a real reductive group in Harish-Chandra's class as in~\cite[Appendix]{Ban05}.
Let $\sigma$ be an involutive automorphism of $G$, and let $H$ be an open subgroup of $G^\sigma$. We say $(G, H)$ is a \emph{reductive symmetric pair},
and call the homogeneous space $X=G/H$ a \emph{reductive symmetric space}. 

For any Lie group~$U$, we shall use the Fraktur letter~$\mathfrak{u}$ for the real Lie algebra of~$U$,
and write  $\mathfrak{u}^{\mathbb{C}}$ or $\mathfrak{u}_\bC$ for the complexified Lie algebra.
We shall write $U_0$ for the identity component of~$U$. 
\end{blank}

\begin{blank} We shall need to fix a convenient maximal compact subgroup of~$G$. In our setting there exists a Cartan involution~$\theta$ of~$G$ that commutes with~$\sigma$  (see \cite[Proposition 1.1]{Ban87-1}); the fixed-point set $K=G^\theta$ is a maximal compact subgroup of~$G$, and has the additional property that $K \cap H = H^\theta$ is a maximal compact subgroup of~$H$. The $K$-homogeneous space $K/(K \cap H)$ is a compact symmetric space\footnote{If $G$ is disconnected, $K$ need not belong to Harish-Chandra's class, but we shall still call $K/K\cap H$ a compact symmetric space when $K$ is compact.}, which will play a role in the paper. 

The involutions $\sigma$ and $\theta$ induce decompositions of the Lie algebra~$\fg$ which are critical to the theory. We have the direct sum decompositions
\[
\fg=\fh\oplus \fq,
\qquad
\fg=\fk\oplus \fp,
\]
where $\fh$ and $\fq$ are the $(\pm 1)$-eigenspaces of $\sigma$, and $\fk$ and $\fp$ are the $(\pm 1)$-eigenspaces of $\theta$. 

If $\fa$ is a $\sigma$-stable subspace of~$\fg$, we write
\[
\fah=\fa\cap \fh,
\qquad
\faq=\fa\cap \fq,
\]
for the $(\pm 1)$-eigenspaces of~$\sigma$ on~$\fa$, so that $\fa = \fah \oplus \faq$. This convention will be used throughout.   
\end{blank}

\begin{blank} Fix once and for all a nonzero $G$-invariant measure on $X=G/H$. We denote by $\lambda_X$ the left regular representation of~$G$ on~$L^2(X)$. Thus $\lambda_X$ is equivalent to the (normalized, unitary) induction $\mathrm{Ind}_H^G 1_H$, where $1_H$ is the trivial representation of $H$ (see \cite[Appendix E]{BHV}). 
\end{blank}

\subsection{$H$-fixed distributions and the $H$-spherical dual}

\begin{definition}[{\cite[p.~14]{Ban_note}, \cite[p.~13]{Ban05}}]\label{def_Hfixed_dist}
Let $(\pi,V)$ be a unitary representation of $G$.
We denote by $V^\infty$ the Fr\'echet space of smooth vectors, and by $V^{-\infty}$ its strong dual. The action of~$G$ on~$V$ determines an action on~$V^{-\infty}$. An \emph{$H$-fixed distribution vector} for $\pi$ is a continuous linear functional $\eta\in V^{-\infty}$ which is $H$-invariant.
We write
\[
(V^{-\infty})^H
\]
for the space of all such $H$-fixed distributions for $(\pi, V)$.

Suppose $\eta \in (V^{-\infty})^H$. For all $v \in V^\infty$, we let $\eta_v\colon G/H \to \C$ denote the $C^\infty$ function defined by $\eta_v(gH) = \eta(\pi(g)^{-1}v)$ for all $g \in G$. We define  
\[ m_\eta\colon V^\infty \to C^\infty(G/H)\]
by setting $m_\eta(v)=\eta_v$ for all $v \in V^\infty$. 
\end{definition}

\begin{lemma}[{\cite[Lemma 2.1]{Ban_note}}, {\cite[Lemma 2.1]{Ban05}}] \label{lem_Hdist_embedding}
Let $(\pi,V)$ be an irreducible unitary representation of $G$. The map $\eta \to m_\eta$ defines a linear isomorphism from $ (V^{-\infty})^H$ to the space $\mathrm{Hom}_G(V^\infty, C^\infty(G/H))$ of $G$-equivariant continuous linear maps $V^\infty\to C^\infty(G/H)$. If $\eta\neq 0$, then $m_\eta$ is an embedding.
\end{lemma}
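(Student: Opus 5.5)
The plan is to construct the inverse map explicitly and then verify equivariance, continuity and injectivity directly.

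\emph{Well-definedness of $m_\eta$.} Fix $\eta\in (V^{-\infty})^H$. For $v\in V^\infty$, the orbit map $g\mapsto \pi(g)^{-1}v$ is smooth from $G$ into the Fréchet space $V^\infty$. Composing with the continuous functional $\eta$ gives a smooth function on $G$. This function is right $H$-invariant, because $\eta(\pi(gh)^{-1}v)=(\pi(h)\eta)(\pi(g)^{-1}v)=\eta(\pi(g)^{-1}v)$, so it descends to $\eta_v\in C^\infty(G/H)$.

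\emph{$G$-equivariance.} This is the identity $\eta_{\pi(x)v}(gH)=\eta(\pi(x^{-1}g)^{-1}v)=\eta_v(x^{-1}gH)$.

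\emph{Continuity.} A seminorm of $C^\infty(G/H)$ involves derivatives $\sup_{g\in C}|D\eta_v(g)|$ over a compact set $C$. These derivatives are values of $\eta$ on vectors $\pi(g)^{-1}d\pi(u)v$ with $u\in U(\fg)$. The set $\{\pi(g)^{-1}d\pi(u)v : g\in C\}$ stays in a set controlled by finitely many seminorms of $v$, and $\eta$ is bounded by one Sobolev-type seminorm. Hence $m_\eta$ is continuous, and $\eta\mapsto m_\eta$ is clearly linear.

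\emph{Injectivity.} If $m_\eta=0$, then $\eta(\pi(g)^{-1}v)=0$ for all $g$ and $v$; taking $g=e$ gives $\eta=0$ on $V^\infty$.

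\emph{Surjectivity (inverse map).} Given $T\in\Hom_G(V^\infty,C^\infty(G/H))$, set $\eta(v)=T(v)(eH)$. Evaluation at $eH$ is continuous on $C^\infty(G/H)$, so $\eta\in V^{-\infty}$. It is $H$-invariant: $\eta(\pi(h)v)=T(v)(h^{-1}H)=T(v)(eH)$. Equivariance of $T$ gives $T(v)(gH)=(T(\pi(g)^{-1}v))(eH)=\eta_v(gH)$, so $T=m_\eta$.

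\emph{Embedding when $\eta\neq0$.} The kernel of $m_\eta$ is a closed $G$-invariant subspace of $V^\infty$. Its closure in $V$ is a closed invariant subspace, hence $0$ or $V$ by irreducibility. Since $\eta\ne0$, the kernel is proper. A proper closed invariant subspace of $V^\infty$ cannot be dense in $V$: its closure in $V$ intersects $V^\infty$ in the subspace itself, via a standard smoothing by convolution with $C_c^\infty(G)$, which maps $V$ into $V^\infty$ continuously. So the kernel is $0$ and $m_\eta$ is injective.

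This last step is the only delicate one. If an embedding is meant in the topological sense (a homeomorphism onto the image), one further needs that the image is closed. That can be obtained from the Casselman–Wallach theory of smooth Fréchet globalizations of admissible representations, as cited from van den Ban.
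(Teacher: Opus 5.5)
Your proof is correct. It is the standard argument behind this lemma, which the paper does not prove itself but quotes from van den Ban. The inverse is $T\mapsto \delta_{eH}\circ T$. For $\eta\neq 0$, injectivity of $m_\eta$ follows because a closed invariant subspace $W\subset V^\infty$ satisfies $\overline{W}\cap V^\infty=W$, which is what your smoothing argument establishes. This injectivity is all the paper means by ``embedding'' and all it later uses (see the proof of Lemma~\ref{lem_sqdis_iff}).

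Your closing remark should be dropped, because it is false. In general $m_\eta$ is not a topological embedding. Casselman--Wallach theory cannot give closed range here, since $C^\infty(G/H)$ is not a representation of moderate growth.

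Here is a counterexample. Take $X=G/K$ with $G=\SL(2,\R)$, let $\pi$ be a unitary spherical principal series realized on $L^2(K/M)$, and let $\eta$ be the $K$-invariant functional. Then $m_\eta$ is a Poisson transform on $C^\infty(K/M)$.

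Consider the Poisson transform of a Dirac measure $\delta_b$. It is the $C^\infty$-limit of the Poisson transforms of the Fourier truncations of $\delta_b$, so it lies in the closure of $m_\eta(V^\infty)$. It does not lie in $m_\eta(V^\infty)$ itself, because the Poisson transform is injective on distributions at unitary parameters. Hence the image of $m_\eta$ is not closed. An injective continuous linear map between Fréchet spaces with non-closed range is not a homeomorphism onto its image.
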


\begin{remark}
In \cite[p.~14]{Ban_note}, \cite[p.~13]{Ban05}, $H$-fixed distributions ($H$-fixed generalized vectors) are taken to be continuous anti-linear functionals on $V^\infty$.
Passing from a linear functional $\eta$ to the anti-linear functional $v\mapsto \overline{\eta(v)}$ gives an anti-linear isomorphism between the two conventions.
\end{remark}

\begin{definition}[{\cite[p.~14]{Ban05}}] Let $(G, H)$ be a reductive symmetric pair. The $H$-spherical unitary dual of $G$, denoted by  ${\widehat{G}}_H$, is the subset of the unitary dual $\widehat{G}$ consisting of the equivalence classes of irreducible unitary representations with a nonzero space of $H$-fixed distributions. 
\end{definition}

\subsection{The discrete series}

\begin{definition}[{\cite[p.~16]{Ban05}}]
Let $(\pi, V)$ be an irreducible unitary representation of~$G$. We say $\pi$ is \emph{in the discrete series of $X=G/H$} if it is equivalent to an irreducible subrepresentation of the regular representation~$\lambda_X$ of~$G$ on~$L^2(X)$, i.e. if
\[
\mathrm{Hom}_G(V, L^2(X)) \neq0.
\]
We denote by ${\widehat{G}}_{H, \ds}$ the set of equivalence classes of irreducible unitary representations of~$G$ in the discrete series of~$X$;  we say  $X$ \emph{has a discrete series}, or \emph{has discrete series}, when this set is nonempty. 
\end{definition}

 \begin{definition} Let $(\pi, V)$ be an irreducible unitary representation of~$G$ on a Hilbert space~$V$, with space of smooth vectors $V^\infty$.  We call an element $\eta\in (V^{-\infty})^H$  \emph{square-integrable} if there exists $v \in V^\infty\smallsetminus\{0\}$ such that the element~$\eta_v$ of $C^\infty(X)$ (defined in Definition~\ref{def_Hfixed_dist}) is a square-integrable function on~$X$. We denote by $(V^{-\infty})^H_{\ds}$ the vector space of square-integrable $H$-fixed distributions on $V$.
\end{definition}

\begin{lemma}\label{lem_sqdis_iff} Let $(\pi, V)$ be an irreducible unitary representation of~$G$, and let $\eta$ be an element of $(V^{-\infty})^H$. 
\begin{enumerate}[(a)]

\item The element $\eta$ is square-integrable if and only if for every $v \in V^\infty$, the function $\eta_v$ is square-integrable on~$X$.
\item Suppose $\eta$ is square-integrable. The map \mbox{$m_\eta\colon V^\infty \to C^\infty(X)$} has image in $C^\infty(X)\cap L^2(X)$ and extends uniquely to a $G$-equivariant continuous map $V \to L^2(X)$.
\end{enumerate} 
\end{lemma}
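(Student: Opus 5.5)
We use only the $G$-equivariance of the map $v \mapsto \eta_v$ and the irreducibility of $\pi$.

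\emph{Part (a).} One direction is immediate, since $V^\infty\smallsetminus\{0\}$ is nonempty. For the converse, suppose $\eta_{v_0}\in L^2(X)$ for some nonzero $v_0 \in V^\infty$. Consider
\[
W=\{v\in V^\infty : \eta_v\in L^2(X)\},
\]
which is a linear subspace of $V^\infty$ containing $v_0$.

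\begin{enumerate}[(1)]
\item $W$ is $G$-stable, because $\eta_{\pi(g)v}(xH)=\eta_v(g^{-1}xH)$ and the measure on $X$ is $G$-invariant.
\item $W$ is stable under $\pi(f)$ for every $f\in C_c^\infty(G)$. Indeed,
\[
\eta_{\pi(f)v}=\int_G f(g)\,\lambda_X(g)\eta_v\,dg .
\]
This identity holds pointwise, since $\eta$ is continuous on $V^\infty$ and $g\mapsto \pi(g)v$ is continuous into $V^\infty$. The right-hand side is a Bochner integral of a continuous compactly supported $L^2(X)$-valued function, so it lies in $L^2(X)$. This is the more delicate point: checking that the pointwise and $L^2$ integrals agree, which is a Fubini argument on compact subsets of $X$.
\end{enumerate}

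Now take a Dixmier--Malliavin style decomposition: every $v\in V^\infty$ is a finite sum $\sum_i \pi(f_i)w_i$ with $f_i\in C_c^\infty(G)$ and $w_i\in V^\infty$. This alone does not reduce to $v_0$. Instead we use irreducibility, in the form of a closed-graph argument.

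Define on $W$ the norm $\|v\|_W = p(v)+\|\eta_v\|_{L^2}$, where $p$ ranges over the seminorms of $V^\infty$. Equip $W$ with the resulting Fréchet topology, taking the completion if necessary. We aim to show $W=V^\infty$. The span of $\pi(C_c^\infty(G))v_0$ is dense in $V^\infty$ by irreducibility, and it is contained in $W$. The remaining task is to pass from this dense subspace to all of $V^\infty$.

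The cleaner route goes through $K$-finite vectors. Since $\pi$ is irreducible, $V^\infty_K$ is an irreducible $(\mathfrak g,K)$-module. By (2) applied with $f$ ranging over approximate identities times $K$-type projections, $W$ contains a nonzero $K$-finite vector. The differentiated action $\eta_{d\pi(Y)v}=-Y\eta_v$ preserves $L^2$, but only on the smooth $L^2$ domain. Instead use $\pi(f)$ with $f$ $K$-finite: the Harish-Chandra module is generated under $\pi(C_c^\infty(G)_K)$, so $W\supset V^\infty_K$.

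To pass from $V_K^\infty$ to $V^\infty$, we apply Dixmier--Malliavin to $V_K^\infty$: every smooth vector is $\sum \pi(f_i)w_i$ with $w_i$ smooth. This still needs $w_i\in W$. So we argue by closed graph instead. Consider the map $V^\infty\to \mathcal D'(X)$, $v\mapsto\eta_v$, which is continuous. Its restriction to the Banach-completed $G$-invariant subspace $W$ yields, by the Hilbert-space version of the argument, a closed $G$-invariant densely defined operator $T\colon V\supset W\to L^2(X)$. This operator is closable because it is continuous into distributions. By Schur's lemma for closed intertwiners (via the polar decomposition of $\overline T$), $\overline T^*\overline T$ is a scalar. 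Hence $\overline T$ is bounded and everywhere defined, and agrees with $v\mapsto\eta_v$ on $V^\infty$ since distributions match. This gives (a). The Schur step for unbounded closed intertwiners is the main obstacle.

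\emph{Part (b).} Part (b) follows from the same construction. The bounded extension $\overline T\colon V\to L^2(X)$ is $G$-equivariant, and it restricts to $m_\eta$ on $V^\infty$. It is unique because $V^\infty$ is dense in $V$.
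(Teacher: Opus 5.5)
\textbf{Overall assessment.} Your route is genuinely different from the paper's, and it works in outline. However, the closability step is justified with the wrong topology.

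\textbf{Comparison with the paper.} The paper argues algebraically:
\begin{itemize}
\item G\aa rding smoothing and a $K$-isotypic projection produce a nonzero $K$-finite $v$ with $\eta_v\in L^2(X)^\infty$.
\item By injectivity of $m_\eta$ (Lemma~\ref{lem_Hdist_embedding}), $m_\eta(V_K)$ is an irreducible $(\mathfrak{g},K)$-module. It is generated by $\eta_v$ inside $(L^2(X)^\infty)_K$, so it lies in $L^2(X)$.
\item Pulling back the $L^2$ inner product and using Harish-Chandra's uniqueness of invariant Hermitian forms on irreducible admissible modules gives a multiple of an isometry $V\to L^2(X)$.
\end{itemize}
You instead treat $T\colon v\mapsto\eta_v$, with domain $W$, as a densely defined $G$-equivariant operator $V\to L^2(X)$, and apply Schur's lemma for closed intertwiners. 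The spectral projections of $\overline T^{*}\overline T$ commute with $\pi(G)$, so this operator is a scalar and $\overline T$ is an everywhere-defined multiple of an isometry.

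This buys you independence from admissibility, $K$-finite vectors, the injectivity of $m_\eta$, and uniqueness of invariant forms. In fact it does not use the reductive structure of $G$ at all. The paper's argument stays inside the Harish-Chandra-module framework used in the rest of the paper.

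For your route, the detour through $\pi(C_c^\infty(G))$-stability, Dixmier--Malliavin, Fr\'echet norms on $W$ and $K$-finite vectors is unnecessary. $W$ is a nonzero $G$-stable subspace, hence dense in $V$ by irreducibility. Take $T$ on $W$ with the Hilbert norm of $V$; no completion of $W$ is needed. The Schur step is standard; it is not the real obstacle.

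\textbf{The step that needs repair: closability.} You justify it by continuity of $v\mapsto\eta_v$ from $V^\infty$ to $\mathcal D'(X)$. But closability requires the following: if $v_n\in W$, $v_n\to 0$ in the \emph{Hilbert} norm and $\eta_{v_n}\to f$ in $L^2(X)$, then $f=0$. Continuity for the Fr\'echet topology of $V^\infty$ says nothing about Hilbert-norm null sequences.

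What you need is that $v\mapsto\eta_v$ extends continuously from $V$ itself into $\mathcal D'(X)$. This is true. For $\phi\in C_c^\infty(X)$, choose $\tilde\phi\in C_c^\infty(G)$ with $\phi(gH)=\int_H\tilde\phi(gh)\,dh$. Weil's formula and $H$-invariance of $\eta$ give, for $v\in V^\infty$,
\[
\int_X\eta_v\,\phi=\eta\Bigl(\int_G\tilde\phi(g)\,\pi(g)^{-1}v\,dg\Bigr)=\eta\bigl(\pi(\check{\tilde\phi})v\bigr),\qquad \check{\tilde\phi}(g)=\tilde\phi(g^{-1}),
\]
using unimodularity of $G$. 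Moreover, $\pi(\check{\tilde\phi})$ maps $V$ continuously into $V^\infty$.

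With this in hand, closability is immediate, and so is the final identification. For $v\in V^\infty$, pick $v_n\in W$ with $v_n\to v$ in $V$. Then $\eta_{v_n}=\overline T v_n\to\overline T v$ in $L^2(X)$, while $\eta_{v_n}\to\eta_v$ in $\mathcal D'(X)$. Hence $\eta_v=\overline Tv\in L^2(X)$, so $W=V^\infty$, and both (a) and (b) follow.
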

\begin{proof} Assertion~(a) immediately follows from~(b): indeed,  if $\eta_v$ is square-integrable on~$X$, then $\eta$ is square-integrable by definition; and if~$m_\eta$ extends uniquely to a $G$-equivariant continuous map $V \to L^2(X)$, then $\eta_v$ is square-integrable for all $v \in V^\infty$. 

Let us prove~(b). The case $\eta=0$ is immediate. Suppose $\eta\neq0$ is square-integrable. Choose $v\in V^\infty\smallsetminus\{0\}$ such that $m_\eta(v)=\eta_v\in C^\infty(X)\cap L^2(X)$. By applying Gårding smoothing $\lambda(f)v = \int_G f(g)\pi(g)vdg$ for some appropriate $f\in C^\infty_c(G)$ and by replacing $v$ with $\lambda(f)v$ if necessary, we may assume that $\eta_v\in L^2(X)^\infty$ (see~\cite[Proposition 3.14]{Knapp}). By projecting $v$ to its nonzero $K$-isotypical component, we can further assume that $v$ is $K$-finite.

Set
\[
M_\eta=m_\eta(V_K)\subset C^\infty(X),
\]
where $V_K$ denotes the space of $K$-finite vectors in~$V$. By
Lemma~\ref{lem_Hdist_embedding}, the map $m_\eta$ is injective; hence
$M_\eta$ is an irreducible $(\fg,K)$-module isomorphic to~$V_K$. We have
\[
0\neq \eta_v
\in M_\eta\cap\bigl(L^2(X)^\infty\bigr)_K.
\]
Since
$\bigl(L^2(X)^\infty\bigr)_K$ is a $(\fg,K)$-module and $M_\eta$ is
irreducible, the $(\fg,K)$-module generated by $\eta_v$ is all of $M_\eta$.
Hence $m_\eta$ restricts to an embedding
\[
m_\eta|_{V_K}\colon V_K\hookrightarrow L^2(X).
\]
Pulling back the $L^2$-inner product along $m_\eta|_{V_K}$ therefore defines a positive-definite invariant Hermitian form on $V_K$.
By uniqueness of invariant Hermitian forms on an irreducible admissible $(\fg,K)$-module \cite[proof of Theorem~8]{HC53}, this form is proportional to the one induced from the given Hilbert structure on $V$. Thus, $m_\eta|_{V_K}$ extends to a positive scalar multiple of a $G$-equivariant isometric embedding
\[
V\hookrightarrow L^2(X),
\]
whose restriction to $V^\infty$ coincides with $m_\eta$ by continuity and by density of~$V_K$. Thus $m_\eta(V^\infty)\subset C^\infty(X)\cap L^2(X)$, and $m_\eta$ extends to the embedding $V\to L^2(X)$ above. The uniqueness is clear since $V^\infty$~is dense in~$V$.
\end{proof}

\begin{lemma}[{\cite[p.~16]{Ban05}}]\label{lem_sq_int_dis} Let $(\pi,V)$ be an irreducible unitary representation of~$G$. For  $\eta \in  (V^{-\infty})^H_{\ds}$, let $\widetilde{m_\eta}$ be the $G$-equivariant map $V \to L^2(X)$ defined in Lemma~\ref{lem_sqdis_iff}(b). The correspondence $\eta\mapsto \widetilde{m_\eta}$ induces a linear isomorphism
    \[
  (V^{-\infty})^H_\ds \simeq  \Hom_G(V, L^2(G/H)).
\]

  In particular, $\pi\in {\widehat{G}}_{H, \ds}$ if and only if $(V^{-\infty})_\ds^H\neq 0$.
\end{lemma}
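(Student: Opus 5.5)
The plan is to show that $\eta\mapsto\widetilde{m_\eta}$ is a well-defined linear injection $(V^{-\infty})^H_\ds\to\Hom_G(V,L^2(X))$, and then to construct an inverse.

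\emph{Well-definedness and linearity.} If $\eta$ is square-integrable, Lemma~\ref{lem_sqdis_iff}(b) produces a unique continuous $G$-equivariant extension $\widetilde{m_\eta}\colon V\to L^2(X)$ of $m_\eta$. If $\eta_1,\eta_2$ are square-integrable, then Lemma~\ref{lem_sqdis_iff}(a) shows that $(a\eta_1+b\eta_2)_v=a(\eta_1)_v+b(\eta_2)_v$ is square-integrable for every $v\in V^\infty$. Hence $(V^{-\infty})^H_\ds$ is a linear subspace of $(V^{-\infty})^H$. Uniqueness of the extension then gives linearity of $\eta\mapsto\widetilde{m_\eta}$.

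\emph{Injectivity.} Suppose $\widetilde{m_\eta}=0$. Then $m_\eta=0$ on $V^\infty$, so $\eta=0$ by Lemma~\ref{lem_Hdist_embedding}.

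\emph{Surjectivity.} This is the main step. Take a nonzero $T\in\Hom_G(V,L^2(X))$.
\begin{enumerate}[(i)]
\item Since $T$ is continuous and equivariant, it maps $V^\infty$ continuously into the smooth vectors $L^2(X)^\infty$ for the regular representation.
\item Invariant differential operators act on $L^2(X)^\infty$ in the distributional sense. By the Sobolev lemma on the homogeneous space, $L^2(X)^\infty\subset C^\infty(X)$, and the inclusion is continuous for the Fréchet topology. Evaluation at the base point $x_0=eH$ is then a continuous linear functional on $L^2(X)^\infty$.
\item Define $\eta(v)=(Tv)(x_0)$ for $v\in V^\infty$. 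Then $\eta\in V^{-\infty}$. It is $H$-invariant because $(T\pi(h)v)(x_0)=(Tv)(h^{-1}x_0)=(Tv)(x_0)$.
\item Equivariance gives $\eta_v(gH)=(T\pi(g)^{-1}v)(x_0)=(Tv)(gH)$. So $m_\eta=T|_{V^\infty}$, which takes values in $L^2(X)$.
\end{enumerate}
It follows that $\eta$ is square-integrable with $\eta\ne0$, and uniqueness of extensions gives $\widetilde{m_\eta}=T$.

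\emph{Main obstacle.} The hard point is step (ii): the continuity of $L^2(X)^\infty\hookrightarrow C^\infty(X)$ and of point evaluation. I would prove it with a local Sobolev estimate. Near $x_0$, the action map gives a submersion $G\to X$, so the vector fields coming from $\fg$ span the tangent spaces. Finitely many derivatives in $L^2$ therefore control the sup norm near $x_0$. Alternatively, one can use Dixmier--Malliavin: write $Tv$ as a finite sum of $\lambda_X(f_j)u_j$ with $f_j\in C_c^\infty(G)$, where each term is visibly a continuous smooth function.

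\emph{Final assertion.} The last claim, that $\pi\in\widehat G_{H,\ds}$ if and only if $(V^{-\infty})^H_\ds\ne0$, follows immediately from the isomorphism together with the definition of $\widehat G_{H,\ds}$.
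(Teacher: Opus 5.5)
Your proposal is correct and follows essentially the same route as the paper: injectivity via Lemma~\ref{lem_Hdist_embedding}, and surjectivity by recognizing $T|_{V^\infty}$ as $m_\eta$ for the $H$-fixed distribution $\eta=\delta_{eH}\circ T|_{V^\infty}$, which is square-integrable, and then extending by continuity. The only difference is that the paper simply asserts that $T|_{V^\infty}$ is a continuous map into $C^\infty(G/H)$ and then invokes Lemma~\ref{lem_Hdist_embedding}, whereas you write down the inverse explicitly and also justify that analytic step and the fact that $(V^{-\infty})^H_\ds$ is a linear subspace.
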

\begin{proof}
 The injectivity of the map $\eta\mapsto \widetilde{m_\eta}$, $(V^{-\infty})^H_\ds \to  \Hom_G(V, L^2(G/H))$, follows from the fact that the map  $\eta\mapsto m_\eta$ is an isomorphism between $(V^{-\infty})^H $ and $\mathrm{Hom}_G(V^\infty,C^{\infty}(G/H))$ (Lemma~\ref{lem_Hdist_embedding}). 
 
 For $T\in  \Hom_G(V, L^2(G/H))$, the restriction $T|_{V^\infty}$ must be an element of $\mathrm{Hom}_G(V^\infty,C^{\infty}(G/H))$; by Lemma~\ref{lem_Hdist_embedding} it is $m_\eta$ for some $H$-fixed distribution vector $\eta\in (V^{-\infty})^H$. The function $\eta_v=m_\eta(v)=T|_{V^\infty}(v)$ is square-integrable for all $v \in V^\infty$, so $\eta$ is square-integrable by  Lemma~\ref{lem_sqdis_iff}, and $T|_{V^\infty} = m_\eta$. By continuity, this implies $T=\widetilde{m_\eta}$, showing the surjectivity.

 \end{proof}

\begin{blank} To summarize the above: 
 \emph{an irreducible unitary representation~$\pi$ on~$V$ belongs to ${\widehat{G}}_{H, \ds}$ if and only if $(V^{-\infty})^H_{\ds}\neq 0$.} The multiplicity of $V$ in the discrete series of $L^2(G/H)$ is given by
 \[
 \dim_{\mathbb{C}}(\mathrm{Hom}_G(V, L^2(G/H)))=\dim_{\mathbb{C}}((V^{-\infty})^H_{\ds}).
 \]
Given an embedding $T\colon V\to L^2(G/H)$, the corresponding   $\eta \in (V^{-\infty})^H_\ds$ is obtained by composing $T|_{V^\infty}\colon V^\infty\to L^2(G/H)\cap C^\infty(G/H)$ with the evaluation map  $\delta_{eH}\colon C^\infty(G/H)\to \bC$ at the identity coset.
\end{blank}

\begin{blank} \emph{Cartan subspaces and the rank condition.}\label{sec:cartan_and_rank} Recall that~$\fq$ denotes the $(-1)$-eigenspace of $\sigma$ on~$\fg$; this vector subspace of $\fg$ can be identified with the tangent space to $G/H$ at $eH$. 
A \emph{Cartan subspace} of $\fq$ is a maximal abelian subspace of~$\fq$ consisting of semisimple elements. Every Cartan subspace of~$\fq$ is conjugate by $H_0$ to a $\theta$-stable Cartan subspace \cite[p.~406, Remark]{OM80}. All Cartan subspaces of $\fq$ have the same dimension; this common dimension is called the (real) \emph{rank} of $X$ and is denoted by
\[
\rank(X)=\rank(G/H).
\]
Since $K/(K \cap H)$ is itself a symmetric space, we may consider the rank  of $K/(K\cap H)$. 
 We say $G/H$  \emph{satisfies the equal-rank condition} if 
 \[
 \rank(G/H) = \rank(K/(K\cap H)).
 \]
 This is equivalent to the existence of a Cartan subspace of $\fq$ which is contained in $\fk \cap \fq$; such a subspace is called a \emph{compact Cartan subspace of~$\fq$} for~$G/H$. If there exists a compact Cartan subspace of $\fq$, then it is unique up to conjugation by $(K\cap H)_0$.
\end{blank}

 \begin{theorem}[\cite{FJ80}, \cite{OM84}, {\cite[Theorem 16.1]{BS05-1}}] \label{thm:equal-rank} The discrete series of a reductive symmetric space $X=G/H$ is nonempty if and only if $X$ satisfies the equal-rank condition.
\end{theorem}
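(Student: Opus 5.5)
The plan is to follow the two classical halves of this theorem: sufficiency by Flensted-Jensen's duality construction, and necessity by Matsuki--Oshima's asymptotic analysis. Throughout I will use Lemma~\ref{lem_sq_int_dis} to identify the discrete series with the nonvanishing of $(V^{-\infty})^H_{\ds}$.

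\emph{Sufficiency.} Assume there is a compact Cartan subspace $\fbq\subset\fk\cap\fq$. The first step is to pass to the $c$-dual pair. Set
\[
\fg^d=(\fk\cap\fh)\oplus i(\fk\cap\fq)\oplus i(\fp\cap\fh)\oplus(\fp\cap\fq),
\]
take the corresponding group $G^d$, and take the maximal compact subgroup $K^d$ with Lie algebra $(\fk\cap\fh)\oplus i(\fp\cap\fh)$. Then $G^d/K^d$ is a Riemannian symmetric space. The equal-rank condition says exactly that $i\fbq$ is a maximal abelian subspace of the $\fp$-part of $\fg^d$ that is contained in $i(\fk\cap\fq)$, the ``$K\cap H$''-side of the dual picture. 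Flensted-Jensen's construction then runs as follows.
\begin{enumerate}[(i)]
\item Take $K^d$-finite joint eigenfunctions of the invariant differential operators on $G^d/K^d$. These are Poisson transforms of generalized spherical functions on $K^d/(K^d\cap M^d)$ with suitable parameters $\mu$.
\item Holomorphically continue these eigenfunctions to $G_\C/K_\C$ and restrict them to $G/H$. This produces $K$-finite, $\fg$-eigen, $\mathbb{D}(G/H)$-eigenfunctions on $X$.
\item Use the $KAH$ decomposition, with $A=\exp\fbq$ now compact-free in the dual sense, together with Harish-Chandra-type estimates of the $c$-function, to show that these restrictions lie in $L^2(X)$ for $\mu$ sufficiently regular in the positive chamber.
\end{enumerate}
A nonzero $L^2$ function which is $K$-finite and $Z(\fg)$-finite generates a closed invariant subspace of $L^2(X)$ that has an irreducible subrepresentation. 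This gives the discrete series. The point I would isolate as the main obstacle is nonvanishing of the restricted functions. I would handle it by evaluating at a regular point of $\exp\fbq$ and comparing leading asymptotics.

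\emph{Necessity.} Suppose $\pi$ embeds in $L^2(X)$. By Lemma~\ref{lem_sqdis_iff}, the functions $\eta_v$ are $K$-finite, $\mathbb{D}(X)$-finite and square-integrable. Fix a $\theta$-stable maximal abelian subspace $\faq\subset\fp\cap\fq$. Using the polar decomposition $X=K\exp(\faq)H$ and Oshima--Sekiguchi's boundary-value theory, one expands $\eta_v$ along $\exp\faq$ in generalized power series with exponents determined by the infinitesimal character. Square-integrability against the Jacobian $\prod|\sinh\alpha|^{m_\alpha}$ forces every leading exponent to be strictly negative on all of the positive $\faq$-chamber.

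If the equal-rank condition fails, then some Cartan subspace of $\fq$ contains a nonzero noncompact part that cannot be conjugated into $\fk\cap\fq$. One then extends $\faq$ to a Cartan subspace and shows that some exponent must be purely imaginary on a nonzero direction. Concretely, the infinitesimal character restricted to that split direction, combined with the unitarity of $\pi$, yields a tempered but non-decaying exponent, and this contradicts $L^2$-ness. This is the Matsuki--Oshima rank argument. Carrying it out, i.e.\ controlling the exponents uniformly and making the conjugacy-class argument on Cartan subspaces precise, is the second hard step. Here I would simply invoke \cite{OM84} and \cite[Theorem~16.1]{BS05-1} rather than reprove it.
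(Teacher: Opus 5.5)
Your strategy is the one the paper relies on. The paper gives no argument of its own for this theorem. It cites Flensted-Jensen for existence, and \cite{OM84} and \cite[Theorem~16.1]{BS05-1} for the converse; the latter also makes the duality work for $G$ merely in Harish-Chandra's class, where a complexification $G_\C$ need not exist. Since you also defer necessity to those references, the load-bearing content is the same.

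However, several steps of your sketch of the existence half are wrong as written and would fail if carried out.
\begin{enumerate}[(i)]
\item The $L^2$-estimate cannot be done on $K\exp(\fbq)H$. Since $\fbq\subset\fk\cap\fq$, $\exp(\fbq)$ lies in $K$, so $K\exp(\fbq)H=KH$, and this is not $G$ unless $\fp\cap\fq=0$. The polar decomposition that computes the $L^2$-norm is $G=K\exp(\fa)H$ with $\fa$ maximal abelian in $\fp\cap\fq$. The compact Cartan subspace enters only on the dual side: $i\fbq$ is the split part of the Iwasawa decomposition of the Riemannian dual, which defines the Poisson-type integral $\psi^d_{j,\lambda}$ of Remark~\ref{rem:FJfunc}. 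Decay along $\exp\fa\subset\exp(\fp\cap\fq)$ comes from estimates for that integral on $G^d$.
\item The functions to take must be finite under the dual of $K$, i.e.\ the group with Lie algebra $(\fk\cap\fh)\oplus i(\fk\cap\fq)$. They should not be finite under the maximal compact subgroup of $G^d$.
\item The holomorphic continuation passes through $G_\C/H_\C$, not $G_\C/K_\C$, because $(\fk\cap\fh)\oplus i(\fp\cap\fh)$ complexifies to $\fh^\bC$.
\end{enumerate}

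You have also misplaced the difficulty. Nonvanishing is not an obstacle, and your proposed method for it cannot work: $\exp(\fbq)$ is a compact torus, so there are no asymptotics to compare along it. Nonvanishing is immediate. The integrand defining $\psi^d$ is positive, so $\psi^d(eH^d)>0$, and the duality preserves the value at the base point, exactly as in Remark~\ref{rem:FJfunc}.

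The genuine work is the $L^2$-estimate. It needs the parameter far enough inside the chamber, together with the integrality condition that guarantees finiteness under the dual of $K$.

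Finally, the mechanism you describe for necessity is not an argument. Unitarity alone does not force any leading exponent to have vanishing real part; complementary series are a counterexample. Producing a non-decaying exponent when no compact Cartan subspace exists is exactly the content of the Oshima--Matsuki analysis you cite. The sketch should therefore not be presented as the reason the converse holds.
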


\subsection{Cuspidal \texorpdfstring{$\sigma$}{sigma}-parabolics and induced representations}\label{sec-cuspidal-sigma-parabolics}

Throughout this section, \(G/H\) is a reductive symmetric space. 

\begin{blank}

Fix a $\theta$-stable Cartan subspace
$\fbq=\ftq\oplus \faq$
of $\fq$, where
\[
\ftq=\fbq\cap \fk,
\qquad
\faq=\fbq\cap \fp.
\]
Let $L=Z_G(\faq)$ be the centralizer of $\faq$ in~$G$. Then $L$ is a $\sigma$-stable and $\theta$-stable Levi subgroup of $G$.
Write $L=MA$ for its Langlands decomposition. The group $M$ is reductive and in Harish-Chandra's class. Since $\ftq\subset \fm\cap \fq$ is a compact Cartan subspace of $\fm \cap \fq$, the smaller symmetric space
\[
M/(M\cap H)
\]
has discrete series by Theorem~\ref{thm:equal-rank}.

Choose a positive system $\Sigma^+(\fg,\faq)$ for the restricted root system $\Sigma(\fg,\faq)$, and let
\[
\mathfrak{n}=\bigoplus_{\alpha\in \Sigma^+(\fg,\faq)} \fg_\alpha.
\]
Then \(\fm\oplus\fa\oplus\fn\) is a \(\sigma\theta\)-stable parabolic subalgebra of \(\fg\) containing \(\fbq\). We define \(P=LN=MAN\), where \(N=\exp(\fn)\).

This discussion determines a map 
\begin{equation}\label{sigma_st_parab} \{ (\fbq, \Sigma^+(\fg, \faq))\} \to \{ \text{parabolic subgroups of~$G$}\},\end{equation}
sending a pair consisting of a $\theta$-stable Cartan subspace of~$\fq$ and a positive system to the corresponding $\sigma\theta$-stable parabolic subgroup $P$.
\end{blank}

\begin{definition} A \emph{cuspidal $\sigma$-parabolic subgroup} of $G$ is a parabolic subgroup of~$G$ in the image of~\eqref{sigma_st_parab}. We denote by $\mathcal{P}_{\sigma,\mathrm{cusp}}$ the collection of all cuspidal $\sigma$-parabolic subgroups of $G$. 
\end{definition}

\begin{definition} Two cuspidal $\sigma$-parabolic subgroups $P_1$, $P_2$ of $G$ are called \emph{$\sigma$-associate} if $(\fa_1)_\fq$ and $(\fa_2)_\fq$ are conjugate by~$K$. We denote by $\mathcal{P}_{\sigma,\mathrm{cusp}}/\!\sim$ the set of $\sigma$-associate classes of cuspidal $\sigma$-parabolic subgroups of $G$. 
\end{definition}

The map~\eqref{sigma_st_parab} descends to a bijection between $\mathcal{P}_{\sigma,\mathrm{cusp}}/\!\sim$ and the set of $K$-conjugacy classes of $\theta$-stable Cartan subspaces of $\fq$.

\begin{definition} \label{def:sigma-principal-series}
Let $P=MAN$ be a cuspidal $\sigma$-parabolic subgroup of $G$. Let $(\xi, V_\xi)$ be a unitary irreducible representation of~$M$ in the discrete series of $M/(M\cap H)$. Let $\nu\in i\faq^\ast$. Write $e^\nu$ for the unitary character of $A$ given by
$e^\nu(a)=e^{\nu(\log a)}$ for $a\in A$. Here, we regard $\nu$ as a functional on $\fa$ by extending it by zero on $\fah$. We denote by
\[
\pi_{P,\xi,\nu}
=
\Ind_P^G(\xi\otimes e^\nu\otimes 1)
\]
the $G$-representation obtained by normalized unitary induction from these data. 
We refer to $\pi_{P,\xi,\nu}$ as a \emph{generalized $\sigma$-principal series representation}. We can realize $\pi_{P, \xi, \nu}$ on the space of (a.e. equivalence classes of) measurable functions $f\colon G\to V_\xi$ such that \[
f\mid_K \in L^2(K, V_\xi) \text{ and } f(gman)=e^{-\nu-\rho}(a)\xi(m)^{-1}f(g)
\]
for all $g \in G$ and $man \in MAN$, where $\rho$ is the half-sum of the roots in $\Sigma(\mathfrak{n}, \fa)$. The action of~$G$ on such ``functions'' is by left translation and the inner product  of ``functions'' is the $L^2$-inner product between their restrictions to~$K$:
\[
\s{f_1, f_2}\coloneq \int_{K} \langle f_1(k),  f_2(k) \rangle_{V_\xi}dk.
\]
\end{definition}

\subsection{The support of the Plancherel measure} \label{subsec:support_Plancherel}

For each cuspidal $\sigma$-parabolic subgroup $P=MAN$ of $G$, each $\xi\in \widehat M_{M\cap H,\ds}$, and each $\nu\in i\faq^\ast$,
consider the irreducible constituents of the $\sigma$-principal series representation $\pi_{P,\xi,\nu}$.
Define
\[
S_X
=
\bigcup_{P,\xi,\nu}
\Bigl\{
\tau\in \widehat G
\;\Bigm|\;
\tau \text{ is an irreducible constituent of } \pi_{P,\xi,\nu}
\Bigr\},
\]
where $P$, $\xi$, and $\nu$ range over all such data.
Our next statement is a direct consequence of the Plancherel theorem \cite{Delorme98, BS05-1, BS05-2}.

\begin{theorem}\label{thm_description0}
Let $X=G/H$ be a reductive symmetric space. The subset 
 $\supp(\lambda_X)$ of $\widehat G$ is the closure of $S_X$ in the Fell topology.
\end{theorem}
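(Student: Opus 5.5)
We derive Theorem~\ref{thm_description0} from the Plancherel decomposition of \cite{Delorme98, BS05-1, BS05-2}, combined with the general fact that the support of a direct integral of representations is the closure of the union of the supports of the integrands.

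\textbf{Step 1: the Plancherel decomposition.} The theorem gives a $G$-equivariant unitary isomorphism
\[
L^2(X)\;\cong\;\bigoplus_{[P]\in\mathcal P_{\sigma,\mathrm{cusp}}/\sim}\ \bigoplus_{\xi\in\widehat M_{M\cap H,\ds}}\int^{\oplus}_{i\faq^{*+}} \pi_{P,\xi,\nu}\otimes \mathcal M_{P,\xi,\nu}\,d\nu .
\]
Here $\mathcal M_{P,\xi,\nu}$ is a multiplicity space, built from $H$-fixed distribution vectors and the relative discrete series data of $M/(M\cap H)$. The measure $d\nu$ is Lebesgue measure on a positive chamber of $i\faq^*$, or on its regular part. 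The multiplicity spaces are nonzero for every $\xi$ in the discrete series of $M/(M\cap H)$ and almost every $\nu$. We would cite the van den Ban--Schlichtkrull version, in which $\mathcal M_{P,\xi,\nu}$ is $V_\xi^{-\infty}$-valued, has dimension $\dim (V_\xi^{-\infty})^{M\cap H}_\ds$ times the number of open orbits, and is independent of $\nu$.

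\textbf{Step 2: support of a direct integral.} We use the standard fact that, for a direct integral $\int^\oplus \rho_z\,d\mu(z)$ with each $\rho_z$ nonzero, the support equals the closure of $\bigcup_{z\in Z'}\supp(\rho_z)$ for any conull set $Z'$. This holds because weak containment is governed by the kernel of the $C^*$-representation: $\ker\int^\oplus\rho_z=\bigcap_{z\in Z'}\ker\rho_z$, using separability and continuity in $z$ to pass from almost every $z$ to every $z$ in $Z'$. Each $\pi_{P,\xi,\nu}$ is of finite length, so its support is its set of irreducible constituents. This shows that $\supp(\lambda_X)$ is the closure of the constituents of $\pi_{P,\xi,\nu}$ for $\nu$ in a conull subset $Z$ of $i\faq^*$.

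\textbf{Step 3: removing the almost-everywhere restriction.} We must show that the closure from Step~2 also contains the constituents of $\pi_{P,\xi,\nu}$ for every $\nu\in i\faq^*$, so that it equals $\overline{S_X}$. Two facts suffice:
\begin{enumerate}[(a)]
\item Weyl group elements (intertwining operators) relate $\pi_{P,\xi,\nu}$ and $\pi_{P,\xi,w\nu}$, so the chamber restriction is harmless.
\item The family $\nu\mapsto\pi_{P,\xi,\nu}$ is realised on a fixed space $L^2(K,V_\xi)$ (compact picture) and is strongly continuous. Hence for $f\in L^1(G)$ the function $\nu\mapsto\|\pi_{P,\xi,\nu}(f)\|$ is lower semicontinuous. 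So if $\|\pi_{P,\xi,\nu}(f)\|\ne0$ at some $\nu_0$, it is nonzero near $\nu_0$, on a set of positive measure.
\end{enumerate}
Consequently $\ker\lambda_X\subset\ker\pi_{P,\xi,\nu_0}$ for every $\nu_0$, so every constituent of $\pi_{P,\xi,\nu_0}$ lies in $\supp(\lambda_X)$. Combining this with Step~2 gives $\supp(\lambda_X)=\overline{S_X}$.

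The main obstacle is Step~1: extracting from the literature the precise statement that the multiplicity spaces are nonzero for every relative discrete series $\xi$ of every cuspidal $\sigma$-parabolic. Without this, some $\xi$ might not contribute, and $S_X$ could be too large. We would handle this by invoking \cite[Theorem~23.1]{BS05-2}, where the Fourier transform is unitary onto the full space built from all $(V_\xi^{-\infty})^{M\cap H}_\ds\ne0$.
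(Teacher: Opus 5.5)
Your overall route is the paper's. You use the Plancherel decomposition, then the fact that the support of a direct integral is the closure of the union of the fibre supports, then density of regular parameters plus continuity of unitary induction. Your lower-semicontinuity argument in 3(b) is a concrete substitute for the appeal to Fell's continuity theorem. It works once the kernel containment is stated in $C^*(G)$ rather than $L^1(G)$, and the same argument gives that.

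The gap is in Step~1. That is exactly where the specific set $S_X$ has to be matched with the Plancherel formula, and there you have misquoted the index set. In the van den Ban--Schlichtkrull formula, fix one representative $P=MAN$ of each $\sigma$-associate class. For $\xi\in\widehat M_{\ds}$, the multiplicity space is $\bigoplus_{w}(V_\xi^{-\infty})^{M\cap w^{-1}Hw}_{\ds}$, with $w$ running over $W_{K\cap H}\backslash W_K(\faq^0)/Z_W(\faq)$. So the discrete data are those of the spaces $M/(M\cap w^{-1}Hw)$, not $|\mathcal W|$ copies of $(V_\xi^{-\infty})^{M\cap H}_{\ds}$. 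On the other side, $S_X$ is a union over \emph{all} cuspidal $\sigma$-parabolic subgroups, not one per class. Once the formula is stated correctly, the two index sets visibly differ in both directions:
\begin{itemize}
\item Your Step~2 produces constituents of $\pi_{P,\xi,\nu}$ with $\xi$ in the discrete series of $M/(M\cap w^{-1}Hw)$, which are not visibly in $S_X$.
\item Conversely, $S_X$ contains inductions from every $P'$ in a class, which your index set (representatives with $w=e$ only) does not see.
\end{itemize}
So neither inclusion between $\supp(\lambda_X)$ and $\overline{S_X}$ is established by what you wrote.

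The missing step is the conjugation the paper performs. $\Ad(w)$ identifies $M/(M\cap w^{-1}Hw)$ with $M^w/(M^w\cap H)$, where $M^w=wMw^{-1}$ is the Levi factor of a $\sigma$-associate cuspidal $\sigma$-parabolic attached to $\Ad(w)\faq$. It transports $\xi$, $\nu$ and the induced representation accordingly, using that unitary induction from two parabolics with the same Levi factor gives equivalent representations. Only after this is the union of constituents in the Plancherel formula exactly $S_X$ with $\nu$ restricted to regular parameters.

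With the formula stated correctly, the point you single out as the main obstacle is automatic. For $\xi\in\widehat M_{M\cap H,\ds}$, the $w=e$ summand is $(V_\xi^{-\infty})^{M\cap H}_{\ds}\neq0$ by Lemma~\ref{lem_sq_int_dis} applied to $M/(M\cap H)$.

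The same bookkeeping affects 3(a). The correct relation is $\pi_{P,\xi,\nu}\cong\pi_{P,w\xi,w\nu}$, not a relation between $\pi_{P,\xi,\nu}$ and $\pi_{P,\xi,w\nu}$. Moreover, $w\xi$ is in general a relative discrete series for $M/(M\cap wHw^{-1})$ rather than for $M/(M\cap H)$. So reaching the other chambers again goes through the associate parabolics. If you instead quote the formula with $\nu$ ranging over all regular elements of $i\faq^*$, as the paper does, 3(a) is unnecessary and 3(b) alone handles the singular parameters.
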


\begin{proof}
The Plancherel theorem for reductive symmetric spaces expresses $\lambda_X$ as a direct sum of direct integrals of generalized $\sigma$-principal series representations $\pi_{P,\xi,\nu}$, with $\nu$ ranging over regular elements of $i\faq^\ast$; see \cite[Theorem~10.9]{BS05-2} and \cite[Theorem~10.15]{Ban05}.
The finite-dimensional multiplicity spaces occurring in those formulas do not affect the support.
Hence $\supp(\lambda_X)$ is the closure of the union of the supports of the corresponding fiber representations.

For the reader's convenience, let us make the connection with the parame-trization of \cite{BS05-2}, in which the discrete-series data are  attached to symmetric spaces of the form
\[
M/(M\cap w^{-1}Hw),
\qquad
w\in W_{K\cap H}\backslash W_K(\faq^0)/Z_W(\faq),
\]
where $W_K(\faq^0)$ is the Weyl group of a fixed maximal abelian subspace $\faq^0\subset \fp \cap \fq$ containing $\faq$, $W_{K\cap H}$ is its subgroup generated by the image of the normalizer $N_{K\cap H}(\faq^0)$  of $\faq^0$ in $K\cap H$ and $Z_W(\faq)$ is the centralizer of $\faq$. Conjugation by $w$ identifies this symmetric space with $M^w/(M^w\cap H)$ where $M^w=wMw^{-1}$, transports the discrete-series representation accordingly, and replaces $P$ by a $\sigma$-associate cuspidal $\sigma$-parabolic subgroup attached to $w\faq w^{-1}\subset \faq^0$.
Thus the union of irreducible constituents occurring in the Plancherel formula is exactly the union defining $S_X$, except that the Plancherel formula only uses regular parameters.

Since the set of regular elements is dense in $i\faq^\ast$, and unitary parabolic induction depends continuously on the imaginary parameter in the Fell topology \cite{Fell1962}, allowing all $\nu\in i\faq^\ast$ changes the union only up to closure. Hence $\supp(\lambda_X)=\overline{S_X}$.
\end{proof}

We end this section by observing that $S_X$ is already closed in $\widehat G$:

\begin{theorem}\label{thm_description}
Let \(X=G/H\) be a reductive symmetric space.
Then \(S_X\) is a closed subset of \(\widehat G\). Consequently,
\[
\supp(\lambda_X)=S_X\subset \widehat G.
\]
In other words, for any irreducible unitary representation \(\pi\) of \(G\),
\(\pi\) belongs to \(\mathrm{supp}(\lambda_{X})\) if and only if there exist a
cuspidal \(\sigma\)-parabolic subgroup \(P=MAN\) of \(G\), a representation
\(\xi \in {\hat{M}}_{M\cap H, \ds}\) in the discrete series of
\(M/(M\cap H)\), and \(\nu \in i\faq^*\), such that \(\pi\) is equivalent to an
irreducible constituent of the generalized \(\sigma\)-principal series
representation \(\pi_{P, \xi, \nu}\).
\end{theorem}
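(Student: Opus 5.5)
Since Theorem~\ref{thm_description0} already gives $\supp(\lambda_X)=\overline{S_X}$, it suffices to show that $S_X$ is closed in $\widehat G$. The plan is to realize $S_X$ as the spectrum of a quotient of $C^*_r(G)$, or at least as a finite union of closed sets. There are finitely many $\sigma$-associate classes of cuspidal $\sigma$-parabolics, corresponding to $K$-conjugacy classes of $\theta$-stable Cartan subspaces of $\fq$. So $S_X$ is a finite union over representatives $P=MAN$ of the sets
\[
S_P=\bigcup_{\xi\in \widehat M_{M\cap H,\ds}}\ \bigcup_{\nu\in i\faq^*} \{\text{constituents of } \pi_{P,\xi,\nu}\},
\]
and it suffices to show that each $S_P$ is closed.

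Fix $P$. I would first treat a single $\xi$. The family $\nu\mapsto\pi_{P,\xi,\nu}$ for $\nu\in i\fa^*$ (the full $\fa$, not just $\faq$) is realized on the fixed Hilbert space $\Ind_{K\cap M}^K(\xi|_{K\cap M})$ (compact picture). This gives a Hilbert $C_0(i\fa^*)$-module $\mathcal E_{P,\xi}$ together with a $*$-homomorphism
\[
C^*_r(G)\to \Compact_{C_0(i\fa^*)}(\mathcal E_{P,\xi}),
\]
the standard ingredient in the $C^*$-Plancherel theorem of Clare--Crisp--Higson. Restrict this homomorphism to the closed subset $i\faq^*\subset i\fa^*$ by tensoring $\mathcal E_{P,\xi}$ over $C_0(i\fa^*)$ with $C_0(i\faq^*)$. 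This yields a module $\mathcal E^\fq_{P,\xi}$ and a map $\phi_\xi\colon C^*_r(G)\to\Compact(\mathcal E^\fq_{P,\xi})$. Let $J_\xi=\ker\phi_\xi$. The key claim is
\[
\widehat{C^*_r(G)/J_\xi}=\bigcup_{\nu\in i\faq^*}\{\text{constituents of }\pi_{P,\xi,\nu}\}.
\]
This would follow because the image of $\phi_\xi$ is a $C_0(i\faq^*)$-algebra whose fibre at $\nu$ is the image of $C^*_r(G)$ in $\Compact(\pi_{P,\xi,\nu})$. Since $\pi_{P,\xi,\nu}$ is a finite direct sum of irreducibles (Knapp--Zuckerman for discrete series data; I would need to verify this for relative discrete series of $M$, which are tempered for $M$ only in the well-tempered case, so in general I would use admissibility and finite length), that fibre is a finite sum of compacts. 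An irreducible representation of the image factors through some fibre evaluation, because the central $C_0(i\faq^*)$-action is scalar on irreducibles (Dauns--Hofmann), and the image is properly $C_0$-fibred. The spectrum of a quotient is closed, so each single-$\xi$ piece is closed.

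The main obstacle is the union over infinitely many $\xi$. I would handle it with the infinitesimal character and local finiteness. Relative discrete series of $M/(M\cap H)$ with a given lowest $(K\cap M)$-type are finite in number, and only finitely many $\xi$ can contribute a representation containing a fixed $K$-type $\tau$. The reason is Frobenius reciprocity: $\tau$ restricted to $K\cap M$ contains the relevant $K\cap M$-types, and the relative discrete series have discrete, regular-enough parameters (Oshima--Matsuki/Delorme), giving finiteness of $\xi$ with bounded infinitesimal character norm on compact sets of $\nu$. Now let $\pi\in\overline{S_P}$, and pick a $K$-type $\tau$ of $\pi$ and a net $\pi_j\to\pi$ in $S_P$. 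By Fell continuity and the projection $p_\tau\in C^*(G)$, eventually $\pi_j(p_\tau)\ne0$, so the $\pi_j$ contain $\tau$. Since $\pi$'s infinitesimal character is a limit, the $\nu_j$ stay bounded. Hence only finitely many $\xi$ occur along the net, so a subnet has constant $\xi$ and $\pi$ lies in that closed single-$\xi$ piece. A cleaner alternative, which I would try in parallel, is to form the direct sum $\bigoplus_\xi\mathcal E^\fq_{P,\xi}$ and show that the ideal $J=\bigcap_\xi J_\xi$ has spectrum $\overline{\bigcup_\xi\widehat{C^*_r(G)/J_\xi}}$ equal to the union. The local finiteness above is exactly what gives this equality.
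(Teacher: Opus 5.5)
Your architecture matches the paper's: a finite union over \(\sigma\)-associate classes, a Hilbert-module realization of \(\nu\mapsto\pi_{P,\xi,\nu}\) restricted to the closed set \(i\faq^*\), and then control of the union over \(\xi\). Your ``cleaner alternative'' is essentially the paper's proof. As written, however, the argument breaks at its first step.

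\emph{The use of \(C^*_r(G)\).} You define \(\phi_\xi\) on \(C^*_r(G)\). That requires \(\pi_{P,\xi,\nu}\) to be weakly contained in the regular representation of \(G\), i.e.\ \(\xi\) tempered for \(M\). For a general reductive symmetric space this fails, because relative discrete series need not be tempered. When \(X\) is not tempered in the Benoist--Kobayashi sense (e.g.\ \(\mathrm{SL}(m+n,\R)/\mathrm{S}(\mathrm{GL}(m,\R)\times\mathrm{GL}(n,\R))\) with \(|m-n|\geq 2\)), \(S_X=\supp(\lambda_X)\) is not even contained in \(\widehat G_{\temp}\). So it cannot be the spectrum of a quotient of \(C^*_r(G)\). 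You noticed that \(\xi\) may be non-tempered, but drew a consequence only for finite length.

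The fix is to use the maximal algebra \(C^*(G)\) and Clare's module \(C^*(G/N)\). On this module \(C^*(G)\) still acts by compact operators, since the Clare--Crisp--Higson argument goes through verbatim. All you need of \(\xi\) is that \(C^*(M)\) acts compactly on \(V_\xi\).

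\emph{The fibre step.} The image of \(\phi_\xi\) is in general not a \(C_0(i\faq^*)\)-algebra. Intertwining operators relate the fibres at \(\nu\) and \(w\nu\), so the image is not stable under multiplication by non-invariant functions; already for \(\mathrm{SL}(2,\R)/\mathrm{SO}(2)\) the fibres at \(\nu\) and \(-\nu\) are linked this way. Your Dauns--Hofmann argument is therefore unjustified. The paper uses only that the image \(B\) is a \(C^*\)-subalgebra of \(C_0(i\faq^*,\Compact)\). By extension of pure states, every irreducible representation of \(B\) is then a subrepresentation of some evaluation at \(\nu\), hence a constituent of \(\pi_{P,\xi,\nu}\). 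This also makes your finite-length worry irrelevant.

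\emph{The union over \(\xi\).} Your net argument has the right structure: \(\{\rho:\rho(p_\tau)\neq0\}\) is open, and a finite union of closed sets is closed. But it rests entirely on one fact: only finitely many \(\xi\in\widehat M_{M\cap H,\ds}\) contain a given \((K\cap M)\)-type. That is precisely the uniform admissibility theorem of van den Ban--Schlichtkrull, which the paper cites.

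Your sketch via lowest \((K\cap M)\)-types, ``regular-enough parameters'' and bounded infinitesimal characters does not establish it. You would need, for instance, continuity of infinitesimal characters along Fell-convergent nets and a discreteness statement for the infinitesimal characters of relative discrete series, and you prove neither. Once uniform admissibility is available, Frobenius reciprocity gives a finite set of \(\xi\) depending only on \(\tau\), not on \(\nu\), so no bound on \(\nu_j\) is needed.

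The paper uses the same fact more economically. Uniform admissibility makes \(C^*(K\cap M)\), and hence \(C^*(M)\), act by compact operators on \(\bigoplus_\xi \xi\). A single module argument over \(\bigoplus_\xi C_0(i\faq^*,\Ind_P^G V_\xi)\) then exhibits the whole union for fixed \(P\) as the spectrum of a quotient of \(C^*(G)\).
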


We shall prove this  using the connection between the Fell topology of the unitary dual and operator algebras. Our next statement is a general fact about the Fell topology of the unitary dual, in which we use the connection between unitary representations of a reductive group and nondegenerate representations of its maximal $C^\ast$-algebra, as in \cite[Sections~3.4 and~13.9]{Dixmier}.
 \begin{proposition} \label{prop_closed_induction} 
 Let $G$ be a real reductive group. 
 Let $P$ be any parabolic subgroup of $G$,
 with Langlands decomposition $P=MAN$. 
 Let $\mathbf{\xi} = (\xi_n)_{n \in \N}$
 be a sequence  of unitary representations of~$M$.
 Let $\mathbf{Y} = (Y_n)_{n \in \N}$ be a sequence of closed subsets of $i\mathfrak{a}^\ast$.
 Let $S(P, \mathbf{\xi}, \mathbf{Y})$ be the subset of the unitary dual $\widehat{G}$ consisting of the equivalence classes for all irreducible constituents of the parabolically induced representations $\pi_{P, \xi_n, \nu}$ for $n \in \N$ and $\nu \in Y_n$. 
 
 If the maximal group $C^\ast$-algebra of~$M$ acts by compact operators in the direct sum representation $\bigoplus_{n \in \N} \xi_n$, then $S(P, \mathbf{\xi}, \mathbf{Y})$ is a closed subset of $\widehat{G}$.
 \end{proposition}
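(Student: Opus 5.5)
The plan is to realize the set $S(P,\xi,\mathbf{Y})$ as the spectrum of a quotient of a $C^\ast$-algebra by a closed ideal, and then use that such spectra are closed in $\widehat G$. Put $\xi=\bigoplus_n \xi_n$ on $V=\bigoplus_n V_{\xi_n}$. The hypothesis says the image $B:=\xi(C^\ast(M))\subset \Compact(V)$ is a $C^\ast$-algebra of compact operators. Such a $B$ is a $c_0$-direct sum of algebras of compact operators, so its spectrum $\widehat B$ is discrete. It consists of the irreducible subrepresentations of the $\xi_n$, up to equivalence.

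Next we consider the Hilbert $C^\ast$-module for parabolic induction. Let $E_P$ be the Rieffel-type $C^\ast_{\max}(G)$–$C^\ast(MA)$ imprimitivity-type bimodule that implements $\Ind_{MAN}^G$, completed from $C_c(G/N)$. Interior tensoring with $B\otimes C_0(i\fa^\ast)\cong C_0(i\fa^\ast,B)$, a quotient of $C^\ast(M)\otimes C^\ast(A)$, gives a $*$-homomorphism
\[
\Phi\colon C^\ast_{\max}(G)\longrightarrow \mathcal{L}\bigl(E_P\otimes_{C^\ast(MA)} C_0(i\fa^\ast,B)\bigr).
\]
Since $B$ is compact, this module is a continuous field over $i\fa^\ast$ whose fiber at $\nu$ is $\pi_{P,\xi,\nu}$, realized on $L^2(K/(K\cap M),V)$ in the compact picture of Definition~\ref{def:sigma-principal-series}. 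Because $B$ is a $c_0$-direct sum of compacts, and because induced representations of $C^\ast_{\max}(G)$ from compact $M$-operators act by compact operators (admissibility), the image of $\Phi$ lies in the compact operators. This gives a map into $C_0(i\fa^\ast,\Compact(L^2(K/K\cap M)\otimes V))$. I expect compactness, and the vanishing at infinity in $\nu$, to need a Riemann–Lebesgue/Harish-Chandra type argument; that step is routine for the compact picture.

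Now restrict to $Y=\bigsqcup_n \{n\}\times Y_n$, closed in $\N\times i\fa^\ast$. Decompose $B=\bigoplus_n B_n$ with $B_n=\xi_n(C^\ast(M))$. Restricting each $\Phi_n$ to $Y_n$ gives
\[
\Psi\colon C^\ast_{\max}(G)\to A_Y:=\bigoplus_n C_0\bigl(Y_n,\Compact(L^2(K/K\cap M)\otimes V_{\xi_n})\bigr).
\]
The target $A_Y$ is a $C^\ast$-algebra of type I, and its irreducible representations are the evaluations at points of $Y$ composed with irreducible representations of $\Compact$. Therefore the irreducible representations of the $C^\ast$-algebra $D:=\Psi(C^\ast_{\max}(G))\subset A_Y$ are exactly the restrictions to $D$ of irreducible representations of $A_Y$, decomposed. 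Those restrictions are the irreducible constituents of the $\pi_{P,\xi_n,\nu}$ for $\nu\in Y_n$. This uses that every irreducible representation of a $C^\ast$-subalgebra extends, via Arveson/Dixmier's extension of pure states, to a subrepresentation of a restriction. Since $A_Y$ acts by compacts, each fiber restriction is a finite sum of irreducibles. Hence $\widehat D=S(P,\xi,\mathbf{Y})$.

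Finally, $D\cong C^\ast_{\max}(G)/\ker\Psi$. By Dixmier~\cite[3.2]{Dixmier}, the spectrum of a quotient is the closed subset $\{\pi:\pi|_{\ker\Psi}=0\}$ of $\widehat G$, which is exactly $\widehat D$. This proves closedness. The main obstacle is the second step: justifying that $\Phi$ lands in compact-operator-valued $C_0$ functions, and the identification $\widehat D=S$. For the identification, the inclusion $\widehat D\subset S$ uses the full spectrum of $A_Y$ and the pure-state extension. This is where closedness of each $Y_n$ is used, because it makes the evaluation points of $A_Y$ exactly $Y$ rather than its closure.
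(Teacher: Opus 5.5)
Your architecture matches the paper's. You use the parabolic-induction module over $C^*(MA)$ and a $*$-homomorphism $\Psi$ of $C^*(G)$ into $\bigoplus_n C_0(Y_n,\Compact(\Ind_P^G V_{\xi_n}))$. You then identify the spectrum of the image with $S(P,\xi,\mathbf{Y})$ by extension of pure states, and conclude because the spectrum of a quotient of $C^*(G)$ is closed.

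\textbf{The gap.} The step that carries the content of the proposition is that $\Psi$ actually lands in this $c_0$-direct sum of $C_0$-functions with compact values. You leave this as an expectation, and the reason you offer does not establish it. Admissibility only tells you that each single operator $\pi_{P,\xi_n,\nu}(f)$ is compact. You also need norm continuity in $\nu$ and decay as $\nu\to\infty$. Above all, you need $\sup_\nu\|\pi_{P,\xi_n,\nu}(f)\|\to0$ as $n\to\infty$.

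This last decay is exactly where the hypothesis on $\bigoplus_n\xi_n$ must enter, and it cannot be dropped. Without it, the image only sits in an $\ell^\infty$-type product whose spectrum has extra points ``at infinity''. A typical source would be irreducible $\xi_n$ converging in the Fell topology of $\widehat M$ to a representation not among them, which is precisely what the hypothesis rules out. In that situation the pure-state argument no longer gives $\widehat D\subset S(P,\xi,\mathbf{Y})$. Your proposal never explains how compactness of $C^*(M)$ on $\bigoplus_n\xi_n$ produces this uniform decay.

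\textbf{The missing ingredient} is the one the paper uses, and it combines three facts:
\begin{enumerate}
\item By \cite[Proposition~4.5]{CCH16}, whose proof works verbatim for maximal algebras, $C^*(G)$ acts on the module $C^*(G/N)$ by compact module operators.
\item The hypothesis says exactly that $C^*(MA)\cong C^*(M)\otimes C^*(A)$ acts by compact operators on the $C^*(MA)$--$C^*(A)$ module $\mathcal E=V\otimes C^*(A)\cong C_0(i\fa^*,V)$.
\item Compactness of left actions passes to interior tensor products.
\end{enumerate}
Hence $C^*(G)$ acts by compacts on $C^*(G/N)\otimes_{C^*(MA)}\mathcal E\cong\bigoplus_n C_0(i\fa^*,\Ind_P^GV_{\xi_n})$. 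The compact operators preserving this decomposition are exactly $\bigoplus_n C_0(i\fa^*,\Compact(\Ind_P^GV_{\xi_n}))$. So compactness, continuity, decay in $\nu$ and decay in $n$ all come out at once, and restricting to the closed sets $Y_n$ gives $\Psi$.

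\textbf{Two smaller corrections.} First, tensoring with $C_0(i\fa^*,B)$, as you propose, yields a Hilbert $C_0(i\fa^*,B)$-module whose fibres are Hilbert $B$-modules, not the Hilbert spaces of $\pi_{P,\xi_n,\nu}$. You would need a further tensor product with $V$ over $B$, so it is cleaner to use $C_0(i\fa^*,V)$ directly. Second, a fibre $\pi_{P,\xi_n,\nu}$ need not be a finite sum of irreducibles, since $\xi_n$ may have infinitely many summands. It is only a direct sum of irreducibles with finite multiplicities, which is all the argument needs.
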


 \begin{proof}
 Let $C^*(G)$ be the maximal group $C^*$-algebra of $G$, and $C^*(L)$ be the maximal group $C^*$-algebra of $L=MA$. Let $C^*(G/N)$ be the Hilbert $C^*(G)$-$C^*(L)$ module  introduced in \cite[Section 3.2]{Clare13}, and denoted  $\mathcal{E}(G/N)$ there (see also \cite{CCH16}, where the notation $C^*(G/N)$ comes from). A key property of this module is that $C^*(G)$ acts compactly on $C^*(G/N)$: the proof of \cite[Proposition 4.5]{CCH16} for $C_r^*(G/N)$ works verbatim for $C^*(G/N)$. 
 
 Let $(\xi_n, V_n)_{n \in \N}$ be a sequence of unitary representations of $M$ such that $C^*(M)$ acts compactly on $V=\bigoplus_n V_n$ through $\xi=\bigoplus_n\xi_n$. Then $C^*(L)$ acts compactly on the $C^*(L)$-$C^*(A)$ module $\mathcal{E} = V \otimes C^*(A)\cong C_0(i\fa^*, V)\cong \bigoplus_n C_0(i\fa^*, V_n)$. By the theory of $C^*$-parabolic induction \cite{Clare13, CCH16}, we have $C^*(G/N)\otimes_{C^*(L)} \mathcal{E} \cong  \bigoplus_n C_0(i\fa^*, \mathrm{Ind}_P^GV_n)$ where, on the fiber at \(\nu\in i\fa^*\), \(G\) acts on
\(\Ind_P^G V_n\) by \(\pi_{P,\xi_n,\nu}\)
\cite[Corollary~3.5]{Clare13}. Moreover, the $C^\ast$-algebra $C^*(G)$ acts on $C^*(G/N)\otimes_{C^*(L)} \mathcal{E}$ compactly.
 
After restriction to the closed subspaces $Y_n\subset i\fa^*$, it follows that the family $\pi_{P, \xi_n, \nu}$ of unitary representations of $G$, for $n\in \N$ and for $\nu \in Y_n$, induces a $\ast$-homomorphism
\begin{equation}\label{eq-fourier-pi}
\pi\colon C^*(G) \to \bigoplus_{n} C_0(Y_n, \Compact(\mathrm{Ind}_P^GV_n)), 
\end{equation}
where $\Compact(\mathrm{Ind}_P^GV_n)$ is the $C^*$-algebra of compact operators on the Hilbert space $\mathrm{Ind}_P^GV_n$ of the representation $\pi_{P, \xi_n, \nu}$. Here, the composition of $\pi$ with the evaluation at a point $\nu$ in $Y_n$ gives a representation
\[
 \mathrm{ev}_{\nu} \circ \pi \colon C^*(G) \to  \Compact(\mathrm{Ind}_P^GV_n) 
 \]
which is equivalent to the representation of $C^\ast(G)$ on $\mathrm{Ind}_P^GV_n$   induced by the representation $\pi_{P, \xi_n, \nu}$ of $G$. 

Now, let $B$ be the image of $C^\ast(G)$ under~\eqref{eq-fourier-pi}.  Since $B$ is a subalgebra of $\bigoplus_{n} C_0(Y_n, \Compact(\mathrm{Ind}_P^GV_n))$, by extension of pure states \cite[Lemma 2.10.1]{Dixmier}, any irreducible representation of $B$ is equivalent to one that factors through the evaluation at some point $\nu$ in $Y_n$. Therefore, the spectrum $\widehat{B}$, which is a closed subspace of $\widehat G$, naturally coincides with the subset $S(P, \{\xi_n\}_n, \{Y_n\}_n)$ of $\hat G$. Hence $S(P, \mathbf{\xi}, \mathbf{Y})$ is closed. 
\end{proof}

\begin{corollary}
 \label{cor_closed_induction} Let $G$ be any real reductive group, and let $P=MAN$ be any parabolic subgroup of $G$. Let $\xi \in \hat{M}$ be an irreducible unitary representation of $M$, and let $Y$ be a closed subset of $i\mathfrak{a}^\ast$. The subset $S(P, \xi,Y) \subset \widehat G$, consisting of all the irreducible constituents of the  induced representations $\pi_{P, \xi, \nu}=\Ind^G_{MAN}(\xi\otimes e^{\nu}\otimes 1)$ for $\nu \in Y$, is closed in $\widehat{G}$. In particular, the subset $S(P, \xi) = S(P, \xi, i\mathfrak{a}^\ast)$ is closed in~$\widehat{G}$. 
\end{corollary}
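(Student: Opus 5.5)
This is the special case of Proposition~\ref{prop_closed_induction} in which the sequence is constant. We take $\xi_n=\xi$ and $Y_n=Y$ for all $n\in\N$. By construction, $S(P,\mathbf{\xi},\mathbf{Y})$ then coincides with $S(P,\xi,Y)$, because repeating the same data countably many times adds no new constituents. Once the hypothesis of Proposition~\ref{prop_closed_induction} is checked, it will show that $S(P,\xi,Y)$ is closed in $\widehat G$.

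That hypothesis asks that $C^\ast(M)$ act by compact operators in $\bigoplus_n\xi_n$, and this is the one point to watch. With infinitely many copies of $\xi$ the condition fails, since the direct sum of infinitely many copies of a nonzero compact operator is not compact. We therefore take a finite sequence instead: use the sequence with $\xi_1=\xi$, $Y_1=Y$, and all other $\xi_n$ equal to the zero representation on the zero Hilbert space (equivalently, $Y_n=\emptyset$ for $n\ge2$). The proof of Proposition~\ref{prop_closed_induction} visibly works for any countable or finite index set, and zero summands contribute nothing, so either reformulation is harmless.

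It then remains to see that $C^\ast(M)$ acts by compact operators on the single irreducible unitary representation $\xi$. This holds because $M$ is a real reductive group, so it is of type I and indeed liminal (CCR). Harish-Chandra's admissibility theorem gives that $\xi(f)$ is compact for every $f\in L^1(M)$, and by density and norm continuity the same holds for every element of $C^\ast(M)$; see \cite[Section~13.9]{Dixmier} and \cite{HC53}. If $M$ is disconnected, this still holds because $M$ lies in Harish-Chandra's class, or more generally because $M_0$ has finite index in $M$ and admissibility persists.

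Applying Proposition~\ref{prop_closed_induction} gives that $S(P,\xi,Y)$ is closed. The final assertion follows by taking $Y=i\fa^\ast$, which is trivially closed in itself. The only real obstacle is the compactness (CCR) input for $M$, and this is a standard fact.
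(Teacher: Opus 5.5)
Your proof is correct and is exactly the intended argument: the paper states this corollary without proof as the one-term case of Proposition~\ref{prop_closed_induction}, with compactness of $C^*(M)$ on the irreducible $\xi$ supplied by the CCR property of $M$. You are right that a constant infinite sequence would violate the compactness hypothesis. For the same reason, the parenthetical alternative ``$Y_n=\emptyset$ for $n\ge2$'' is not literally equivalent: the hypothesis concerns $\bigoplus_n\xi_n$ regardless of the $Y_n$. Your main choice of zero summands, or simply a one-element index set, is fine.
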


 \begin{proof}[Proof of Theorem \ref{thm_description}]
 By Theorem \ref{thm_description0}, we only need to show that $S_X$ is closed in $\hat G$. We may write $S_X$ as the union of the subsets $S(P, \xi, i\faq^*)$ defined in Corollary \ref{cor_closed_induction}, as $P$ ranges over cuspidal $\sigma$-parabolic subgroups and $\xi$ ranges over $ \widehat{M}_{M\cap H, \ds}$. By Corollary~\ref{cor_closed_induction}, each subset $S(P, \xi, i\faq^*)$ is closed. We can say more: for fixed~$P$, the union $S_X(P)=\bigcup_{\xi\in {\hat{M}}_{M\cap H, \ds}} S(P, \xi, i\faq^*)$ is closed in $\widehat{G}$. This follows from Proposition~\ref{prop_closed_induction} and the uniform admissibility theorem \cite[Proposition 3.1]{BS05-2} (or \cite[Theorem 11.5]{Ban05}). Indeed, the latter directly implies $C^*(K\cap M)$ acts compactly in the representation $\bigoplus_{\xi\in {{\hat{M}}_{M\cap H, \ds}}  }\xi$, and this implies that $C^*(M)$ acts compactly on $\bigoplus_{\xi\in {{\hat{M}}_{M\cap H, \ds}}  }\xi$ by the density of $C^*(M)C^*(K\cap M)$ in $C^*(M)$. Note also that the set ${\hat{M}}_{M\cap H, \ds}$ is countable by the same uniform admissibility.
 
Now let  $\mathcal{P}_0$ be a finite set of cuspidal $\sigma$-parabolic subgroups comprising one per associate class of cuspidal $\sigma$-parabolic subgroups. We have the equality  $S_X=\bigcup_{P \in \mathcal{P}_0}S_X(P)$; therefore $S_X$ is a finite union of closed subsets of~$\widehat{G}$. This implies the theorem. 
\end{proof}

 \section{{The discrete series of certain symmetric spaces}} \label{sec:dis}
 
Throughout this section, $(G, H)$ is a reductive symmetric pair in which $G$~is connected. We assume 
$
X=G/H
$
has discrete series. By Theorem~\ref{thm:equal-rank}, the equal-rank condition holds for $X$, so compact Cartan subspaces $\ftq$ of $\fq$ exist. We shall see that under an additional hypothesis on the symmetric space~$X$, there is a simple parametrization of the discrete series of~$X$ in terms of linear functionals on~$\ftq$, or in terms of lowest $K$-types.

\subsection{A compact Cartan subspace with abelian centralizer}
Fix a compact Cartan subspace $\ftq\subset \fk\cap \fq$. We assume throughout this section that
\begin{equation}\label{centralizer_compact_cartan}
\text{the centralizer} \ Z_\fg(\ftq)\ \text{is abelian.}
\end{equation}

Since all compact Cartan subspaces of $\fq$ are conjugate under $(K\cap H)_0$, condition~\eqref{centralizer_compact_cartan} is independent of the choice of $\ftq$.

\begin{lemma}\label{lem:fundamental-cartan-abelian-centralizer}
Under assumption~\eqref{centralizer_compact_cartan}, the Lie algebra
\[
\fl=Z_\fg(\ftq)
\]
is a fundamental Cartan subalgebra of $\fg$, that is, a maximally compact Cartan subalgebra. The group centralizer 
\[
L = Z_G(\ftq)
\]
is a connected abelian Cartan subgroup of $G$.
\end{lemma}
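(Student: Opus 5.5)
The plan is to first show that $\fl=Z_\fg(\ftq)$ is a Cartan subalgebra, then check that it is maximally compact, and finally pass to the group $L=Z_G(\ftq)$.

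\emph{Step 1: $\fl$ is a Cartan subalgebra.} Since $\ftq\subset\fk$ consists of commuting elements that are semisimple with imaginary eigenvalues, $\fl$ is the centralizer of a toral subalgebra. Hence $\fl$ is reductive in $\fg$, and it is stable under both $\theta$ and $\sigma$ (the latter because $\sigma$ acts by $-1$ on $\ftq$). By assumption~\eqref{centralizer_compact_cartan} $\fl$ is abelian, and as a reductive-in-$\fg$ abelian subalgebra it consists of semisimple elements. Any element commuting with $\fl$ commutes with $\ftq\subset\fl$, so it lies in $\fl$; thus $\fl$ is self-centralizing. An abelian, self-centralizing subalgebra of semisimple elements is a Cartan subalgebra.

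\emph{Step 2: $\fl$ is maximally compact.} Write $\fl=\fl_\fk\oplus\fl_\fp$ using $\theta$-stability. I would show that $\fl_\fk=\fl\cap\fk$ is a Cartan subalgebra of $\fk$, i.e.\ maximal abelian in $\fk$, which characterizes fundamental Cartan subalgebras. Suppose $x\in\fk$ commutes with $\fl_\fk$. Since $\ftq\subset\fl_\fk$, we get $x\in Z_\fg(\ftq)=\fl$, so $x\in\fl\cap\fk$. This uses only that $\ftq$ is compact, so this step is essentially formal.

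\emph{Step 3: the group $L$.} First, $L=Z_G(\ftq)$ is a Cartan subgroup, since the centralizer of a Cartan subalgebra $\fl$ equals $Z_G(\ftq)$: elements of $Z_G(\ftq)$ preserve $\fl=Z_\fg(\ftq)$ and act trivially on it. To see this, I would use that $\fl_\fk$ is a maximal torus Lie algebra of $\fk$ containing $\ftq$, so $\Ad(g)$ for $g\in L$ preserves $\fl$. The part of $L$ in $K$ is $Z_K(\ftq)$. For connected compact $K$, centralizers of tori are connected, which gives connectedness of $Z_K(\ftq)$; note that $K$ is connected because $G$ is connected in this section. Then the Cartan decomposition $L=(L\cap K)\exp(\fl_\fp)$ yields that $L$ is connected. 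This decomposition holds because $L$ is $\theta$-stable and is the centralizer of a $\theta$-stable subset. Being connected with abelian Lie algebra, $L$ is abelian.

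The main obstacle is Step 3. Two points need care: that $Z_K(\ftq)$ is connected even though $\ftq$ is only a subspace rather than a full maximal torus (this is the standard fact that centralizers of tori in connected compact groups are connected), and the compatibility with a Harish-Chandra class $G$ whose $K$ might have nontrivial center components. If the center of $G$ causes trouble, I would instead use $Z_G(\ftq)=Z_G(\overline{\exp\ftq})$ and the connectedness of centralizers of tori in connected reductive groups.
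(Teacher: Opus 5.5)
Your proposal is correct and follows essentially the paper's route. Showing that $\fl\cap\fk=Z_\fk(\ftq)$ is maximal abelian in $\fk$ amounts to the paper's identification $\fl=Z_\fg(\ft)$ with $\ft$ a Cartan subalgebra of $\fk$, and your connectedness argument via $Z_K(\ftq)$ and $L=(L\cap K)\exp(\fl\cap\fp)$ is exactly the paper's; one ordering point: the justification at the start of your Step~3 only shows that $\Ad(g)$, $g\in L$, preserves $\fl$, not that it acts trivially on it, so deduce $L=Z_G(\fl)$ (the Cartan-subgroup claim) last, from $L$ being connected with abelian Lie algebra and hence abelian, as the paper does.
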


\begin{proof}
Because $\ftq\subset \fk$ and $\ft=Z_\fk(\ftq)$ is abelian, $\ft$ 
is a Cartan subalgebra of $\fk$ (see \cite[\S\,2, th.~1]{Bourbaki}). Since $\ft\subset \fl$ and since by assumption $\fl$ is abelian, every element of $\fl$ centralizes~$\ft$; hence
\[
\fl\subset Z_\fg(\ft).
\]
The reverse inclusion is immediate from $\ftq\subset \ft$, so $\fl=Z_\fg(\ft)$. Now $Z_\fg(\ft)$ is the $\theta$-stable Cartan subalgebra of $\fg$ associated with the Cartan subalgebra $\ft$ of $\fk$, so $\fl$ is fundamental.

The group $L=Z_G(\ftq)$ has Lie algebra $\fl$. It is connected since $L\cap K = Z_K(\ft_q)$ is connected: the centralizer of any torus in a connected compact Lie group is connected \cite[\S\,2, n°2, cor.~5]{Bourbaki}. Thus, $L\cap K=T=\exp(\ft)$ is a maximal torus in $K$. Since \(L\) is \(\theta\)-stable, its Cartan decomposition gives
\(L=(L\cap K)\exp(\mathfrak l\cap\mathfrak p)\). Thus the connectedness of
\(L\cap K\) implies the connectedness of \(L\).  Hence
\[
L=TA
\]
is an abelian Cartan subgroup of $G$ where $T=\exp(\ft) =Z_K(\ft_\fq)$ and $A=\exp(\fa)$, $\fa=\fl\cap \fp$. 
\end{proof}

\subsection{The discrete series in terms of cohomological induction}\label{sec:FJ} 

Building on fundamental results of Flensted-Jensen \cite{FJ80} and
Oshima--Matsuki \cite{OM84, M88}, one can describe the discrete series of reductive symmetric spaces  by Oshima--Matsuki
parameters, which are roughly elements of $\ftq^\ast$ satisfying certain regularity and evenness conditions. To discuss the classification we shall follow
Vogan's cohomological-induction formulation \cite[Section~2]{V88} (see also \cite{S83}), explaining what simplifies under the abelian-centralizer hypothesis~\eqref{centralizer_compact_cartan}. We shall explain the connection with Flensted-Jensen's construction in Remark~\ref{rem:FJfunc} below.

\begin{blank}
We fix once and for all a compact Cartan subspace $\ftq$ of $\fq$ satisfying the abelian centralizer condition, and set $\ft\coloneq Z_\fk(\ftq)$, $\fl\coloneq Z_\fg(\ftq)=Z_\fg(\ft)$. Let $L=Z_G(\ftq)=TA$ where $T=\exp(\ft)$ is a maximal torus in $K$. 

We also fix once and for all a positive system $\Sigma_c^+$ for the root system $\Sigma(\fk^\bC, \ftq^\bC)$, which determines a unique compatible positive system $\Sigma_K^+$ for the root system $\Sigma(\fk^\bC, \ft^\bC)$. Here, the compatibility means that for every $\alpha \in \Sigma^+_K$, the restriction $\alpha\vert_{\ftq}$ is either zero or an element of  $\Sigma^+_c$. In our setting, the restriction is never zero.

 In general, there are finitely many positive systems $\Sigma^+$ for $\Sigma(\fg^\bC, \ftq^\bC)$ that are compatible with the positive system $\Sigma_c^+$ of $\Sigma(\fk^\bC, \ftq^\bC)$. In our setting, such a choice of $\Sigma^+$ determines a unique positive system $\Sigma_G^+$ of $\Sigma(\fg^\bC, \ft^\bC)$ compatible with $\Sigma^+$. Enumerate the positive systems obtained in this way as
\[
\Sigma_{G,1}^+,\dots,\Sigma_{G,m}^+.
\]
This list consists of the systems induced from $\ftq$; it need not exhaust the positive systems of $\Sigma(\fg^\bC, \ft^\bC)$ containing $\Sigma^+_K$.

For each $j \in \{1, \dots, m\}$, define
\[
\mathfrak{u}_j=\bigoplus_{\alpha\in \Sigma_{G,j}^+}\fg_\alpha^{\mathbb C},
\qquad
\q_j=\fl^{\mathbb C}\oplus \mathfrak{u}_j.
\]
Then $\q_j$ is a $\theta$-stable Borel subalgebra of $\fg^{\mathbb C}$.
Let
\[
\rho_{G,j}=\frac{1}{2}\sum_{\alpha\in \Sigma_{G,j}^+}\alpha,
\qquad
\rho_{K}=\frac{1}{2}\sum_{\alpha\in \Sigma_{K}^+}\alpha.
\]

\begin{lemma}\label{lem-minus-sigma} We have $\sigma(\rho_K) =-\rho_K$, and $\sigma(\rho_{G,j}) = -\rho_{G,j}$ for all $j \in \{1, \dots m\}$. 
\end{lemma}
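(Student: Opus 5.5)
The plan is to compute how $\sigma$ acts on roots through its action on $\ft^\bC$, and then show that $\sigma$ sends each positive system $\Sigma_K^+$, $\Sigma_{G,j}^+$ to its negative.

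\textbf{Action of $\sigma$ on $\ft$.} Since $\sigma$ commutes with $\theta$ and fixes $\ftq$ up to sign ($\sigma|_{\ftq}=-1$), it preserves $\ft=Z_\fk(\ftq)$ and $\fl=Z_\fg(\ftq)$. Hence $\sigma$ acts on $\ft=\fth\oplus\ftq$, with $\fth=\ft\cap\fh$. It therefore acts on $\ft^{\bC*}$ by $(\sigma\alpha)(X)=\alpha(\sigma X)$ and permutes the roots of $(\fk^\bC,\ft^\bC)$ and of $(\fg^\bC,\ft^\bC)$. For any root $\alpha$, the restriction satisfies $(\sigma\alpha)|_{\ftq}=-\alpha|_{\ftq}$, because $\sigma=-1$ on $\ftq$.

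\textbf{$\sigma$ reverses the positive systems.} As noted in the setup, under the abelian-centralizer hypothesis~\eqref{centralizer_compact_cartan} no root of $(\fg^\bC,\ft^\bC)$ restricts to zero on $\ftq$. Indeed, a root vanishing on $\ftq$ would have its root space in $Z_{\fg^\bC}(\ftq)=\fl^\bC$, which is abelian, so contains no root spaces for $\ft$. The positive system $\Sigma_K^+$ is defined by the condition $\alpha|_{\ftq}\in\Sigma_c^+$, and $\Sigma_{G,j}^+$ by $\alpha|_{\ftq}\in\Sigma^+$ for the chosen compatible system $\Sigma^+$ of $\Sigma(\fg^\bC,\ftq^\bC)$. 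In both cases, if $\alpha$ is positive then $(\sigma\alpha)|_{\ftq}=-\alpha|_{\ftq}$ is a negative restricted root, so $\sigma\alpha$ is a negative root. Since $\sigma$ is a bijection on the roots, it maps $\Sigma_K^+$ onto $-\Sigma_K^+$ and $\Sigma_{G,j}^+$ onto $-\Sigma_{G,j}^+$.

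\textbf{Conclusion.} Summing over the roots gives $\sigma(\rho_K)=-\rho_K$ and $\sigma(\rho_{G,j})=-\rho_{G,j}$.

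The only real point to verify is that membership in $\Sigma_K^+$ and $\Sigma_{G,j}^+$ is determined exactly by the sign of the restriction to $\ftq$. This holds because the restriction map to $\ftq$ never vanishes on roots, which is the nonvanishing property recorded in the setup as a consequence of~\eqref{centralizer_compact_cartan}. So I do not expect a serious obstacle.
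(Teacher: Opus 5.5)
Your proposal is correct and follows essentially the same argument as the paper. No root of $(\fg^\bC,\ft^\bC)$ vanishes on $\ftq$, so positivity in $\Sigma_K^+$ and $\Sigma_{G,j}^+$ is read off from the restriction to $\ftq$, where $\sigma$ acts by $-1$; this gives $\sigma(\Sigma^+)=-\Sigma^+$. Since $\sigma$ maps root spaces isomorphically onto root spaces, the conclusion also holds when $\rho_{G,j}$ is computed with root multiplicities.
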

\begin{proof}
Under assumption~\eqref{centralizer_compact_cartan}, no root of
\(\Sigma(\fg^\bC,\ft^\bC)\) vanishes on \(\ftq\). For each of the compatible
positive systems under consideration, a root \(\alpha\) is positive if and
only if its restriction \(\alpha|_{\ftq}\) is positive. Since \(\sigma\) acts
as \(-1\) on \(\ftq\), we have
\[
(\sigma\alpha)|_{\ftq}=-\alpha|_{\ftq}.
\]
Consequently,
\[
\sigma(\Sigma^+_{G,j})=-\Sigma^+_{G,j},
\qquad
\sigma(\Sigma_K^+)=-\Sigma_K^+,
\]
which implies the result. 
\end{proof}
\end{blank}
Since the \((-1)\)-eigenspace of \(\sigma\) on
\((\ft^\ast)^\bC\) naturally identifies with \((\ftq^\ast)^\bC\), Lemma~\ref{lem-minus-sigma} allows us to identify $\rho_K$ and the $\rho_{G,j}$ with elements of \((\ftq^\ast)^\bC\). We shall henceforth make these identifications without further comment.

For $\lambda\in (\ftq^*)^\bC$, we continue to write $\lambda$ for its extension by zero to an element of $(\ft^*)^\bC$. 
Let $\lambda\in (\ftq^*)^\bC$. We say  $\lambda$ is \emph{regular} if $\langle \alpha, \lambda \rangle \neq  0$ for every $\alpha \in \Sigma(\fg^\bC, \ftq^\bC)$. We say $\lambda$ is \emph{regular and $\Sigma^+_{G, j}$-dominant} if $\langle \alpha, \lambda \rangle > 0$ for every $\alpha \in \Sigma^+_{G,j}$. Under our assumption \eqref{centralizer_compact_cartan}, none of the roots in $\Sigma(\fg^\bC, \ft^\bC)$ vanish on $\ftq$. Therefore, $\lambda$ is regular if and only if $\lambda$, regarded as an element in $ (\ft^*)^\bC$, satisfies $\langle \alpha, \lambda \rangle \neq 0$ for every $\alpha \in \Sigma(\fg^\bC, \ft^\bC)$.

\begin{blank}
Let us introduce  notation related to cohomological induction. Given one of the systems $\Sigma^+_{G, j}$, the elements $2\rho_{G, j}$ and $2\rho_{K}$ are the linear functionals on~$\ft^\bC$ determined by the action of~$\fl^\bC$ on the top exterior power of $\mathfrak{u}_j$ and $\mathfrak{u}_j\cap \fk^\bC$, respectively. To any regular and $\Sigma^+_{G, j}$-dominant~$\lambda\in (\ftq^*)^\bC$  such that $\lambda-\rho_{G,j}$ is analytically integral on $T$, we attach a $(\fg, K)$-module $A^{\fq_j}(\lambda)$ by cohomological induction: in the notation of \cite[p.~330, (5.6)]{KnappVogan}, we set 
\[
A^{\fq_j}(\lambda) = \mathcal{R}^{S}_{\mathfrak{q}_j}(\mathbb{C}_{\lambda-\rho_{G,j}})
\]
where  $S=\dim(\mathfrak{u}_j\cap \fk^\bC)$ and we apply the $\rho$-shift $\lambda-\rho_{G,j}$ to $\lambda$ before applying the Zuckerman functor $\mathcal{R}^{S}_{\mathfrak{q}_j}$.

For fixed $j$ and $\lambda$, set
\begin{equation}\label{eq:mu-lambda}
\mu_\lambda=\lambda+\rho_{G,j}-2\rho_{K}.
\end{equation}

All highest weights for representations of $K$ below are taken with respect to the fixed positive system $\Sigma_K^+$.

With this notation, the description of the discrete series can be summarized as follows when the symmetric space $G/H$ satisfies our condition~\eqref{centralizer_compact_cartan}. 

\end{blank}

\begin{theorem}[{\cite[Theorem~2.9]{V88}, \cite{FJ80}, \cite{S83}, \cite{OM84}}]\label{thm_Vogan_ds} Let $X=G/H$ be a reductive symmetric space. Suppose $G$ is connected. Assume \eqref{centralizer_compact_cartan}. Let
\[
L^2(X)_d\subset L^2(X)
\]
denote the Hilbert direct sum of the irreducible $G$-subrepresentations of $L^2(X)$. Then the space of $K$-finite vectors of \(L^2(X)_d\) has the
algebraic direct sum decomposition
\[
(L^2(X)_d)_K
\cong
\bigoplus_{j=1}^m\ \bigoplus_{\lambda\in\Lambda_j}
A^{\fq_j}(\lambda),
\]
where $\Lambda_j$ is the set of all $\lambda\in i\ftq^\ast$ such that:
\begin{enumerate}[(i)]
\item $\lambda$ is regular and $\Sigma_{G,j}^+$-dominant;
\item $\mu_\lambda$, as in \eqref{eq:mu-lambda}, is the highest weight of an irreducible $(K \cap H)$-spherical representation of $K$ \textup{(}i.e.\ a representation with a nonzero $K\cap H$-fixed vector\textup{)}. 
\end{enumerate}
\end{theorem}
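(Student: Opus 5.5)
The plan is to specialize Vogan's general theorem \cite[Theorem~2.9]{V88} to the abelian-centralizer situation, not to rederive the discrete series. In general, Vogan describes $(L^2(X)_d)_K$ as a sum of modules $\mathcal{R}^S_{\mathfrak q}(\mathbb{C}_{\lambda-\rho})$. Here $\mathfrak q$ runs over $\theta$-stable parabolic subalgebras determined by $\ftq$ and a positive system of $\Sigma(\fg^\bC,\ftq^\bC)$ compatible with $\Sigma_c^+$. The parameter $\lambda$ runs over Oshima--Matsuki parameters in $i\ftq^*$. A $K$-type condition singles out which parameters actually occur, namely the Flensted-Jensen / Oshima--Matsuki criterion that the relevant $K$-type is $K\cap H$-spherical.

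\emph{Step 1: identify the Levi factors.} By Lemma~\ref{lem:fundamental-cartan-abelian-centralizer}, the Levi factor of each such $\mathfrak q$ is $Z_\fg(\ftq)^\bC=\fl^\bC$, which is a Cartan subalgebra. So every $\mathfrak q$ is one of the $\theta$-stable Borel subalgebras $\q_j$. Conversely, each compatible positive system on $\ftq$ gives exactly one $\Sigma^+_{G,j}$, because no root vanishes on $\ftq$. The inducing modules are therefore one-dimensional characters $\mathbb{C}_{\lambda-\rho_{G,j}}$ of $L=TA$. Here $\lambda-\rho_{G,j}$ is trivial on $\fa$ because $\lambda$ and $\rho_{G,j}$ lie in $(\ftq^*)^\bC$ by Lemma~\ref{lem-minus-sigma}.

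\emph{Step 2: regularity and the lowest $K$-type.} In Vogan's statement the parameter must lie in the good range (or at least be in the weakly fair range with nonvanishing). For characters of a Cartan this amounts to $\lambda$ being $\Sigma^+_{G,j}$-dominant, and the discrete-series condition forces it to be regular; this is condition (i). I will then compute the bottom layer $K$-type of $A^{\q_j}(\lambda)$, which has highest weight $\lambda-\rho_{G,j}+2\rho(\mathfrak u_j\cap\fp^\bC)$. Using $2\rho(\mathfrak u_j\cap\fp^\bC)=2\rho_{G,j}-2\rho_K$, this equals $\mu_\lambda=\lambda+\rho_{G,j}-2\rho_K$ as in \eqref{eq:mu-lambda}. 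This $K$-type occurs with multiplicity one. Vogan's criterion for $A^{\q_j}(\lambda)$ to occur in $L^2(X)$ is precisely that this bottom-layer $K$-type is $K\cap H$-spherical, which gives condition (ii). Integrality of $\lambda-\rho_{G,j}$ on $T$ is automatic from (ii), since $\mu_\lambda$ is a $T$-weight and $\rho_{G,j}-2\rho_K$ differs from $\rho_{G,j}$ by a root-lattice element.

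\emph{Step 3: directness.} I will show that the sum is direct and that each summand occurs with multiplicity one. The modules $A^{\q_j}(\lambda)$ for distinct pairs $(j,\lambda)$ are irreducible in the good range and pairwise inequivalent, because their infinitesimal characters $\lambda$ are regular and determine the chamber $j$. Multiplicity one should follow from Vogan's theorem as stated, since its multiplicity equals $\dim\mu_\lambda^{K\cap H}$. I will need to check that this dimension is $1$, or else read the displayed decomposition up to multiplicity.

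\textbf{Main obstacle.} The hardest step is Step 2: matching the positivity and range conditions in \cite{V88} with the simple conditions (i) and (ii). In particular, I must confirm that no singular or weakly-fair parameters contribute when the centralizer is abelian. Any such contributions would have to be excluded using the regularity of the root restrictions to $\ftq$.
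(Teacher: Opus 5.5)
Your overall route is the paper's. The paper gives no independent proof of this statement: it quotes \cite[Theorem~2.9]{V88} (with \cite{FJ80,OM84}) and only observes that, under \eqref{centralizer_compact_cartan}, the $\theta$-stable parabolics attached to $\ftq$ are the Borel subalgebras $\fq_j$ with Levi factor the fundamental Cartan $\fl^\bC$. Consequently regular dominant $\lambda$ lies in the good range, and the bottom-layer $K$-type has highest weight $\mu_\lambda=\lambda+\rho_{G,j}-2\rho_K$. Your Steps~1 and~2 reproduce exactly this.

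The step that does not work is the inequivalence argument in Step~3. The infinitesimal character of $A^{\fq_j}(\lambda)$ only records the orbit of $\lambda$ under the complex Weyl group $W(\fg^\bC,\fl^\bC)$. A regular orbit can meet $i\ftq^\ast$ in points that are dominant for several different $\Sigma^+_{G,j}$. Already for $\SL(2,\bR)/\mathrm{SO}(1,1)$, where $\fk$ is abelian, $\rho_K=0$ and $m=2$, the pairs $(1,\lambda)$ and $(2,-\lambda)$ give the holomorphic and antiholomorphic discrete series, which share an infinitesimal character. So ``the infinitesimal character determines the chamber $j$'' is false, and your argument cannot separate such summands.

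The invariant that does work is the one you computed in Step~2. Every $\Sigma^+_{G,j}$ contains the fixed $\Sigma^+_K$, and $\mu_\lambda+2\rho_K=\lambda+\rho_{G,j}$ is strictly $\Sigma^+_{G,j}$-dominant. Hence this weight determines $j$, and then $\lambda$, so distinct pairs have distinct lowest $K$-types. This is how the paper argues in Proposition~\ref{prop_ds_ab}.

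Likewise, falling back on ``reading the decomposition up to multiplicity'' would not prove the statement as written, since it asserts each $A^{\fq_j}(\lambda)$ exactly once. The check you propose is immediate, though: $K$ is connected and $K\cap H\supseteq (K^\sigma)_0$, so $\dim \mu_\lambda^{K\cap H}\le 1$ by Cartan--Helgason (Proposition~\ref{prop:appendix-connected-CH}).
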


Condition (ii) implies that the linear functional $\lambda -\rho_{G,j}= \mu_\lambda -2\rho_{G,j} + 2\rho_K$ is analytically integral. 

The individual summands $A^{\fq_j}(\lambda)$ in the theorem satisfy:

\begin{theorem}[\cite{KnappVogan}]\label{thm:Aq-properties} In the setting of Theorem \ref{thm_Vogan_ds}, for any $\lambda \in \Lambda_j$,  
\begin{enumerate}[(a)]
\item $A^{\fq_j}(\lambda)$ is a (nonzero) irreducible $(\mathfrak{g}$-$K$)-module; 
\item $A^{\fq_j}(\lambda)$ has infinitesimal character $\lambda\in (\ft^*)^\bC \subset (\fl^*)^\bC$ in Harish-Chandra's parametrization;
\item $A^{\fq_j}(\lambda)$ is unitary;
\item $A^{\fq_j}(\lambda)$ is tempered;
\item $A^{\fq_j}(\lambda)$ has a unique lowest $K$-type whose highest weight is $\mu_\lambda$.
\end{enumerate}
\end{theorem}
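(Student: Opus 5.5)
The plan is to reduce each item to the general theory of cohomological induction in the good range, as developed in Knapp--Vogan. The first step is to check that, for $\lambda\in\Lambda_j$, the one-dimensional $\fl^\bC$-module $\mathbb{C}_{\lambda-\rho_{G,j}}$ lies in the \emph{good range} (indeed the weakly fair/good range for the Borel $\fq_j$). Since $\fq_j=\fl^\bC\oplus\mathfrak{u}_j$ is a $\theta$-stable Borel subalgebra with Levi $\fl$ abelian, the infinitesimal character of $\mathbb{C}_{\lambda-\rho_{G,j}}$ shifted by $\rho(\mathfrak{u}_j)=\rho_{G,j}$ is just $\lambda$. Condition~(i) of Theorem~\ref{thm_Vogan_ds} says $\mathrm{Re}\langle\lambda,\alpha\rangle>0$ for all $\alpha\in\Delta(\mathfrak{u}_j)=\Sigma^+_{G,j}$. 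Here we use that $\lambda\in i\ftq^*$ is real on $i\ft$, and that by Lemma~\ref{lem-minus-sigma} and the abelian-centralizer hypothesis, dominance on $\ftq$ is the same as dominance on $\ft$. This is exactly the good-range condition. Condition~(ii) gives the integrality of $\lambda-\rho_{G,j}$ on $T$, so that $\mathbb{C}_{\lambda-\rho_{G,j}}$ is an honest $(\fl,L\cap K)$-module. The connectedness of $L=TA$ from Lemma~\ref{lem:fundamental-cartan-abelian-centralizer} lets us extend it as a unitary character trivial on $A$.

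With the good range established, items (a)--(c) follow from the standard theorems. Knapp--Vogan Theorem~5.99 gives nonvanishing and irreducibility in the good range. The infinitesimal-character computation (Corollary~5.25 there) shows that $\mathcal{R}^S_{\fq_j}$ multiplies infinitesimal characters by the $\rho(\mathfrak{u}_j)$-shift, which yields the infinitesimal character $\lambda$ in (b). The unitarizability theorem (Theorem~9.1) shows that the Zuckerman functor preserves unitarity in the weakly good range, since a unitary character is the input; this gives (c).

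For (e), I would apply the bottom-layer theory (Knapp--Vogan, Theorem~5.80 and Corollary~5.72). The bottom-layer $K$-type of $\mathcal{R}^S_{\fq_j}(\mathbb{C}_{\lambda-\rho_{G,j}})$ has highest weight
\[
(\lambda-\rho_{G,j})+2\rho(\mathfrak{u}_j\cap\fp^\bC)=\lambda+\rho_{G,j}-2\rho_K=\mu_\lambda .
\]
Here $2\rho(\mathfrak{u}_j\cap\fp^\bC)=2\rho_{G,j}-2\rho_K$ holds because $\Sigma_K^+\subset\Sigma_{G,j}^+$ by the compatibility built into the construction. I would then invoke Vogan's theorem that in the good range this $K$-type is the unique lowest $K$-type, in the sense of the Vogan norm $\|\mu+2\rho_K\|$, and occurs with multiplicity one. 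Condition~(ii) guarantees that $\mu_\lambda$ is $\Sigma_K^+$-dominant, so the bottom layer is nonzero.

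For (d), the main obstacle, I would use that $\fq_j$ is a Borel subalgebra with $\fl$ a fundamental Cartan and $\lambda$ regular, real on $i\ft$. Then $A^{\fq_j}(\lambda)$ has regular infinitesimal character and is induced from a character of the fundamental Cartan, trivial on $A$. By the Langlands/Vogan character-datum identification, it is a limit-free standard module: it equals $\Ind_{MAN}^G(\delta\otimes 1)$ with $MA=Z_G(\fa)$ and $\delta$ a discrete series of $M$ with Harish-Chandra parameter $\lambda$, and this is tempered. Concretely, I would verify Vogan's criterion: the Langlands parameter attached to the lowest $K$-type $\mu_\lambda$ via Vogan's algorithm has zero continuous part, because $\lambda$ is purely imaginary on $\ft$ and vanishes on $\fa$. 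Alternatively, tempered follows from (e) together with the fact that the representation embeds discretely in $L^2(X)$ for well-tempered $X$, but that is circular here, so I would use the character-datum route instead.
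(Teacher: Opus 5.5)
Your proposal is correct and takes the same route as the paper, whose proof simply cites Knapp--Vogan for each item: Theorem~8.2 for (a), Corollary~5.25 for (b), Theorem~9.1 for (c), Theorem~11.225 for (d) and Proposition~10.24 for (e). Your explicit check that $\mathbb{C}_{\lambda-\rho_{G,j}}$ lies in the good range, and your bottom-layer computation giving $\mu_\lambda$, fill in what the paper leaves implicit; the only deviations are minor: irreducibility is Knapp--Vogan's Theorem~8.2 (what the paper cites) rather than 5.99, and for (d) you use the identification $A^{\fq_j}(\lambda)\cong\Ind_{MAN}^G(\delta\otimes 1)$ with $\delta$ a discrete series of $M$ instead of citing Theorem~11.225, which is equally valid and is the transfer statement the paper itself uses later via \cite[Theorem~6.6.15]{Voganbook}.
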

\begin{proof}
(a) follows from \cite[Theorem 8.2]{KnappVogan} since $\lambda$ is dominant regular; (b) is the content of \cite[Corollary 5.25 (b)]{KnappVogan}; (c) follows from \cite[Theorem 9.1]{KnappVogan}; (d)  from \cite[Theorem 11.225]{KnappVogan}; (e)  from \cite[Proposition 10.24]{KnappVogan}.
\end{proof}

\subsection{The discrete series in terms of lowest \texorpdfstring{$K$}{K}-types}

\begin{definition}[\cite{AA24}] An irreducible unitary representation of a real reductive Lie group is called \emph{tempiric} if it is tempered and has real infinitesimal character.
\end{definition}

\begin{remark} \label{rk-tempiric-regular} For a connected reductive group such as~$G$, every tempiric representation has a unique lowest $K$-type. Suppose $\pi$ is tempiric and has  \emph{regular}  infinitesimal character. Then the space of $K$-finite vectors of $\pi$ is isomorphic to $A^{\mathfrak{q}_0}(\lambda)$ for a $\theta$-stable Borel subalgebra $\fq_0=\fl^\bC\oplus \frak u$ where $\Delta^+=\Sigma(\fq_0, \ft^\bC)$ is compatible with $\Sigma^+_K$, $\lambda\in i\ft^*$ is regular and $\Delta^+$-dominant, and $\lambda-\rho_G$ is analytically integral on $T$. Here  $\rho_G$ denotes the half-sum of the roots in $\Delta^+$, counted with multiplicities. Then the unique lowest $K$-type of $\pi$ has highest weight $\mu =\lambda + \rho_{G} - 2\rho_K$. This follows from \cite[Proposition 10.24, Theorem 11.225]{KnappVogan}.
\end{remark}

\begin{proposition}\label{prop_ds_ab}  In the setting of Theorem \ref{thm_Vogan_ds}, the discrete series $\widehat G_{H,\ds}$ of $X=G/H$ is precisely the set of tempiric representations of $G$ which have regular infinitesimal character and whose lowest $K$-type is $K\cap H$-spherical. Each of these representations occurs in $L^2(X)$ with multiplicity one.
\end{proposition}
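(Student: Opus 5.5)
Both inclusions follow from Theorem~\ref{thm_Vogan_ds}, Theorem~\ref{thm:Aq-properties} and Remark~\ref{rk-tempiric-regular}; the point is to match the Vogan parameters $(\fq_j,\lambda)$ with the Knapp--Vogan parameters of a tempiric representation with regular infinitesimal character.

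\emph{Discrete series $\Rightarrow$ tempiric.} If $\pi\in\widehat G_{H,\ds}$, then $\pi$ embeds in $L^2(X)_d$. Passing to $K$-finite vectors, Theorem~\ref{thm_Vogan_ds} shows that $\pi_K\cong A^{\fq_j}(\lambda)$ for some $j$ and some $\lambda\in\Lambda_j$. By Theorem~\ref{thm:Aq-properties}, $\pi$ is tempered and its infinitesimal character is $\lambda\in i\ftq^*\subset i\ft^*$. This functional is real in Harish-Chandra's normalization, since $i\ft^*$ is the real form of the compact part. It is regular by condition (i), because under \eqref{centralizer_compact_cartan} regularity on $\ftq$ coincides with regularity on $\ft$. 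Finally, the unique lowest $K$-type has highest weight $\mu_\lambda$, which is $(K\cap H)$-spherical by condition (ii).

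\emph{Tempiric $\Rightarrow$ discrete series.} Let $\pi$ be tempiric with regular infinitesimal character and $(K\cap H)$-spherical lowest $K$-type. By Remark~\ref{rk-tempiric-regular}, $\pi_K\cong A^{\fq_0}(\lambda)$, where $\fq_0=\fl^\bC\oplus\mathfrak u$, the system $\Delta^+$ is compatible with $\Sigma_K^+$, and $\lambda\in i\ft^*$ is regular and $\Delta^+$-dominant. The lowest $K$-type then has highest weight $\mu=\lambda+\rho_G-2\rho_K$. It remains to show that $\lambda\in i\ftq^*$ and that $\Delta^+$ is one of the $\Sigma^+_{G,j}$; this is the main obstacle.

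To handle it, I first use that $\mu$ is $(K\cap H)$-spherical. By the Cartan--Helgason theorem for the compact symmetric space $K/(K\cap H)$, whose maximal torus $T$ contains the Cartan subspace $\ftq$ (with the appendix's torus version used if needed), the highest weight $\mu$ vanishes on $\ft_\fh=\ft\cap\fh$, so $\sigma\mu=-\mu$. Lemma~\ref{lem-minus-sigma} gives $\sigma\rho_K=-\rho_K$, so $\lambda+\rho_G$ is also $\sigma$-anti-invariant.

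Next I show that $\Delta^+$ is $\sigma$-stable up to sign, i.e.\ $\sigma\Delta^+=-\Delta^+$. Since $\Delta^+$ contains $\Sigma_K^+$ and $\sigma\Sigma_K^+=-\Sigma_K^+$, the compact roots are already compatible. For the noncompact roots, I plan to use the regularity and dominance of $\lambda$ together with the $\sigma$-anti-invariance of $\lambda+\rho_G$. The idea is to apply $-\sigma$: this sends $\lambda$ to the infinitesimal character of $\pi$ composed with $-\sigma$. I then want to identify $A^{-\sigma\fq_0}(-\sigma\lambda)$ with $\pi$, which requires $\pi\circ\sigma\cong\pi^*$ and would use the $K\cap H$-spherical lowest $K$-type. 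Uniqueness of the Knapp--Vogan parameter up to $W_K$, with $W_K$ acting simply transitively on chambers compatible with $\Sigma_K^+$, would then force $-\sigma\Delta^+=\Delta^+$ and $-\sigma\lambda=\lambda$.

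With both facts, $\lambda\in i\ftq^*$. Moreover, a $\sigma$-anti-stable positive system is determined by restriction to $\ftq$, and it is compatible with $\Sigma_c^+$, so $\Delta^+=\Sigma^+_{G,j}$ for some $j$. Hence $\lambda\in\Lambda_j$, and $\pi$ occurs in $L^2(X)$ by Theorem~\ref{thm_Vogan_ds}.

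\emph{Multiplicity one.} The decomposition in Theorem~\ref{thm_Vogan_ds} is a direct sum over pairs $(j,\lambda)$. Distinct pairs give inequivalent modules: the infinitesimal characters differ unless the $\lambda$'s are conjugate under $W_K$, and by the chamber argument above that forces equal $j$ and equal $\lambda$. So each representation occurs exactly once.
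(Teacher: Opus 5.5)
Your forward direction is correct and matches the paper. The converse has a genuine gap at the step you yourself flag as the main obstacle. The proof of $\sigma\Delta^+=-\Delta^+$ is only a plan, and it rests on $\pi\circ\sigma\cong\pi^*$, which you never establish. Deriving that from the $(K\cap H)$-sphericity of the lowest $K$-type would require input you do not supply:
\begin{itemize}
\item Vogan's theorem that a tempiric representation is determined by its lowest $K$-type;
\item a computation of how $A^{\fq_0}(\lambda)$ transforms under $\sigma$-twisting and under contragredients;
\item uniqueness of the $\theta$-stable parameters.
\end{itemize}
None of this is needed, because you already have the ingredients. Since $\lambda$ is $\Delta^+$-dominant and $\rho_G$ is strictly $\Delta^+$-dominant, $\lambda+\rho_G$ is regular and $\Delta^+$-dominant. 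Hence $\Delta^+=\{\alpha:\langle\alpha,\lambda+\rho_G\rangle>0\}$. You showed $\sigma(\lambda+\rho_G)=-(\lambda+\rho_G)$, so $\langle\sigma\alpha,\lambda+\rho_G\rangle=-\langle\alpha,\lambda+\rho_G\rangle$, which gives $\sigma\Delta^+=-\Delta^+$ at once. Put differently, $\Delta^+$ is cut out by an element of $i\ftq^*$, so it is one of the $\Sigma^+_{G,j}$. Lemma~\ref{lem-minus-sigma} then gives $\rho_G=\rho_{G,j}\in i\ftq^*$, and hence $\lambda\in i\ftq^*$. This is exactly the paper's argument.

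Your multiplicity-one argument also contains a false step. Equal infinitesimal characters only make $\lambda$ and $\lambda'$ conjugate under the complex Weyl group $W(\fg^\bC,\ft^\bC)$, not under $W_K$. For example, take $G=\SL(2,\R)$ with $H$ the diagonal subgroup. Then $\ftq=\fk$, $W_K$ is trivial, and $m=2$. Holomorphic and antiholomorphic discrete series with the same infinitesimal character both occur in $L^2(X)$, with parameters $(j,\lambda)$ and $(j',-\lambda)$. The correct way to separate the summands is by lowest $K$-types, as the paper does. The weight $\mu_\lambda+2\rho_K=\lambda+\rho_{G,j}$ is strictly $\Sigma^+_{G,j}$-dominant, so it determines $j$ and then $\lambda$. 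Alternatively, appeal directly to uniqueness of the Knapp--Vogan $\theta$-stable data up to $K$-conjugacy, rather than to infinitesimal characters.
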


\begin{proof}
Suppose first that $[\pi]\in \widehat G_{H,\ds}$. Then the underlying $(\fg,K)$-module of $\pi$ occurs in $L^2(X)_d$, so Theorem~\ref{thm_Vogan_ds} yields an index $j$ and a parameter $\lambda\in \Lambda_j$ such that the underlying $(\fg,K)$-module of $\pi$ is isomorphic to $A^{\q_j}(\lambda)$. By Theorem~\ref{thm:Aq-properties}, the representation $\pi$ is tempiric, has regular infinitesimal character, and its unique lowest $K$-type has highest weight $\mu_\lambda$. Condition~(ii) in Theorem~\ref{thm_Vogan_ds} says precisely that this lowest $K$-type is $K\cap H$-spherical.

Conversely, let $\pi$ be a tempiric representation of $G$ with regular infinitesimal character; assume its unique lowest $K$-type is $K\cap H$-spherical. Then $\pi$ is equivalent to $A^{\fq_0}(\lambda)$ for $\fq_0$,  $\lambda \in i\ft^*$, $\Delta^+\supset \Sigma^+_K$, $\rho_G$ as in Remark \ref{rk-tempiric-regular}. The highest weight of its lowest $K$-type is $\mu=\lambda + \rho_G - 2\rho_K$. By the Cartan–Helgason criterion (see Proposition \ref{prop:appendix-connected-CH}), the $K\cap H$-sphericity of $\mu$ implies $\mu \in i\ftq^*$. Since $\rho_K\in i\ftq^*$ by Lemma \ref{lem-minus-sigma}, $\lambda+\rho_G \in i\ftq^*$. Since $\lambda$ is regular and $\Delta^+$-dominant, so is $\lambda +\rho_G$. This implies that $\Delta^+\supset \Sigma^+_K$, which is a positive system determined by $\lambda +\rho_G \in i\ftq^*$, is the one compatible with some positive system $\Sigma^+(\fg^\bC, \ftq^\bC)$ compatible with $\Sigma^+_c$. Hence, $\Delta^+=\Sigma^+_{G, j}$ for some $j$. By Lemma \ref{lem-minus-sigma}, $\rho_G=\rho_{G, j}\in i\ftq^*$. Thus, $\lambda \in i\ftq^*$. The highest weight of its unique lowest $K$-type is $\mu_\lambda$. The sphericity hypothesis is therefore exactly condition~(ii) in Theorem~\ref{thm_Vogan_ds}; hence $[\pi]\in \widehat G_{H,\ds}$.

The lowest-\(K\)-type formula~\eqref{eq:mu-lambda} shows that distinct pairs
\((j,\lambda)\) give different highest weights for the lowest \(K\)-types of
\(A^{\fq_j}(\lambda)\); therefore, these irreducible representations cannot be equivalent.  Hence the decomposition in Theorem~\ref{thm_Vogan_ds} is multiplicity-free, and every representation in $\widehat G_{H,\ds}$ occurs with multiplicity one in $L^2(X)$.
\end{proof}

\subsection{The square-integrable $H$-fixed distributions in terms of lowest \texorpdfstring{$K$}{K}-types}

It will be convenient later to have the following refinement of the multiplicity one statement for the discrete series:

\begin{proposition}\label{prop:ds-distribution-one-dimensional}
In the setting of Theorem \ref{thm_Vogan_ds}, let $(\pi,V)\in \widehat G_{H,\ds}$, and let $\tau_\pi$ be the unique lowest $K$-type of $\pi$. Then $\dim_{\mathbb C}(V^{-\infty})^H_{\ds}=1$, and every nonzero element of $(V^{-\infty})^H_{\ds}$ is nonzero on $\tau_\pi$.
\end{proposition}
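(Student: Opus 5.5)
The dimension claim follows from results already established. By Lemma~\ref{lem_sq_int_dis}, $(V^{-\infty})^H_{\ds}\cong \Hom_G(V,L^2(X))$. Proposition~\ref{prop_ds_ab} says that $\pi$ occurs in $L^2(X)$ with multiplicity one. The multiplicity is exactly $\dim_\bC \Hom_G(V,L^2(X))$, so $\dim_\bC (V^{-\infty})^H_{\ds}=1$.

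For the second claim, let $\eta\neq 0$ be square-integrable. Let $T=\widetilde{m_\eta}\colon V\to L^2(X)$ be the associated $G$-embedding from Lemma~\ref{lem_sqdis_iff}(b). By the summary following Lemma~\ref{lem_sq_int_dis}, we have $\eta(v)=(Tv)(eH)$ for $v\in V^\infty$. So we must show that some $v$ in the $\tau_\pi$-isotypic subspace $V(\tau_\pi)$ satisfies $(Tv)(eH)\neq 0$.

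The plan is a Frobenius reciprocity argument on $K/(K\cap H)$. The map
\[
V(\tau_\pi)\to \bC,\qquad v\mapsto \eta(v)=(Tv)(eH),
\]
is a $(K\cap H)$-invariant functional on a copy of $\tau_\pi$. It vanishes identically if and only if every function $Tv$, for $v\in V(\tau_\pi)$, vanishes at $eH$. By $K$-equivariance, this is the same as every such $Tv$ vanishing on all of $K\cdot eH\cong K/(K\cap H)$.

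We therefore need to rule out this vanishing. I would use the explicit structure of the discrete spectrum from Theorem~\ref{thm_Vogan_ds}, together with Flensted-Jensen's construction (Remark~\ref{rem:FJfunc}). The Flensted-Jensen functions attached to $A^{\fq_j}(\lambda)$ are constructed as $K$-finite functions of the lowest $K$-type $\mu_\lambda$. Their restriction to $K/(K\cap H)$ is the nonzero $(K\cap H)$-spherical matrix coefficient of $\tau_\pi$. In Flensted-Jensen's duality, the function $\psi_\lambda$ restricted to the compact dual is essentially the spherical function of type $\mu_\lambda$ on $K/(K\cap H)$, which takes the value $1$ at the origin.

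Concretely, I would argue as follows. The $\tau_\pi$-isotypic part of $T(V)$ contains a nonzero Flensted-Jensen function $f$. Such an $f$ is $(K\cap H)$-invariant under the right action, or more precisely transforms via the spherical vector. Its value $f(eH)$ is nonzero, because its restriction to the slice $K\exp(\fp\cap\fq)\cdot eH$ at the origin reduces to the spherical function. Uniqueness (dimension one) then guarantees that every nonzero $\eta$ is a scalar multiple of the one realizing $f$. Hence every nonzero $\eta$ is nonzero on $\tau_\pi$.

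The main obstacle is this nonvanishing at $eH$. An abstract alternative would use the Cartan decomposition $X=K\exp(\fp\cap\fq)\cdot eH$: if all $\tau_\pi$-vectors vanished at $eH$, one would try to derive a contradiction from the radial differential equations. That route looks harder, so I would rely on the explicit Flensted-Jensen realization, where $f$ is given by a Poisson-type integral whose value at the origin is a positive constant times $\dim \tau_\pi^{K\cap H}$.
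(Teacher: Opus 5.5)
Your proposal follows the paper's proof: the dimension comes from Lemma~\ref{lem_sq_int_dis} together with the multiplicity-one statement of Proposition~\ref{prop_ds_ab}. Nonvanishing on $\tau_\pi$ comes from the Flensted-Jensen function $\psi_{j,\lambda}$ lying in the $\tau_\pi$-isotypic part of the (unique) image $T(V)$ with $\psi_{j,\lambda}(eH)\neq 0$, and one-dimensionality then passes this to every nonzero $\eta$.

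The one step to tighten is why $\psi_{j,\lambda}(eH)\neq 0$. Your spherical-function reasoning, as phrased, assumes that the restriction to $K\cdot eH$ is nonzero, which is equivalent to what you want to show. The paper instead uses the duality identity $f(eH)=f^d(eH^d)$ from \eqref{eq_FJ_dual_e}, combined with the manifest positivity at $eH^d$ of the integral defining $\psi^d_{j,\lambda}$ (Remark~\ref{rem:FJfunc}).
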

\begin{proof}
By Proposition~\ref{prop_ds_ab}, the representation $\pi$ occurs in $L^2(X)$ with multiplicity one. Hence Lemma~\ref{lem_sq_int_dis} gives $\dim_{\mathbb C}(V^{-\infty})^H_{\ds}=1$.

Suppose $T\colon V\hookrightarrow L^2(X)$ is a $G$-equivariant embedding. Under this embedding, the evaluation functional 
\[
\eta\colon V^\infty\to  L^2(X)\cap C^\infty(X) \to \mathbb C,
\qquad
v\longmapsto (Tv)(eH)
\]
defines a nonzero element of $(V^{-\infty})^H_{\ds}$. Since $(V^{-\infty})^H_{\ds}$ has dimension~1, it is enough to check that $\eta$ is nonzero on $\tau_\pi$ for every such $T$.

Let $(j,\lambda)$ be the unique pair such that the underlying $(\fg,K)$-module of $\pi$ is $A^{\q_j}(\lambda)$. By the work of Flensted-Jensen (see Remark \ref{rem:FJfunc} below), the image $T(V^\infty)$ is explicitly known in our setting. A key point is that the image $T(\tau_\pi)\subset C^\infty(X)$ of the lowest $K$-type contains an explicit vector $\psi_{j,\lambda}$ such that
\[
\psi_{j,\lambda}(eH)\neq 0:
\]
see \cite{FJ80,OM84,S84} and Remark \ref{rem:FJfunc} below. Therefore $\eta$ cannot vanish on~$\tau_\pi$,  proving the assertion.
\end{proof}

\begin{remark} \label{rem:FJfunc} Given a pair $(j, \lambda)$ as in Theorem~\ref{thm:Aq-properties}, let us explain how Flensted-Jensen constructs a subrepresentation of $L^2(X)$ equivalent to $A^{\fq_j}(\lambda)$, and how the special function $\psi_{j,\lambda}$ appears. 

Let \(C=H\cap Z(G)\). Since \(C\) is central in \(G\) and contained in \(H\),
the natural identification
\[
G/H\cong (G/C)/(H/C)
\]
also identifies the corresponding regular representations. Moreover, since
\(G\) is connected and reductive, we have \(Z(G/C)=Z(G)/C\), and therefore
\[
(H/C)\cap Z(G/C)=\{e\}.
\]
Thus, after replacing \((G,H)\) by \((G/C,H/C)\), we may assume without loss
of generality that \(H\cap Z(G)=\{e\}\). By splitting off the connected part of the center of $G$ which acts on any irreducible module by a central character, we may assume $G$ is semisimple and this does not affect the assertion $\psi_{j,\lambda}(eH)\neq0$. 

We first treat the case where $H$ is connected. The starting point is the Flensted--Jensen duality (\cite[Theorem 2.3]{FJ80}, \cite[Section 8.2]{S84}):
\begin{equation} \label{eq_FJ_dual}
C_\mu^\infty(G/H) \cong C_{\mu^d}^\infty(G^d/H^d) 
\end{equation}
between the space of $C^\infty$ functions on $X=G/H$ transforming under the $K$-type $\mu$, and the space of $C^\infty$ functions on a ``dual'' symmetric space $X^d = G^d/H^d$, transforming under a certain $K^d$-type $\mu^d$, where $H^d$ is a maximal compact subgroup of $G^d$. 
 As explained in \cite[Proof of Theorem 16.1]{BS05-1}, this holds in our setting where $G$ is not assumed to be linear. 
 Specifically, $G^d, H^d, K^d$ are the analytic subgroups in the complex adjoint group $G_\bC$ whose Lie algebras, as subalgebras of $\fg_\bC$, are
\[
\fg^d = (\fk\cap \fh \oplus \fp\cap \fq) \oplus i (\fk\cap \fq \oplus \fp\cap \fh);
\]
\[
\fh^d = (\fk\cap \fh) \oplus i (\fp\cap \fh), \quad \fk^d = (\fk\cap \fh) \oplus i (\fk\cap \fq)
\]
respectively. For any irreducible representation $(\mu, V_\mu)$ of~$K$, $\mu$ extends to an irreducible holomorphic representation of $(K, \fk_\mathbb{C})$. Its restriction to $(K\cap H, \fk^d)$ exponentiates to an irreducible representation $\mu^d$ of $K^d$ with the same highest weight as $\mu$. The duality isomorphism \eqref{eq_FJ_dual} sends
$f\in C^\infty_\mu(G/H)$ to the function $f^d\in C^\infty_{\mu^d}(G^d/H^d)$  defined by
 \[
 f^d(k^d x H^d)\coloneq (\mu^d((k^d)^{-1})f)(xH),
 \]
 for $k^d\in K^d$, $x\in \exp(\fp\cap \fq)$. In particular, we have
 \begin{equation} \label{eq_FJ_dual_e}
 f(eH) = f^d(eH^d).
 \end{equation}
See \cite[Section 8.2]{S84}.

 Now let $\lambda$ be an element of $\Lambda_j$ as in Theorem \ref{thm_Vogan_ds}, and let $\mu$ be an irreducible representation of $K$ whose highest weight is the functional $\mu_\lambda$ in~\eqref{eq:mu-lambda}. Let \(\psi_{j,\lambda}\in C^\infty_\mu(G/H)\) be the function corresponding, under the isomorphism~\eqref{eq_FJ_dual}, to the function \(\psi^d_{j,\lambda}\in C^\infty_{\mu^d}(G^d/H^d)\) given by
 \[
 \psi_{j, \lambda}^d(xH^d) = \int_{K\cap H}e^{\langle-\lambda -\rho_{G,j}, t^d(x^{-1}k)\rangle}dk
 \]
 where $t^d$ in the exponent is the projection from $G^d=H^d \cdot \exp{i \ftq} \cdot N^d$ (the Iwasawa decomposition) to $T^d= \exp{i \ftq}$ followed by the logarithm map (see  \cite[Remark 2]{OM84}, \cite[Section 8.3]{S84}). Then  $\psi_{j,\lambda}$ is square-integrable on $X=G/H$ (\cite{FJ80}, \cite{OM84}, \cite[Theorem 8.3.1]{S84}).  It is clear that $\psi^d_{j,\lambda}(eH^d)>0$ and hence $\psi_{j,\lambda}(eH)>0$ by \eqref{eq_FJ_dual_e}. Flensted--Jensen proves that the closed \(G\)-subrepresentation of \(L^2(X)\) generated by \(\psi_{j,\lambda}\) is an irreducible discrete-series representation \cite{FJ80}. Under Vogan's 
parametrization in terms of cohomological induction \cite[Theorem~2.9]{V88}, its underlying \((\fg,K)\)-module is
\(A^{\fq_j}(\lambda)\). By definition   $\psi_{j,\lambda}$ lies in the $\mu$-isotypical subspace of $A^{\fq_j}(\lambda)$, where $\mu$ is the unique lowest $K$-type of  $A^{\fq_j}(\lambda)$. This completes our discussion of the Flensted-Jensen construction when $H$ is connected.

For disconnected \(H\), pullback along \(G/H_0\to G/H\) identifies
\(L^2(G/H)\) with the \(H/H_0\)-fixed subspace of \(L^2(G/H_0)\).
The corresponding orthogonal projection is the normalized averaging operator
\[
(Pf)(gH_0)
=
\frac{1}{[H:H_0]}
\sum_{hH_0\in H/H_0} f(ghH_0).
\]
If an irreducible representation \(\pi\) occurs in \(L^2(G/H)\), then \(P\)
is nonzero on the corresponding discrete summand of \(L^2(G/H_0)\).
By the connected-case multiplicity-one statement and Schur's lemma, \(P\)
acts as the identity on that summand. Hence its Flensted--Jensen function is
\(H/H_0\)-invariant and descends to \(G/H\), while its nonzero value at the
identity coset is unchanged.
\end{remark}

\section{{Well-tempered symmetric spaces}}\label{sec:welltempered}

The abelian-centralizer condition that appeared in Section~\ref{sec:dis} will turn out to interact quite well with the recursive description of the Plancherel support in Section~\ref{sec:support}. We therefore isolate it here without imposing the equal-rank hypothesis.

\subsection{Definition and a hereditary property}

\begin{lemma}\label{lem:welltemp-all-cartan}
Let $(G, H)$ be a reductive symmetric pair. The following conditions are equivalent:
\begin{enumerate}[(1)]
\item There exists a Cartan subspace $\fbq\subset \fq$ such that the Lie algebra centralizer $Z_\fg(\fbq)$ is abelian.
\item For every Cartan subspace $\fbq\subset \fq$, 
the centralizer $Z_\fg(\fbq)$ is abelian.
\item For every Cartan subspace $\fb_{\fq^\bC} \subset \fq^\bC$ of $\fq^\bC$, the centralizer $Z_{\fg^\bC}(\fb_{\fq^\bC})$ is abelian.
\end{enumerate}
\end{lemma}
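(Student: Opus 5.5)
The plan is to pass through the complexification, using the standard fact that Cartan subspaces of $\fq^\bC$ are all conjugate under the connected group $H_\bC := \mathrm{Int}(\fg^\bC)^{\sigma}_0$, or more concretely under the analytic subgroup of $\mathrm{Int}(\fg^\bC)$ with Lie algebra $\fh^\bC$. This is the complex analogue of the conjugacy of Cartan subspaces; it is due to Kostant--Rallis in the complex symmetric-pair setting. I will prove (1)$\Rightarrow$(3)$\Rightarrow$(2)$\Rightarrow$(1). The last implication is trivial, since Cartan subspaces of $\fq$ exist.

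For (1)$\Rightarrow$(3), let $\fbq$ be a Cartan subspace of $\fq$ with $Z_\fg(\fbq)$ abelian. First I check that $\fbq^\bC$ is a Cartan subspace of $\fq^\bC$. It is abelian and consists of semisimple elements, and its centralizer in $\fq^\bC$ is $(Z_\fq(\fbq))^\bC$. This equals $\fbq^\bC$ because $Z_\fq(\fbq)=\fbq$; that equality follows from maximality of $\fbq$ together with the standard fact that $Z_\fq(\fbq)=\fbq$ for a Cartan subspace, for instance via $\theta$-stability after $H_0$-conjugation as recalled in \S\ref{sec:cartan_and_rank}. Since centralizers commute with complexification, $Z_{\fg^\bC}(\fbq^\bC)=Z_\fg(\fbq)^\bC$, and this is abelian. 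Now take any Cartan subspace $\fc\subset\fq^\bC$. By complex conjugacy there is $g\in \mathrm{Int}(\fg^\bC)$, commuting with $\sigma$, such that $g\fc=\fbq^\bC$. Then $Z_{\fg^\bC}(\fc)=g^{-1}Z_{\fg^\bC}(\fbq^\bC)$, which is abelian.

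For (3)$\Rightarrow$(2), take any Cartan subspace $\fbq\subset\fq$. By the argument above, $\fbq^\bC$ is a Cartan subspace of $\fq^\bC$, so $Z_\fg(\fbq)^\bC=Z_{\fg^\bC}(\fbq^\bC)$ is abelian. Hence $Z_\fg(\fbq)$ is abelian.

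The main obstacle is the input that all Cartan subspaces of $\fq^\bC$ are conjugate, together with the verification that the complexification of a real Cartan subspace is still maximal abelian semisimple in $\fq^\bC$. For the first point I will cite Kostant--Rallis, or argue directly: a Cartan subspace of $\fq^\bC$ is the centralizer in $\fq^\bC$ of a regular semisimple element of $\fq^\bC$. Such regular elements form a connected Zariski-open subset of $\fq^\bC$. The set of regular elements of a given Cartan subspace, swept out by $H_\bC$, gives an open orbit-type image, and connectedness then yields conjugacy. For the second point, it suffices to note that a semisimple element $Y\in\fq^\bC$ centralizing $\fbq^\bC$ has real and imaginary parts in $Z_\fq(\fbq)=\fbq$, since $\fbq$ is its own centralizer in $\fq$.
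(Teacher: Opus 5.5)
Your proposal is correct and follows the same route as the paper: (2)$\Rightarrow$(1) is trivial, (3)$\Rightarrow$(2) uses $Z_{\fg^\bC}(\fbq^\bC)=Z_\fg(\fbq)^\bC$, and (1)$\Rightarrow$(3) uses Kostant--Rallis conjugacy of Cartan subspaces of $\fq^\bC$ under the connected complexified fixed-point group. Your extra check that $\fbq^\bC$ really is a Cartan subspace of $\fq^\bC$ (via $Z_\fq(\fbq)=\fbq$) is a step the paper uses without comment, so it fills in a detail rather than changing the argument.
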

\begin{proof}
 That (3) implies (2) follows from the fact that  $Z_{\fg^{\mathbb C}}({\fbq}^{\mathbb C})=Z_\fg({\fbq})^{\mathbb C}$ for every Cartan subspace $\fbq \subset \fq$. That (2) implies (1) is immediate. 
 
Let us prove that (1) implies (3).  By the Kostant--Rallis conjugacy theorem \cite[Theorem 1]{KostantRallis1971} for complex reductive symmetric
pairs, all Cartan subspaces of \(\mathfrak q^\mathbb C\) are conjugate under
the identity component of the fixed-point group in the complexified pair.
Therefore the ${\mathfrak g}^{\mathbb \C}$-centralizers of all Cartan subspaces of $\mathfrak q^\mathbb C$ are conjugate. Given a Cartan subspace $\fbq \subset \fq$, one  of the centralizers of the previous sentence is 
\[
Z_{\mathfrak g^\mathbb C}(\mathfrak b_{\mathfrak q}^\mathbb C)
=
Z_{\mathfrak g}(\mathfrak b_{\mathfrak q})^\mathbb C;
\]
if this one is abelian, all of them are abelian.
\end{proof}

\begin{definition}\label{def_welltemp}
The reductive symmetric space $X=G/H$ is called \emph{well-tempered} if the equivalent conditions in Lemma \ref{lem:welltemp-all-cartan} hold.
\end{definition}

For our purposes, one important feature of Definition~\ref{def_welltemp} is that the condition is stable under passage to the Levi factors attached to (cuspidal) $\sigma$-parabolic subgroups. This is the content of the next observation.

\begin{lemma}\label{lem:welltemp_hereditary}
Assume that $X=G/H$ is well-tempered.
Let
\[
\fbq=\ftq\oplus \faq
\]
be a $\theta$-stable Cartan subspace of $\fq$ such that $Z_\fg(\fbq)$ is abelian, and let
\[
L=Z_G(\faq)=MA
\]
be the associated Levi subgroup.
The symmetric space
\[
M/(M\cap H)
\]
is again well-tempered.
\end{lemma}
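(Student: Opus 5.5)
The plan is to exhibit an explicit Cartan subspace of $\fm\cap\fq$ whose centralizer in $\fm$ is abelian, and then invoke Lemma~\ref{lem:welltemp-all-cartan}, implication (1)$\Rightarrow$(2), for the pair $(M, M\cap H)$. The natural candidate is $\ftq$. First, $M$ is $\sigma$- and $\theta$-stable, because $\fa$ and $\faq$ are. Hence $(M, M\cap H)$ is a reductive symmetric pair, and its $(-1)$-eigenspace of $\sigma$ is $\fm\cap\fq$. Since $\fl=Z_\fg(\faq)=\fm\oplus\fa$, and $\ftq\subset\fk\cap\fq$ centralizes $\faq$, we have $\ftq\subset\fl\cap\fk\cap\fq$. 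Moreover $\fa\subset\fp$ while $\ftq\subset\fk$, so $\ftq\subset\fm$. Thus $\ftq\subset\fm\cap\fq$, and $\ftq$ consists of semisimple elements.

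The key step is to show that $\ftq$ is maximal abelian in $\fm\cap\fq$. Let $Y\in\fm\cap\fq$ commute with $\ftq$. Since $\fm$ centralizes $\fa\supset\faq$, the element $Y$ commutes with $\faq$ as well, hence with all of $\fbq=\ftq\oplus\faq$. Therefore $\fbq+\R Y$ is an abelian subspace of $\fq$. If $Y$ is semisimple, maximality of the Cartan subspace $\fbq$ forces $Y\in\fbq\cap\fm$. Because $\fm\cap\fa=0$ and $\fbq\cap\fm$ is $\theta$-stable, this gives $Y\in\ftq$. To avoid worrying about semisimplicity of $Y$, I would argue more cleanly with centralizers: $Z_\fm(\ftq)\subset Z_\fg(\ftq)\cap Z_\fg(\faq)=Z_\fg(\fbq)$, and the latter is abelian by hypothesis. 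So $Z_{\fm}(\ftq)$ is abelian, and $Z_{\fm\cap\fq}(\ftq)$ is an abelian subspace containing $\ftq$. To see that it equals $\ftq$, note that $Z_\fg(\fbq)\cap\fq$ is abelian, consists of semisimple elements, and contains $\fbq$. Indeed, $Z_\fg(\fbq)$ is a $\theta$-stable abelian subalgebra, which is a Cartan subalgebra in this situation, so its elements are semisimple. Maximality of $\fbq$ then gives $Z_\fg(\fbq)\cap\fq=\fbq$. Intersecting with $\fm$ yields $Z_{\fm\cap\fq}(\ftq)\subset\fbq\cap\fm=\ftq$. Hence $\ftq$ is a Cartan subspace of $\fm\cap\fq$.

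Finally, $Z_\fm(\ftq)\subset Z_\fg(\fbq)$ is abelian, so condition (1) of Lemma~\ref{lem:welltemp-all-cartan} holds for $M/(M\cap H)$, which is therefore well-tempered. I expect the main obstacle to be the semisimplicity point in the middle step, namely justifying that the elements of $Z_\fg(\fbq)\cap\fq$ are semisimple. I would handle it by noting that an abelian centralizer of a semisimple abelian subspace in a reductive Lie algebra is a Cartan subalgebra. Alternatively, one can decompose an element into its Jordan components in $\fq$ (both components lie in $\fq$ and commute with $\fbq$) and use that the nilpotent part lies in an abelian $\theta$-stable reductive subalgebra, hence vanishes.
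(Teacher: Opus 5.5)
Your proof is correct and follows essentially the paper's route: show that $\ftq$ is a Cartan subspace of $\fm\cap\fq$ using the maximality of $\fbq$, then conclude from $Z_\fm(\ftq)\subset Z_\fg(\ftq)\cap Z_\fg(\faq)=Z_\fg(\fbq)$. Your detour through $Z_\fg(\fbq)\cap\fq=\fbq$ is valid but unnecessary: a Cartan subspace only has to be maximal among abelian subspaces of semisimple elements, so your first argument with semisimple $Y$, which is exactly the paper's argument, already suffices.
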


\begin{proof}
The subspace $\ftq$ is a Cartan subspace of $\fm\cap \fq$. Indeed, if \(\mathfrak c\subseteq\mathfrak m\cap\mathfrak q\) were a Cartan
subspace properly containing \(\mathfrak t_{\mathfrak q}\), then
\(\mathfrak c\oplus\mathfrak a_{\mathfrak q}\) would be an abelian subspace of
semisimple elements in \(\mathfrak q\) properly containing
\(\mathfrak b_{\mathfrak q}\), contradicting the maximality of
\(\mathfrak b_{\mathfrak q}\).  Since $\fm\subset Z_\fg(\faq)$, we obtain 
\[
Z_\fm(\ftq)=\fm\cap Z_\fg(\ftq)\subset Z_\fg(\faq)\cap Z_\fg(\ftq)=Z_\fg(\fbq).
\]
The Lie algebra $Z_\fg(\fbq)$ is abelian by assumption, hence so is $Z_\fm(\ftq)$.
Therefore $M/(M\cap H)$ is well-tempered.
\end{proof}

\begin{remark}
Lemma~\ref{lem:welltemp_hereditary} is the basic permanence statement needed later. Combining it with Theorem~\ref{thm_description} and the concrete descriptions
of the discrete series for the smaller symmetric spaces, as in Section~\ref{sec:dis} above and Section~\ref{sec:disc-welltemp} below, we shall see that the tempered spectrum of well-tempered symmetric spaces can be described in simple terms. 
\end{remark}

\subsection{Connection with tempered homogeneous spaces}

The terminology \emph{well-tempered} comes from a connection with recent work of Benoist and Kobayashi~\cite{BK15, BK21, BK22, BK23}, who study the following class of homogeneous spaces.

\begin{definition}[Benoist and Kobayashi \cite{BK15}] A $G$-homogeneous space $X$ is called \emph{$G$-tempered}, or simply \emph{tempered}, if the regular representation $\lambda_{X}$ of~$G$ on~$L^2(X)$ is weakly contained in the regular representation $\lambda_G$ of~$G$ on~$L^2(G)$; in other words, if every irreducible representation in the support of~$\lambda_{X}$ is tempered as a representation of~$G$.
\end{definition}

\begin{proposition}\label{prop:equiv-welltemp} Let $(G, H)$ be a reductive symmetric pair. The following conditions are equivalent:
\begin{enumerate}
\item\label{item-1} $G/H$ is well-tempered.
\item\label{item-2} There is an element in $\mathfrak q$ that is regular semisimple as an
element of \(\mathfrak g\);
\item\label{item-3} The set
\[
\{x \in \fq \mid  \text{$x$ is regular semisimple as an element of $\fg$} \}
\]
is dense in $\fq$ in the Euclidean topology.
\item\label{item-4} The set
\[
S=\{x \in \fq \mid  \text{$Z_\fh(x)$ is abelian} \}
\]
is dense in $\fq$ in the Euclidean topology.
\end{enumerate}
\end{proposition}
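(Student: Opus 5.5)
The plan is to prove the cycle $(1)\Rightarrow(3)\Rightarrow(2)\Rightarrow(1)$, and separately $(1)\Leftrightarrow(4)$. The main tool is the structure of the set $\fq_{\mathrm{rs}}$ of elements of $\fq$ that are semisimple and regular as elements of $\fq$, i.e.\ whose centralizer in $\fq$ is a Cartan subspace. By standard facts on reductive symmetric pairs (Kostant--Rallis for the complexification, or Oshima--Matsuki), $\fq_{\mathrm{rs}}$ is open and dense in $\fq$; it is the complement of the zero set of a nonzero polynomial. For $x\in\fq_{\mathrm{rs}}$ we have $Z_\fq(x)=\fbq$, a Cartan subspace, and $Z_\fg(x)=Z_\fg(\fbq)$.

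For $(1)\Rightarrow(3)$: take $x\in\fq_{\mathrm{rs}}$ with Cartan subspace $\fbq=Z_\fq(x)$. Since $x$ is semisimple, $Z_\fg(x)$ is reductive and $\sigma$-stable, with $Z_\fg(x)\cap\fq=\fbq$. Any element of $Z_\fg(x)$ centralizes $\fbq$, because $\fbq$ lies in the center of $Z_\fg(x)$ (this needs a short argument: $\fbq$ is maximal abelian in $Z_\fg(x)\cap\fq$, and one uses the generic choice of $x$). Hence $Z_\fg(x)=Z_\fg(\fbq)$, which is abelian by Lemma~\ref{lem:welltemp-all-cartan}(2). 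Thus $x$ is regular semisimple in $\fg$, and by density of $\fq_{\mathrm{rs}}$ we get (3). The implication $(3)\Rightarrow(2)$ is trivial. For $(2)\Rightarrow(1)$: if $x\in\fq$ is regular semisimple in $\fg$, then $Z_\fg(x)$ is a Cartan subalgebra, hence abelian and $\sigma$-stable. Its $\fq$-part $\fbq:=Z_\fq(x)$ is an abelian space of semisimple elements. It is maximal, because anything in $\fq$ commuting with $\fbq\ni x$ lies in $Z_\fg(x)$. So $\fbq$ is a Cartan subspace with $Z_\fg(\fbq)\subset Z_\fg(x)$, and $Z_\fg(\fbq)$ is therefore abelian.

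For $(1)\Rightarrow(4)$: by (3), a dense set of $x\in\fq$ has $Z_\fg(x)$ abelian, hence $Z_\fh(x)$ abelian, so $S$ is dense. For $(4)\Rightarrow(1)$: since $S$ is dense and $\fq_{\mathrm{rs}}$ is open dense, I will first argue that $S$ meets $\fq_{\mathrm{rs}}$. The point is that $S$ is a constructible set defined by a condition on centralizers, and the dimension of $Z_\fh(x)$ is upper semicontinuous, so it should contain a nonempty open subset. I expect this to be the delicate point; a cleaner route is to note that along $\fq_{\mathrm{rs}}$ the algebra $Z_\fh(x)=Z_\fh(\fbq)$ is locally constant up to conjugation. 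Taking $x\in S\cap\fq_{\mathrm{rs}}$ with Cartan subspace $\fbq$, we have
\[
Z_\fg(\fbq)=Z_\fh(\fbq)\oplus\fbq,
\]
using $\sigma$-stability and $Z_\fq(\fbq)=\fbq$. Here $Z_\fh(\fbq)=Z_\fh(x)$ is abelian, and it commutes with $\fbq$, which is itself abelian. So $Z_\fg(\fbq)$ is abelian, which gives (1).

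The main obstacle is the structural input: the equality $Z_\fg(x)=Z_\fg(Z_\fq(x))$ for generic $x\in\fq$, and density of $\fq_{\mathrm{rs}}$. I would obtain both by complexifying and invoking Kostant--Rallis. There, generic elements of $\fq^\bC$ are regular in $\fq^\bC$, and the centralizer of a regular semisimple element of $\fq^\bC$ equals the centralizer of its Cartan subspace. This transfers back to $\fq$ because the real regular set is the real points of a nonempty Zariski open set.
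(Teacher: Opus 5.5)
Your argument is correct, but it is organized differently from the paper's. The paper runs a single cycle (1)$\Rightarrow$(2)$\Rightarrow$(3)$\Rightarrow$(4)$\Rightarrow$(1). For (1)$\Rightarrow$(2) it notes that $\fb=Z_\fg(\fbq)$ is then a Cartan subalgebra of $\fg$ and that no root of $(\fg^\bC,\fb^\bC)$ vanishes identically on $\fbq$, and it picks $x\in\fbq$ off all root hyperplanes. Then (2)$\Rightarrow$(3) is just Zariski openness of the $\fg$-regular semisimple locus.

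You instead prove (1)$\Rightarrow$(3) by showing that every $\fq$-regular semisimple element is $\fg$-regular. This uses $Z_\fg(x)=Z_\fg(Z_\fq(x))$ together with the fact that \emph{every} Cartan subspace has abelian centralizer (Lemma~\ref{lem:welltemp-all-cartan}). You also prove (2)$\Rightarrow$(1) directly with $\fbq=Z_\fq(x)$. The paper's route is more economical at this stage: it needs one Cartan subspace and no density theorem for $\fq$-regular elements until (4)$\Rightarrow$(1). Yours gives the by-product that, for well-tempered $X$, the $\fq$-regular and $\fg$-regular semisimple elements of $\fq$ coincide. Your steps (1)$\Rightarrow$(4) and (4)$\Rightarrow$(1) are essentially the paper's.

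Two of your worries are unnecessary. First, $S\cap\fq_{\mathrm{rs}}\neq\varnothing$ is immediate, because a dense set meets every nonempty open set; no constructibility or semicontinuity argument is needed, and this is exactly how the paper proceeds. Second, $Z_\fg(x)=Z_\fg(\fbq)$ for $x\in\fq_{\mathrm{rs}}$ needs no further genericity. The $\fq$-part of the derived algebra of the $\sigma$-stable reductive algebra $Z_\fg(x)$ lies in the abelian space $\fbq$. It is therefore an abelian ideal of a semisimple algebra, so it vanishes, which forces $\fbq$ to be central in $Z_\fg(x)$.
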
 
\begin{proof}
\eqref{item-1} \(\implies\) \eqref{item-2}:
Let \(\fbq\subset\fq\) be a Cartan subspace and put
\[
\fb=Z_\fg(\fbq).
\]
By assumption, \(\fb\) is abelian. Since \(\fbq\subset\fb\), we have
\[
\fb=Z_\fg(\fbq)\supseteq Z_\fg(\fb)\supseteq\fb,
\]
and hence \(Z_\fg(\fb)=\fb\). Moreover, \(\fb\) is reductive, since it is
the centralizer of a subspace of semisimple elements. Thus \(\fb\) is a Cartan subalgebra of \(\fg\).

No root in \(\Sigma(\fg^\bC,\fb^\bC)\) vanishes identically on \(\fbq\):
otherwise the corresponding root space would be contained in
\(Z_{\fg^\bC}(\fbq^\bC)=\fb^\bC\). We may therefore choose
\(x\in\fbq\) such that
\[
\alpha(x)\neq0
\qquad
\text{for every }\alpha\in\Sigma(\fg^\bC,\fb^\bC).
\]
Then \(x\) is a regular semisimple element of \(\fg\) contained in \(\fq\).

\eqref{item-2} \(\implies\) \eqref{item-3}:
The set of regular semisimple elements of \(\fg\) is Zariski open.
Its intersection with \(\fq\) is nonempty by assumption and is therefore
Zariski open dense, hence Euclidean open dense, in \(\fq\).

\eqref{item-3} \(\implies\) \eqref{item-4}:
If \(x\in\fq\) is regular semisimple as an element of \(\fg\), then
\(Z_\fg(x)\) is an abelian Cartan subalgebra. Since it is
\(\sigma\)-stable, it decomposes as
\[
Z_\fg(x)=Z_\fh(x)\oplus Z_\fq(x).
\]
In particular, \(Z_\fh(x)\) is abelian, so \(S\) contains every
\(\fg\)-regular semisimple element of \(\fq\). Condition~\eqref{item-3}
therefore implies that \(S\) is dense in \(\fq\).

\eqref{item-4} \(\implies\) \eqref{item-1}:
Let \(\fq^{ss,\mathrm{reg}}\) denote the set of semisimple elements of
\(\fq\) that are regular for the symmetric pair \((\fg,\fh)\), equivalently
those semisimple \(x\) for which \(\dim Z_\fq(x)\) is minimal. This set is
open and dense in \(\fq\); see \cite[Theorem~5 and the preceding
discussion]{vanDijk}. Since \(S\) is dense and
\(\fq^{ss,\mathrm{reg}}\) is nonempty and open, we may choose
\[
x\in S\cap\fq^{ss,\mathrm{reg}}.
\]
Set
\[
\fbq=Z_\fq(x).
\]
Then \(\fbq\) is a Cartan subspace of \(\fq\). Relative to \(\fbq\),
regularity means that \(\alpha(x)\neq0\) for every nonzero restricted root
\(\alpha\). The restricted-root decomposition therefore gives
\[
Z_\fg(x)=Z_\fg(\fbq)
       =Z_\fh(\fbq)\oplus\fbq.
\]
In particular, \(Z_\fh(x)=Z_\fh(\fbq)\). Since \(x\in S\),
\(Z_\fh(x)\) is abelian. The subspace \(\fbq\) is also abelian, and
\(Z_\fh(\fbq)\) centralizes it. Hence \(Z_\fg(\fbq)\) is abelian, proving
that \(G/H\) is well-tempered.
\end{proof}

For the next statement, by a \emph{complex symmetric space} we mean a symmetric space $G/H$ such that $G$ is a complex  semisimple algebraic group and $\sigma$ is a holomorphic involution of $G$.

\begin{theorem}[Benoist--Kobayashi {\cite[Corollary 8.6]{BK21}}]\label{prop:complex-temp-welltemp}
Let $G/H$ be a complex symmetric space.
Then $G/H$ is tempered if and only if it is well-tempered.
\end{theorem}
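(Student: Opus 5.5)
The plan is to reduce the statement to the characterization of well-temperedness in Proposition~\ref{prop:equiv-welltemp}\eqref{item-4}, and then invoke the Benoist--Kobayashi temperedness criterion for the action of $H$ on $\fq\cong\fg/\fh$. The relevant input is the main theorem of \cite{BK21}. For a complex semisimple algebraic group $G$ and a complex algebraic reductive subgroup $H$, the space $L^2(G/H)$ is $G$-tempered if and only if the set of points of $\fg/\fh$ whose stabilizer in $\fh$ is abelian is dense in $\fg/\fh$. They prove this by reducing temperedness to the inequality $\rho_{\fh}\le\rho_{\fg/\fh}$ on $\fh$ (from \cite{BK15}), and then showing that, for a complex representation $V$ of a complex reductive $\fh$, the inequality $2\rho_\fh\le\rho_V$ is equivalent to generic stabilizers in $\fh$ being abelian.

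First, I would check that a complex symmetric space fits their hypotheses. Since $\sigma$ is holomorphic and $G$ is complex algebraic, $H$ is an open subgroup of the algebraic group $G^\sigma$, hence a complex reductive algebraic subgroup. Second, I would identify the $H$-module $\fg/\fh$ with $\fq$ via the $\sigma$-decomposition $\fg=\fh\oplus\fq$; this decomposition is $\mathrm{Ad}(H)$-equivariant. Under this identification, the $\fh$-stabilizer of $x\in\fq$ is exactly $Z_\fh(x)$. Consequently, the Benoist--Kobayashi density condition is literally the set $S$ of Proposition~\ref{prop:equiv-welltemp}\eqref{item-4} being dense. By that proposition, this is equivalent to $G/H$ being well-tempered. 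Combining the two equivalences gives the theorem.

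The main obstacle is entirely in the cited criterion, specifically the implication ``$\rho$-inequality $\Rightarrow$ abelian generic stabilizer'' for complex groups. If I had to reprove it, I would treat the two directions differently. For well-tempered $\Rightarrow$ tempered, I would verify $\rho_\fh\le\rho_\fq$ directly on a Cartan subalgebra $\fth$ of $\fh$ centralizing a Cartan subspace $\fbq$. Using the well-tempered hypothesis, every root of $\Sigma(\fg,\fb)$ with $\fb=\fbh\oplus\fbq$ is nonzero on $\fbq$. Each root $\alpha$ of $\fh$ therefore comes paired with its $\sigma$-conjugate, and the pair contributes a weight of $\fq$ at least as large. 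I expect this to give $2\rho_\fh\le\rho_\fq$ by a counting argument over $\sigma$-orbits of roots. The converse is genuinely harder: a nonabelian generic stabilizer must be shown to force a violation of the inequality. There I would follow Benoist--Kobayashi and reduce to the generic stabilizer, which is a reductive subalgebra $Z_\fh(\fbq)$ acting trivially on $\fbq$, and exhibit an element of it where $\rho_\fh$ exceeds $\rho_\fq$. In the present argument, however, I would simply cite \cite[Corollary 8.6]{BK21} for this step.
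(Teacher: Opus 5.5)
This is the paper's argument. You invoke \cite[Corollary~8.6]{BK21} for complex semisimple algebraic $G$ and complex reductive $H$, identify $\fg/\fh$ with $\fq$ so that the $\fh$-stabilizer of $x\in\fq$ is $Z_{\fh}(x)$, and conclude by condition~(4) of Proposition~\ref{prop:equiv-welltemp}. Drop your side remarks on how Benoist--Kobayashi prove their criterion: the stated equivalence between $2\rho_{\fh}\le\rho_V$ and abelian generic stabilizers is false as written, since for $V=\fh$ (the group case) generic stabilizers are Cartan subalgebras while $2\rho_{\fh}\le\rho_{\fh}$ fails.
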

\begin{proof}
By \cite[Corollary~8.6]{BK21}, which is a consequence of \cite[Theorem~1.6 and criterion (1.1)]{BK21}, for complex semisimple algebraic \(G\) and
complex reductive~\(H\), temperedness of \(L^2(G/H)\) is equivalent to the
density of points in \(\mathfrak q\) with abelian stabilizer in \(\mathfrak h\).
For a complex symmetric space, this is condition~(4) of
Proposition~\ref{prop:equiv-welltemp}, hence it is equivalent to
well-temperedness.
\end{proof}
\begin{remark} Suppose $G/H$ is a complex symmetric space,  and let $G_{\R}$ and $H_\R$ be the fixed points in $G$ and $H$ of an antiholomorphic involution of~$G$ commuting with~$\sigma$, so that we may form the real symmetric space $G_\R/H_\R$. Then it is a consequence of Lemma~\ref{lem:welltemp-all-cartan} that $G_{\R}/H_\R$ is well-tempered if and only if $G/H$ is well-tempered. Taking into account Theorem~\ref{prop:complex-temp-welltemp}, we see that \emph{$G_{\R}/H_\R$ is well-tempered if and only if $G/H$ is tempered}. 
\end{remark}

For general real symmetric spaces,  well-temperedness implies temperedness:

\begin{theorem}[Benoist--Kobayashi {\cite[Theorem 1.1]{BK21}}]\label{thm:temp-welltemp}
Let $G/H$ be a symmetric space where $G$ is a real semisimple algebraic group and $H$ is a real reductive algebraic subgroup. If $G/H$ is well-tempered, then $G/H$ is tempered. 
\end{theorem}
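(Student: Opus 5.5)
The plan is to reduce the statement to the Benoist--Kobayashi temperedness criterion for $L^2(G/H)$, stated in terms of generic stabilizers in $\fh$ acting on $\fq$. I would not pass through the Plancherel support. Concretely, I will use \cite[Theorem 1.1]{BK21}: if the set of points of $\fq$ whose stabilizer in $\fh$ (for the adjoint action) is abelian is dense in $\fq$, then $L^2(G/H)$ is $G$-tempered. More precisely, their criterion is the inequality $\rho_\fq\le\rho_\fh$ on $\fh$, and one of its sufficient forms is the abelian generic stabilizer condition. With this in hand, the proof becomes the verification of that density hypothesis in our setting, and Proposition~\ref{prop:equiv-welltemp} is exactly the tool for it.

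The steps, in order, are as follows. First, assume $G/H$ is well-tempered. By Proposition~\ref{prop:equiv-welltemp}, \eqref{item-1}$\Rightarrow$\eqref{item-4}, the set $S=\{x\in\fq : Z_\fh(x)\text{ abelian}\}$ is dense in $\fq$. Second, observe that the stabilizer of $x\in\fq$ for the isotropy representation $\Ad\colon H\to\GL(\fq)$ has Lie algebra exactly $Z_\fh(x)$. Hence the stabilizer in $\fh$ of a generic point of the $H$-module $\fq$ is abelian. Third, identify $\fq$ with $\fg/\fh$ as an $H$-module, via the $\sigma$-splitting $\fg=\fh\oplus\fq$, which is $H$-equivariant. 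This puts us in the Benoist--Kobayashi framework for the homogeneous space $G/H$ with $G$ real semisimple algebraic and $H$ real reductive algebraic. Their theorem then says that $\lambda_X$ is weakly contained in $\lambda_G$, which is temperedness in the sense of the definition above.

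The main obstacle is matching the exact form of the hypothesis in \cite{BK21}. If their theorem is stated only through $\rho_{\fg/\fh}\le\rho_\fh$, I would derive that inequality directly from the density of abelian stabilizers. Their Proposition relating generic abelian stabilizers to the inequality $\rho_V\le\rho_\fh$ for an $\fh$-module $V$ does this: for $Y$ in a maximal split abelian subspace of $\fh$, one compares the sums of positive eigenvalues of $\mathrm{ad}(Y)$ on $\fq$ and on $\fh$. The abelian generic stabilizer gives an injection-type counting argument, using an orbit-dimension comparison, showing that the eigenvalues on $\fq$ dominate those on $\fh$.

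There is also a secondary point: real algebraicity is needed so that the Zariski density in Proposition~\ref{prop:equiv-welltemp} applies to the real points. This is already built into the hypotheses of the statement. For disconnected $H$, passing to $H_0$ changes neither $\fh$ nor temperedness, since $L^2(G/H)$ is a subrepresentation of $L^2(G/H_0)$.
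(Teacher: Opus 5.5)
Your route is the same as the paper's. Proposition~\ref{prop:equiv-welltemp} gives density in $\fq$ of points with abelian centralizer in $\fh$, and one then invokes \cite[Theorem~1.1 (2)]{BK21}. The paper uses item~(3) and you use item~(4), which makes no difference. However, you skip the one step the paper actually writes out. The Benoist--Kobayashi criterion asks for density of points of $\fg/\fh\cong\fq$ whose stabilizer in the \emph{group} $H$ is virtually abelian. It does not ask about points whose stabilizer in $\fh$ is abelian. To bridge the two, you need $Z_H(x)$ to have finitely many connected components. This holds because $Z_H(x)$ is a real algebraic group. Then $Z_H(x)_0$, whose Lie algebra is the abelian algebra $Z_\fh(x)$, is an abelian subgroup of finite index. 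This is where the algebraicity hypothesis enters, besides being a standing hypothesis of \cite{BK21}. It does not enter in Proposition~\ref{prop:equiv-welltemp}, whose density statements are purely Lie-algebraic: a nonempty Zariski-open subset of the real vector space $\fq$ is automatically Euclidean-dense.

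There are also two smaller points. First, the Benoist--Kobayashi temperedness criterion is $\rho_\fh\le\rho_{\fg/\fh}$. You write the inequality reversed twice, although your verbal conclusion (the eigenvalues on $\fq$ dominate those on $\fh$) has the right direction. Second, your fallback sketch is not an argument as it stands. An orbit-dimension comparison such as $\dim\fh-\dim Z_\fh(x)\le\dim\fq$ controls dimensions, not weighted sums of eigenvalues, so it is much weaker than the inequality needed. That paragraph is best read as a pointer to \cite{BK21} rather than a proof. Finally, the reduction to $H_0$ is unnecessary, since \cite{BK21} applies to $H$ directly.
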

\begin{proof}
Suppose $G/H$ is well-tempered. By Proposition \ref{prop:equiv-welltemp}, regular semisimple elements are dense in $\fq$, and for any such $x\in \fq$, $Z_H(x)$ is real reductive algebraic and \(Z_\fh(x)\) is abelian. Hence the identity component
\(Z_H(x)_0\) is abelian and has finite index in \(Z_H(x)\), so
\(Z_H(x)\) is virtually abelian, meaning that it contains an abelian
subgroup of finite index \cite[p.~834]{BK21}.

It follows that elements $x$ in $\fq$ such that $Z_H(x)$ is virtually abelian are dense in $\fq$. By \cite[Theorem~1.1 (2)]{BK21}, this implies $G/H$ is tempered.
\end{proof}

\begin{remark} The algebraic hypotheses in Theorem~\ref{thm:temp-welltemp} are those of
\cite{BK21}. For more general reductive symmetric spaces, the fact that well-temperedness implies temperedness also follows
from the Plancherel theorem together with the description of the discrete series of well-tempered symmetric spaces. We shall omit the details.
\end{remark}

\subsection{Examples and non-examples}\label{sec:examples_welltempered}

The next examples show that many tempered symmetric spaces are well-tempered, and should give a sense of which tempered spaces are not well-tempered.

\begin{example}[{\cite[Example 8.9]{BK21}}]\label{ex:GcGr-welltemp}
Let $G$ be a connected complex reductive Lie group, regarded as a real Lie group, and let $H$ be an open subgroup of a real form of $G$.
Then $G/H$ is well-tempered.
\end{example}

\begin{proof}
Write $\fg=\fh\oplus i\fh$ as real Lie algebras, so $\fq=i\fh$. Let $\fbh\subset \fh$ be a Cartan subalgebra of $\fh$. Then $\fbq=i\fbh\subset \fq$ is a Cartan subspace of $\fq$. Since the complexification $\fbh^{\mathbb C}=\fbh \oplus i\fbh $ is a Cartan subalgebra of $\fg$, the space $\fbh^{\mathbb C}$ is its own  centralizer in $\fg$. 
Hence $Z_\fg(\fbq)=\fbh\oplus i\fbh$ is abelian.
\end{proof}

\begin{example}\label{ex:GmodK-welltemp}
Let $G$ be a connected semisimple Lie group with finite center, and let $K$ be a maximal compact subgroup of~$G$. The Riemannian symmetric space $G/K$ is well-tempered if and only if $G$ is quasi-split.
\end{example}

Note, on the other hand, that the Riemannian symmetric space $G/K$ is always tempered since $L^2(G/K)$ is contained in $L^2(G)$.

\begin{proof}
Let $\theta$ be an involution of~$G$ with fixed-point set $K$, and let $\fp$ be the $(-1)$-eigenspace of $\theta$ on $\fg$. A Cartan subspace of $\fq=\fp$ is exactly a maximal abelian subspace $\fa\subset \fp$. Hence, $G/K$ is well-tempered if and only if $Z_\fg(\fa)$ is abelian. For a connected semisimple real Lie group, this occurs if and only if $G$ is quasi-split. 
\end{proof}

\begin{example}[{\cite[Example 5.3]{BK22}}]\label{ex:GcKc-welltemp}
Let $G_1$ be a connected real semisimple Lie group with maximal compact subgroup $K_1$. Let $G=G_{1,\bC}$, and let $H=K_{1,\bC}$.
Then the complex symmetric space $G/H = G_{1,\mathbb C}/K_{1,\mathbb C}$ is well-tempered if and only if $G_1$ is quasi-split.
\end{example}

\begin{proof}
The complex symmetric space $G_{1,\mathbb C}/K_{1,\mathbb C}$ is well-tempered if and only if the real form $G_{1}/K_1$ is well-tempered: this follows from Lemma \ref{lem:welltemp-all-cartan}. As in Example \ref{ex:GmodK-welltemp}, $G_{1}/K_1$ is well-tempered exactly when $G_1$ is quasi-split.
\end{proof}

\begin{example}\label{ex:quaternionic-exception} Let $\mathbb{H}$ denote the field of quaternions. 
For every $m\geq 2$, the symmetric space
\[
\mathrm{SL}(2m-1,\mathbb H)\big/S\bigl(\mathrm{GL}(m,\mathbb H)\times \mathrm{GL}(m-1,\mathbb H)\bigr)
\]
is tempered but not well-tempered.
\end{example}

\begin{proof}
See \cite[Section~8.5(2)]{BK21} for the temperedness. To see that the
space is not well-tempered, let \(e_{r,s}\) denote the standard quaternionic
matrix unit and fix an imaginary quaternion
\(\mathbf i\in\mathbb H\) with \(\mathbf i^2=-1\).
A Cartan subspace \(\fbq\subset\fq\) is spanned over \(\bR\) by
\[
e_{r,m+r}+e_{m+r,r},
\qquad
\mathbf i(e_{r,m+r}+e_{m+r,r}),
\qquad 1\leq r\leq m-1.
\]
Its centralizer contains the nonabelian Lie subalgebra
\[
(\operatorname{Im}\mathbb H)e_{m,m}\cong\mathfrak{sp}(1),
\]
and therefore is not abelian.
\end{proof}

The next statement is a specialization to symmetric pairs of 
\cite[Theorem~1.7]{BK21}, taking into account the temperedness criterion introduced in \cite[Theorem~4.1]{BK15}. It says that, in the absence of a compact factor for~$\fh$, any tempered symmetric space that is not well-tempered is either among the examples above or among a small number of exceptional cases.

\begin{theorem}[Benoist–Kobayashi, {\cite[Theorem 1.7]{BK21}}]
Let $(G, H)$ be a symmetric pair. Suppose $\fg$ is a real simple Lie algebra and $\fh$ is a semisimple Lie subalgebra whose adjoint group has no compact factor. Assume $\fg$ is isomorphic  neither to $\mathfrak{e}_{6(-26)}$ nor to $\mathfrak{e}_{6(-14)}$. Then $G/H$ is tempered if and only if one of the following holds:
\begin{enumerate}
\item $G/H$ is well-tempered; 
\item $(\fg, \fh) \cong (\mathfrak{sl}(2m-1,\mathbb{H}),\ \mathfrak{sl}(m, \mathbb{H}) \oplus \mathfrak{sl}(m-1,\mathbb{H}))$ for some $ m \geq 2$.
\end{enumerate}
\end{theorem}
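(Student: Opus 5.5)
The plan is to derive the statement from Benoist--Kobayashi's classification of tempered symmetric pairs, \cite[Theorem~1.7]{BK21}. Their theorem covers arbitrary real reductive homogeneous spaces $G/H$ with $\fg$ simple and $\fh$ without compact factor, and lists the tempered ones in terms of the temperedness criterion of \cite[Theorem~4.1]{BK15}. That criterion says that $L^2(G/H)$ is tempered if and only if $\rho_\fh\le\rho_{\fg/\fh}$ on $\fh$ (equivalently on a maximal split abelian subspace of $\fh$). The first step is to restrict their list to the symmetric pairs. One reads off from their tables the symmetric pairs $(\fg,\fh)$ for which the inequality holds, excluding $\mathfrak{e}_{6(-26)}$ and $\mathfrak{e}_{6(-14)}$, which they also set aside.

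The second step is to compare this list with well-temperedness. By Proposition~\ref{prop:equiv-welltemp}, condition~(4), well-temperedness means that the set of $x\in\fq$ with $Z_\fh(x)$ abelian is dense. Benoist--Kobayashi's proof of \cite[Theorem~1.7]{BK21} shows that, apart from a short explicit list, temperedness coincides with the condition that generic stabilizers in $\fh$ of points of $\fg/\fh\cong\fq$ are virtually abelian (their criterion~(1.1)). For symmetric pairs, $\fh$ acts on $\fq$ by the isotropy representation. Generic stabilizers are therefore $Z_\fh(x)$ for $x$ regular semisimple in $\fq$, which equals $Z_\fh(\fbq)$ for a Cartan subspace $\fbq$. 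Virtual abelianness of $Z_H(x)$ is equivalent to abelianness of the Lie algebra $Z_\fh(\fbq)$, and hence, as in the proof of (4)$\Rightarrow$(1), to abelianness of $Z_\fg(\fbq)=Z_\fh(\fbq)\oplus\fbq$. So, outside the exceptional list, tempered is the same as well-tempered. Conversely, well-tempered implies tempered by Theorem~\ref{thm:temp-welltemp}.

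The third step is to identify which exceptional entries of \cite[Theorem~1.7]{BK21} are symmetric pairs. I expect this to be the main obstacle, because it requires case-by-case inspection of their exceptional list against Berger's classification of semisimple symmetric pairs. The entries where $\fh$ is not symmetric in $\fg$, or where $\fh$ has compact factors, are discarded. The only survivor should be $(\mathfrak{sl}(2m-1,\mathbb H),\mathfrak{sl}(m,\mathbb H)\oplus\mathfrak{sl}(m-1,\mathbb H))$. For this pair, Example~\ref{ex:quaternionic-exception} already verifies both temperedness and the failure of well-temperedness. The hypothesis that $\fh$ be semisimple rules out the $\mathfrak{gl}$-versions, whose center would be compact or split and would affect the criterion.

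Combining the three steps, $G/H$ is tempered exactly when it is well-tempered or is the quaternionic exception, which proves the stated equivalence.
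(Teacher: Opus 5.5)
Your proposal follows the paper's route. The paper gives no independent proof: it records the statement as the specialization to symmetric pairs of \cite[Theorem~1.7]{BK21}, combined with the criterion of \cite[Theorem~4.1]{BK15}, and identifies Benoist--Kobayashi's abelian-generic-stabilizer condition with well-temperedness through Proposition~\ref{prop:equiv-welltemp}(4), which is exactly your Steps~1--2. The paper does not carry out your Step~3 (checking the exceptional entries against Berger's list); that is absorbed into the citation, and, as you note, Example~\ref{ex:quaternionic-exception} handles the exceptional pair.
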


We remark, however, that the situation becomes more complicated in the presence of a compact factor. For example, let $X=G/H=\mathrm{SO}(p+q,1)/\bigl(\mathrm{SO}(p)\times \mathrm{SO}(q,1)\bigr)$, where $p\geq q+1$. Then $X$ has no discrete series, since $\mathrm{rk}(G/H)=q+1=\mathrm{rk}(K/(K\cap H))+1$. Moreover, the Plancherel support consists of principal series representations of $G$. Consequently, $X$ is tempered, whereas it is well-tempered if and only if $p\leq q+3$.


Let us finish this section with further examples of symmetric spaces related to classical groups (see \cite[Sections 8.3–8.4]{BK21}). Together with the previous proposition, these examples illustrate that for real symmetric spaces, temperedness can be a rather strong condition, but well-temperedness is not much more restrictive than temperedness. 

\begin{examples}\label{example_welltempered_classical}~
\begin{itemize}
    \item Let $p_1, p_2, q_1, q_2 \geq1$. The symmetric space \[
\mathrm{SO}(p_1+p_2,q_1+q_2)/
\bigl(\mathrm{SO}(p_1,q_1)\times \mathrm{SO}(p_2,q_2)\bigr)
\]
is tempered if and only if 
    \[
    \left|p_1+q_1-p_2-q_2\right| \leq 2;
    \]
    In that case it is well-tempered.
    \item The symmetric space
    $\mathrm{SL}(m+n, \R) / \mathrm{S}(\mathrm{GL}(m, \R) \times \mathrm{GL}(n, \R))$ is tempered if and only if $|m-n| \leq 1$; in that case it is well-tempered.
    \item The symmetric space $\mathrm{SO}(m+n, \mathbb{C})/ \mathrm{SO}(m, \mathbb{C}) \times \mathrm{SO}(n, \mathbb{C})$ is tempered if and only if $|m-n| \leq 2$; in that case it is well-tempered.
    \item The space $\mathrm{SL}(p+q, \mathbb{R}) / \mathrm{SO}(p, q)$ is well-tempered for any $p,q$.
\end{itemize}
\end{examples}

\begin{blank} For more examples of tempered homogeneous spaces, we refer the reader to \cite{BK15,BK21, BK22}. For classical symmetric spaces $X$ satisfying the equal-rank condition, Mœglin and Renard \cite[Table 1]{Moeglin_Renard} have explicit computations of the centralizer $\fl_X$ of a compact Cartan subspace. In that table, we can easily read off which spaces $X$ are well-tempered: they are well-tempered precisely when $\fl_X$ is abelian. For example, $\mathrm{Sp}(2n, \bR)/\mathrm{GL}(n, \bR)$ is an equal-rank well-tempered symmetric space for any $n\geq1$.
\end{blank}

\section{{The Plancherel support of well-tempered symmetric spaces}}\label{sec:discrete-welltemp}

In this section we describe the support of the Plancherel measure for a
well-tempered symmetric space in terms of characters of Cartan subgroups, more specifically in terms of Vogan's notion of \emph{character data} from \cite{Voganbook}. Throughout the section, unless explicitly stated otherwise, we assume \(G\) is in
\emph{Vogan's class}: by this we mean that \(G\) is a real reductive group
in Harish--Chandra's class, is linear, and has abelian Cartan subgroups,
as in \cite[(0.1.2)]{Voganbook}. As before, \(\sigma\) is an involution of \(G\), \(H\) is an open subgroup of~\(G^\sigma\), and
$
X=G/H
$.
We assume that \(X\) is well-tempered in the sense of
Definition~\ref{def_welltemp}. Whenever the discrete series of \(X\) is
discussed, we assume in addition that \(X\) has discrete series.

The section is organized as follows. First, we recall the part of
Vogan's parametrization that we use below. Second, in the equal-rank
case we reformulate the discrete-series parametrization of Section~\ref{sec:dis}
in terms of character data and the torus form of the Cartan--Helgason
criterion from the Appendix. Finally, for a general well-tempered symmetric
space, we combine this discrete-series description with the support theorem
of Section~\ref{sec:support}, and explain the resulting description of the Plancherel support and what it reveals about the geometry of the latter.

\subsection{Tempered character data for reductive groups}\label{subsec:vogan-character-data}

Let \(TA\) be a \(\theta\)-stable Cartan subgroup of \(G\), with
\(T\subset K\) and \(A\subset \exp(\fp)\). We write
\[
\ft=\operatorname{Lie}(T),
\qquad
\fa=\operatorname{Lie}(A),
\qquad
\fb=\ft\oplus\fa.
\]
Given an element \(\lambda\in i\ft^\ast\), we continue to write \(\lambda\) for its
extension by zero to an element of \(i\fb^\ast\). Here \(T\) need not be connected. Write
\[
Z_G(A)=MA
\]
for the corresponding cuspidal Levi subgroup, with Lie algebra \(\fm\oplus\fa\).
Every element \(\lambda\in i\ft^\ast\) which is \(M\)-regular, i.e. which satisfies \
\[
\langle\alpha,\lambda\rangle\neq0
\qquad
\text{for all }\alpha\in\Sigma(\fm^\bC,\ft^\bC),
\]
 determines a positive system
\(\Sigma^+(\fm^\bC,\ft^\bC)\). Set
\[
\rho_{\fm}
=
\frac12\sum_{\alpha\in\Sigma^+(\fm^\bC,\ft^\bC)}\alpha,
\qquad
\rho_{\fm\cap\fk}
=
\frac12\sum_{\alpha\in
\Sigma^+(\fm^\bC,\ft^\bC)\cap
\Sigma((\fm\cap\fk)^\bC,\ft^\bC)}\alpha,
\]
omitting the dependence on~$\lambda$.

\begin{definition} \label{def-character-datum-G} A  \emph{tempered character datum for \(G\)} is a quadruple
\[
(TA,\Gamma,\lambda,\nu),
\]
where \(TA\) is a \(\theta\)-stable Cartan subgroup as above,
\(\Gamma\) is a character of \(T\), \(\lambda\in i\ft^\ast\) is
\(M\)-regular, \(\nu\in i\fa^\ast\), and
\[
d\Gamma=\lambda+\rho_{\fm}-2\rho_{\fm\cap\fk}
\qquad\text{in }i\ft^\ast.
\]
\end{definition} 
This is Vogan's notion of character datum in the tempered case
\cite[Definition~6.6.1]{Voganbook}; see also the proof of 
\cite[Proposition~6.6.2]{Voganbook}. If we are only given a triple $(TA, \Gamma, \lambda)$ where $TA, \Gamma$ and $\lambda$ satisfy the conditions in the definition, omitting~\(\nu\), we shall call
\((TA,\Gamma,\lambda)\) a \emph{discrete character datum for \(G\)}.

To any tempered character datum $(TA,\Gamma,\lambda,\nu)$ for~$G$, Vogan associates a standard representation
\[
X_G(TA,\Gamma,\lambda,\nu),
\]
which we shall take to be the one specified in  \cite[Convention~6.6.3]{Voganbook}. By \cite[Theorem 6.6.15]{Voganbook}, if \(MAN\) is a parabolic
subgroup with Levi factor \(MA\), then
\[
X_G(TA,\Gamma,\lambda,\nu)
\simeq
\Ind_{MAN}^G(\xi_M \otimes e^\nu\otimes 1),
\]
where \(\xi_M\) is the discrete-series representation of \(M\) obtained by
cohomological induction from the \(T\)-character
\(\Gamma\otimes e^{-2\rho_{\fm\cap\fp}}\), with
\[
2\rho_{\fm\cap\fp}=2\rho_{\fm}-2\rho_{\fm\cap\fk}.
\]

\begin{lemma}\label{lem:rho-rho-eq}
Let \((TA,\Gamma,\lambda,\nu)\) be a tempered character datum for \(G\). Suppose that \(\lambda\) is \(G\)-regular, i.e.\
\[
\langle\alpha,\lambda\rangle\neq0
\qquad
\text{for every }\alpha\in\Sigma(\fg^\bC,\fb^\bC).
\]
Let \(\Sigma^+(\fg^\bC,\fb^\bC)\) be the unique positive system for which
\(\lambda\) is dominant, and let \(\Sigma^+(\fk^\bC,\ft^\bC)\) be the induced compact positive system. Let \(\rho_G\) and \(\rho_K\) be the corresponding
half-sums of positive roots. Then
\[
\rho_{\fm}-2\rho_{\fm\cap\fk}
=
\rho_G-2\rho_K
\]
in \(i\ft^\ast\).
\end{lemma}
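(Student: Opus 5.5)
The identity is an equality of restrictions to \(\ft\) of sums over roots. We will split the roots of \(\Sigma(\fg^\bC,\fb^\bC)\) outside \(\fm\) into \(\theta\)-orbits and show that their total contribution to \(\rho_G-2\rho_K\) restricted to \(\ft\) vanishes.

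First fix the conventions. Since \(\lambda\in i\ft^\ast\) is extended by zero on \(\fa\), the positive system \(\Sigma^+(\fg^\bC,\fb^\bC)\) determined by \(\lambda\) is \(\theta\)-stable. Indeed, \(\theta\) acts trivially on \(\ft\), so \(\langle\theta\alpha,\lambda\rangle=\langle\alpha,\lambda\rangle\). Its intersection with \(\Sigma(\fm^\bC,\ft^\bC)\) is the positive system for \(\fm\) determined by \(\lambda\); the roots of \(\fm\) are exactly the roots vanishing on \(\fa\). The compact positive system \(\Sigma^+(\fk^\bC,\ft^\bC)\) consists of the nonzero restrictions to \(\ft\) of the positive roots, counted with the appropriate multiplicities. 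Writing \(\rho_G|_\ft\) and \(2\rho_K\) as sums, we must show
\[
\tfrac12\sum_{\alpha\in\Sigma^+,\ \alpha|_\fa\neq0}\alpha|_\ft\;-\;\bigl(2\rho_K-2\rho_{\fm\cap\fk}\bigr)=0 .
\]

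The key step is to compute \(2\rho_K-2\rho_{\fm\cap\fk}\) root by root. For a pair \(\{\alpha,\theta\alpha\}\) with \(\alpha\neq\theta\alpha\), both roots are positive, since the positive system is \(\theta\)-stable. The two-dimensional space \(\fg^\bC_\alpha\oplus\fg^\bC_{\theta\alpha}\) is \(\theta\)-stable and splits into a \(+1\) and a \(-1\) eigenline, each of \(\ft\)-weight \(\beta=\alpha|_\ft=(\theta\alpha)|_\ft\). Thus the pair contributes \(\beta\) exactly once to \(2\rho_K\). It contributes \(\tfrac12(\alpha+\theta\alpha)|_\ft=\beta\) to \(\rho_G|_\ft\), and nothing to \(2\rho_{\fm\cap\fk}\) because \(\alpha|_\fa\neq0\).

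Next consider roots with \(\alpha=\theta\alpha\). Such a root vanishes on \(\fa\), hence is a root of \(\fm\). These are imaginary roots, either compact or noncompact, and they already occur in \(\rho_{\fm}\) and \(2\rho_{\fm\cap\fk}\), so they cancel from both sides. The roots with \(\theta\alpha=-\alpha\) are real; they vanish on \(\ft\) and are never \(\lambda\)-positive, which is consistent with \(\lambda\) being \(G\)-regular. So there are none. Summing over all \(\theta\)-pairs not in \(\fm\), the difference \(\rho_G-\rho_{\fm}\) restricted to \(\ft\) equals \(2\rho_K-2\rho_{\fm\cap\fk}\), which rearranges to the claim.

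The main obstacle is the bookkeeping of \(\rho_K\) when different positive root pairs restrict to the same \(\ft\)-weight, so that \(\ft\)-root spaces of \(\fk\) have dimension larger than one. We would handle this by working with \(\fk^\bC\)-weight spaces counted with multiplicity. This is legitimate because \(\rho_K\) is the half-sum of the \(\ft\)-weights of \(\fk^\bC\cap\mathfrak u\), where \(\mathfrak u\) is the sum of the positive root spaces. We then prove the cleaner statement
\[
2\rho_G|_\ft-2\rho_{\fm}=2\cdot\mathrm{wt}_\ft\bigl(\fk^\bC\cap\mathfrak u\bigr)-2\rho_{\fm\cap\fk}
\]
using the \(\theta\)-eigenline decomposition above. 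One must also check that \(\fk^\bC\cap\mathfrak u\) really defines the induced compact positive system, i.e. that no positive \(\ft\)-weight of \(\fk^\bC\) arises from a negative root. This follows from \(\theta\)-stability of the positive system together with \(\lambda\)-positivity of \(\beta\).
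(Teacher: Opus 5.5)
Your proposal is correct and is essentially the paper's argument: pair each positive root $\alpha$ not vanishing on $\fa$ with $\theta\alpha$ (both are positive because $\lambda\in i\ft^\ast$). The pair contributes $\alpha|_{\ft}$ to $\rho_G-\rho_{\fm}$ and, since $\fg^\bC_\alpha\oplus\fg^\bC_{\theta\alpha}$ splits into a $\fk^\bC$-line and a $\fp^\bC$-line of the same $\ft$-weight, it contributes $\tfrac12\alpha|_{\ft}$ to $\rho_K-\rho_{\fm\cap\fk}$; the imaginary roots cancel on both sides. The multiplicity bookkeeping you flag as the main obstacle is harmless, and in fact vacuous: $G$-regularity rules out real roots, so $\fb$ is fundamental, $\ft$ is a Cartan subalgebra of $\fk$, and the $\ft$-root spaces of $\fk^\bC$ are one-dimensional.
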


\begin{proof}
Since \(\lambda\) is \(G\)-regular, no root of
\((\fg^\bC,\fb^\bC)\) has zero pairing with \(\lambda\). We may
write a root of \((\fg^\bC,\fb^\bC)\) as \((\alpha,\beta)\in
(\ft^\ast\oplus\fa^\ast)^\bC\). Then
\[
\fg^\bC
=
\fm^\bC\oplus
\bigoplus_{(\alpha,\beta),\,\beta\neq0}\fg^\bC_{\alpha,\beta},
\qquad
\fm^\bC
=
\fb^\bC\oplus
\bigoplus_{(\alpha,0),\,\alpha\neq0}\fg^\bC_{\alpha,0}.
\]
The positive roots are determined by the sign of
\(\langle\alpha,\lambda\rangle\). We show that
\begin{equation}\label{eq:rho-difference}
\rho_G-\rho_{\fm}
=
2(\rho_K-\rho_{\fm\cap\fk}).
\end{equation}
The Cartan involution~\(\theta\) acts by the identity on \(\ft\) and by minus the
identity on \(\fa\), and hence sends a root
\((\alpha,\beta)\) to \((\alpha,-\beta)\). Since positivity is determined
by \(\langle\alpha,\lambda\rangle\), the two roots in this pair are
simultaneously positive or simultaneously negative.

For each positive pair with \(\beta\neq0\), its contribution to
\(\rho_G-\rho_{\fm}\), regarded as a functional on \(\ft^\bC\), is
\[
\frac12\bigl((\alpha,\beta)+(\alpha,-\beta)\bigr)
 \big|_{\ft^\bC}
=
\alpha.
\]
Moreover, \(\theta\) interchanges the two root spaces
\(\fg^\bC_{\alpha,\beta}\) and
\(\fg^\bC_{\alpha,-\beta}\). Their direct sum therefore decomposes into
one-dimensional \(+1\)- and \(-1\)-eigenspaces of~\(\theta\), both of
\(\ft^\bC\)-weight \(\alpha\), lying respectively in \(\fk^\bC\) and
\(\fp^\bC\). Thus the same pair contributes
\(\frac12\alpha\) to \(\rho_K-\rho_{\fm\cap\fk}\). Summing over all
positive pairs proves \eqref{eq:rho-difference}, and rearranging this
identity proves the lemma.
\end{proof}

\begin{definition} \label{def-rep-associated-to-chardatum} Given a tempered character datum $(TA, \Gamma, \lambda, \nu)$ for~\(G\),
we denote by
\[
\Phi_G(TA,\Gamma,\lambda,\nu)\subset\widehat G_{\temp}
\]
the (finite) subset of $\widehat{G}$ consisting of the (equivalence classes of the) irreducible constituents of
\(X_G(TA,\Gamma,\lambda,\nu)\). It is a subset of $\widehat{G}_{\temp}$, the tempered dual of~$G$. 

\end{definition}

\begin{remark} There is an obvious action of $K$ on the set of all tempered character data for~$G$, and on the set of discrete character data. We denote by \[
[TA,\Gamma,\lambda,\nu].
\]
the $K$-conjugacy class of a character datum \((TA, \Gamma, \lambda, \nu)\), and by \([TA,\Gamma,\lambda]\) the $K$-conjugacy class of a discrete character datum. In Definition~\ref{def-rep-associated-to-chardatum}, the set \(X_G(TA,\Gamma,\lambda,\nu)\) depends only on the $K$-conjugacy class $[TA,\Gamma,\lambda,\nu]$.
\end{remark}

\begin{notation}\label{notation-w} Given a character datum $(TA, \Gamma, \lambda, \nu)$,
set \begin{align*}
&W(\ft\oplus\fa) =N_K(\ft\oplus\fa)/Z_K(\ft\oplus\fa),
\\
& W(\lambda)=\{w\in W(\ft\oplus\fa):w\lambda=\lambda\}, \text{ and}\\
& W(\Gamma,\lambda)=\{w\in W(\lambda):w\Gamma=\Gamma\}.
\end{align*}
Let \(\mathcal C_G\) denote the set of \(K\)-conjugacy classes
\([TA,\Gamma,\lambda]\) of discrete character data for \(G\).
\end{notation}

The next statement summarizes what we will need on the classification of irreducible tempered representations of~\(G\) in terms of character data. \\

\begin{theorem}~
\label{thm:HC_disjoint}
\begin{enumerate}[(a)]
\item Given a discrete character datum $(TA, \Gamma, \lambda)$, the set  $\Phi_G(TA,\Gamma,\lambda,\nu)$ depends only on the equivalence class of $\nu$ in $i\fa^\ast/W(\Gamma,\lambda)$.
\item
The tempered dual of~\(G\) partitions as 
\[
\widehat G_{\temp}
=
\bigsqcup_{[TA,\Gamma,\lambda,\nu]}
\Phi_G(TA,\Gamma,\lambda,\nu)
=
\bigsqcup_{[TA,\Gamma,\lambda]\in\mathcal C_G}
\bigsqcup_{\nu\in i\fa^\ast/W(\Gamma,\lambda)}
\Phi_G(TA,\Gamma,\lambda,\nu).
\]
\item Given a character datum $(TA, \Gamma, \lambda, \nu)$, all elements of $\Phi_G(TA,\Gamma,\lambda,\nu)$ have the same image in the universal Hausdorff quotient $\widehat G_{\temp,\mathrm{Haus}}$. Sending a character datum $(TA, \Gamma, \lambda, \nu)$ to the image of  $\Phi_G(TA,\Gamma,\lambda,\nu)$  in $\widehat{G}_{\temp,\mathrm{Haus}}$ yields an identification 
\[
\widehat G_{\temp,\mathrm{Haus}}
\cong
\bigsqcup_{[TA,\Gamma,\lambda]\in\mathcal C_G}
 i\fa^\ast/W(\Gamma,\lambda).
\]
\end{enumerate}
\end{theorem}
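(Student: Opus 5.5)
We will derive all three parts from Vogan's classification of tempered representations in \cite[Chapter~6]{Voganbook} and Harish-Chandra's Plancherel theory. The tool throughout is the realization $X_G(TA,\Gamma,\lambda,\nu)\simeq \Ind_{MAN}^G(\xi_M\otimes e^\nu\otimes 1)$ recalled above from \cite[Theorem~6.6.15]{Voganbook}. Here $\xi_M$ is the discrete-series representation of $M$ determined by the discrete character datum $(TA,\Gamma,\lambda)$.

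For (a), take $w\in W(\Gamma,\lambda)$ and pick a representative $k\in N_K(\ft\oplus\fa)$. Conjugation by $k$ carries $(TA,\Gamma,\lambda,\nu)$ to $(TA,w\Gamma,w\lambda,w\nu)=(TA,\Gamma,\lambda,w\nu)$. Since $X_G$ depends only on the $K$-conjugacy class of the datum (the Remark after Definition~\ref{def-rep-associated-to-chardatum}), we get $X_G(TA,\Gamma,\lambda,\nu)\simeq X_G(TA,\Gamma,\lambda,w\nu)$. Hence $\Phi_G$ is constant on $W(\Gamma,\lambda)$-orbits.

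For (b), three facts are needed.
\begin{enumerate}[(i)]
\item Every irreducible tempered representation is a constituent of some $X_G(TA,\Gamma,\lambda,\nu)$. This is Harish-Chandra's theorem that tempered irreducibles embed in representations unitarily induced from discrete series of cuspidal Levi subgroups, combined with Vogan's parametrization of the discrete series of $M$ by $(T,\Gamma,\lambda)$.
\item Two standard representations with non-conjugate data have disjoint sets of constituents. We will get this from Vogan's disjointness theorem for standard modules; alternatively, from Harish-Chandra's result that induced representations from discrete series are either equivalent (by associated data) or disjoint, using the commuting-algebra theorem.
\item Two data give the same $\Phi_G$ exactly when they are $K$-conjugate. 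Restricting to a fixed Cartan $TA$, this reduces to conjugacy under $W(\ft\oplus\fa)$. Conjugating by $w$ keeps the triple $(TA,\Gamma,\lambda)$ fixed exactly when $w\in W(\Gamma,\lambda)$, which produces the second form of the partition.
\end{enumerate}

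Part (c) is the main obstacle. Our plan is to use the $C^*$-algebraic description of $C_r^*(G)$ due to Wassermann and Clare--Crisp--Higson \cite{CCH16}. By the argument of Proposition~\ref{prop_closed_induction}, each class $[TA,\Gamma,\lambda]$ contributes a direct summand of $C_r^*(G)$ Morita equivalent to $C_0(i\fa^\ast,\Compact)^{W(\Gamma,\lambda)}$, or to a crossed-product variant twisted by the $R$-group. Points of one fiber over $\bar\nu$ cannot be separated by open sets, since the fiber over $\bar\nu$ is the spectrum of the fiber algebra and all its points have the same kernel image in the center $C_0(i\fa^\ast/W(\Gamma,\lambda))$. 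Points over distinct $\bar\nu$, or in distinct summands, are separated by this central function algebra. Thus the universal Hausdorff quotient is the spectrum of the center, namely $\bigsqcup i\fa^\ast/W(\Gamma,\lambda)$.

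The delicate point in (c) is identifying the isotropy group acting on $i\fa^\ast$ in this summand with $W(\Gamma,\lambda)$ as defined in Notation~\ref{notation-w}. Concretely, one has to check that the normalizer of the pair $(MA,\xi_M)$ modulo $M$ is $W(\Gamma,\lambda)$. We expect this to follow from Vogan's uniqueness of discrete-series parameters on $M$ for groups with abelian Cartan subgroups.
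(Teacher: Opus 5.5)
Your route is the paper's. For (a) and (b) you read things off from Vogan's parametrization \cite[Section~6.6]{Voganbook}; your conjugation argument for (a) and the Harish-Chandra disjointness for representations induced from discrete series in (b) are fine. For (c) you use the $C^*$-algebraic form of parabolic induction. Separating the components through the block decomposition of $C_r^*(G)$ and its central functions, instead of by lowest $K$-types as in \cite{CHS24, CHST24}, is a legitimate variant.

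The one step whose justification fails is the first claim of (c). You say the points of the fiber over $\bar\nu$ cannot be separated because they have the same image in the center $C_0(i\fa^\ast/W(\Gamma,\lambda))$. That only shows they are not separated by central functions. It does not show they cannot be separated by disjoint open sets, nor that they are identified in the universal Hausdorff quotient; the implication runs the other way.

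The missing ingredient is continuity in $\nu$ together with generic irreducibility:
\begin{enumerate}
\item Take $\nu_n\to\nu$ with trivial stabilizer in $W(\Gamma,\lambda)$. By the commuting-algebra theorem, $X_G(TA,\Gamma,\lambda,\nu_n)$ is irreducible.
\item For every $f$ in the block and every constituent $\rho$ at $\nu$, you have $\|\pi_{\nu_n}(f)\|=\|f(\nu_n)\|\to\|f(\nu)\|\geq\|\rho(f)\|$.
\item Hence the sequence Fell-converges to every element of $\Phi_G(TA,\Gamma,\lambda,\nu)$. These elements are therefore pairwise inseparable and coincide in every Hausdorff quotient.
\end{enumerate}
The same norm-continuity argument shows that the map from the spectrum to $\bigsqcup i\fa^\ast/W(\Gamma,\lambda)$ is open. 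You also need this, and do not address it, to upgrade the induced continuous bijection from the universal Hausdorff quotient to a homeomorphism.

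On the point you flag as delicate, your expectation is correct and the check is short. Elements of $N_K(\ft\oplus\fa)$ normalize $\fa$, which gives a map $W(\Gamma,\lambda)\to N_K(\fa)/Z_K(\fa)$.
\begin{itemize}
\item \emph{Injectivity.} An element of the kernel lies in $N_{M\cap K}(\ft)$. Since $M$ is in Harish-Chandra's class, it acts on $\ft^\bC$ through $W(\fm^\bC,\ft^\bC)$. It fixes the $M$-regular $\lambda$, so it lies in $Z_K(\ft\oplus\fa)$.
\item \emph{Image.} Suppose $k\in N_K(\fa)$ fixes $\xi_M$. Uniqueness of discrete-series parameters for $M$ gives $m\in M\cap K$ with $mk$ fixing $(T,\Gamma,\lambda)$. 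Since $m\in Z_K(\fa)$, the elements $mk$ and $k$ have the same image. So the image is exactly the stabilizer of $\xi_M$.
\end{itemize}
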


\begin{proof}
The first two assertions are Vogan's parametrization of tempered irreducible representations by
character data; see \cite[Section~6.6]{Voganbook}. The description of the
Hausdorff quotient follows from the same parametrization together with standard remarks on the connection between parabolic induction and the Fell topology. One way to obtain it is through
the \(C^\ast\)-algebraic realization of parabolic induction and the separation of the components of \(\widehat G_{\temp,\mathrm{Haus}}\) by lowest \(K\)-types, as in \cite{CHS24, CHST24}.
\end{proof}

For convenience, we recall the irreducibility of the standard representation attached to a $G$-regular character datum (for possibly disconnected $G$), and information about its unique lowest $K$-type.

We shall need the following terminology (for details see the Appendix). Given a possibly disconnected compact Lie group $K$, a Cartan subalgebra $\ft\subset \fk$, and a positive system $\Sigma_K^+~=\Sigma^+(\fk^\bC,\ft^\bC)$,  define
\[
T^l\coloneq N_K(\ft, \Sigma^+_K).
\]
This is the group of elements of $K$ that normalize $\ft$ and whose actions on the roots preserve the set $\Sigma^+_K$. It is called the \emph{large Cartan subgroup} of $K$ associated with this choice $(\ft,\Sigma_K^+)$. Let $\fn\subset \fk^\bC$ be the span of the positive root vectors.

Vogan's highest-weight classification for possibly disconnected compact groups \cite[Theorem~1.17]{VoganUnitaryBook} goes as follows: for any irreducible representation $(\tau, V)$ of $K$, let $V^{\fn}$ be the subspace of highest weight vectors, i.e. the annihilator of $\fn$. Then $V^\fn$ is naturally equipped with a representation $\mu=(\tau\mid_{T^l})\mid_{V^\fn}$ of the large Cartan subgroup $T^l$. Moreover, $(\mu, V^\fn)$ is irreducible. Let us call it the
\emph{highest-weight representation} attached to~$\pi$ (this depends on the choice of $(\ft, \Sigma_K^+$)).  When $K$ is connected, one has $T^l=T_0$ and $V^\fn$ is the usual highest weight line.

The map
\[
(\tau, V) \in \widehat{K} \mapsto (\mu, V^\fn)  \in \widehat{T^l}
\]
is an injection whose image is the set of the \emph{dominant} irreducible representation of $T^l$. Here, a representation of $T^l$ is dominant if its restriction to $T^l_0=\exp({\ft})$ is the sum of dominant characters, relative to $\Sigma^+_K$.

\begin{lemma}\label{lem:G-regular-standard-irreducible}
Let \((TA,\Gamma,\lambda,0)\) be a character datum for \(G\), and suppose
that \(\lambda\) is \(G\)-regular. Then
\[
X_G(TA,\Gamma,\lambda,0)
\]
is irreducible. It has a unique lowest \(K\)-type \((\tau, V)\). Relative to the positive
system $\Sigma^+_K=\Sigma^+(\fk^\bC, \ft^\bC)$ for which \(\lambda\) is dominant, the highest-weight representation $(\mu, V^\fn)$ of $T^l=N_K(\ft, \Sigma^+_K)$ attached to $\tau$ contains the \(T\)-character \(\Gamma\). Moreover, the highest-weight representation  $(\mu, V^\fn)$ is equivalent to the induced representation $\mathrm{Ind}_T^{T^l}\Gamma$.
\end{lemma}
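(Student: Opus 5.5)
The plan is to reduce to the connected-group case through Vogan's description of $X_G(TA,\Gamma,\lambda,0)$, and then read off the lowest $K$-type with the large Cartan subgroup $T^l$.

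\emph{Step 1: identify the representation and its $K$-types.} Since $\lambda$ is $G$-regular, it is in particular $M$-regular. By \cite[Theorem~6.6.15]{Voganbook}, $X_G(TA,\Gamma,\lambda,0)$ is unitarily induced from the discrete series $\xi_M$ of $M$ with $\nu=0$. Vogan's theory of lowest $K$-types \cite[Chapter~6]{Voganbook} then applies. The standard module has lowest $K$-types parametrized by the fine data attached to $(\Gamma,\lambda)$. Its irreducible constituents correspond bijectively to the lowest $K$-types, each occurring with multiplicity one. So irreducibility and uniqueness of the lowest $K$-type are equivalent, and the task is to show there is exactly one lowest $K$-type. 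The obstruction to this is the R-group generated by reflections in real roots $\beta$ (roots of $\fb$ vanishing on $\ft$) satisfying a parity condition. Because $\lambda$ is $G$-regular, every root of $(\fg^\bC,\fb^\bC)$ pairs nontrivially with $\lambda\in i\ft^\ast$. Hence there are no real roots at all, and $\fa$ is a split part without real roots for $Z_G(\fa)$. With no real roots, $W(\Gamma,\lambda)$ acting on $i\fa^\ast$ fixes $0$ without producing intertwining reducibility: Knapp--Stein reducibility requires reduced real roots with vanishing Plancherel factor. I would invoke \cite[Theorem~6.6.15 and Proposition~6.6.2]{Voganbook} (or Knapp--Zuckerman) to conclude irreducibility.

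\emph{Step 2: compute the lowest $K$-type.} Let $\Sigma^+(\fg^\bC,\fb^\bC)$ be the positive system making $\lambda$ dominant, with induced $\Sigma_K^+$. For the identity component $G_0$, the corresponding constituent is $A^{\fq}(\lambda)$ for a $\theta$-stable Borel $\fq$ built from $\lambda$ as in Remark~\ref{rk-tempiric-regular}; its lowest $K_0$-type has highest weight $\lambda+\rho_G-2\rho_K$. By Lemma~\ref{lem:rho-rho-eq} and the definition of character datum, this equals $\lambda+\rho_\fm-2\rho_{\fm\cap\fk}=d\Gamma$. So the restriction to $T_0$ of the $T$-character $\Gamma$ is exactly the highest weight of the lowest $K_0$-type. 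To pass to disconnected $K$, I would use that $X_G$ is obtained by Vogan's cohomological induction from $\Gamma\otimes e^{-2\rho_{\fm\cap\fp}}$, twisted to $T$. The bottom-layer map then realizes the lowest $K$-type as $K$ generated from the $\fn$-invariants carrying the $T$-character $\Gamma$. So $V^\fn\supset\Gamma$ as $T$-modules.

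\emph{Step 3: identify $V^\fn$ with $\mathrm{Ind}_T^{T^l}\Gamma$.} Since $T$ is normal in $T^l$ and $V^\fn$ is irreducible for $T^l$ and contains $\Gamma$, Frobenius reciprocity gives a nonzero map $\mathrm{Ind}_T^{T^l}\Gamma\to V^\fn$. It remains to show this map is an isomorphism, which amounts to showing that the $T^l$-conjugates of $\Gamma$ are pairwise distinct. Equivalently, the stabilizer of $\Gamma$ in $T^l/T$ is trivial, which makes the induced module irreducible.

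For this last step, $T^l/T$ embeds in $W(\ft\oplus\fa)$ via elements fixing $\Sigma_K^+$. A stabilizer element $w$ fixes $d\Gamma$, and also fixes $\lambda=d\Gamma-\rho_G+2\rho_K$ when it preserves the $G$-positive system. The main obstacle is precisely this: an element $w$ fixing $\Sigma_K^+$ need not a priori fix $\Sigma^+_G$, and even if it fixes $\lambda$, one must check it lies in $T$. My first attempt is to use $G$-regularity of $\lambda$ together with the fact that $Z_K(\ft)$ meets every $\Sigma_K^+$-preserving coset. If $w$ fixes $\Gamma$ and $\lambda$, then $w\in W(\Gamma,\lambda)$, and the irreducibility from Step 1 (multiplicity one of the lowest $K$-type) forces the $\Gamma$-isotypic part of $V^\fn$ to be one-dimensional. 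A dimension count $\dim V^\fn=[T^l:T]$ then follows from the disjointness in Vogan's lowest-$K$-type theorem \cite[Theorem~6.5.9]{Voganbook}. If this fails, I would instead argue via the torus Cartan--Helgason framework of the Appendix, where $T^l$-representations are handled explicitly.
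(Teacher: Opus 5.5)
Your outline has the same shape as the paper's argument: lowest-$K$-type theory gives irreducibility and identifies $\tau$, and then you show $\mathrm{Ind}_T^{T^l}\Gamma$ is irreducible and apply Frobenius reciprocity. But Step~3 has a genuine gap, exactly at the point you flag. You correctly reduce to showing that the stabilizer of $\Gamma$ in $T^l$ is $T$, but the substitute argument does not prove this.

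One-dimensionality of the $\Gamma$-isotypic part of $V^\fn$ cannot detect the stabilizer. Suppose some $S$ with $T\subsetneq S\subseteq T^l$ fixed $\Gamma$, and $\Gamma$ extended to a character $\widetilde\Gamma$ of $S$. Then $\mathrm{Ind}_S^{T^l}\widetilde\Gamma$ would be an irreducible $T^l$-module containing $\Gamma$ with multiplicity one, yet of dimension $[T^l:S]<[T^l:T]$. The ``dimension count from disjointness'' you invoke is not something \cite[Thm.~6.5.9]{Voganbook} provides. The Appendix's Cartan--Helgason results concern $(K\cap H)$-invariants and play no role here.

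The missing observation is that you never need $n$ to fix $\lambda$ or $\Sigma^+_G$ separately.
\begin{itemize}
\item If $n\in T^l$ fixes $\Gamma$, it fixes $d\Gamma$. Since $n$ preserves $\Sigma^+_K$, it also fixes $\rho_K$.
\item Hence $n$ fixes $d\Gamma+2\rho_K$, which by the datum relation and Lemma~\ref{lem:rho-rho-eq} equals $\lambda+\rho_G$. This element is strictly $\Sigma^+_G$-dominant, hence $G$-regular.
\item There are no real roots, so $Z_\fg(\ft)=\fb$ and $n$ normalizes $\fb$. Since $G$ is in Harish-Chandra's class, $\Ad(n)|_{\fb^\bC}$ is an element of $W(\fg^\bC,\fb^\bC)$.
\item That element fixes a regular element, so it is trivial, and $n\in Z_K(\fb)=T$.
\end{itemize}
In particular, $n$ does preserve $\Sigma^+_G$ a posteriori, so your obstacle disappears. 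Mackey theory then gives irreducibility of $\mathrm{Ind}_T^{T^l}\Gamma$, and your Frobenius-reciprocity map is an isomorphism.

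A smaller point concerns Step~1.
\begin{itemize}
\item The paper gets irreducibility and uniqueness of the lowest $K$-type in one stroke: $A(\fq,\delta)$ is in bijection with $A_L(\delta)$, and $A_L(\delta)=\{\delta\}$ because $L=TA$ is a Cartan subgroup.
\item Your Knapp--Stein route can work, but the right reason is that $G$-regularity makes $W(\lambda)$ trivial, and hence makes the stabilizer of $\xi_M$ in $W(G,A)$ trivial.
\item Your remark about ``vanishing Plancherel factor'' inverts the mechanism: roots with vanishing Plancherel factor give scalar intertwiners.
\item Your claimed bijection between irreducible constituents and lowest $K$-types is more than the cited theorem gives. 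It only gives that each lowest $K$-type occurs once and that each summand contains at least one. So uniqueness of the lowest $K$-type still needs the singleton computation.
\end{itemize}
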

\begin{proof}
By definition, the representation $X_G(TA,\Gamma,\lambda,0)$ is a finite direct sum of tempiric representations. Now \cite[Thm.~6.5.9]{Voganbook} describes a specific subset $A(\fq,\delta)\subset \widehat{K}$ of the lowest $K$-types of $X_G(TA,\Gamma,\lambda,0)$. Each element of $A(\fq,\delta)$ occurs with multiplicity one in $X_G(TA,\Gamma,\lambda,0)$. Furthermore, every irreducible submodule of $X_G(TA,\Gamma,\lambda,0)$ contains some $\tau\in A(\fq,\delta)$. Here, $\delta=\Gamma \otimes \Lambda^{\dim(\mathfrak u \cap \fp^\bC) }(\mathfrak u \cap \fp^\bC)^\ast\in \widehat{T}$ where $\mathfrak u$ is the nil-radical of the relevant $\theta$-stable parabolic subalgebra of $\fg^\bC$ as in \cite[Convention~6.6.3]{Voganbook}. Vogan also describes a bijection between  $A(\fq,\delta)$ and a finite set  $A_L(\delta)\subset \widehat{L\cap K}=\widehat{T}$, defined in \cite[Definition 4.3.15]{Voganbook}: see \cite[Definition~6.5.5]{Voganbook} for the correspondence. In our case, where $L=TA$ is a Cartan subgroup, running through the definitions reveals that $A_L(\delta)=\{\delta\}$. Together, these facts imply that $X_G(TA,\Gamma,\lambda,0)$ is irreducible, hence tempiric, and has a unique lowest $K$-type $\tau$. The correspondence \cite[Definition~6.5.5]{Voganbook} says that $\tau$ has highest weight $d\Gamma=d\delta + 2\rho_{\mathfrak u \cap \fp^\bC}$ and $\tau\mid_T$ contains $\Gamma = \delta \otimes \Lambda^{\dim(\mathfrak u \cap \fp^\bC) }(\mathfrak u \cap \fp^\bC)$. These facts imply that the highest-weight representation of $T^l$ attached to $\tau$ contains the $T$-character~$\Gamma$.

We show that the irreducible representation $(\mu, V^\fn)$ of $T^l$ is equivalent to the induced representation $\mathrm{Ind}_T^{T^l}\Gamma$. We show this by proving $\mathrm{Ind}_T^{T^l}\Gamma$ is irreducible. If $n \in T^l$ stabilizes $\Gamma \in \widehat{T}$, it must fix $d\Gamma = \lambda +\rho_G-2\rho_K$. Because $G$ is in Harish-Chandra's class and $\lambda$ is $G$-regular, the action of $n$ on $\ft_{\mathbb C}\oplus \fa_\bC$ is represented by a Weyl group element $w \in W_G$ that preserves $\Sigma_K^+$. Thus, we have $w\cdot \rho_K = \rho_K$ and $w\cdot d\Gamma = d\Gamma$. Hence, $w\cdot (\lambda +\rho_G) =\lambda + \rho_G$. Since $\lambda$ is $G$-regular and dominant, $\lambda + \rho_G$ is $G$-regular. The freeness of the Weyl action on regular elements yields $w=1$, forcing $n \in Z_K(\ft) = T$. Therefore, the stabilizer of \(\Gamma\) in \(T^l\) is exactly \(T\). It follows $\mathrm{Ind}_T^{T^l}\Gamma$ is irreducible. Consequently, $(\mu, V^\fn)$ is equivalent to $\mathrm{Ind}_T^{T^l}\Gamma$.
\end{proof}

\subsection{The discrete spectra of well-tempered symmetric spaces}\label{sec:disc-welltemp}

Assume in this subsection that \(X=G/H\) is well-tempered and has discrete
series. Fix a compact Cartan subspace
\(\ftq\subset\fk\cap\fq\). Since \(X\) is well-tempered, the centralizer
\[
\fl=Z_{\fg}(\ftq)
\]
is abelian. By the Lie-algebra assertion of
Lemma~\ref{lem:fundamental-cartan-abelian-centralizer}, the space \(\fl\) is a
fundamental Cartan subalgebra of \(\fg\). We write
\[
\fl=\ft\oplus\fa,
\qquad
\ft=\fl\cap\fk=\fth\oplus\ftq,
\qquad
\fa=\fl\cap\fp=\fah.
\]
Thus the split part of this Cartan subgroup lies in \(\fh\). Since \(G\) is in
Vogan's class, the group
\[
TA=Z_G(\fl)
\]
is an abelian Cartan subgroup of \(G\).

\begin{theorem*}\label{thm:ds_para}
Let \(X=G/H\) be a well-tempered symmetric space. Suppose that \(G\) is a
real reductive linear group in Vogan's class and that \(X\) has discrete
series. For an irreducible unitary representation \((\pi,V)\) of \(G\), the
following conditions are equivalent:
\begin{enumerate}[(1)]
\item \(\pi\) belongs to the discrete series of~$X$; 
\item \(\pi\) is tempiric with regular infinitesimal character, and its unique 
lowest \(K\)-type~\(\tau_\pi\) satisfies
\[
\tau_\pi^{K\cap H}\neq0;
\]
\item \(\pi\) is equivalent to the irreducible representation
\[
\X_G(TA,\Gamma,\lambda,0)
\]
for a discrete character datum based on the fundamental Cartan subgroup
\(TA=Z_G(\fl)\), where \(\lambda\in i\ftq^\ast\) is \(G\)-regular and, after
replacing the datum by an equivalent \(T^l\)-conjugate if necessary, the character~$\Gamma$ satisfies
\[
\Gamma|_{T\cap H}=1.
\]
\end{enumerate}
Moreover, for any \((\pi,V)\) satisfying these equivalent conditions, we have
\[
\dim_\bC (V^{-\infty})^H_{\ds}
=
\dim_\bC \tau_\pi^{K\cap H}.
\]
\end{theorem*}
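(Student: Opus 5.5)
The plan is to reduce to the connected-group results of Section~\ref{sec:dis} and then handle the disconnectedness of $G$ (and of $T$) using the torus form of the Cartan--Helgason criterion from the Appendix, together with Lemma~\ref{lem:G-regular-standard-irreducible}.

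\emph{Step 1: (2) $\Leftrightarrow$ (3).} Let $\pi$ be tempiric with regular infinitesimal character. Vogan's parametrization (Theorem~\ref{thm:HC_disjoint}) writes $\pi$ as a constituent of some $X_G(T'A',\Gamma,\lambda,0)$ with $\lambda$ real and $G$-regular. Regularity forces the Cartan to be fundamental: $\lambda\in i\ft'^\ast$ is extended by zero on $\fa'$, so regularity requires $\fa'$ to carry no real roots. Hence we may take $T'A'=TA=Z_G(\fl)$. By Lemma~\ref{lem:G-regular-standard-irreducible}, $\pi=X_G(TA,\Gamma,\lambda,0)$ is irreducible. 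Its unique lowest $K$-type $\tau_\pi$ has $T^l$-highest-weight representation $\operatorname{Ind}_T^{T^l}\Gamma$, with $d\Gamma=\lambda+\rho_G-2\rho_K$ by Lemma~\ref{lem:rho-rho-eq}. The torus form of Cartan--Helgason then says that $\tau_\pi^{K\cap H}\neq0$ if and only if two things hold. First, $d\Gamma\in i\ftq^\ast$; since $\rho_G,\rho_K\in i\ftq^\ast$ by Lemma~\ref{lem-minus-sigma}, this is equivalent to $\lambda\in i\ftq^\ast$, provided the positive system is one of the $\Sigma^+_{G,j}$, which follows exactly as in the proof of Proposition~\ref{prop_ds_ab}. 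Second, some $T^l$-conjugate of $\Gamma$ is trivial on $T\cap H$. Together these are precisely the conditions in (3). A further output of the criterion should be that $\dim\tau_\pi^{K\cap H}$ equals the number of $T^l/T$-conjugates $w\Gamma$ with $w\Gamma|_{T\cap H}=1$, suitably counted modulo $K\cap H$-normalizer elements.

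\emph{Step 2: (1) $\Leftrightarrow$ (2).} Let $G_0$ be the identity component and $H_0'=H\cap G_0$. Then $L^2(G/H)=\operatorname{Ind}_{G_0H}^G L^2(G_0H/H)$, and $G_0H/H\cong G_0/(G_0\cap H)$ is a well-tempered space for a connected group, to which Proposition~\ref{prop_ds_ab} and Proposition~\ref{prop:ds-distribution-one-dimensional} apply. We proceed in three moves. First, (1) holds for $\pi$ if and only if $\pi|_{G_0}$ contains a discrete-series constituent of $G_0/(G_0\cap H)$. Second, tempiricity, regular infinitesimal character and lowest $K$-types are compatible with restriction to $G_0$ (and $K_0$), since $G$ is in Harish-Chandra's class. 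Third, a $K\cap H$-fixed vector in $\tau_\pi$ gives, by averaging and restriction, a $K_0\cap H$-fixed vector in a $K_0$-constituent; conversely, Frobenius reciprocity for $K/K_0$ produces one from a $K_0\cap H$-fixed vector. The converse direction needs care because $K\cap H$ may meet several components of $K$. I would therefore instead run the argument with $\operatorname{Ind}$ and Mackey theory, through the identification of $H$-fixed distributions with $G_0\cap H$-fixed distributions invariant under $H/(G_0\cap H)$.

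\emph{Step 3: the dimension formula.} By Lemma~\ref{lem_sq_int_dis}, $(V^{-\infty})^H_{\ds}$ is the space of $H/(H\cap G_0)$-invariants in $\bigoplus_i (V_i^{-\infty})^{G_0\cap H}_{\ds}$, where $V=\bigoplus V_i$ is the $G_0$-decomposition of $V$. Each summand is one-dimensional and detected on the lowest $K_0$-type by Proposition~\ref{prop:ds-distribution-one-dimensional}. The map $\eta\mapsto\eta|_{\tau_\pi}$ is therefore injective from $(V^{-\infty})^H_{\ds}$ into $(\tau_\pi^\ast)^{K\cap H}\cong\tau_\pi^{K\cap H}$, and the two dimensions can then be compared. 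I expect this comparison to be the main obstacle: one must match the count of $(K\cap H)$-invariant lines in $\tau_\pi$ with the count of $H$-invariant combinations of the Flensted-Jensen functionals across the $G_0$-constituents. To do this I would check that evaluation at $eH$ of the Flensted-Jensen functions $\psi_{j,\lambda}$ gives a bijection onto the invariants. Surjectivity would follow by taking, for each $K\cap H$-fixed vector in $\tau_\pi$, the corresponding $H$-average of the Flensted-Jensen functions over the constituents, using the nonvanishing $\psi_{j,\lambda}(eH)\neq0$.
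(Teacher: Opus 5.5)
Your Step~1 is essentially the paper's proof of (2)$\Leftrightarrow$(3). One point should be made explicit: Corollary~\ref{cor:appendix-parametrization} needs a positive system compatible with some $\Sigma_c^+$, so you must first move the datum by some $w_0\in N_K(\ft)$. After $T^l$-conjugation, your two conditions reduce to the single condition $\Gamma|_{T\cap H}=1$, which already forces $d\Gamma\in i\ftq^\ast$ and, by the $\rho$-argument, $\lambda\in i\ftq^\ast$.

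The genuine gap is in (1)$\Leftrightarrow$(2) and the dimension formula. The first move of your Step~2 is false as stated. Take $G=\SL(2,\bR)\times\{\pm1\}$, with $\sigma$ conjugation by $\mathrm{diag}(1,-1)$ on the first factor, $H=G^\sigma$, and $\pi=\pi_1\otimes\mathrm{sgn}$ with $\pi_1$ in the discrete series of $G_0/(G_0\cap H)$. The central element $(1,-1)\in H$ acts trivially on $L^2(G/H)$ but by $-1$ on $\pi$, so $\pi$ is not in the discrete series of $X$. You then fall back on the invariant picture, which is the right one. However, Step~3 only gives injectivity of $\eta\mapsto\eta|_{\tau_\pi}$ into $(\tau_\pi^\ast)^{K\cap H}$. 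Both (2)$\Rightarrow$(1) and the dimension formula need surjectivity. The Flensted-Jensen averaging you sketch for this is not made precise, since you would have to control how $K\cap H$ permutes and rescales lines across the $G_0$-constituents. It is also unnecessary.

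The missing observation is that restriction to the lowest $K$-type is already an isomorphism summand by summand, before any invariants are taken, because both sides are at most one-dimensional. Work with $H_0$, as the paper does.
\begin{itemize}
\item Write $\pi|_{G_0}=\bigoplus_i V_i$ with lowest $K_0$-types $\tau_i$, so that $\tau_\pi|_{K_0}=\bigoplus_i\tau_i$. Each $V_i$ is tempiric with regular infinitesimal character; under (1) this holds because each $V_i$ lies in the discrete series of some $G_0/gH_0g^{-1}$.
\item By Proposition~\ref{prop_ds_ab}, $(V_i^{-\infty})^{H_0}_{\ds}\neq0$ if and only if $\tau_i^{K\cap H_0}\neq0$.
\item By Proposition~\ref{prop:ds-distribution-one-dimensional}, the source is then one-dimensional and detected on $\tau_i$.
\item By Proposition~\ref{prop:appendix-connected-CH}, the target $(\tau_i^\ast)^{K\cap H_0}$ is at most one-dimensional.
\end{itemize}
Hence restriction gives a $K\cap H$-equivariant isomorphism $(V^{-\infty})^{H_0}_{\ds}\cong(\tau_\pi^\ast)^{K\cap H_0}$. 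Since $H=H_0(K\cap H)$, taking $K\cap H$-invariants on both sides yields $(V^{-\infty})^{H}_{\ds}\cong(\tau_\pi^\ast)^{K\cap H}$. This gives (1)$\Leftrightarrow$(2) and the dimension formula at once.
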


\begin{proof}
We first prove the equivalence of (1) and (2), together with the dimension
formula. If \(G\) and \(H\) are connected, this is exactly
Proposition~\ref{prop_ds_ab} together with Proposition~\ref{prop:ds-distribution-one-dimensional} and Proposition~\ref{prop:appendix-connected-CH}. For the general case considered here, we inspect what happens upon restriction to the identity components. The restriction of \(V\) to \(G_0\) is a finite direct sum of irreducible
representations in the discrete series of $G_0/H_0$, so the assertion that $(\pi,V)$ is tempiric and has regular infinitesimal character follows directly from Proposition~\ref{prop_ds_ab}. By Lemma ~\ref{lem:G-regular-standard-irreducible}, $\pi$ has unique lowest $K$-type $\tau_\pi$. Applying
Proposition~\ref{prop:ds-distribution-one-dimensional} to each summand for the connected
symmetric space \(G_0/H_0\) gives an $K\cap H$-equivariant isomorphism
\[
(V^{-\infty})^{H_0}_{\ds}
\cong
(\tau_\pi^\ast)^{K\cap H_0}.
\]
by restriction of linear functionals to the
lowest \(K_0\)-types, where the star denotes contragredient and the
right-hand side is understood after restricting~\(\tau_\pi\) to $K_0\supset (K\cap H)_0 = K\cap H_0$.
Since \(H/H_0\) is finite and \(H=H_0(K\cap H)\), taking invariants gives
\[
(V^{-\infty})^H_{\ds}
=\bigl((V^{-\infty})^{H_0}_{\ds}\bigr)^{K\cap H}
\cong
\bigl((\tau_\pi^\ast)^{K\cap H_0}\bigr)^{K\cap H}
=
(\tau_\pi^\ast)^{K\cap H}.
\]
Thus \((V^{-\infty})^H_{\ds}\neq0\) if and only if
\(\tau_\pi^{K\cap H}\neq0\), and the displayed dimension formula follows.

It remains to compare (2) and (3). We use a form of the Cartan–Helgason criterion, spelled out in the Appendix. Let \(\pi\) satisfy (2). Since \(\pi\) is tempiric with regular infinitesimal character, it is equivalent with a standard representation 
\[ X_G(TA,{}^\backprime\Gamma,{}^\backprime \lambda,0) \] 
based on the fundamental Cartan subgroup $TA=Z_G(\fl)$ where $\fl=Z_{\fg}(\ftq)$ for a compact Cartan subspace $\ftq\subset \fk\cap \fq$, and \({}^\backprime\lambda\in i\ft^\ast\) is \(G\)-regular. Here, ${}^\backprime\lambda$ may not belong to $i\ftq^*$. Let ${}^\backprime\Sigma_K^+={}^\backprime\Sigma^+(\fk^\bC,\ft^\bC)$, ${}^\backprime\Sigma_G^+={}^\backprime\Sigma^+(\fg^\bC,\ft^\bC)$ be the unique positive systems for which \({}^\backprime\lambda\) is dominant. Let ${}^\backprime\rho_K, {}^\backprime\rho_G$ be the corresponding half-sums of positive roots. Let \({}^\backprime\fn\) denote the sum of the positive root spaces in $\fk^\bC$ determined by \({}^\backprime\Sigma_K^+\). Let ${}^\backprime T^l$ be the large Cartan subgroup of $K$ associated to ${}^\backprime\Sigma^+_K$.

By Lemma~\ref{lem:G-regular-standard-irreducible}, the standard representation $ X_G(TA,{}^\backprime\Gamma,{}^\backprime \lambda,0)$ is irreducible and has a unique lowest \(K\)-type \(\tau_\pi\). Moreover, the highest-weight representation of $
{}^\backprime T^l$ attached to $\tau_\pi$ is equivalent to the induced representation $\mathrm{Ind}_T^{{}^\backprime T^l}{}^\backprime \Gamma$.

Now, we choose a possibly different positive system  $\Sigma_{K}^{+}=\Sigma^+(\fk^\bC,\ft^\bC)$ that is (uniquely) compatible with some positive system $\Sigma_{c}^{+}=\Sigma^+(\fk^\bC,\ftq^\bC)$. Consider $\fn$ and $T^{l}$ accordingly for this system. There is $w_0\in N_K(\ft)$ such that $w_0\cdot {}^\backprime \Sigma^+_K = \Sigma^+_K$. By the previous part, the highest-weight representation $(\mu, W^\fn)$ attached to $(\tau_\pi, W)$ with respect to this positive system is also an irreducible induced $T^{l}$-representation. Thus we have
\[
(\mu, W^\fn)\cong \mathrm{Ind}_T^{T^l}\bC_{\Gamma},
\]
where $\Gamma = w_0\cdot {}^\backprime \Gamma$. Note that $w_0\in N_K(\ft\oplus \fa)$ automatically, and this implies $w_0\in N_K(T)$. Hence, $w_0$ conjugate $(TA, {}^\backprime \Gamma, {}^\backprime \lambda, 0)$ to   $(TA,  \Gamma, \lambda, 0)$ where $\lambda = w_0\cdot {}^\backprime \lambda$. 
Since $\Sigma_{K}^{+}$ is compatible with $\Sigma_{c}^{+}$,  Corollary~\ref{cor:appendix-parametrization} applies; therefore,
\[
\tau_\pi^{K\cap H}\neq0
\quad\Longleftrightarrow\quad
\left(\Ind_T^{T^l}\bC_\Gamma\right)^{T^l\cap H}\neq0.
\]

Since \(T\) is a normal subgroup of \(T^l\), we have 
\[
( \operatorname{Ind}^{T^l}_T \mathbb C_\Gamma)\mid_{T^l\cap H} \, 
\simeq
\bigoplus_{s\in (T^l\cap H)\backslash T^l/T}
\operatorname{Ind}^{T^l\cap H}_{(T^l\cap H)\cap T}\mathbb C_{s\Gamma}.
\]
Therefore there are nonzero \((T^l\cap H)\)-fixed vectors  if and only if
\((s\Gamma)|_{(T^l\cap H)\cap T}\) is trivial for some \(s\in T^l\), i.e.\ if and only if
\((s\Gamma)|_{T\cap H}=1\).

Replacing the character datum $(TA,  \Gamma, \lambda, 0)$ further by the equivalent \(s\)-conjugate datum, we
may therefore assume \(\Gamma|_{T\cap H}=1\). This implies that \(d\Gamma\)
vanishes on \(\fth\) and $d\Gamma \in i\ftq^*$.

Moreover, the corresponding root shifts \(\rho_G\) and \(\rho_K\) lie in \((\ftq^\ast)^\bC\): to see this, by Definition~\ref{def-character-datum-G} and Lemma~\ref{lem:rho-rho-eq}, we have
\[
d\Gamma + 2 \rho_K = \lambda + \rho_G 
\]
in $i\ft^*$. Compatibility of $\Sigma^+_K$ with $\Sigma^+_c$ implies $\rho_K\in i\ftq^*$ as in Lemma~\ref{lem-minus-sigma}. Therefore, $\lambda + \rho_G\in i\ftq^*$. The positive systems determined by $\lambda$ and $\lambda +\rho_G$ coincide. Therefore, $\lambda +\rho_G \in i\ftq^*$ implies that $\rho_G\in i\ftq^*$ as in Lemma~\ref{lem-minus-sigma}. 

Hence
\[
\lambda=d\Gamma-\rho_G+2\rho_K
\]
lies in \(i\ftq^\ast\). This proves (2) \(\Rightarrow\) (3). 

Conversely, if $\pi$ satisfies (3), then $\pi$ is tempiric with regular infinitesimal character, and its unique lowest $K$-type $\tau_\pi$ is $K\cap H$-spherical by the argument above.
\end{proof}

\subsection{Character data for symmetric spaces}\label{subsec-chardata-X}

We now return to a general well-tempered symmetric space
$
X=G/H
$,
not necessarily of equal rank. The following definition is designed so that the description of the Plancherel support in 
Theorem~\ref{thm_description} can be rewritten in terms of tempered character data. We use
a prime on \(M'\) to distinguish the Levi subgroup attached to the
symmetric-space parameter \(\faq\) from the Levi subgroup \(MA=Z_G(\fa)\) attached to the full Cartan subgroup \(TA\).

\begin{definition}\label{def:dataC}
A tempered \emph{character datum for \(X\)} is a quadruple
\((TA,\Gamma,\lambda,\nu)\) with the following properties:
\begin{enumerate}[(1)]
\item \(TA\) is a \(\sigma\)-stable and \(\theta\)-stable Cartan subgroup of
\(G\), with Lie algebra
\[
\fb=\ft\oplus\fa,
\qquad
\ft=\fth\oplus\ftq,
\qquad
\fa=\fah\oplus\faq,
\]
and
\[
\fbq=\ftq\oplus\faq
\]
is a Cartan subspace of \(\fq\);
\item \(\Gamma\) is a character of \(T\) such that
\[
\Gamma|_{T\cap H}=1;
\]
\item \(\lambda\in i\ftq^\ast\), regarded as an element of \(i\ft^\ast\) after extending it by zero on \(\fth\), is \(M'\)-regular, where
\[
M'A'=Z_G(\faq)
\]
is the Langlands decomposition of the centralizer of \(\faq\);
\item \(\nu\in i\faq^\ast\), regarded as an element of \(i\fa^\ast\) after extending it  by zero on \(\fah\);
\item the differential of \(\Gamma\) satisfies
\[
d\Gamma=
\lambda+\rho_{\fm'}-2\rho_{\fm'\cap\fk}
\qquad\text{in }i\ft^\ast.
\]
Here the positive systems defining \(\rho_{\fm'}\) and
\(\rho_{\fm'\cap\fk}\) are the ones for which \(\lambda\) is dominant and
regular on \(\fm'\).
\end{enumerate}
Omitting \(\nu\), we call \((TA,\Gamma,\lambda)\) a \emph{discrete character
datum for \(X\)}. We call $i\faq^\ast$ the \emph{relative continuous parameter space} for the  datum $(TA, \Gamma, \lambda)$, and $i\fa^\ast$ the \emph{full continuous parameter space}. 
\end{definition}

We note that, in the notation above, $\fa'=\mathrm{Lie}(A')$ contains $\faq$ and a subspace $\fah'$ of $\fah$ which may or may not be zero.\\

The next lemma connects character data for~$X$ with the notion of tempered character datum for~$G$ in Definition~\ref{def-character-datum-G}. 

\begin{lemma}\label{lem:rho_rho_eq}
Let \((TA,\Gamma,\lambda,\nu)\) be a character datum for \(X\). Let
\[
MA=Z_G(\fa)
\]
be the Langlands decomposition of the centralizer of the full split part
\(\fa\). Then \(\lambda\) is \(M\)-regular and
\[
\rho_{\fm'}-2\rho_{\fm'\cap\fk}
=
\rho_{\fm}-2\rho_{\fm\cap\fk}
\]
in \(i\ft^\ast\). Consequently \((TA,\Gamma,\lambda,\nu)\), with \(\nu\)
extended by zero on \(\fah\), is a character datum for \(G\).
\end{lemma}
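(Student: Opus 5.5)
Two things have to be checked: that $\lambda$ is $M$-regular, and that the two $\rho$-shifts agree. The plan is to mimic the proof of Lemma~\ref{lem:rho-rho-eq}, with the pair $(\fg,\fm)$ replaced by $(\fm',\fm)$.

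First we compare the Levi subgroups. Since $\faq\subset\fa$, we have $Z_G(\fa)\subset Z_G(\faq)$, so $MA\subset M'A'$. Moreover $\fm'$ contains $\ft\oplus\fah''$, where $\fah''$ is the complement of $\fah'$ in $\fah$. Hence $\fm$ is the centralizer of that $\fah$-part inside $\fm'$, i.e.\ $MA$ is a cuspidal Levi subgroup of $M'$ attached to the $\theta$-stable Cartan $\ft\oplus\fah''$ of $\fm'$. The roots of $(\fm^\bC,\ft^\bC)$ are the roots $(\alpha,0)$ of $(\fg^\bC,\fb^\bC)$ with $\alpha\neq0$. These are in particular roots of $(\fm'^\bC,\fb^\bC)$, because they vanish on $\fa\supset\faq$. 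By condition~(3) of Definition~\ref{def:dataC}, $\lambda$ pairs nonzero with every root of $\fm'$, hence with every root of $\fm$. So $\lambda$ is $M$-regular, and the positive system for $\fm$ is the restriction of the one for $\fm'$.

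Next we prove the $\rho$-identity. We apply the argument of Lemma~\ref{lem:rho-rho-eq} to the reductive group $M'$ in place of $G$, with Cartan $\ft\oplus(\fa\cap\fm')$ and the $M'$-regular element $\lambda$. Write roots of $\fm'$ as $(\alpha,\beta)$ with $\beta\in(\fa\cap\fm')^*$. Positivity is decided by the sign of $\langle\alpha,\lambda\rangle$, which is never zero by $M'$-regularity. The Cartan involution $\theta$ sends $(\alpha,\beta)$ to $(\alpha,-\beta)$, so the two roots of such a pair are positive simultaneously. For each positive pair with $\beta\neq0$, the pair contributes $\alpha$ to $\rho_{\fm'}-\rho_\fm$ on $\ft$. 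The two root spaces are swapped by $\theta$, so their sum splits into one $\fk$-line and one $\fp$-line, each of weight $\alpha$. Hence the pair contributes $\tfrac12\alpha$ to $\rho_{\fm'\cap\fk}-\rho_{\fm\cap\fk}$. Summing over pairs gives $\rho_{\fm'}-\rho_\fm=2(\rho_{\fm'\cap\fk}-\rho_{\fm\cap\fk})$, which rearranges to the claimed identity. One should also check that $M'$ is reductive in Harish-Chandra's class, which holds as for any Levi subgroup. Only the Lie algebra is used here anyway.

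Finally, we substitute into condition~(5) of Definition~\ref{def:dataC} to get $d\Gamma=\lambda+\rho_\fm-2\rho_{\fm\cap\fk}$. This is exactly the condition in Definition~\ref{def-character-datum-G}. Together with $\nu\in i\faq^*\subset i\fa^*$ (extended by zero) and $\lambda\in i\ft^*$ being $M$-regular, it shows that $(TA,\Gamma,\lambda,\nu)$ is a character datum for $G$. The main subtlety is the bookkeeping in the middle step: one must make sure that the roots of $\fm'$ vanishing on $\fa\cap\fm'$ are exactly the roots of $\fm$. This amounts to the identity $Z_{\fm'}(\fa\cap\fm')=\fm\oplus(\fa\cap\fm')$, which I would verify directly from $Z_\fg(\fa)=\fm\oplus\fa$.
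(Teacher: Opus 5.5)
Your proposal is correct and follows the paper's proof: $M'$-regularity gives $M$-regularity because the roots of $\fm$ are among those of $\fm'$, and the $\rho$-identity is Lemma~\ref{lem:rho-rho-eq} applied inside $M'$ with Cartan subalgebra $\ft\oplus(\fah\cap\fm')$, whose split part centralizes down to $M$. Your written-out pairing argument and your check that $Z_{\fm'}(\fa\cap\fm')=\fm\oplus(\fa\cap\fm')$ make explicit what the paper leaves implicit. Two small corrections: the roots of $\fm'$ should be taken with respect to $\ft\oplus(\fa\cap\fm')$ rather than $\fb$, and it is $M\exp(\fa\cap\fm')$, not $MA$, that is a Levi subgroup of $M'$.
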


\begin{proof}
Since \(\fm\subset\fm'\), \(M'\)-regularity of \(\lambda\) implies
\(M\)-regularity. Inside the reductive group \(M'\), the relevant Cartan subgroup is
\(T\exp(\fah \cap \fm')\). Its split part is \(\exp(\fah \cap \fm')\), and \(M\) is the Levi
subgroup obtained by centralizing this split part. Applying Lemma~\ref{lem:rho-rho-eq} inside \(M'\) gives the displayed
identity. Consequently, the defining differential relation for a character
datum of \(X\) in Definition~\ref{def:dataC}\textup{(5)} becomes the
defining differential relation for a character datum of \(G\) in Definition~\ref{def-character-datum-G}.
\end{proof}

\begin{theorem}\label{thm:tempered spec}
Let \(X=G/H\) be a well-tempered symmetric space, and suppose that \(G\) is
a real reductive linear group in Vogan's class. Let \(\pi\in\widehat G\) be an
irreducible unitary representation of \(G\). Then \(\pi\) belongs to
\(\operatorname{supp}(\lambda_X)\) if and only if \(\pi\) is equivalent to an
irreducible constituent of \(X_G(TA,\Gamma,\lambda,\nu)\), where \((TA,\Gamma,\lambda,\nu)\) is a character datum for the symmetric space \(X\).
\end{theorem}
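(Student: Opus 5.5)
The plan is to combine Theorem~\ref{thm_description} with the discrete-series description of Theorem~\ref{thm:ds_para}, applied to the Levi symmetric spaces. By Theorem~\ref{thm_description}, $\pi\in\supp(\lambda_X)$ if and only if $\pi$ is a constituent of some $\pi_{P',\xi,\nu}=\Ind_{P'}^G(\xi\otimes e^\nu\otimes 1)$. Here $P'=M'A'N'$ is a cuspidal $\sigma$-parabolic attached to a $\theta$-stable Cartan subspace $\fbq=\ftq\oplus\faq$, $\xi\in\widehat{M'}_{M'\cap H,\ds}$, and $\nu\in i\faq^*$. By Lemma~\ref{lem:welltemp_hereditary}, $M'/(M'\cap H)$ is well-tempered. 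Since $\ftq$ is a compact Cartan subspace for it, this space has discrete series. I will check that $M'$ is again in Vogan's class: it is a Levi subgroup of a linear group with abelian Cartan subgroups, and its Cartan subgroups are Cartan subgroups of $G$. Theorem~\ref{thm:ds_para}, applied to $M'$, then shows that $\xi\simeq X_{M'}(TA_{M'},\Gamma,\lambda,0)$. Here $TA_{M'}=Z_{M'}(Z_{\fm'}(\ftq))$ with $\fa_{M'}=\fah\cap\fm'$. Also $\lambda\in i\ftq^*$ is $M'$-regular, $\Gamma|_{T\cap H}=1$, and $d\Gamma=\lambda+\rho_{\fm'}-2\rho_{\fm'\cap\fk}$.

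Next I assemble the Cartan subgroup $TA$ of $G$, with $\fa=\fa_{M'}\oplus\fa'$ and $\fa'=\faq\oplus\fah'$. It is $\sigma$- and $\theta$-stable, because $\fl=Z_{\fm'}(\ftq)\oplus\fa'$ equals $Z_\fg(\fbq)$, which is abelian by well-temperedness, and this in turn equals $\ft\oplus\fa$. I will check that $Z_{M'}(Z_{\fm'}(\ftq))\cdot A'=Z_G(\fl)$, using that $A'$ is central in $M'A'$. With this, $(TA,\Gamma,\lambda,\nu)$ is a character datum for $X$ in the sense of Definition~\ref{def:dataC}. Conversely, any such datum determines $\fbq$, and hence $P'$ and $\xi:=X_{M'}(TA_{M'},\Gamma,\lambda,0)$. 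This $\xi$ is irreducible by Lemma~\ref{lem:G-regular-standard-irreducible} applied in $M'$, where $\lambda$ is $M'$-regular. By Theorem~\ref{thm:ds_para} (3)$\Rightarrow$(1), $\xi$ lies in the discrete series of $M'/(M'\cap H)$. So both directions reduce to identifying
\[
\Ind_{P'}^G\bigl(X_{M'}(TA_{M'},\Gamma,\lambda,0)\otimes e^\nu\otimes1\bigr)\simeq X_G(TA,\Gamma,\lambda,\nu).
\]

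For this identification I use induction in stages. By \cite[Theorem~6.6.15]{Voganbook}, $X_{M'}(TA_{M'},\Gamma,\lambda,0)\simeq\Ind_{M'\cap(MAN)}^{M'}(\xi_M\otimes 1\otimes1)$, where $MA=Z_G(\fa)\subset M'$. Induction in stages then gives $\Ind_{MAN}^G(\xi_M\otimes e^{\nu}\otimes1)$ for a parabolic $MAN\subset P'$ with Levi $MA$. This is $X_G(TA,\Gamma,\lambda,\nu)$, again by \cite[Theorem~6.6.15]{Voganbook}, once I know that $(TA,\Gamma,\lambda,\nu)$ is a tempered character datum for $G$ and that $\xi_M$ is formed from the same $\Gamma$. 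Lemma~\ref{lem:rho_rho_eq} supplies exactly the equality of $\rho$-shifts $\rho_{\fm'}-2\rho_{\fm'\cap\fk}=\rho_{\fm}-2\rho_{\fm\cap\fk}$ needed for this. The induced representation does not depend on the choice of parabolic with Levi $MA$ up to constituents, since $\nu$ is imaginary. Therefore the sets of irreducible constituents agree, and the theorem follows.

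The main obstacle is the disconnectedness bookkeeping. Theorem~\ref{thm:ds_para} says that $\Gamma|_{T\cap H}=1$ only after $T^l$-conjugation inside $M'$, so I must check that this conjugation is by an element of $K\cap M'$ normalizing $TA$ and $\fbq$. Such an element fixes $\faq$ and hence $P'$ and $\nu$, so the datum for $X$ is replaced by an equivalent one. I must also verify that the Cartan subgroup of $M'$ used there, $Z_{M'}(Z_{\fm'}(\ftq))$, has compact part equal to the $T$ of $TA=Z_G(\fb)$. I would handle this by noting that both are the $K$-parts of centralizers of the same Cartan subalgebra $\fb$, and that $A'$ is connected and central in $M'A'$.
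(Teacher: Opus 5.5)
Your proposal is correct and follows the same route as the paper. In both directions it combines Theorem~\ref{thm_description} with Theorem~\ref{thm:ds_para} applied to the well-tempered Levi space $M'/(M'\cap H)$ (via Lemma~\ref{lem:welltemp_hereditary}), and identifies $\Ind_{P'}^G(\xi\otimes e^\nu\otimes 1)$ with $X_G(TA,\Gamma,\lambda,\nu)$ by induction in stages. Your extra bookkeeping makes explicit what the paper's short proof leaves implicit: that $M'$ is in Vogan's class, that the compact parts of $Z_{M'}(Z_{\fm'}(\ftq))$ and $Z_G(\fb)$ coincide, the $\rho$-shift identity of Lemma~\ref{lem:rho_rho_eq}, and that the $T^l$-conjugation takes place inside $K\cap M'$ and therefore fixes $\faq$ and $\nu$.
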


\begin{proof}
Suppose first that \((TA,\Gamma,\lambda,\nu)\) is a character datum for
\(X\). Let
\[
M'A'=Z_G(\faq),
\]
and let \(P'=M'A'N'\) be a cuspidal \(\sigma\)-parabolic subgroup with this
Levi factor. By Lemma~\ref{lem:welltemp_hereditary}, the smaller symmetric
space
\[
M'/(M'\cap H)
\]
is well-tempered. It has discrete series because \(\ftq\) is a compact Cartan
subspace of \(\fm'\cap\fq\). Applying Theorem~\ref{thm:ds_para} to
\(M'/(M'\cap H)\), with Cartan subgroup \(T\exp(\fah \cap \fm')\subset M'\), together with Lemma \ref{lem:G-regular-standard-irreducible}, shows
that the standard representation
\[
\xi= X_{M'}(T\exp(\fah \cap \fm'),\Gamma,\lambda,0)
\]
is irreducible and belongs to \(\widehat{M'}_{M'\cap H,\ds}\). By induction in
stages, the standard representation
\(X_G(TA,\Gamma,\lambda,\nu)\) is 
\[
\Ind_{P'}^G(\xi\otimes e^\nu\otimes 1).
\]
Here \(\nu\in i\faq^\ast\) is viewed as a unitary character of \(A'\) by extension by zero on the remaining split directions $\fah'$. Hence
every irreducible constituent of the standard representation \(X_G(TA,\Gamma,\lambda,\nu)\) belongs to
\(\operatorname{supp}(\lambda_X)\) by Theorem~\ref{thm_description}.

Conversely, let \(\pi\in\operatorname{supp}(\lambda_X)\). By
Theorem~\ref{thm_description}, there exist a \(\theta\)-stable Cartan
subspace \(\ftq\oplus\faq\subset\fq\), a cuspidal \(\sigma\)-parabolic subgroup
\[
P'=M'A'N',
\qquad
M'A'=Z_G(\faq),
\]
a  representation of~$M'$ in the discrete spectrum
\[
\xi\in\widehat{M'}_{M'\cap H,\ds},
\]
and \(\nu\in i\faq^\ast\), such that \(\pi\) is an irreducible constituent of
\[
\Ind_{P'}^G(\xi\otimes e^\nu\otimes1).
\]
The symmetric space \(M'/(M'\cap H)\) is well-tempered by
Lemma~\ref{lem:welltemp_hereditary} and has discrete series. Therefore
Theorem~\ref{thm:ds_para}, applied to \(M'/(M'\cap H)\), gives a discrete
character datum \((T\exp(\fah \cap \fm'),\Gamma,\lambda)\) for \(M'\) such that
\[
\xi \cong X_{M'}(T\exp(\fah\cap \fm'),\Gamma,\lambda,0),
\]
with \(\Gamma|_{T\cap H}=1\) and with \(\lambda\in i\ftq^\ast\) \(M'\)-regular.
Adjoining the split factor \(\exp(\fa')\) and keeping the same
\(\nu\in i\faq^\ast\), we obtain a character datum
\[
(TA,\Gamma,\lambda,\nu)
\]
for \(X\), and the induced representation above is
\(X_G(TA,\Gamma,\lambda,\nu)\).
\end{proof}

\subsection{The shape of the Plancherel support}\label{subsec-shape-support}

Theorem~\ref{thm:tempered spec} describes the Plancherel support
\(\operatorname{supp}(\lambda_X)\) by character data for \(X\). To understand what this tells us about the geometry of \(\operatorname{supp}(\lambda_X)\), we shall 
describe the image of this support in the universal Hausdorff quotient \(\widehat G_{\temp,\mathrm{Haus}}\) of the tempered
dual. As we shall see, the corresponding subset of \(\widehat G_{\temp,\mathrm{Haus}}\) has a rather simple description. Therefore the results of this section will yield a simple description of \(\operatorname{supp}(\lambda_X)\), {modulo} the reducibility of the standard representations \(X_G(TA,\Gamma,\lambda,\nu)\).

\begin{blank} Recall from Theorem~\ref{thm:HC_disjoint} that the connected components of \(\widehat G_{\temp,\mathrm{Haus}}\) are in natural bijection with the set $\mathcal{C}_G$ of   \(K\)-conjugacy classes
\([TA,\Gamma,\lambda]\) of discrete character data for \(G\). If \(c\) is an element of $\mathcal{C}_G$, with discrete character datum representative \((TA,\Gamma,\lambda)\), the corresponding connected component \(\widehat G_{\temp,\mathrm{Haus}}(c)\) of \(\widehat G_{\temp,\mathrm{Haus}}\)  has the form 
\[
i\fa^\ast/W(\Gamma,\lambda)
\]
where $\fa$ is the (real) Lie algebra of~$A$, and $W(\Gamma, \lambda)$ is the finite group in Notation~\ref{notation-w}.
 We have also seen that every discrete character datum \(D\)
for \(X\) (Definition \ref{def:dataC})  defines, after certain extensions by zero, a discrete
character datum for \(G\): see Lemma~\ref{lem:rho_rho_eq}.
\end{blank}

\begin{blank}
\label{def:CX-and-DX}
Let \(\mathcal D_X\) be the set of discrete character data for \(X\). We say that two discrete character data $(T_1A_1,\Gamma_1,\lambda_1)$ and $(T_2A_2,\Gamma_2,\lambda_2)$ for $X$ are \emph{$\sigma$-conjugate} if there is $k\in K$ such that 
\[
\mathrm{Ad}_k (T_1A_1, \Gamma_1, \lambda_1) = (T_2A_2, \Gamma_2, \lambda_2) \,\, \text{and} \,\,  \mathrm{Ad}_k({\fa_1}_\fq)={\fa_2}_\fq.
\]
Let \(\mathcal D_X / {\sim_\sigma}\) be the set of $\sigma$-conjugacy classes $[TA,\Gamma,\lambda]_\sigma$ of discrete character data for \(X\). If
\(D=(TA,\Gamma,\lambda)\in\mathcal D_X\), write
\[
[D]_G=[TA,\Gamma,\lambda]\in\mathcal C_G
\]
for the corresponding $K$-conjugacy class of discrete character data for $G$.  The map 
\begin{align*} \kappa\colon\quad  \mathcal{D}_X & \to \mathcal{C}_G\\ 
D & \mapsto  [D]_G
\end{align*}
descends to a map
\[
\overline{\kappa}\colon\quad  \mathcal{D}_X /{\sim_\sigma} \to \mathcal C_G.
\]
The latter map $\overline{\kappa}$ is finite-to-one, but not injective in general. 

Let $\mathcal{C}_X \subset \mathcal{C}_G$ be the image of $\kappa$, and for $c \in \mathcal{C}_X$, let $\mathcal{D}_X(c)$ be the fiber of $\kappa$ over~$c$, i.e. $\mathcal{D}_X(c) =  \{ D \in \mathcal{D}_X  \ : \ [D]_G = c\}$.

The intersection \( \supp(\lambda_X)_{\mathrm{Haus}} \cap \widehat G_{\temp,\mathrm{Haus}}(c)\) is a single connected component of \(\supp(\lambda_X)_{\mathrm{Haus}} \). We shall see that if we identify  \(\widehat G_{\temp,\mathrm{Haus}}(c)\) with  \(i\fa^\ast/W(\Gamma,\lambda)\), the corresponding component of \(\supp(\lambda_X)_{\mathrm{Haus}}\)  can be described as the image in \( i\fa^\ast/W(\Gamma,\lambda)\) of a finite (explicit) union of linear subspaces of~$i\fa^\ast$.
\end{blank}

\begin{notation}
Fix \(c\in\mathcal C_X\) and $D_0= (TA,\Gamma,\lambda)\in \mathcal D_X(c)$.

Write
\[
\fb=\ft\oplus\fa,
\qquad
\ft=\fth\oplus\ftq,
\qquad
\fa=\fah\oplus\faq,
\]
for the corresponding \(\sigma\)-decompositions, and put
\[
V_\fq:=i\faq^\ast\subset i\fa^\ast.
\]
Let $W(\lambda)$ and $
W(\Gamma,\lambda)$ be as in Notation~\ref{notation-w}, so that 
\(\widehat G_{\temp,\mathrm{Haus}}(c)\) is homeomorphic to $
i\fa^\ast/W(\Gamma,\lambda)$.The group $W(\lambda)$ acts on $i\fa^\ast$, and we let $W_\fq(\lambda)$ be the global stabilizer of the subspace $V_\fq$ for that action:
\[
W_\fq(\lambda)
=
\{u\in W(\lambda):u(V_\fq)=V_\fq\}.
\]
\end{notation}

\begin{lemma}\label{lem:weak-weyl-normal-form}
Let \(D\in\mathcal D_X(c)\). Choose a \(K\)-conjugacy carrying the
discrete character datum for \(G\) underlying \(D\) to
\(D_0=(TA,\Gamma,\lambda)\), and use it to identify the full continuous
parameter space attached to \(D\) with the fixed space \(i\fa^\ast\).
Under this identification, the image of the relative continuous parameter
space attached to \(D\) has the form
\[
w^{-1}V_\fq\subset i\fa^\ast
\]
for some \(w\in W(\lambda)\). The resulting class of \(w\) in
\[
W_\fq(\lambda)\backslash W(\lambda)/W(\Gamma,\lambda)
\]
depends only on the $\sigma$-conjugacy class $[D]_\sigma \in \mathcal{D}_X/{\sim_\sigma}$, and not on the choice of $K$-conjugacy or the choice of $w$.
\end{lemma}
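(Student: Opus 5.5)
Let $D=(T_1A_1,\Gamma_1,\lambda_1)\in\mathcal D_X(c)$ and let $k\in K$ satisfy $\mathrm{Ad}_k(T_1A_1,\Gamma_1,\lambda_1)=(TA,\Gamma,\lambda)$. Then $\mathrm{Ad}_k$ maps $\fa_1$ onto $\fa$, so it induces a linear isomorphism $i\fa_1^\ast\to i\fa^\ast$ carrying $V_{1,\fq}:=i(\fa_1)_\fq^\ast$ onto some subspace $E_k\subset i\fa^\ast$. The plan is to prove three things: $E_k$ is a $W(\lambda)$-translate of $V_\fq$; changing $k$ changes $w$ only by right multiplication by $W(\Gamma,\lambda)$; and changing $D$ within its $\sigma$-class, or changing $w$, changes $w$ only by left multiplication by $W_\fq(\lambda)$.

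\emph{Existence of $w$.} Both $D$ and $D_0$ are discrete character data for $X$, so $\fb_{1,\fq}=(\ft_1)_\fq\oplus(\fa_1)_\fq$ and $\fbq=\ftq\oplus\faq$ are $\theta$-stable Cartan subspaces of $\fq$. By well-temperedness (Lemma~\ref{lem:welltemp-all-cartan}) their centralizers are the abelian Cartan subalgebras $\fb_1$ and $\fb$. The subspace $\mathrm{Ad}_k(\fb_{1,\fq})\subset\fb$ is the $(-1)$-eigenspace of the involution $\sigma':=\mathrm{Ad}_k\,\sigma\,\mathrm{Ad}_k^{-1}$ restricted to $\fb$. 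I would compare $\sigma'|_{\fb}$ with $\sigma|_{\fb}$ via the Kostant--Rallis conjugacy over $\C$ used in Lemma~\ref{lem:welltemp-all-cartan}: the complexified Cartan subspaces $\fbq^\bC$ and $\mathrm{Ad}_k(\fb_{1,\fq})^\bC$ are both Cartan subspaces, for $\sigma$ and for $\sigma'$ respectively, of the same Cartan subalgebra $\fb^\bC$. This should show that $\sigma'|_\fb=w^{-1}\sigma|_\fb w$ for some element $w$ of the Weyl group of $(\fg^\bC,\fb^\bC)$.

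The restrictions on $w$ come from the data. First, $\lambda$ lies in the $(-1)$-eigenspace of both $\sigma$ and $\sigma'$ on $\ft^\ast$, because $\lambda\in i\ftq^\ast$ and $\lambda_1\in i(\ft_1)_\fq^\ast$. Second, $\sigma$ and $\sigma'$ commute with $\theta$, so $w$ preserves $\ft$ and $\fa$. Using these two facts I would arrange that $w$ is realized in $N_K(\ft\oplus\fa)$ and fixes $\lambda$, so that $w\in W(\lambda)$ and $E_k=w^{-1}V_\fq$.

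This step is the main obstacle. Conjugacy over $\C$ only yields a complex Weyl element, and it may not come from $K$. The fallback is to work directly with the fixed $\lambda$: $\lambda$ is $M'$-regular in $i\ftq^\ast$, and its centralizer data should pin $w$ down inside the real Weyl group $W(\ft\oplus\fa)$.

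\emph{Independence of choices.} If $k'$ is another conjugating element, then $k'k^{-1}$ normalizes $TA$ and fixes $\Gamma$ and $\lambda$, so it defines an element $u\in W(\Gamma,\lambda)$. Then $E_{k'}=u(E_k)=uw^{-1}V_\fq=(wu^{-1})^{-1}V_\fq$, so $w$ is replaced by $wu^{-1}$. If $w'$ is another element with $w'^{-1}V_\fq=w^{-1}V_\fq$, then $w'w^{-1}$ stabilizes $V_\fq$ and lies in $W(\lambda)$, hence in $W_\fq(\lambda)$. Finally, replace $D$ by a $\sigma$-conjugate $D'=\mathrm{Ad}_h D$ with $h\in K$ and $\mathrm{Ad}_h((\fa_1)_\fq)=(\fa'_1)_\fq$. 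Then $kh^{-1}$ conjugates $D'$ to $D_0$ and carries $i(\fa'_1)_\fq^\ast$ onto $E_k$. Combined with the two previous observations, the double coset $W_\fq(\lambda)\,w\,W(\Gamma,\lambda)$ depends only on $[D]_\sigma$.
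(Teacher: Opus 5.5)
There is a genuine gap, at the step that carries the content of the lemma. Your argument for independence of choices is fine; it is the paper's double-coset argument, phrased directly in terms of the subspace $E_k$. What is not established is the existence of $w\in W(\lambda)$ with $E_k=w^{-1}V_\fq$.

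The Kostant--Rallis step gives at best an element $w_0$ of the complex Weyl group of $(\fg^\bC,\fb^\bC)$ with $\sigma'|_\fb=w_0^{-1}\sigma|_\fb\, w_0$. This $w_0$ is produced by the complexified fixed-point group, not by $K$. It is determined only up to left multiplication by the centralizer of $\sigma|_\fb$. So the inference ``$\sigma$ and $\sigma'$ commute with $\theta$, so $w$ preserves $\ft$ and $\fa$'' does not follow. Nothing in your argument forces $w_0$ to fix $\lambda$ either, and the fallback (``its centralizer data should pin $w$ down'') is a hope rather than an argument.

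The missing idea, which is how the paper proceeds, is to use $M'$-regularity to produce $\sigma$-anti-invariant positive systems and match them.

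Let $\fg^\lambda\subset\fg^\bC$ be spanned by $\fb^\bC$ and the root spaces of roots orthogonal to $\lambda$. Then $Z_{\fg^\lambda}(\faq)=\fb^\bC$. So a generic $Y\in\faq$ defines a positive system $\Sigma^+$ of these roots with $\sigma\Sigma^+=-\Sigma^+$. Define $\Sigma_1^+$ for $D$ in the same way, from a generic element of $(\fa_1)_\fq$.

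Both $\Sigma^+$ and $\Ad_k\Sigma_1^+$ come from chambers in $\fa$ for the restricted roots of the centralizer of $\lambda$ in $\fg$, for which $\fa$ is maximally split. That restricted Weyl group is realized by elements of $N_K(\ft\oplus\fa)$ fixing $\lambda$. Hence there is $w\in W(\lambda)$ such that $g=wk$ satisfies $\Ad_g\Sigma_1^+=\Sigma^+$.

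Now consider $\Ad_g^{-1}\Ad_{\sigma(g)}$, where $\Ad_{\sigma(g)}=\sigma\Ad_g\sigma$. It normalizes $\fb_1$, fixes $\lambda_1$, and preserves $\Sigma_1^+$. By Chevalley's lemma, the stabilizer of $\lambda_1$ in the complex Weyl group is generated by reflections in roots orthogonal to $\lambda_1$. So this element acts trivially on $\fb_1$.

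Therefore $\Ad_g$ intertwines $\sigma$ on $\fb_1$ and on $\fb$. It follows that $\Ad_g((\fa_1)_\fq)=\faq$, i.e.\ $E_k=w^{-1}V_\fq$. With this step in place, your independence argument completes the proof.
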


\begin{proof}
Write \(D=(T_DA_D,\Gamma_D,\lambda_D)\), and choose \(k\in K\)
such that \(kD=D_0\) as discrete character data for \(G\).  By adjusting
\(k\) on the left by a representative of an element
\(w\in W(\lambda)\), we can arrange that $g:=wk$ carries the \(\sigma\)-eigenspace decomposition of
\(\operatorname{Lie}(T_DA_D)\) to the $\sigma$-eigenspace  decomposition of \(\fb\). To see this, since $Z_{\fg^\lambda}(\faq)$ is abelian, a generic element in $\faq$ determines a positive system $\Sigma^+(\fg^\lambda, \fa)$ such that $\sigma(\Sigma^+(\fg^\lambda, \fa))= - \Sigma^+(\fg^\lambda, \fa)$. Choose $\Sigma^+(\fg^{\lambda_D}, \fa_D)$ similarly. Then, there is $w\in  W(\lambda)$ so that $g=wk$ carries $\Sigma^+(\fg^\lambda, \fa)$ to $\Sigma^+(\fg^{\lambda_D}, \fa_D)$. This, together with $(wk)\cdot \lambda_D = \lambda$, implies $\mathrm{Ad}_g$ and $\mathrm{Ad}_{\sigma(g)} =\sigma(\mathrm{Ad}_g)$ coincide as maps from $\ft_D\oplus \fa_D$ to $\ft \oplus \fa$. Hence, $\mathrm{Ad}_g$ respects the $\sigma$-eigenspace decompositions. We also have \(gD=wD_0\).  Since \(g\) carries the source relative continuous
parameter subspace to \(V_\fq\), transport by \(k=w^{-1}g\) carries it
to \(w^{-1}V_\fq\).

Suppose that \(k_iD=D_0\) and that
\(g_i=w_i k_i\), for \(i=1,2\), are two such eigenspace-preserving
transports.  The Weyl class of \(k_2k_1^{-1}\) lies in
\(W(\Gamma,\lambda)\), because this element stabilizes \(D_0\).
The Weyl class of \(g_2g_1^{-1}\) lies in \(W_\fq(\lambda)\), because
this element preserves the fixed relative parameter subspace.  Since
\[
g_2g_1^{-1}
=
w_2(k_2k_1^{-1})w_1^{-1},
\]
the elements \(w_1\) and \(w_2\) lie in the same
\(W_\fq(\lambda)\)--\(W(\Gamma,\lambda)\) double coset. 

Given $D\in \mathcal{D}_X(c)$ and $g=wk$ as above, if $D=k'D'$ for $D' \in \mathcal{D}_X(c)$ where $k'\in K$ preserves relative continuous parameter spaces, $g'=wkk'$ sends $D'$ to $D_0$ preserving the $\sigma$-eigenspace decomposition of the relevant Cartan subspaces for $D'$ and $D_0$. Hence, the resulting class of $w$ depends only on the $\sigma$-conjugacy class of $D$.   
\end{proof}

\begin{definition}\label{def:branch-set}
The set of \emph{branch-indexing double cosets} for in the component of \(\widehat G_{\temp,\mathrm{Haus}}(c)\), relative to the fixed representative $D_0=(TA,\Gamma,\lambda)\in \mathcal{D}_X(c)$, is the subset
\[
\operatorname{Br}_X(D_0)
\subseteq
W_\fq(\lambda)\backslash W(\lambda)/W(\Gamma,\lambda)
\]
consisting of the double-coset classes produced by
Lemma~\ref{lem:weak-weyl-normal-form} as \(D\) ranges over
\(\mathcal D_X(c)\).
Let $\mathcal B_X(D_0)$ be a subset of $W(\lambda)$ which contains one representative for each double coset in $\operatorname{Br}_X(D_0)$. Define a subset $\Sigma_X(D_0)$ of $i\fa^\ast/W(\Gamma,\lambda)$ by
\[
\Sigma_X(D_0)
=
\left(
\bigcup_{w\in\mathcal B_X(D_0)} w^{-1}V_\fq
\right)\Big/ W(\Gamma,\lambda).
\]
In other words, \(\Sigma_X(D_0)\) is the union, over
\([w]\in\operatorname{Br}_X(D_0)\), of the images of 
\(w^{-1}V_\fq\) in \(i\fa^\ast/W(\Gamma,\lambda)\). We may call these images the ``branches'' of \(\Sigma_X(D_0)\).

Let $\Sigma_X(c)$ be the subset of the connected component $\widehat G_{\temp,\mathrm{Haus}}(c)$ that corresponds to \(\Sigma_X(D_0)\) under the identification $G_{\temp,\mathrm{Haus}}(c)\simeq i\fa^\ast/W(\Gamma,\lambda)$.
\end{definition}

The subset \(\Sigma_X(D_0)\) is independent of the chosen representative
set \(\mathcal B_X(D_0)\). Indeed, if
\[
w'=uwv,
\qquad
u\in W_\fq(\lambda),\quad
v\in W(\Gamma,\lambda),
\]
then
\[
(w')^{-1}V_\fq
=
v^{-1}w^{-1}u^{-1}V_\fq
=
v^{-1}w^{-1}V_\fq.
\]
Thus \(w^{-1}V_\fq\) and \((w')^{-1}V_\fq\) have the same image in
\(i\fa^\ast/W(\Gamma,\lambda)\). The subset $\Sigma_X(c)$ of $\widehat G_{\temp,\mathrm{Haus}}(c)$ is independent of all choices.

\begin{theorem}[Shape of the Hausdorff support]\label{thm:shape-practical}
Let \(X=G/H\) be a well-tempered symmetric space, and assume that \(G\) is
in Vogan's class.   
\begin{enumerate}
    \item The image of \(\operatorname{supp}(\lambda_X)\)
in the Hausdorff quotient \(\widehat G_{\temp,\mathrm{Haus}}\) is
\[
\operatorname{supp}(\lambda_X)_{\mathrm{Haus}}
\cong
\bigsqcup_{c\in\mathcal C_X}\Sigma_X(c).
\]
\item For each \(c\in\mathcal C_X\), choose a representative $D_0 \in \mathcal{D}_X(c)$; then the subset
\(\Sigma_X(D_0)\) is a finite union of the images of the linear subspaces
\(w^{-1}V_\fq\subset i\fa^\ast\) in 
\(i\fa^\ast/W(\Gamma,\lambda)\). Thus $\Sigma_X(c)$ is connected. 
\item 
In particular, the connected components of $\operatorname{supp}(\lambda_X)_{\mathrm{Haus}}$ are the sets $\Sigma_X(c)$, for $c \in \mathcal{C}_X$.
\end{enumerate}

\end{theorem}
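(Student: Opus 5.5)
Theorem~\ref{thm:tempered spec} already gives $\operatorname{supp}(\lambda_X)$ as the union of the sets $\Phi_G(TA,\Gamma,\lambda,\nu)$ over character data $(TA,\Gamma,\lambda,\nu)$ for $X$. So the plan is to push this union through the identification of Theorem~\ref{thm:HC_disjoint}(c) and read off the image component by component. By Theorem~\ref{thm:HC_disjoint}(c), each $\Phi_G(TA,\Gamma,\lambda,\nu)$ maps to a single point of $\widehat G_{\temp,\mathrm{Haus}}$. That point lies in the component indexed by the $K$-class $[TA,\Gamma,\lambda]\in\mathcal C_G$ of the underlying discrete datum, which is a discrete character datum for $G$ by Lemma~\ref{lem:rho_rho_eq}. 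There it is the class of $\nu$, extended by zero on $\fah$, in $i\fa^\ast/W(\Gamma,\lambda)$. Hence $\operatorname{supp}(\lambda_X)_{\mathrm{Haus}}$ meets $\widehat G_{\temp,\mathrm{Haus}}(c)$ exactly when $c=\kappa(D)$ for some $D\in\mathcal D_X$, i.e. when $c\in\mathcal C_X$. Since the components of $\widehat G_{\temp,\mathrm{Haus}}$ are disjoint, the support splits as a disjoint union over $c\in\mathcal C_X$ of its intersections with the $\widehat G_{\temp,\mathrm{Haus}}(c)$.

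Next I fix $c$ and $D_0=(TA,\Gamma,\lambda)\in\mathcal D_X(c)$, and show that the intersection is $\Sigma_X(D_0)$. Let $D\in\mathcal D_X(c)$ be arbitrary. Transport its data to $D_0$ by a $K$-conjugation $k$, which is legitimate because the sets $X_G$ and $\Phi_G$ depend only on $K$-classes. Lemma~\ref{lem:weak-weyl-normal-form} then shows that $k$ carries the relative parameter space $i(\fa_D)_\fq^\ast$ onto $w^{-1}V_\fq$ for some $w\in W(\lambda)$. So the data $(D,\nu)$ with $\nu\in i(\fa_D)_\fq^\ast$ contribute precisely the image of $w^{-1}V_\fq$ in $i\fa^\ast/W(\Gamma,\lambda)$. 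Two points need care here.
\begin{enumerate}[(i)]
\item The transport of $\nu$ is linear. The identification of the full parameter spaces is via $\mathrm{Ad}_k$ composed with a Weyl element, and it is well defined modulo $W(\Gamma,\lambda)$, which is compatible with Theorem~\ref{thm:HC_disjoint}(a).
\item Conversely, every point of every branch $w^{-1}V_\fq$ with $[w]\in\operatorname{Br}_X(D_0)$ arises this way, by definition of $\operatorname{Br}_X(D_0)$.
\end{enumerate}
The independence of the representative set $\mathcal B_X(D_0)$, noted after Definition~\ref{def:branch-set}, makes the union equal to $\Sigma_X(D_0)$. This proves (1).

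For (2), finiteness follows because $W(\lambda)$ is finite, so the double coset space, and hence $\operatorname{Br}_X(D_0)$, is finite. Each branch is the image of a linear subspace, which contains $0$ and is connected. All branches therefore share the image of $0$, so their finite union $\Sigma_X(c)$ is connected.

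For (3), I argue that each $\Sigma_X(c)$ is open and closed in $\operatorname{supp}(\lambda_X)_{\mathrm{Haus}}$. This holds because it is the intersection with the component $\widehat G_{\temp,\mathrm{Haus}}(c)$, which is open and closed in $\widehat G_{\temp,\mathrm{Haus}}$ by the disjoint-union description in Theorem~\ref{thm:HC_disjoint}(c). Combined with connectedness, this identifies the $\Sigma_X(c)$ as the connected components.

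The main obstacle is the middle step: the Hausdorff image of $\Phi_G(TA,\Gamma,\lambda,\nu)$ must be computed compatibly with the conjugation of Lemma~\ref{lem:weak-weyl-normal-form}. Concretely, the $K$-conjugation, followed by the Weyl adjustment that aligns $\sigma$-eigenspaces, has to act on $i\fa^\ast$ through the same Weyl group $W(\lambda)$ that appears in the identification of Theorem~\ref{thm:HC_disjoint}(c). I would check this directly, by verifying that $\mathrm{Ad}_k$ maps standard representations $X_G(T_DA_D,\Gamma_D,\lambda_D,\nu)$ to $X_G(TA,\Gamma,\lambda,\mathrm{Ad}_k\nu)$.
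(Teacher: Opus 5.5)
Your proposal is correct and follows essentially the same route as the paper: you combine Theorem~\ref{thm:tempered spec} with the component description of Theorem~\ref{thm:HC_disjoint}, use Lemma~\ref{lem:weak-weyl-normal-form} to turn each relative parameter space into a branch $w^{-1}V_\fq$, and take the union over $\mathcal D_X(c)$ to obtain $\Sigma_X(D_0)$. Your arguments for (2) and (3) fill in what the paper leaves implicit (finiteness of the double coset space, connectedness through the common image of $0$, and clopenness of each component). The compatibility you flag as the main obstacle is exactly the $K$-invariance of $X_G$ recorded in the remark after Definition~\ref{def-rep-associated-to-chardatum}.
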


\begin{proof}
By Theorem~\ref{thm:tempered spec}, the support is the union of the
irreducible constituents of standard representations attached to character
data for \(X\). The Vogan parametrization for \(G\) in
Theorem~\ref{thm:HC_disjoint} identifies the component of
\(\widehat G_{\temp,\mathrm{Haus}}\) indexed by \(c=[D_0]_G\) with
\[
i\fa^\ast/W(\Gamma,\lambda).
\]
By Lemma~\ref{lem:weak-weyl-normal-form}, for each \(D\in\mathcal D_X(c)\), the relative continuous parameter
space of \(D\) contributes to this component the image of
\(w^{-1}V_\fq\) for some \(w\in W(\lambda)\), where the double-coset class $W_\fq(\lambda)wW(\Gamma,\lambda)$ is determined by $D$. Taking the union
over all \(D\in\mathcal D_X(c)\) gives exactly \(\Sigma_X(D_0)\).  Finally, taking the disjoint union over \(c\in\mathcal C_X\) gives the asserted image of the support in the Hausdorff quotient.
\end{proof}

\begin{corollary}\label{cor_singleton}
Let $X=G/H$ be a well-tempered symmetric space, and assume $G$ is in Vogan's class. A representation in $\supp(\lambda_X)$ belongs to the discrete series of $X$ if and only if it is an isolated point of $\supp(\lambda_X)$.
\end{corollary}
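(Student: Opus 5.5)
Both directions go through the description of $\supp(\lambda_X)$ in Theorem~\ref{thm:tempered spec}, written as the finite union $S_X=\bigcup_{P\in\mathcal P_0}S_X(P)$ of closed sets from the proof of Theorem~\ref{thm_description}.

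\emph{Discrete series $\Rightarrow$ isolated.} Let $\pi\in\widehat G_{H,\ds}$. By Theorem~\ref{thm:ds_para}, $\pi\cong X_G(TA,\Gamma,\lambda,0)$ with $TA$ fundamental and $\lambda$ $G$-regular. In the equal-rank case $\fa=\fah$, so $\faq=0$ for this datum. The plan is to show that $\{\pi\}$ is open in $\supp(\lambda_X)$, i.e.\ that $\supp(\lambda_X)\smallsetminus\{\pi\}$ is closed.

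First, for the cuspidal $\sigma$-parabolic $P=G$ (the case $\faq=0$), the set $S_X(G)$ is the discrete series of $X$. By uniform admissibility it is a discrete closed set: $C^*(G)$ acts compactly on $\bigoplus_\xi \xi$, so by Proposition~\ref{prop_closed_induction} every subset of $S_X(G)$ is closed. In particular $S_X(G)\smallsetminus\{\pi\}$ is closed.

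Second, for every $P$ with $\faq\neq0$ we must check $\pi\notin S_X(P)$. Suppose $\pi$ were a constituent of some $X_G(T'A',\Gamma',\lambda',\nu)$ with $\faq'\neq0$. By Theorem~\ref{thm:HC_disjoint}(b) the disjoint parametrization gives $[T'A',\Gamma',\lambda',\nu]=[TA,\Gamma,\lambda,0]$. Then $\nu=0$ and $T'A'$ is conjugate to the fundamental Cartan subgroup. But then $\dim\fa'_\fq\le\dim\fa'=\dim\fa$, and $\fa=\fah$ is contained in $\fh$. To get a contradiction I would use that the conjugating element can be chosen $\sigma$-compatible, as in Lemma~\ref{lem:weak-weyl-normal-form}, so that $\fa'_\fq$ maps into $\faq=0$. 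This step is the main obstacle. If the $\sigma$-compatibility is awkward, I would instead compare ranks: $\ftq'\oplus\faq'$ is a Cartan subspace, so $\dim\ft'_\fq<\rank X=\dim\ftq$. Meanwhile the infinitesimal character $\lambda'+\nu$ must be $W$-conjugate to $\lambda\in i\ftq^*$, which is regular; this forces $\nu\ne0$ generically, and for $\nu=0$ it forces $\lambda'$ to be $G$-regular, which contradicts the existence of the noncompact directions $\fa'$, since a $G$-regular $\lambda'$ on $\ft'$ cannot be real-conjugate to one with a nonzero split part outside the fundamental Cartan. Combining the two steps, $\supp(\lambda_X)\smallsetminus\{\pi\}$ is a finite union of closed sets, so $\pi$ is isolated.

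\emph{Isolated $\Rightarrow$ discrete series.} Let $\pi\in\supp(\lambda_X)$ be isolated. By Theorem~\ref{thm:tempered spec}, $\pi\in\Phi_G(TA,\Gamma,\lambda,\nu)$ for a character datum for $X$. If $\faq\neq0$, the map $\nu'\mapsto\Phi_G(TA,\Gamma,\lambda,\nu')$ on $i\faq^*$ is continuous into $\supp(\lambda_X)$ by Fell continuity of induction. The representations $\Phi_G(TA,\Gamma,\lambda,\nu')$ with $\nu'$ near $\nu$ and $\nu'\notin W(\Gamma,\lambda)\nu$ are distinct from $\pi$ by Theorem~\ref{thm:HC_disjoint}, and they converge to a set containing $\pi$. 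Hence $\pi$ is not isolated. So $\faq=0$. Then $M'=G$, and Theorem~\ref{thm:ds_para} applied with this datum shows $\pi=X_G(TA,\Gamma,\lambda,0)$ lies in the discrete series of $X$.
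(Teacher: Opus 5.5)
Your converse direction (isolated $\Rightarrow$ discrete series) is the paper's argument. Continuity of induction in $\nu\in i\faq^\ast$ produces nearby constituents, and Theorem~\ref{thm:HC_disjoint} guarantees that those at non-conjugate parameters differ from $\pi$.

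The forward direction takes a genuinely different route. The paper reads it off Theorem~\ref{thm:shape-practical}: for a datum with $\faq=0$, the corresponding component of $\supp(\lambda_X)_{\mathrm{Haus}}$ is a point. Its fibre is a single representation by Lemma~\ref{lem:G-regular-standard-irreducible}, and isolation comes from the component structure of $\widehat G_{\temp,\mathrm{Haus}}$ in Theorem~\ref{thm:HC_disjoint}(c). You stay inside Section~\ref{sec:support}. Proposition~\ref{prop_closed_induction} with $P=G$ (compactness passes to sub-sums), together with uniform admissibility, makes every subset of $\widehat G_{H,\ds}$ closed. Isolation therefore reduces to showing that $\pi$ lies in no $S_X(P)$ with $\faq\neq0$.

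What your route buys:
\begin{enumerate}
\item It uses only the disjointness in Vogan's parametrization, not the topology of the Hausdorff quotient.
\item It shows that $\widehat G_{H,\ds}$ is a closed discrete subset of $\widehat G$ for any symmetric space.
\item It pins down exactly where well-temperedness is needed.
\end{enumerate}
Both routes rest on the same fact: an $X$-datum in the same $G$-class as the fundamental one must also have $\faq=0$. In the paper this is supplied by Lemma~\ref{lem:weak-weyl-normal-form}; it is what makes the component a point exactly when $\faq=0$.

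That is the step you flag as the main obstacle. Commit to your first option, because the fallback fails as written. $G$-regularity of $\lambda'$ only says that $T'A'$ has no real roots, i.e.\ that it is fundamental, which disjointness already gave you. A fundamental Cartan can perfectly well have $\fa\neq0$ (here $\fa=\fah$). What must be excluded is $\fa'_\fq\neq0$, and that depends on how $\sigma$ sits on $\fb'$, not on the $K$-conjugacy class of $T'A'$. For instance, in $\SL(2,\C)/\SL(2,\R)$ the $\sigma$-stable Cartan subalgebras built from the split and from the compact Cartan subalgebra of $\fsl(2,\R)$ are $K$-conjugate, but only the second has $\faq\neq0$.

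Citing Lemma~\ref{lem:weak-weyl-normal-form} with $V_\fq=0$ closes the gap. Here it is a two-line argument, because $\lambda$ is $G$-regular:
\begin{enumerate}
\item If $k\in K$ carries $(\fb',\lambda')$ to $(\fb,\lambda)$, then so does $\sigma(k)$, since $\sigma\lambda'=-\lambda'$ and $\sigma\lambda=-\lambda$.
\item Hence $k\sigma(k)^{-1}$ normalizes $\fb$ and fixes the regular element $\lambda$, so it acts trivially on $\fb$.
\item So $\Ad(k)$ commutes with $\sigma$ on $\fb'$ and maps $\fa'_\fq$ into $\faq=0$.
\end{enumerate}

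Your rank comparison can also be repaired. $G$-regularity of $\lambda'\in i(\ft'_\fq)^\ast$ produces a $\fg$-regular element $Y\in\ft'_\fq\subset\fk\cap\fq$. A maximal abelian subspace of $\fk\cap\fq$ through $Y$ is a compact Cartan subspace contained in $Z_\fg(Y)\cap\fk\cap\fq=\ft'_\fq$. This forces $\dim\ft'_\fq\geq\rank X$, hence $\fa'_\fq=0$.
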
 
\begin{proof} 
By Theorem~\ref{thm:shape-practical}, the component of
\(\operatorname{supp}(\lambda_X)_{\mathrm{Haus}}\) indexed by a discrete character datum 
\([TA,\Gamma,\lambda]\) for $X$ is the image of a finite union of linear
subspaces \(w^{-1}(i\mathfrak a_{\mathfrak q}^\ast)\), all containing
\(0\). It is a singleton exactly when
\(\mathfrak a_{\mathfrak q}=0\).

If \(\mathfrak a_{\mathfrak q}\neq0\), continuity of parabolic
induction shows that each constituent over a given parameter is a
Fell limit of constituents over nearby relative parameters, and
therefore is not isolated. If
\(\mathfrak a_{\mathfrak q}=0\), the inducing
\(\sigma\)-parabolic is \(G\) itself, and
Lemma~\ref{lem:G-regular-standard-irreducible} shows that the
corresponding standard representation is irreducible. Its component
is therefore a single isolated representation. These are precisely the discrete-series representations of
\(X\).
\end{proof}

In the rest of this subsection, we shall spell out three observations which may be helpful in understanding what Theorem~\ref{thm:shape-practical} says about the geometry of $\supp(\lambda_X)$. \\

Our first theme is  the overlap between the branches in Definition~\ref{def:branch-set}: we shall see that their regular parts do not overlap (see Proposition~\ref{prop:generic-branch-separation} for the precise statement). 

Let $D_0=(TA,\Gamma,\lambda)\in\mathcal D_X(c)$. Let \(\fg^\lambda\) denote the complex reductive subalgebra of \(\fg^\bC\) generated
by \(\fb^\bC\) and the root spaces for roots whose restrictions to \(\ft^\bC\)
have zero pairing with \(\lambda\).  Since \(D_0\) is a discrete character datum for $X$,
\(\lambda\) is regular for the Levi attached to \(\faq\), and therefore
\[
Z_{\fg^\lambda}(\faq)=\fb^\bC.
\]
Define
\[
\faq^{\mathrm{reg},\lambda}
=
\{Y\in\faq:Z_{\fg^\lambda}(Y)=\fb^\bC\}.
\]
This is a Zariski open dense subset of \(\faq\).  After choosing a
\(K\)-invariant inner product to identify \(V_\fq = i\faq^\ast\) with \(\faq\), let
\[
V_\fq^{\mathrm{reg},\lambda}
\subset V_\fq
\]
be the corresponding open dense subset.

\begin{lemma}\label{lem:regular-branch-stabilizer}
Let \(w\in W(\lambda)\).  If
\[
w(V_\fq^{\mathrm{reg},\lambda})\cap V_\fq^{\mathrm{reg},\lambda}
\neq\varnothing,
\]
then \(w\in W_\fq(\lambda)\).
\end{lemma}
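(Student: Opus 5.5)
The plan is to exploit the structure of $\fg^\lambda$ as a complex reductive Lie algebra with Cartan subalgebra $\fb^\bC$, and to identify $W(\lambda)$ with a subgroup of its Weyl group acting on $\fb^\bC$. We transport the problem from $V_\fq=i\faq^\ast$ to $\faq$ by the $K$-invariant inner product. An element $w\in W(\lambda)$ is represented by some $k\in N_K(\ft\oplus\fa)$ fixing $\lambda$. Hence $\mathrm{Ad}_k$ permutes the roots of $(\fg^\bC,\fb^\bC)$ and preserves the set of roots orthogonal to $\lambda$, so $\mathrm{Ad}_k$ preserves $\fg^\lambda$. Since $\mathrm{Ad}_k$ commutes with $\theta$, it preserves $\fa$. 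The set $\faq^{\mathrm{reg},\lambda}$ is defined intrinsically through $\fg^\lambda$ and $\fb^\bC$, so it is transported compatibly: if $Y\in\faq$ and $\mathrm{Ad}_k Y\in\faq$, then $Z_{\fg^\lambda}(\mathrm{Ad}_kY)=\mathrm{Ad}_k Z_{\fg^\lambda}(Y)$.

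Now suppose $Y\in\faq^{\mathrm{reg},\lambda}$ and $Y'=wY\in\faq^{\mathrm{reg},\lambda}$. The key step is to show that $w$ maps all of $\faq$ onto $\faq$. Consider the roots $\Delta^\lambda=\Sigma(\fg^\lambda,\fb^\bC)$ and their restrictions to $\fa$. By the hypothesis $Z_{\fg^\lambda}(\faq)=\fb^\bC$, no root of $\Delta^\lambda$ vanishes on $\faq$. The condition $Z_{\fg^\lambda}(Y)=\fb^\bC$ says that $\alpha(Y)\neq0$ for all $\alpha\in\Delta^\lambda$, so $Y$ lies in no root hyperplane of $\Delta^\lambda$.

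I would then characterize $\faq$ intrinsically inside $\fa$ from a regular point. Every root $\alpha\in\Delta^\lambda$ satisfies $\sigma\alpha\in\Delta^\lambda$, because $\sigma$ preserves $\fb$ and fixes $\lambda$ up to sign. In fact $\sigma\lambda=-\lambda$, since $\lambda\in i\ftq^\ast$, and orthogonality to $\lambda$ is preserved. As $\alpha$ is nonzero on $\faq$, we get $\sigma\alpha\neq-\alpha$ on the relevant part, and the real roots' restrictions to $\fa$ generate a dual space. The expected statement is that $\faq$ is the unique subspace of the form $\{Z\in\fa:\ \cdots\}$ determined by $\sigma$.

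A cleaner route avoids this characterization and uses $\sigma$ directly. Set $u=\sigma w\sigma w^{-1}$, an element of the Weyl group of $(\fg^\lambda,\fb^\bC)$ extended by $W(\lambda)$. We have $\sigma Y'=-Y'$ and $\sigma Y=-Y$, so
\[
\sigma w\sigma^{-1}(Y')=\sigma w(-Y)=-\sigma Y'=Y',
\]
and hence $u$ fixes the $\Delta^\lambda$-regular element $Y'$.

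Since $Y'$ is regular for $\Delta^\lambda$, its stabilizer in the Weyl group $W(\fg^\lambda,\fb^\bC)$ is trivial. If $W(\lambda)$ acts through $W(\fg^\lambda,\fb^\bC)$, which is the main obstacle, then $u=1$. That gives $\sigma w=w\sigma$ on $\fb$, so $w$ preserves the $(-1)$-eigenspace $\faq$ of $\sigma$ on $\fa$, i.e. $w\in W_\fq(\lambda)$.

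The step I expect to be hardest is precisely this identification of $W(\lambda)$, and of $\sigma w\sigma w^{-1}$, with elements of the Weyl group of $\fg^\lambda$. Two things must be checked: that elements of $W(\ft\oplus\fa)$ fixing the $G$-relevant $\lambda$ act on $\fb^\bC$ as elements of $W(\fg^\lambda,\fb^\bC)$ (Chevalley's lemma: the stabilizer of $\lambda$ in the complex Weyl group is generated by reflections in roots orthogonal to $\lambda$), and that $\sigma$ normalizes this group. If Chevalley's lemma only gives the complex Weyl group $W(\fg^\bC,\fb^\bC)^\lambda=W(\fg^\lambda,\fb^\bC)$, this suffices: freeness of that action on $\Delta^\lambda$-regular elements yields $u=1$.
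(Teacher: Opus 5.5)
Your ``cleaner route'' is correct and is essentially the paper's own proof: the paper shows that $\sigma(n)^{-1}n$ fixes the $\fg^\lambda$-regular element $Y$, hence acts trivially on $\fb$, so $\Ad(n)$ commutes with $\sigma$ on $\fb$ and preserves $\faq$; you run the same argument with $\sigma w\sigma w^{-1}$ fixing $Y'$. Note that your display is missing the $w^{-1}$; it should read $\sigma w\sigma w^{-1}(Y')=\sigma w(-Y)=Y'$. Your Chevalley-lemma justification fills in exactly the step the paper asserts without comment, and the abandoned attempt to characterize $\faq$ intrinsically can simply be dropped. That justification has three ingredients: $W(\lambda)\subset W(\fg^\bC,\fb^\bC)^\lambda=W(\fg^\lambda,\fb^\bC)$, using $\Ad(G)\subset\mathrm{Int}(\fg^\bC)$ in Harish-Chandra's class; $\sigma$ normalizes this group, because $\sigma\lambda=-\lambda$ forces $\sigma$ to permute the roots of $\fg^\lambda$; and the action of this group on $\Delta^\lambda$-regular elements is free.
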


\begin{proof}
We give the argument on \(\faq\), using the invariant inner product to pass
between \(\faq\) and \(i\faq^\ast\).  Choose a representative
\(n\in N_K(\fb)\) of \(w\), and suppose that
\(Y\in\faq^{\mathrm{reg},\lambda}\) and \(\Ad(n)Y\in\faq^{\mathrm{reg},\lambda}\).
Since \(Y,\Ad(n)Y\in\faq\), both are in the
\((-1)\)-eigenspace of \(\sigma\). Moreover,
\(\sigma\circ\Ad(n)=\Ad(\sigma(n))\circ\sigma\). Therefore
\[
-\Ad(n)Y
=
\sigma(\Ad(n)Y)
=
\Ad(\sigma(n))\sigma(Y)
=
-\Ad(\sigma(n))Y.
\]
It follows that
\(\Ad(n)Y=\Ad(\sigma(n))Y\). Applying
\(\Ad(\sigma(n)^{-1})\) to this equality gives
\[
\Ad(\sigma(n)^{-1}n)Y=Y.
\]
The element \(\sigma(n)^{-1}n\) represents an element of the Weyl group of
\(\fg^\lambda\), and it fixes the regular element \(Y\).  Since $Y$ is regular, its Weyl-group stabilizer in \(\fg^\lambda\) is trivial.  Thus
\(\sigma(n)^{-1}n\) acts trivially on \(\fb\).  Consequently, \(\Ad(n)\) and \(\Ad(\sigma(n))\) have the same action on
\(\fb\). Hence, for every \(Z\in\fb\),
\[
\sigma(\Ad(n)Z)
=
\Ad(\sigma(n))\sigma(Z)
=
\Ad(n)\sigma(Z),
\]
where the last equality uses \(\sigma(Z)\in\fb\). Thus \(\Ad(n)\)
commutes with \(\sigma\) on~\(\fb\). It follows that \(\Ad(n)\) preserves the
\(\sigma\)-eigenspace decomposition of \(\fa\), in particular
\(\Ad(n)\faq=\faq\).  Hence \(w(V_\fq)=V_\fq\), i.e.\ \(w\in W_\fq(\lambda)\).
\end{proof}

\begin{proposition}[Generic separation of branches]\label{prop:generic-branch-separation}
Let \(w_1,w_2\in W(\lambda)\).  If the images of
\(w_1^{-1}V_\fq^{\mathrm{reg},\lambda}\) and
\(w_2^{-1}V_\fq^{\mathrm{reg},\lambda}\) in
\(i\fa^\ast/W(\Gamma,\lambda)\) intersect, then
\[
W_\fq(\lambda)w_1W(\Gamma,\lambda)
=
W_\fq(\lambda)w_2W(\Gamma,\lambda).
\]

\end{proposition}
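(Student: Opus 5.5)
The plan is to unwind the meaning of an intersection in the quotient and then reduce directly to Lemma~\ref{lem:regular-branch-stabilizer}. Suppose the images of \(w_1^{-1}V_\fq^{\mathrm{reg},\lambda}\) and \(w_2^{-1}V_\fq^{\mathrm{reg},\lambda}\) in \(i\fa^\ast/W(\Gamma,\lambda)\) meet. Then there exist \(\nu_1,\nu_2\in V_\fq^{\mathrm{reg},\lambda}\) and \(v\in W(\Gamma,\lambda)\) such that
\[
w_2^{-1}\nu_2 = v\,w_1^{-1}\nu_1 .
\]
Set \(u=w_2 v w_1^{-1}\). Since \(W(\Gamma,\lambda)\subset W(\lambda)\) and \(W(\lambda)\) is a group, \(u\in W(\lambda)\). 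Moreover \(u\nu_1=\nu_2\). Hence
\[
u(V_\fq^{\mathrm{reg},\lambda})\cap V_\fq^{\mathrm{reg},\lambda}\neq\varnothing .
\]

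Lemma~\ref{lem:regular-branch-stabilizer} then gives \(u\in W_\fq(\lambda)\). Rearranging, \(w_2=u\,w_1\,v^{-1}\) with \(u\in W_\fq(\lambda)\) and \(v^{-1}\in W(\Gamma,\lambda)\). This is exactly the double-coset equality \(W_\fq(\lambda)w_1W(\Gamma,\lambda)=W_\fq(\lambda)w_2W(\Gamma,\lambda)\).

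The only point requiring care is that the identification of \(V_\fq=i\faq^\ast\) with \(\faq\) via the \(K\)-invariant inner product is \(W(\lambda)\)-equivariant. This holds because \(W(\lambda)\) is represented by elements of \(N_K(\fb)\), which act isometrically. It ensures that the action of \(u\) on \(V_\fq^{\mathrm{reg},\lambda}\) used here is the one treated in the lemma.

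I expect no real obstacle: the substantive content, namely that a Weyl element carrying one \(\lambda\)-regular point of \(\faq\) into \(\faq\) must commute with \(\sigma\) on \(\fb\), is already contained in Lemma~\ref{lem:regular-branch-stabilizer}. The proposition is the formal translation of that lemma to the quotient by \(W(\Gamma,\lambda)\).
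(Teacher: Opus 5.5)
Your proposal is correct and follows essentially the same argument as the paper. Like the paper, you write the meeting point as \(\nu_2 = w_2 v w_1^{-1}\nu_1\) with \(v\in W(\Gamma,\lambda)\), apply Lemma~\ref{lem:regular-branch-stabilizer} to conclude \(w_2 v w_1^{-1}\in W_\fq(\lambda)\), and read off the double-coset equality.
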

Thus the “branches” attached to distinct elements of 
\(W_\fq(\lambda)\backslash W(\lambda)/W(\Gamma,\lambda)\) have no regular elements in common. 

\begin{proof}
If the images meet, then for some \(u\in W(\Gamma,\lambda)\) and some
\(\nu_1,\nu_2\in V_\fq^{\mathrm{reg},\lambda}\),
\[
\nu_2=w_2u w_1^{-1}\nu_1.
\]
By Lemma~\ref{lem:regular-branch-stabilizer}, the element
\(w_2u w_1^{-1}\) belongs to \(W_\fq(\lambda)\).  Thus, $w_1$ and $w_2$ belong to the same double coset.
\end{proof}

\begin{remark}\label{rem:branch-intersections}
All branches contain the image of
\(0\), and they may meet along singular lower-dimensional strata.  The
preceding proposition says that this is the only possible kind of overlap:
distinct branch labels have disjoint regular parts.  Example \ref{subsec_example_SL3} below exhibits two branch images whose union has three one-dimensional arms meeting at one point.
\end{remark}

Our second group of remarks  on Theorem~\ref{thm:shape-practical} concerns the situation where all branches come from discrete $X$-character data which have the same underlying Cartan subgroup. 

\begin{definition}\label{def:admissible-w}
Let \(D_0=(TA,\Gamma,\lambda)\in\mathcal D_X(c)\).  Set
\[
\mathcal A_X^{\mathrm{sc}}(D_0)
=
\{w\in W(\lambda):(TA,w\Gamma,\lambda)\in\mathcal D_X\}.
\]
and 
\[
\operatorname{Br}^{\mathrm{sc}}_X(D_0)
:=
\{W_\fq(\lambda)wW(\Gamma,\lambda):w\in
\mathcal A_X^{\mathrm{sc}}(D_0)\}.
\]
\end{definition}

The condition for an element $w$ of $W(\lambda)$ to belong to  $A_X^{\mathrm{sc}}(D_0)$  is simply that $(TA, w\Gamma, \lambda)$ satisfies the
conditions of Definition~\ref{def:dataC}; in particular, this is equivalent to the 
condition
\[
(w\Gamma)|_{T\cap H}=1.
\]
\

\begin{proposition}\label{lem:same-cartan-weyl-form}
The inclusion $\operatorname{Br}^{\mathrm{sc}}_X(D_0) \subset \operatorname{Br}_X(D_0)$ holds for any $D_0$. Furthermore, 
if $\operatorname{Br}^{\mathrm{sc}}_X(D_0)=\operatorname{Br}_X(D_0)$, then
\[
\Sigma_X(D_0)
=
\left(
\bigcup_{w\in\mathcal A_X^{\mathrm{sc}}(D_0)} w^{-1}V_\fq
\right)\Big/ W(\Gamma,\lambda).
\]
\end{proposition}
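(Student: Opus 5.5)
The plan is to show that every same-Cartan datum $D_w:=(TA,w\Gamma,\lambda)$, with $w\in\mathcal A_X^{\mathrm{sc}}(D_0)$, has branch class $W_\fq(\lambda)w^{-1}W(\Gamma,\lambda)$ under Lemma~\ref{lem:weak-weyl-normal-form}. Some care with the inverse convention is needed. Since $W(\Gamma,\lambda)$ and $W_\fq(\lambda)$ are subgroups and $D_w$ ranges over the same set when $w$ is replaced by any element of its coset, I will track the class via the explicit transport below.

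First, $D_w\in\mathcal D_X$ by definition of $\mathcal A_X^{\mathrm{sc}}(D_0)$. Choose a representative $n\in N_K(\fb)$ of $w$. Then $\Ad(n^{-1})$ fixes $\lambda$, because $w\in W(\lambda)$. It carries $w\Gamma$ to $\Gamma$ and preserves $TA$. Hence $k=n^{-1}$ is a $K$-conjugacy taking the underlying $G$-datum of $D_w$ to $D_0$, so $D_w\in\mathcal D_X(c)$.

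Second, I compute the image of the relative parameter space. The relative continuous parameter space of $D_w$ is $V_\fq=i\faq^\ast$, since $D_w$ has the same Cartan subgroup and $\sigma$-decomposition as $D_0$. Under the transport $k=n^{-1}$ it becomes $w^{-1}V_\fq$. By Lemma~\ref{lem:weak-weyl-normal-form}, the class attached to $D_w$ is $W_\fq(\lambda)\,w\,W(\Gamma,\lambda)$; concretely, one may take $g=wk=1$, which is eigenspace-preserving. This double coset therefore lies in $\operatorname{Br}_X(D_0)$, which proves $\operatorname{Br}^{\mathrm{sc}}_X(D_0)\subset\operatorname{Br}_X(D_0)$.

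For the equality of sets, assume $\operatorname{Br}^{\mathrm{sc}}_X(D_0)=\operatorname{Br}_X(D_0)$. We may then choose the representative set $\mathcal B_X(D_0)$ inside $\mathcal A_X^{\mathrm{sc}}(D_0)$, which gives the inclusion $\Sigma_X(D_0)\subset\bigl(\bigcup_{w\in\mathcal A^{\mathrm{sc}}_X(D_0)}w^{-1}V_\fq\bigr)/W(\Gamma,\lambda)$. Conversely, every $w\in\mathcal A_X^{\mathrm{sc}}(D_0)$ lies in some double coset $W_\fq(\lambda)w'W(\Gamma,\lambda)$ with $w'\in\mathcal B_X(D_0)$. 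By the computation following Definition~\ref{def:branch-set}, $w^{-1}V_\fq$ and $w'^{-1}V_\fq$ then have the same image modulo $W(\Gamma,\lambda)$, which gives the reverse inclusion.

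The main obstacle is checking that $D_w$ satisfies condition~(5) of Definition~\ref{def:dataC}. One needs $d(w\Gamma)=\lambda+\rho_{\fm'}-2\rho_{\fm'\cap\fk}$, whereas $w$ a priori acts on the right-hand side too. The definition of $\mathcal A^{\mathrm{sc}}_X$ already presupposes that $D_w\in\mathcal D_X$, so this is built in. If an independent check were wanted, I would use $w\lambda=\lambda$ together with the fact that the $\rho$-shifts depend only on $\lambda$ and $\faq$; by Lemma~\ref{lem:rho_rho_eq} this reduces to the $G$-level relation, which is $W(\lambda)$-invariant.
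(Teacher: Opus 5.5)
Your proof is correct and follows the paper's argument. You transport $(TA,w\Gamma,\lambda)$ back to $D_0$ by a representative of $w^{-1}$, which sends its relative parameter space $V_\fq$ to $w^{-1}V_\fq$ with $g=1$ eigenspace-preserving; you then choose the representatives $\mathcal B_X(D_0)$ inside $\mathcal A_X^{\mathrm{sc}}(D_0)$, and your two inclusions just spell out the paper's ``follows immediately''. One correction: your opening sentence announces the class $W_\fq(\lambda)w^{-1}W(\Gamma,\lambda)$, but your computation gives $W_\fq(\lambda)\,w\,W(\Gamma,\lambda)$, which is the form needed for $\operatorname{Br}^{\mathrm{sc}}_X(D_0)\subset\operatorname{Br}_X(D_0)$, so that sentence should be amended.
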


\begin{proof}
If \(w\in\mathcal A_X^{\mathrm{sc}}(D_0)\), then
\((TA,w\Gamma,\lambda)\) is itself a discrete character datum for~$X$ whose discrete class for \(G\) is
\(c=[D_0]_G\). Its relative continuous parameter space is
\(V_\fq=i\faq^\ast\). Transporting its underlying datum for \(G\) back to the representative \(D_0\) by a representative of \(w^{-1}\) sends this parameter space to
\(w^{-1}V_\fq\).  This proves that its double-coset class belongs to
\(\operatorname{Br}_X(D_0)\).  The final formula follows immediately if all
branch classes are represented in this way.
\end{proof}

When  Proposition~\ref{lem:same-cartan-weyl-form} applies, i.e. when $\operatorname{Br}^{\mathrm{sc}}_X(D_0)=\operatorname{Br}_X(D_0)$, we shall say that the \emph{same-Cartan situation} occurs for the component of $\supp(\lambda_X)_{\mathrm{Haus}}$ attached to $D_0$. Our next statement shows that this situation always occurs as soon as $H$ is the full fixed-point subgroup $G^\sigma$. 

\begin{proposition}\label{prop:same-cartan} Suppose $H=G^\sigma$. Then for any discrete character datum $D_0=(TA, \Gamma, \lambda)$ for $X$, we have $\operatorname{Br}^{\mathrm{sc}}_X(D_0)=\operatorname{Br}_X(D_0)$. Thus, 
\[
\Sigma_X(D_0)
=
\left(
\bigcup_{w\in\mathcal A_X^{\mathrm{sc}}(D_0)} w^{-1}V_\fq
\right)\Big/ W(\Gamma,\lambda).
\]
\end{proposition}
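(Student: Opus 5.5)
Since Proposition~\ref{lem:same-cartan-weyl-form} already gives $\operatorname{Br}^{\mathrm{sc}}_X(D_0)\subset\operatorname{Br}_X(D_0)$, only the reverse inclusion needs proof. Fix $D=(T_DA_D,\Gamma_D,\lambda_D)\in\mathcal D_X(c)$. We will exhibit $w\in\mathcal A_X^{\mathrm{sc}}(D_0)$ with the same double coset as the class attached to $D$ by Lemma~\ref{lem:weak-weyl-normal-form}. The displayed formula for $\Sigma_X(D_0)$ then follows from Proposition~\ref{lem:same-cartan-weyl-form}.

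Start from the eigenspace-preserving transport of the proof of Lemma~\ref{lem:weak-weyl-normal-form}. Pick $k\in K$ with $kD=D_0$ as discrete character data for $G$, and $w\in W(\lambda)$, with representative $n_w\in N_K(\fb)$, such that $g:=n_wk$ carries the $\sigma$-eigenspace decomposition of $\mathrm{Lie}(T_DA_D)$ onto that of $\fb$. Then $gD=(TA,w\Gamma,\lambda)$, and the branch of $D$ is $w^{-1}V_\fq$. It therefore suffices to show that $gD$ lies in $\mathcal D_X$, i.e.\ that $(w\Gamma)|_{T\cap H}=1$. The remaining conditions of Definition~\ref{def:dataC} (Cartan subspace, $\lambda\in i\ftq^*$, $M'$-regularity, the differential relation) transfer along $g$, because $\Ad_g$ intertwines $\sigma$ on the two Cartan subalgebras and $\rho$-shifts are natural under conjugation.

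The key point is then the following. We know $\Gamma_D|_{T_D\cap H}=1$ and $w\Gamma=\Gamma_D\circ \mathrm{Ad}_{g^{-1}}$ on $T$. So it suffices that $g^{-1}(T\cap H)g= T_D\cap H$. Since $H=G^\sigma$, this amounts to $g^{-1}T^\sigma g=T_D^\sigma$, which would follow if $\sigma(g)g^{-1}$ centralized $T$. We know $\Ad_g$ commutes with $\sigma$ on $\mathrm{Lie}(T_DA_D)$. Hence $\sigma(g)^{-1}g$ acts trivially on $\mathrm{Lie}(T_D)$ and lies in $Z_G(\mathfrak{t}_D\oplus\mathfrak{a}_D)=T_DA_D$, using that $G$ is in Vogan's class, so Cartan subgroups are abelian and equal their centralizers of Lie algebras. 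Write $\sigma(g)=g z$ with $z\in T_DA_D$. For $t\in T_D$ we get $\sigma(gtg^{-1})=gz\sigma(t)z^{-1}g^{-1}=g\sigma(t)g^{-1}$, because $T_DA_D$ is abelian. Thus conjugation by $g$ intertwines $\sigma$ on $T_D$ and $T$, so it maps $T_D^\sigma=T_D\cap G^\sigma$ onto $T^\sigma=T\cap H$. This is exactly where $H=G^\sigma$ is used: for smaller open $H$ one only knows $g(T_D\cap H)g^{-1}\subset T^\sigma$, and the identity component may fail to be preserved.

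The main obstacle is the claim that $\Ad_g$ commutes with $\sigma$ on the whole Cartan subalgebra, including the $\ft$ part, rather than merely mapping $\faq$ to $\faq$. I would take this directly from the argument in Lemma~\ref{lem:weak-weyl-normal-form}, where $\Ad_g$ and $\Ad_{\sigma(g)}$ are shown to agree on $\ft_D\oplus\fa_D$. A secondary point is to check that $z\in T_DA_D$ really requires only $\sigma(g)^{-1}g\in Z_G(\mathrm{Lie}(T_DA_D))$, which holds in Vogan's class.
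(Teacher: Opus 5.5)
Your proof is correct and follows the paper's proof: both take the $\sigma$-compatible transport $g=wk$ from Lemma~\ref{lem:weak-weyl-normal-form}, note that $\sigma(g)$ differs from $g$ by an element of an abelian Cartan subgroup, conclude that $\mathrm{Ad}_g$ carries $T_D\cap G^\sigma$ onto $T\cap G^\sigma$, and use $H=G^\sigma$ to get $(w\Gamma)|_{T\cap H}=1$. The only difference is cosmetic: you write $\sigma(g)=gz$ with $z\in T_DA_D$ and obtain $\sigma\circ\mathrm{Ad}_g=\mathrm{Ad}_g\circ\sigma$ on $T_D$ directly, whereas the paper writes $\sigma(wk)=zwk$ with $z\in T$ and proves the two inclusions separately (and your ``main obstacle'' is automatic, since preserving the $\sigma$-eigenspace decomposition is the same as commuting with $\sigma$).
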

\begin{proof} By the proof of Lemma~\ref{lem:weak-weyl-normal-form}, for any $w\in\operatorname{Br}_X(D_0)$ there are $k\in K$ and a discrete character datum $D=(T_DA_D, \Gamma_D, \lambda_D)$ for $X$ with the property $(wk)\cdot D=w\cdot D_0 = (TA,w\Gamma,\lambda)$ where $wk$ maps $\ft_D\oplus \fa_D$ to $\ft\oplus \fa$ preserving the $\sigma$-eigenspaces. Thus $\mathrm{Ad}_{wk}\colon \ft_D\oplus \fa_D \to \ft\oplus \fa$ coincides with $\sigma(\mathrm{Ad}_{wk})=\mathrm{Ad}_{\sigma(wk)}$. It follows that $\sigma(wk)\cdot (wk)^{-1} \in Z_K(\ft\oplus \fa)=T$. Thus $\sigma(wk)=zwk$ for some $z\in T$. Using this, we can see that \[ \sigma(\mathrm{Ad}_{wk}(T_D \cap G^{\sigma})) = \mathrm{Ad}_{zwk}(T_D \cap G^{\sigma}) = \mathrm{Ad}_{wk}(T_D \cap G^{\sigma});\] therefore, $\mathrm{Ad}_{wk}(T_D \cap G^{\sigma}) \subset T\cap G^\sigma$. Similarly, $\mathrm{Ad}_{(wk)^{-1}}(T \cap G^{\sigma}) \subset T_D\cap G^\sigma$. These imply $\mathrm{Ad}_{wk}(T_D \cap G^{\sigma}) = T\cap G^\sigma$. 

Since $\Gamma_D$ is trivial on $T_D\cap G^{\sigma}$, $w\cdot \Gamma = (wk)\cdot \Gamma_D$ is trivial on $T\cap G^\sigma$. Thus, $(TA, w\Gamma, \lambda)=w\cdot (TA, \Gamma,\lambda) = (wk)\cdot (T_DA_D, \Gamma_D, \lambda_D)$ is a discrete character datum for $X$. Here, we used the assumption $H=G^\sigma$. The first assertion follows from this. The second assertion follows from Lemma~\ref{lem:same-cartan-weyl-form}.
\end{proof}

Our third and final comment on Theorem~\ref{thm:shape-practical} is that there are many interesting situations where all branches in a component $\Sigma_X(c)$ collapse to a single branch. In the following statement, we do not assume that $H$ is the full fixed-point group $G^\sigma$. 

\begin{proposition}[Collapse of branches]\label{prop:branch-collapse}
Suppose that
\[
W(\lambda)=W_\fq(\lambda)W(\Gamma,\lambda).
\]
Then $\operatorname{Br}_X(D_0)$ consists of just the identity coset $
\{W_\fq(\lambda)W(\Gamma,\lambda)\}$,
and
\[
\Sigma_X(D_0)
=
\operatorname{image}
\bigl(V_\fq\longrightarrow i\fa^\ast/W(\Gamma,\lambda)\bigr).
\]
In particular, the subspaces \(w^{-1}V_\fq\), for
\(w\in W(\lambda)\), all have the same image in the component
\(i\fa^\ast/W(\Gamma,\lambda)\) of
\(\widehat G_{\temp,\mathrm{Haus}}\).
\end{proposition}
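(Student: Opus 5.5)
The hypothesis makes the double-coset space $W_\fq(\lambda)\backslash W(\lambda)/W(\Gamma,\lambda)$ a single point. So the proof is mostly bookkeeping with Lemma~\ref{lem:weak-weyl-normal-form} and Definition~\ref{def:branch-set}.

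First, the hypothesis $W(\lambda)=W_\fq(\lambda)W(\Gamma,\lambda)$ says that every $w\in W(\lambda)$ can be written as $w=uv$ with $u\in W_\fq(\lambda)$ and $v\in W(\Gamma,\lambda)$. Hence every element of $W(\lambda)$ lies in the double coset $W_\fq(\lambda)\cdot 1\cdot W(\Gamma,\lambda)$, and the double-coset space is a single point. Since $\operatorname{Br}_X(D_0)$ is a subset of this space, it is either empty or equal to the identity coset. It is nonempty because $D=D_0\in\mathcal D_X(c)$ itself produces a class: taking the identity $K$-conjugacy and $w=1$ in Lemma~\ref{lem:weak-weyl-normal-form}, its relative parameter space is $V_\fq=1^{-1}V_\fq$. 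This proves the first assertion.

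Next, $\mathcal B_X(D_0)$ may then be taken to be $\{1\}$. The text after Definition~\ref{def:branch-set} shows that $\Sigma_X(D_0)$ does not depend on the choice of representatives, so Definition~\ref{def:branch-set} gives $\Sigma_X(D_0)=V_\fq/W(\Gamma,\lambda)$. This is the image of $V_\fq$ in $i\fa^\ast/W(\Gamma,\lambda)$.

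Finally, for the statement about all $w\in W(\lambda)$, write $w=uv$ as above. Then
\[
w^{-1}V_\fq=v^{-1}u^{-1}V_\fq=v^{-1}V_\fq,
\]
since $u^{-1}\in W_\fq(\lambda)$ preserves $V_\fq$. Because $v^{-1}\in W(\Gamma,\lambda)$, the subspaces $v^{-1}V_\fq$ and $V_\fq$ have the same image in the quotient by $W(\Gamma,\lambda)$. This is the same computation as in the independence remark after Definition~\ref{def:branch-set}.

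There is no real obstacle here. The only point to check is that $\operatorname{Br}_X(D_0)$ is nonempty, that is, that $D_0$ itself yields the identity class; this is immediate from the construction in Lemma~\ref{lem:weak-weyl-normal-form}.
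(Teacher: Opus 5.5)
Your proposal is correct and follows essentially the same route as the paper: the factorization makes $W_\fq(\lambda)\backslash W(\lambda)/W(\Gamma,\lambda)$ a singleton, $D_0$ itself contributes that class, and the computation $w^{-1}V_\fq=v^{-1}u^{-1}V_\fq=v^{-1}V_\fq$ shows that all the subspaces $w^{-1}V_\fq$ have the same image. Your explicit nonemptiness check, taking $k=1$ and $w=1$ in Lemma~\ref{lem:weak-weyl-normal-form}, matches the paper's remark that ``the datum $D_0$ contributes this class.''
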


\begin{proof}
The displayed factorization says exactly that
\(W_\fq(\lambda)\backslash W(\lambda)/W(\Gamma,\lambda)\) is a
singleton.  The datum \(D_0\) contributes this class, so the branch set is
the displayed singleton.  Equivalently, if \(w=uv\) with
\(u\in W_\fq(\lambda)\) and \(v\in W(\Gamma,\lambda)\), then
\[
w^{-1}V_\fq=v^{-1}u^{-1}V_\fq=v^{-1}V_\fq,
\]
which has the same image as \(V_\fq\) in
\(i\fa^\ast/W(\Gamma,\lambda)\).  The formula for \(\Sigma_X(D_0)\)
follows.
\end{proof}

\begin{remark}\label{rem:when-same-cartan-simplifies}
The factorization hypothesis of Proposition~\ref{prop:branch-collapse}
holds in particular if either
\(W_\fq(\lambda)=W(\lambda)\) or
\(W(\Gamma,\lambda)=W(\lambda)\). If $\lambda$ is $G$-regular, both conditions hold. 

The first case, \(W_\fq(\lambda)=W(\lambda)\), occurs for instance  when
\(\faq=\fa\) or \(0\). These two limiting cases are useful to keep in mind. If \(\faq=\fa\), then \(V_\fq=i\fa^\ast\), and
\(\Sigma_X(D_0)\) is the entire component
\(i\fa^\ast/W(\Gamma,\lambda)\) of
\(\widehat G_{\temp,\mathrm{Haus}}\). If \(\faq=0\), then we have
\(V_\fq=\{0\}\), and the corresponding connected component
\(\Sigma_X(D_0)\) of
\(\operatorname{supp}(\lambda_X)_{\mathrm{Haus}}\) is a point.

The second case \(W(\Gamma,\lambda)=W(\lambda)\) is the situation which will occur when we turn to spaces of type $G_\C/G_\R$ in  Section~\ref{sec:complex}: the relevant Cartan
subgroups are connected, so the character \(\Gamma\) is determined by its
differential, and any element which stabilizes \(\lambda\) also stabilizes \(\Gamma\).

\end{remark}

By contrast,
Example~\ref{subsec_example_SL3} below has
\(W(\Gamma,\lambda)\subsetneq W(\lambda)\), and $\operatorname{Br}_X(D_0)$ has two elements: one branch contributes a line while the other contributes a half-line to the same component of the spectrum.

\subsection{Example: \texorpdfstring{$X=\mathrm{SL}(3,\R)/\mathrm{GL}(2,\R)_0$}{SL(3,R)/GL(2,R)0}}
\label{subsec_example_SL3}

We illustrate Theorem~\ref{thm:shape-practical} in a case where the part
of \(\operatorname{supp}(\lambda_X)_{\mathrm{Haus}}\) lying in a single
connected component of \(\widehat G_{\temp,\mathrm{Haus}}\) is a union of several
one-dimensional branches. This example falls under the same-Cartan situation of Proposition~\ref{lem:same-cartan-weyl-form}.  

Let $G=\SL(3,\bR)$, and let $H\cong\GL(2,\bR)_0$ be the identity
component of the fixed-point group of
\[
\sigma=\Ad\!\begin{bmatrix}
1&0&0\\
0&-1&0\\
0&0&-1
\end{bmatrix}.
\]
Let
\[
V_\fq =\faq
=
\bR(e_{12}+e_{21})
=
\bR\begin{bmatrix}
0&1&0\\
1&0&0\\
0&0&0
\end{bmatrix}.
\]
Then $Z_\fg(\faq)=\fa$ is a split Cartan subalgebra of $\fg$, with
\[
\fah
=
\bR\begin{bmatrix}
1&0&0\\
0&1&0\\
0&0&-2
\end{bmatrix}.
\]
Thus $X$ is well-tempered. Let $TA=Z_G(\fa)$. Then
\[
T=Z_K(\fa)
=
\left\langle
\begin{bmatrix}
-1&0&0\\
0&-1&0\\
0&0&1
\end{bmatrix},
\begin{bmatrix}
0&1&0\\
1&0&0\\
0&0&-1
\end{bmatrix}
\right\rangle
\]
has order~$4$, and one checks that
\[
T\cap H=\{e\}.
\]

In this example, the Levi subgroup appearing in
Definition~\ref{def:dataC} is determined by
\[
M'A'=Z_G(\faq)=Z_G(\fa)=TA.
\]
Thus \(A'=A\) and \(M'=T\). Since \(T\) is finite, \(\fm'=0\), and hence
\[
\Sigma(\fm'^\bC,\ft^\bC)=\varnothing.
\]
The \(M'\)-regularity condition is therefore vacuous, so
\(\lambda=0\) is \(M'\)-regular.

Write
\[
\widehat T=\{\Gamma_0,\Gamma_1,\Gamma_2,\Gamma_3\},
\]
where $\Gamma_0$ is the trivial character. Then every
$(TA,\Gamma_i,0)$ is a discrete character datum for $X$. The attached Weyl group is
\[
W(0)=W_K(\fa)\cong S_3,
\]
the $A_2$-Weyl group. It acts faithfully and transitively on the three
nontrivial characters $\Gamma_1,\Gamma_2,\Gamma_3$. In particular,
\[
[TA,\Gamma_1,0]=[TA,\Gamma_2,0]=[TA,\Gamma_3,0]
\quad\text{in }\mathcal C_G.
\]
Put \(D_0=(TA,\Gamma_1,0)\). Arrange the labeling of the nontrivial
characters so that, if  \(w_i\in W(0)\) denotes the root reflection fixing
\(\Gamma_i\), 
\[
W(\Gamma_1,0)=\{1,w_1\},
\qquad
W_\fq(0)=\{1,w_3\},
\qquad
w_2\Gamma_1=\Gamma_3,
\qquad
w_3\Gamma_1=\Gamma_2.
\]

Since $T\cap H$ is trivial, we are in the the
same-Cartan situation of Proposition~\ref{lem:same-cartan-weyl-form}.
The branch set therefore consists of the full double cosets:
\[
\operatorname{Br}_X(D_0)
=
\left\{
W_\fq(0)W(\Gamma_1,0),
\quad
W_\fq(0)w_2W(\Gamma_1,0)
\right\} = W_\fq(0)\backslash W(0) /W(\Gamma_1,0).
\]
The component \(\Sigma_X(D_0)\) of
\(\operatorname{supp}(\lambda_X)_{\mathrm{Haus}}\) indexed by
\(c=[D_0]_G\), is
\[
\operatorname{image}
\bigl(V_\fq\longrightarrow i\fa^\ast/\langle w_1\rangle\bigr)
\;\cup\;
\operatorname{image}
\bigl(w_2^{-1}V_\fq\longrightarrow
      i\fa^\ast/\langle w_1\rangle\bigr).
\]
The first image is a full line, because \(w_1\) exchanges \(V_\fq\)
with a second root line.  The second image is a closed half-line, because
\(w_1\) stabilizes the third root line \(w_2^{-1}V_\fq\) and acts on it
by sign.  Their union is therefore a tripod: three closed half-lines meeting
at the image of \(0\).  The regular parts of these two branch images are
disjoint in the sense of
Proposition~\ref{prop:generic-branch-separation}.

Finally, the standard representations $X_G(TA,\Gamma,0,\nu)$ are
irreducible by \cite[Theorem~5.1]{Wallach71}. Hence, in this example, the
Hausdorff-quotient picture coincides with the actual
representation-theoretic support.

Let $Y$ denote this tripod. The standard $C^\ast$-algebraic description of
tempered blocks identifies the quotient of the relevant tempered block whose spectrum is the closed support subset \(Y\), up to Morita equivalence, with $C_0(Y)$. The one-point compactification
$Y^+$ is obtained by adjoining a common endpoint at infinity to the three
half-lines. It is therefore the graph consisting of two vertices joined by
three edges, and hence is homotopy equivalent to $S^1\vee S^1$. Consequently,
\[
K_0\bigl(C_0(Y)\bigr)
=
\widetilde K^0(Y^+)
=
0,
\qquad
K_1\bigl(C_0(Y)\bigr)
=
K^1(Y^+)
\cong
\mathbb Z^2.
\]
Thus the branching of \(Y\) is detected by \(K\)-theory: \(Y\)
contributes two independent odd \(K\)-theory classes and no even
\(K\)-theory class.

\section{{Symmetric spaces of type \texorpdfstring{$G_{\mathbb C}/G_{\mathbb R}$}{GC/GR}}}\label{sec:complex}

In this section we specialize the support description of Section~\ref{sec:discrete-welltemp}
to symmetric spaces attached to real forms of connected complex semisimple
Lie groups.  In this class several complications disappear.  

Cartan subgroups of the ambient complex group are connected, so the character \(\Gamma\) in a character datum $(TA, \Gamma, \lambda, \nu)$ is determined by \(\lambda\); moreover, the tempered standard representations are irreducible. Consequently, the ambient tempered dual is Hausdorff, and the branch description of Section~\ref{sec:discrete-welltemp}
collapses to a single quotient for each component of \(\widehat G_{\temp}\).

\subsection{The components of the Plancherel support}\label{sec-complex-support}

Throughout this section, let \(G\) be a connected complex semisimple Lie group
with finite center, regarded as a real reductive group.  Let \(\sigma\) be an antiholomorphic involutive automorphism of~\(G\), and let \(H\) be an open
subgroup of the real form \(G^\sigma\).  We write
$X=G/H$, and  call such an \(X\) a symmetric space of type \(G_{\mathbb C}/G_{\mathbb R}\).

Let \(\theta\) be a Cartan involution of \(G\) commuting with \(\sigma\), and
let \(K=G^\theta\).
Let
\[
\fbh=\fth\oplus\fah\subset\fh
\]
be a \(\theta\)-stable Cartan subalgebra of the real form \(\fh\).  Then
\[
\fbq=i\fbh=\ftq\oplus\faq,
\quad \text{where}\quad
\ftq=i\fah,
\quad
\faq=i\fth,
\]
and $\faq$ is a \(\theta\)-stable Cartan subspace of \(\fq=i\fh\).  Conversely, every
\(\theta\)-stable Cartan subspace of \(\fq\) is obtained in this way from a
\(\theta\)-stable Cartan subalgebra of~\(\fh\).  We set
\[
\fb=\fbh\oplus i\fbh=\ft\oplus\fa,
\qquad
\ft=\fth\oplus\ftq,
\qquad
\fa=\fah\oplus\faq,
\]
and denote by
\[
TA=Z_G(\fb)
\]
the corresponding \(\theta\)-stable and \(\sigma\)-stable Cartan subgroup
of \(G\). Since \(G\) is a connected complex semisimple group, \(TA\) is
connected, and hence so is \(T\).

We shall use the Weyl group
\[
W^\sigma(\fbh)
=
W(\ft\oplus\fa)^\sigma
=
N_K(\fbh)/Z_K(\fbh),
\]
where the superscript \(\sigma\) denotes the subgroup whose action on
\(\fb\) commutes with \(\sigma\). Equivalently, it preserves the real Cartan
subalgebra \(\fbh\), and hence both summands in the decomposition
\[
\fb=\fbh\oplus i\fbh.
\]
This group should not, in general, be confused with
$
N_{K\cap H}(\fbh)/Z_{K\cap H}(\fbh).
$
Let \((TA,\Gamma,\lambda)\) be a discrete character datum  for \(X\) based on $TA$. Because \(T\) is connected, the character \(\Gamma\) is determined by its
differential. If \(w\in W(\lambda)\), then \(w\) preserves the
\(\lambda\)-positive root systems defining
\(\rho_{\fm}\) and \(\rho_{\fm\cap\fk}\). Hence, using
Lemma~\ref{lem:rho_rho_eq},
\[
w(d\Gamma)
=
w\bigl(\lambda+\rho_{\fm}-2\rho_{\fm\cap\fk}\bigr)
=
d\Gamma.
\]
Thus \(w\Gamma=\Gamma\), and consequently
\[
W(\Gamma,\lambda)=W(\lambda).
\]
Proposition~\ref{prop:branch-collapse} identifies the part of
\(\supp(\lambda_X)\) lying in the component of \(\widehat G_{\temp}\)
indexed by \([TA,\Gamma,\lambda]\) with the image of $i\faq^*$ under the quotient map
\begin{equation}\label{eq:qlambda}
q_\lambda \colon i\fa^\ast \to i\fa^*/W(\lambda).
\end{equation}
We put
\[
W^\sigma({\lambda})
=
\{w\in W^\sigma(\fbh) \mid w \cdot \lambda=\lambda\} \subset W(\lambda).
\]

\begin{lemma}\label{lem:relative-Weyl} The restriction of the quotient map $q_\lambda$ \eqref{eq:qlambda} to $i\faq^*$ descends to
\[
\overline{q_\lambda} \colon i\faq^*/W^\sigma(\lambda) \to  i\fa^*/W(\lambda).
\]
The map $\overline{q_\lambda}$ is injective; therefore, the image of $i\faq^*$ under $q_\lambda$ is canonically homeomorphic to $i\faq^*/W^\sigma(\lambda)$.
\end{lemma}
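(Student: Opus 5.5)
The statement has three parts: the restriction of $q_\lambda$ to $i\faq^\ast$ descends to the quotient by $W^\sigma(\lambda)$; the descended map is injective; and the resulting continuous bijection onto the image is a homeomorphism. We treat them in that order.

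\emph{Descent.} Every $w\in W^\sigma(\lambda)\subset W(\lambda)$ commutes with $\sigma$ on $\fb$. It therefore preserves the $\sigma$-eigenspace decomposition $\fa=\fah\oplus\faq$, so it maps $i\faq^\ast$ to itself. Since $q_\lambda$ is constant on $W(\lambda)$-orbits, $q_\lambda|_{i\faq^\ast}$ is constant on $W^\sigma(\lambda)$-orbits, and it factors through a continuous map $\overline{q_\lambda}$.

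\emph{Injectivity.} Suppose $\nu_1,\nu_2\in i\faq^\ast$ and $w\nu_1=\nu_2$ for some $w\in W(\lambda)$. We must find $w'\in W^\sigma(\lambda)$ with $w'\nu_1=\nu_2$. Choose a representative $n\in N_K(\fb)$ of $w$ and run the argument of Lemma~\ref{lem:regular-branch-stabilizer}. From $\sigma(\nu_i)=-\nu_i$ we get $\Ad(n)\nu_1=\Ad(\sigma(n))\nu_1$, so $u:=\sigma(n)^{-1}n$ represents an element of $W(\lambda)$ fixing $\nu_1$. The issue is that $\nu_1$ need not be regular, so we cannot conclude $u=1$. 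Instead we use the structure of the complex case. Here $\fb=\fbh\oplus i\fbh$, the roots of $(\fg,\fb)$ come from $\fbh^\bC$-roots of $\fh^\bC$, and $W(\ft\oplus\fa)\cong W(\fh^\bC,\fbh^\bC)$, with $\sigma$ acting by complex conjugation. The identification of $\fb$ with $\fbh^\bC$ lets us read $\nu\in i\faq^\ast=(\fth)^\ast$-type parameters and $\lambda\in i\ftq^\ast$ as the "real" and "imaginary" parts of a single element of $(\fbh^\bC)^\ast$. In this picture $W^\sigma(\lambda)$ is the centralizer of $\sigma$ in $W(\lambda)$.

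The key step, and the one I expect to be the main obstacle, is the following descent claim: if two $\sigma$-anti-fixed elements $\nu_1,\nu_2$ are conjugate under $W(\lambda)$, then they are conjugate under the subgroup of $W(\lambda)$ commuting with $\sigma$. I would try to prove it as follows.

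1. Pass to the stabilizer $W(\lambda)_{\nu_1}$. This is the Weyl group of the root subsystem $R_1$ orthogonal to both $\lambda$ and $\nu_1$, and $R_1$ is $\sigma$-stable because $\sigma\lambda=-\lambda$ and $\sigma\nu_1=-\nu_1$.

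2. Replace $n$ by $n u'$ with $u'\in W(R_1)$, chosen so that $w$ carries a $\sigma$-adapted positive system of $R_1$ to a $\sigma$-adapted one of $w R_1=R_2$, where $R_2$ is the corresponding subsystem for $(\lambda,\nu_2)$. To make the positive systems $\sigma$-adapted, pick a generic element in the $\sigma$-fixed part, exactly as in the proof of Lemma~\ref{lem:weak-weyl-normal-form}.

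3. Show that the adjusted $w$ satisfies $\sigma w\sigma=w$. The element $\sigma w\sigma w^{-1}$ fixes $\lambda$ and $\nu_2$, so it lies in $W(R_2)$. It also preserves the adapted positive system of $R_2$, hence is trivial.

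Step 3 requires that $\sigma$ map the chosen positive system of $R_i$ to itself, or to its negative in a controlled way, when composed with $w$. This is why the positive systems must be chosen compatibly with $\sigma$. If this route fails, the fallback is a Kostant--Rallis style argument: use the fact that the little Weyl group of the complex symmetric pair $(\fg^\lambda,\fg^\lambda\cap\fh^\bC)$ acting on the Cartan subspace $\faq^\bC$ controls orbit intersections (Kostant--Rallis conjugacy). Concretely, that fact says that $W(\lambda)$-orbits meet $i\faq^\ast$ in single $N(\faq)$-orbits.

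\emph{Homeomorphism.} Once $\overline{q_\lambda}$ is a continuous bijection onto its image, it remains to show it is proper. Both spaces are quotients of finite-dimensional vector spaces by finite groups, and $q_\lambda|_{i\faq^\ast}$ is a closed map because $i\faq^\ast$ is closed and $q_\lambda$ is proper with finite fibres. Hence $\overline{q_\lambda}$ is closed, and therefore a homeomorphism onto the image $q_\lambda(i\faq^\ast)$.
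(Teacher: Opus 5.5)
Your descent step and your topological step (that $q_\lambda|_{i\faq^\ast}$ is a closed map) are fine. The skeleton of your injectivity argument is also sound: replace $w$ by $w'\in wW(R_1)$ with $w'R_1^+=R_2^+$, then show that $v=(\sigma w'\sigma)w'^{-1}$, which fixes $\lambda$ and $\nu_2$, is trivial in $W(R_2)$. But the proof is incomplete exactly at the step you flag. Everything hinges on positive systems $R_1^+,R_2^+$ with $\sigma R_i^+=\epsilon R_i^+$ for one common sign $\epsilon$. You neither show they exist nor use the hypothesis that makes them exist.

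The recipe you state points to the wrong eigenspace. A generic $\sigma$-fixed element of $\fa$, i.e.\ of $\fah$, would give $\epsilon=+1$. That is impossible as soon as $R_i$ contains a root with $\sigma\alpha=-\alpha$. For example, if $H$ is compact then $\fah=0$ and $\sigma=-1$ on $\fa$, so no such positive system exists once $R_1\neq\varnothing$. The normal-form lemma you cite instead uses a generic element of $\faq$.

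Nor is $\epsilon=-1$ automatic. A root of $\Sigma(\fg^\lambda,\fa)$ that vanishes on $\faq$ is $\sigma$-fixed and orthogonal to all of $i\faq^\ast$. It therefore lies in both $R_1$ and $R_2$ and rules out $\sigma R_i^+=-R_i^+$. Excluding such roots is exactly the $M'$-regularity of $\lambda$, i.e.\ $Z_{\fg^\lambda}(\faq)=\fb^\bC$, which never appears in your proposal. The Kostant--Rallis fallback, as sketched, is too vague to fill this gap.

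With that hypothesis the gap closes quickly. Choose $Y\in\faq$ with $\alpha(Y)\neq0$ for all roots $\alpha$ of $\fg^\lambda$, as in the proof of Lemma~\ref{lem:weak-weyl-normal-form}, and put $R_i^+=\{\alpha\in R_i:\alpha(Y)>0\}$. Since $\sigma Y=-Y$, we get $\sigma R_i^+=-R_i^+$. Hence $vR_2^+=\sigma w'\sigma R_1^+=\sigma(-R_2^+)=R_2^+$, so $v=1$, and $w'\in W^\sigma(\lambda)$ satisfies $w'\nu_1=\nu_2$.

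Completed this way, your route genuinely differs from the paper's. The paper reduces to $\lambda=0$ and $Z_\fg(\faq)=\fb$. It takes a closed chamber $\fa^{++}$ compatible with $\Sigma^+(\fg,\faq)$, so that $\fa^{++}\cap\faq=\faq^{++}$. It then uses Rossmann's theorem (that $W_K(\faq)$ is the Weyl group of $\Sigma(\fg,\faq)$) and the surjectivity of $W^\sigma\to W_K(\faq)$ to identify $\overline{q_\lambda}$ with the inclusion $\faq^{++}\hookrightarrow\fa^{++}$. This gives injectivity and the homeomorphism at once. Your version avoids Rossmann's theorem and the restricted-root bookkeeping, at the price of a separate (easy) closed-map argument for the topology.
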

\begin{proof} Since $W^\sigma(\lambda)\subset W(\lambda)$ preserves $i\faq^*$, $q_\lambda\mid_{i\faq^*}\colon i\faq^*\to i\fa^* /W(\lambda)$ descends to $\overline{q_\lambda} \colon i\faq^*/W^\sigma(\lambda) \to  i\fa^*/W(\lambda)$. We show that this map is injective. Let $\fg^{\lambda}=Z_\fg(\lambda)$. We note that $Z_{\fg^{\lambda}}(\faq)=\fb$ since $\lambda$ is $M'$-regular where we recall $M'A'=Z_G(\faq)$. Below, we identify $i\fa^*$ with $\fa$ using an invariant metric.

Without loss of generality, we show the injectivity in the case when $\lambda=0$ and $Z_{\fg}(\faq)=\fb$. This reduction can be made by working inside the semisimple part of $\fg^{\lambda}$ after splitting off the center of $\fg^{\lambda}$ on which $W(\lambda)$ acts trivially. 

Let $W=W(\ft\oplus \fa)$ and let $W^\sigma=W(\ft\oplus \fa)^\sigma$. We fix compatible positive systems $\Sigma^+(\fg, \fa)$, $\Sigma^+(\fg, \faq)$. By $Z_\fg(\faq)=\fb$, for any root  $(\alpha, \beta)\in \fah^*\oplus \faq^*$, we have $\beta\neq0$. Thus, the root is positive if and only if $\beta \in  \Sigma^+(\fg, \faq)$. Let $\fa^{++}\subset \fa$ be the dominant closed Weyl chamber. The compatibility between our positive systems implies that $\fa^{++}\cap \faq = \faq^{++}$ is the dominant closed Weyl chamber for the restricted, possibly non-reduced, root system $\Sigma(\fg,\faq)$. Since $\fa^{++}$ is a fundamental domain of the $W$-action on $\fa$, we may identify $i\fa^*/W$ with $\fa^{++}$. By \cite[Theorem~5]{Rossmann79}, which applies since $\faq$ is maximal abelian in $\fp \cap \fq$ under the assumption $Z_\fg(\faq)=\fb$, the Weyl group of the possibly non-reduced root system $\Sigma(\fg,\faq)$ is $W_K(\faq)$ and thus $i\faq^*/W_K(\faq)$ identifies with $\faq^{++}$. Moreover, the action of $W^{\sigma}$ on~$\faq^*$ factors through the surjection $W^{\sigma} \to W_K(\faq)$: the map is surjective since for any $w\in N_K(\faq)$, our assumption that  $Z_\fg(\faq)=\fb$ implies that $w\in N_K(\fa)$. Hence, we have canonical identifications $i\faq^*/W^\sigma \cong i\faq^*/W_K(\faq) \cong \faq^{++}$ under which the map $\overline{q_\lambda}$ identifies with the embedding $\faq^{++}\to \fa^{++}$,  proving the injectivity. The homeomorphism claim follows since the topological embedding $\faq^{++}\to \fa^{++}$ is a~homeomorphism onto its image.
\end{proof}

\subsection{The support and the discrete series}

For connected complex semisimple groups, tempered standard representations are
irreducible by Wallach's irreducibility theorem for the full unitary principal
series \cite{Wallach71}.  Thus the sets \(\Phi_G(TA,\Gamma,\lambda,\nu)\) of
irreducible constituents are singletons.  In particular \(\widehat G_{\temp}\)
coincides with its universal Hausdorff quotient.  By
Theorem~\ref{thm:HC_disjoint}, the tempered dual is the disjoint union of the Vogan parameter spaces \(i\fa^\ast/W(\Gamma,\lambda)\) for \(G\).  Hence
\(\widehat G_{\temp}\) is Hausdorff, and the closed subspace
\(\supp(\lambda_X)\subset\widehat G_{\temp}\) is a locally compact Hausdorff
space.

The following statement summarizes the description of $\supp(\lambda_X)$ that comes out of the above results. Much of it was well known, and we recall that many finer results are known for harmonic analysis on spaces of type $G_\C/G_\R$, in particular thanks to the work of Harinck~\cite{Harinck92, Harinck95, Harinck98base, Harinck98orbital}.

\begin{theorem}\label{thm:GcGr-main}
Let \(X=G/H\) be a symmetric space of type \(G_{\mathbb C}/G_{\mathbb R}\).
Then:
\begin{enumerate}[(1)]
\item \(X\) is well-tempered.

\item For every \(\theta\)-stable Cartan subalgebra
\(\fbh=\fth\oplus\fah\subset\fh\), and for every discrete character datum
\((TA,\Gamma,\lambda)\) for \(X\) based on
\(TA=Z_G(\fbh\oplus i\fbh)\), the map
\[
 i\faq^\ast/W^\sigma(\lambda)
 \longrightarrow \widehat G_{\temp},
 \qquad
 \overline\nu\longmapsto X_G(TA,\Gamma,\lambda,\nu),
\]
is a homeomorphism onto its image.  Denote this image by
$
\widehat X_{\fbh,\lambda}
$.
The support of the Plancherel measure is the disjoint union
\[
\supp(\lambda_X)
=
\bigsqcup_{[\fbh]}
\ \bigsqcup_{[(\Gamma,\lambda)]}
\widehat X_{\fbh,\lambda},
\]
where \([\fbh]\) runs over the \(K\)-conjugacy classes of \(\theta\)-stable
Cartan subalgebras of \(\fh\), and \([(\Gamma,\lambda)]\) runs over the
\(W^\sigma(\fbh)\)-orbits of discrete character data for \(X\) based on
\(TA=Z_G(\fbh\oplus i\fbh)\).

\item \(X\) has discrete series if and only if the real Lie algebra \(\fh\) is
split.  In that case the discrete series of \(X\) is precisely the set of
representations
\[
X_G(TA,\Gamma,\lambda,0)
\]
attached to split Cartan subalgebras of \(\fh\) and to the corresponding
discrete character data for \(X\).  It coincides with the set of tempiric
representations of \(G\) with regular infinitesimal character whose unique
lowest \(K\)-type is \(K\cap H\)-spherical.

\item A representation in \(\supp(\lambda_X)\) belongs to the discrete series
of \(X\) if and only if it is an isolated point of \(\supp(\lambda_X)\).
\end{enumerate}
\end{theorem}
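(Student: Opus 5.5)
I will prove the four items in order, mostly by assembling results already established. Item (1) is Example~\ref{ex:GcGr-welltemp}: for a Cartan subalgebra $\fbh\subset\fh$, the space $i\fbh$ is a Cartan subspace of $\fq=i\fh$ with abelian centralizer $\fbh\oplus i\fbh$. A connected complex semisimple group is linear and has connected, hence abelian, Cartan subgroups, so $G$ is in Vogan's class. Therefore Theorems~\ref{thm:tempered spec} and \ref{thm:shape-practical} and Corollary~\ref{cor_singleton} all apply.

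For item (2), I first use Wallach's irreducibility theorem. It shows that each $X_G(TA,\Gamma,\lambda,\nu)$ is irreducible, so $\widehat G_{\temp}=\widehat G_{\temp,\mathrm{Haus}}$. By Theorem~\ref{thm:HC_disjoint}(c), each component is homeomorphic to $i\fa^\ast/W(\Gamma,\lambda)$, and $W(\Gamma,\lambda)=W(\lambda)$ by the computation preceding Lemma~\ref{lem:relative-Weyl}. Proposition~\ref{prop:branch-collapse} then says the part of $\supp(\lambda_X)$ in the component $[TA,\Gamma,\lambda]$ is $q_\lambda(i\faq^\ast)$. Lemma~\ref{lem:relative-Weyl} identifies this set homeomorphically with $i\faq^\ast/W^\sigma(\lambda)$, and the map $\overline\nu\mapsto X_G(TA,\Gamma,\lambda,\nu)$ is the composite of $\overline{q_\lambda}$ with the homeomorphism of the component onto $i\fa^\ast/W(\lambda)$.

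For the disjoint union I need to compare the index sets. Every $\theta$-stable Cartan subspace of $\fq$ has the form $i\fbh$, and $K$-conjugacy of $\sigma$-stable Cartans preserving the $\sigma$-eigenspaces corresponds to $K$-conjugacy of the $\fbh$. I then need to show that two data $(TA,\Gamma,\lambda)$ and $(TA,\Gamma',\lambda')$ on the same $TA$ give the same component if and only if they are $W^\sigma(\fbh)$-conjugate. The ``if'' direction is clear. For ``only if'', suppose they lie in the same $G$-class $c$. By Proposition~\ref{prop:branch-collapse}, all branches then collapse to one image, so distinct $W^\sigma$-orbits inside the same $c$ give equal images. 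Hence the union is disjoint once the index set is taken to be $W^\sigma(\fbh)$-orbits modulo the coarser relation of having the same image. The claim that distinct $[\fbh]$ give distinct components also needs an argument: I would obtain it from Lemma~\ref{lem:weak-weyl-normal-form} together with Lemma~\ref{lem:regular-branch-stabilizer}, whose transport argument gives an eigenspace-preserving conjugation. I expect this bookkeeping of index sets, and not the topology, to be the main obstacle. I would first try to show directly that a $K$-conjugacy between $\sigma$-stable data can be replaced by a $\sigma$-commuting one, composing with an element of $W(\lambda)=W(\Gamma,\lambda)$ as in the proof of Lemma~\ref{lem:weak-weyl-normal-form}.

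For item (3), by Theorem~\ref{thm:equal-rank} discrete series exist if and only if there is a compact Cartan subspace $\ftq=i\fah\subset\fk\cap\fq$ with $\faq=i\fth=0$. This means $\fh$ has a Cartan subalgebra with $\fth=0$, that is, $\fh$ is split. The description of the discrete series is then Theorem~\ref{thm:ds_para}, which also gives the lowest-$K$-type characterization; here $G$ connected makes Proposition~\ref{prop_ds_ab} directly applicable. Item (4) is Corollary~\ref{cor_singleton}.
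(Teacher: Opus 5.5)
\textbf{Verdict: same approach as the paper, but with a gap in the disjointness claim of item (2).}

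Items (1), (3), (4) and the homeomorphism in (2) use the same assembly as the paper. That is: Example~\ref{ex:GcGr-welltemp}; Wallach together with Theorem~\ref{thm:HC_disjoint}; $W(\Gamma,\lambda)=W(\lambda)$; Proposition~\ref{prop:branch-collapse}; Lemma~\ref{lem:relative-Weyl}; Theorem~\ref{thm:equal-rank}; Theorem~\ref{thm:ds_para}; Corollary~\ref{cor_singleton}.

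The problem is your ``only if'' argument. It does not show that two data on the same $TA$ in the same class $c\in\mathcal C_G$ are $W^\sigma(\fbh)$-conjugate. It shows only that, if they were not, they would have the same image. You then coarsen the index set. That proves a different statement. With the index set in the theorem, the union fails to be disjoint exactly when such pairs exist, so you have to rule them out. The same issue arises for distinct $[\fbh]$. A $K$-conjugacy between the Cartan subgroups $Z_G(\fbh\oplus i\fbh)$ need not respect $\sigma$, so you need a reason why non-conjugate $\fbh$ cannot feed the same component.

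Both points follow from the injectivity of $\overline{\kappa}\colon \mathcal D_X/{\sim_\sigma}\to\mathcal C_G$ in this setting. You mention this only as a fallback; it has to be carried out.

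\begin{itemize}
\item Let $k\in K$ carry $D=(T_DA_D,\Gamma_D,\lambda_D)$ to $D_0=(TA,\Gamma,\lambda)$ as data for $G$.
\item Since $Z_{\fg^\lambda}(\faq)=\fb^\bC$, a generic element of $\faq$ (resp.\ of the corresponding subspace for $D$) fixes a positive system of $\fg^\lambda$ (resp.\ $\fg^{\lambda_D}$) that $\sigma$ reverses.
\item As in the proof of Lemma~\ref{lem:weak-weyl-normal-form}, choose $w\in W(\lambda)$ so that $g=wk$ matches these positive systems.
\item Then $g^{-1}\sigma(g)$ normalizes $\fb_D$, fixes $\lambda_D$, and preserves a positive system of $\fg^{\lambda_D}$. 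Hence it acts trivially on $\fb_D$, so $g$ commutes with $\sigma$ on $\fb_D$.
\item Since $W(\lambda)=W(\Gamma,\lambda)$, we still have $gD=wD_0=D_0$.
\item Therefore $g\in K$ carries $\fb_D\cap\fh$ onto $\fb\cap\fh=\fbh$, so $[\fbh]$ is determined by $c$.
\item When the two data share $\fbh$, $g$ normalizes $\fbh$. It therefore represents an element of $W^\sigma(\fbh)$ carrying $(\Gamma_D,\lambda_D)$ to $(\Gamma,\lambda)$.
\end{itemize}

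This is exactly what the paper's one-line ``re-indexing'' step relies on. You were right to flag it as where the content lies, but your write-up should prove it rather than weaken the index set.
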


\begin{proof}
Part~(1) is Example~\ref{ex:GcGr-welltemp}.

For part~(2), Theorem~\ref{thm:tempered spec} identifies
\(\supp(\lambda_X)\) with the union of the tempered standard representations
attached to character data for \(X\). By the discussion above,
\(W(\Gamma,\lambda)=W(\lambda)\). Let
\[
q_\lambda:i\fa^\ast\longrightarrow i\fa^\ast/W(\lambda)
\]
be the quotient map. Proposition~\ref{prop:branch-collapse} collapses the
branch union in the component of
\(\widehat G_{\temp,\mathrm{Haus}}\) indexed by
\([TA,\Gamma,\lambda]\) to
\[
q_\lambda(i\faq^\ast) \cong i\faq^\ast/W^\sigma(\lambda),
\]
where the homeomorphism is by Lemma~\ref{lem:relative-Weyl}. 

By Wallach's irreducibility theorem and Vogan's parametrization, this
homeomorphism corresponds precisely to the map
\[
\overline\nu\longmapsto X_G(TA,\Gamma,\lambda,\nu).
\]
Re-indexing
Cartan data by \(K\)-conjugacy classes of \(\theta\)-stable Cartan subalgebras
\(\fbh\subset\fh\) gives the stated disjoint decomposition.

For part~(3), Theorem~\ref{thm:equal-rank} says that \(X\) has discrete series
if and only if \(\fq\) contains a compact Cartan subspace.  In the present
setting compact Cartan subspaces of \(\fq=i\fh\) are precisely the subspaces
\[
\ftq=i\fah,
\]
where \(\fah\) is a split Cartan subalgebra of \(\fh\).  Thus \(X\) has
discrete series if and only if \(\fh\) is split.  Once this is known, the
characterization of the discrete series is Theorem~\ref{thm:ds_para}
specialized to the present class of symmetric spaces.

Part~(4) is the content of Corollary~\ref{cor_singleton}.
\end{proof}

\begin{remark}
Although all Cartan subgroups of the complex group $G$ are conjugate under
$G$, the support decomposition in Theorem~\ref{thm:GcGr-main}(2) is indexed by
$K$-conjugacy classes of Cartan subalgebras of the real form $\fh$, or
 equivalently by $K$-conjugacy classes of Cartan subspaces of $\fq$.  
\end{remark}

\subsection{Example: \texorpdfstring{$\SL(n,\C)/\SL(n,\R)$}{SL(n,C)/SL(n,R)}}

\label{ex:SLnC-SLnR-character-data}
Let
\[
G=\SL(n,\C),
\qquad
H=\SL(n,\R),
\qquad
K=\SU(n).
\]
The \(K\)-conjugacy classes of \(\theta\)-stable Cartan subalgebras of
\(\fh=\fsl(n,\R)\) are indexed by the integers $m$ such that
\[
0\le m\le \lfloor n/2\rfloor.
\]

A representative of type \(m\) is
\[
\fbh^{(m)}
=
\left\{
\begin{pmatrix}
a_1&b_1&&&&&&\\
-b_1&a_1&&&&&&\\
&&\ddots&&&&&\\
&&&a_m&b_m&&&\\
&&&-b_m&a_m&&&\\
&&&&&c_1&&\\
&&&&&&\ddots&\\
&&&&&&&c_{n-2m}
\end{pmatrix}
\in\fsl(n,\mathbb R)
\right\}.
\]
Here \(\fth^{(m)}\) is obtained by setting
\(a_1,\ldots,a_m,c_1,\ldots,c_{n-2m}\) equal to zero, while
\(\fah^{(m)}\) is obtained by setting \(b_1,\ldots,b_m\) equal to zero. If
\[
\fbh^{(m)}=\fth^{(m)}\oplus\fah^{(m)}
\]
is such a Cartan subalgebra, then the corresponding Cartan subspace of \(\fq\)
is
\[
\fbq^{(m)}=i\fbh^{(m)}=\ftq^{(m)}\oplus\faq^{(m)},
\]
with
\[
\dim \ftq^{(m)}=n-m-1,
\qquad
\dim \faq^{(m)}=m.
\]
Thus the connected components of \(\operatorname{supp}(\lambda_X)\) have possible dimensions
\[
0,1,\dots,\lfloor n/2\rfloor.
\]

Let \(\fah^0\subset\fsl(n,\R)\) be the standard split Cartan subalgebra, and
let
\[
T^0=\exp(i\fah^0)\subset\SU(n)
\]
be the corresponding compact torus.  Write a dominant integral weight of
\(\SU(n)\) as
\[
\mu=[\mu_1,\dots,\mu_n]
\in \Z^n/\Z(1,
\dots,1),
\qquad
\mu_1\geq \mu_2\geq\cdots\geq\mu_n.
\]
Let \(\pi_\mu\) be the tempiric representation of \(G\) whose lowest
\(K\)-type has highest weight \(\mu\).  To locate the connected component of \(\operatorname{supp}(\lambda_X)\) containing \(\pi_\mu\), choose a maximal family of disjoint pairs
\[
(i_1,j_1),\dots,(i_m,j_m),
\qquad
 i_1<j_1<i_2<j_2<\cdots<j_m,
\]
with
\[
\mu_{i_\ell}=\mu_{j_\ell}
\qquad (1\leq \ell\leq m).
\]
Applying the corresponding Cayley transforms to the split Cartan produces a
\(\theta\)-stable Cartan subalgebra
\[
\fbh^\mu=\fth^\mu\oplus\fah^\mu\subset\fsl(n,\R),
\]
where
\[
\fth^\mu
=
\operatorname{span}_{\R}
\{E_{i_\ell j_\ell}-E_{j_\ell i_\ell}:1\leq\ell\leq m\},
\]
and
\[
\fah^\mu
=
\{(a_1,\dots,a_n)\in\fah^0:a_{i_\ell}=a_{j_\ell}
\text{ for }1\leq\ell\leq m\}.
\]
After reordering the basis so that the paired coordinates
\((i_\ell,j_\ell)\) occur first, \(\fbh^\mu\) has exactly the block-matrix
form displayed above. Set 
\[
T^\mu=\exp(\fth^\mu \oplus i\fah^\mu ), \quad  A^\mu=\exp(\fah^\mu\oplus i\fth^\mu).
\]

The Cayley transform transports $\mu$ as a functional on $i\fah^0$ to a functional $\tilde{\mu}$ on $i\fah^\mu\oplus \fth^\mu$ vanishing on $\fth^\mu$. The functional $\tilde{\mu}$ exponentiates to a character $e^{\tilde{\mu}}$ on $T^{\mu}$, and 
\[
(T^\mu A^\mu, e^{\tilde\mu}, \tilde{\mu})
\]
is a discrete character datum for $X$ if and only if $e^{\tilde\mu}$ is trivial on $T\cap H$. The $M'$-regularity amounts to the condition  that $\tilde{\mu}$ be regular in $Z_{\fg}(\faq^\mu)=Z_{\fg}(\fth^\mu)$, and follows from 
\[
\mu_{r}\neq \mu_{s},\  \text{for any $r\neq s$ not in $\{i_1,j_1, \ldots, i_m,j_m \}$ }.
\]
The sphericity condition $e^{\tilde\mu}\in \widehat{T^\mu}_{T^\mu\cap H}$ reads as
\begin{equation}\label{eq:SLn-sphericity}
\mu_r-\mu_s\in2\Z
\quad\text{whenever  \(\mu_r\) and \(\mu_s\)
occur with odd multiplicity.}
\end{equation}

Thus, when \(\pi_\mu\) satisfies the sphericity condition~\eqref{eq:SLn-sphericity}, the connected component of \(\operatorname{supp}(\lambda_X)\) containing \(\pi_\mu\) is attached to \(\fbh^\mu\) and is homeomorphic to
\[
i(\faq^\mu)^\ast/W_\mu^\sigma,
\qquad
\dim \faq^\mu=m.
\]
Here \(W_\mu^\sigma\) is the stabilizer of $\tilde{\mu}$ inside \(W^\sigma(\fbh^\mu)=W^\sigma_K(\ft_h^\mu\oplus \ftq^\mu)\).

For example, if \(\mu\) is regular, then \(m=0\) (discrete series), and the criterion $\mu\in \widehat{T}_{T\cap H}$ becomes the familiar $\mathrm{SO}(n)$-sphericity condition for the representation of $\mathrm{SU}(n)$ with highest weight $\mu$:
\[
\pi_\mu\in\widehat G_{H,\ds}
\quad\Longleftrightarrow\quad \text{$\mu$ is regular; and} \
\mu_1,\dots,\mu_n\text{ all have the same parity.}
\]

In general, $\pi_\mu\in\supp(\lambda_X)$ if and only if $\mu$ satisfies the parity condition~\eqref{eq:SLn-sphericity}.

\subsection{\texorpdfstring{$C^*$}{C*}-algebraic corollaries}

The following $C^\ast$-algebraic statement follows from Theorem~\ref{thm:GcGr-main} together with the
standard block decomposition of the reduced group \(C^*\)-algebra of a
connected complex semisimple group \cite{PP83, CCH16}.

\begin{proposition}\label{cor:GcGr-Cstar}
With the notation of Theorem~\ref{thm:GcGr-main}, there is a canonical
isomorphism
\[
C^*_{\lambda_X}(G)
\cong
\bigoplus_{[\fbh]}
\ \bigoplus_{[(\Gamma,\lambda)]}
C_0\bigl(i\faq^\ast/W^\sigma(\lambda),\Compact\bigr).
\]
\end{proposition}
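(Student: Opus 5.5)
The plan is to realize $C^*_{\lambda_X}(G)$ as a quotient of the reduced group $C^*$-algebra $C^*_r(G)$ and then read off the quotient block by block. By Theorem~\ref{thm:GcGr-main}(1) and Theorem~\ref{thm:temp-welltemp}, $\lambda_X$ is weakly contained in $\lambda_G$. Hence the identity on $L^1(G)$ extends to a surjection $C^*_r(G)\to C^*_{\lambda_X}(G)$. Its kernel is the closed ideal $J$ of elements vanishing on $\supp(\lambda_X)$, so $C^*_{\lambda_X}(G)\cong C^*_r(G)/J$ is the restriction of $C^*_r(G)$ to the closed subset $\supp(\lambda_X)$ of $\widehat G_{\temp}$.

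Next I will invoke the structure of $C^*_r(G)$ for connected complex semisimple $G$ from \cite{PP83, CCH16}. There is a direct-sum decomposition indexed by the minimal principal series data: each block is $C_0(i\fa_0^\ast, \Compact(\Ind_{P_0}^G V_\gamma))^{W_\gamma}$, where $\fa_0$ is the split part of a Cartan of $G$ and $\gamma$ ranges over characters of $T_0$ up to $W$. By Wallach's irreducibility theorem each stabilizer $W_\gamma$ acts through scalar-type intertwiners with trivial cocycle. Each block is therefore Morita equivalent to, and in fact canonically isomorphic to, $C_0(i\fa_0^\ast/W_\gamma,\Compact)$. In Vogan's language this is a copy of $C_0(i\fa^\ast/W(\lambda),\Compact)$ for each component $[TA,\Gamma,\lambda]\in\mathcal C_G$, consistent with Theorem~\ref{thm:HC_disjoint}(c) and $W(\Gamma,\lambda)=W(\lambda)$. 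The key observation is that each block is a $C_0$-algebra over a Hausdorff space with fibers $\Compact$ and a globally trivial bundle. Restricting such an algebra to a closed subset $Y$ gives $C_0(Y,\Compact)$.

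Then I will apply Theorem~\ref{thm:GcGr-main}(2). Inside the block indexed by $[TA,\Gamma,\lambda]$, the support is the closed set $q_\lambda(i\faq^\ast)$, and Lemma~\ref{lem:relative-Weyl} identifies it homeomorphically with $i\faq^\ast/W^\sigma(\lambda)$. I also need to match indexings. Discrete character data for $X$ based on different $\fbh$, or in different $W^\sigma(\fbh)$-orbits, must give distinct components $c\in\mathcal C_X$, so that no block is counted twice. This is the disjointness asserted in Theorem~\ref{thm:GcGr-main}(2), and it rests on the collapse of branches in Proposition~\ref{prop:branch-collapse}. Blocks not meeting $\supp(\lambda_X)$ are killed by $J$. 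Summing over the surviving blocks yields the displayed isomorphism.

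I expect the main obstacle to be the canonicity and global triviality of the restricted bundle. The block algebra is a fixed-point algebra $C_0(i\fa^\ast,\Compact)^{W(\lambda)}$ with $W(\lambda)$ acting by normalized intertwining operators, not literally $C_0(i\fa^\ast/W(\lambda),\Compact)$. To handle this I would first try the result of \cite{CCH16}: for complex groups the intertwiners trivialize, so the fixed-point algebra is isomorphic to $C_0(i\fa^\ast/W(\lambda),\Compact(\Ind V))$. I would then check that restricting along the embedding $i\faq^\ast/W^\sigma(\lambda)\hookrightarrow i\fa^\ast/W(\lambda)$ of Lemma~\ref{lem:relative-Weyl} commutes with this identification. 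That compatibility is a standard property of restricting $C_0(Y,\Compact)$ to a closed subset.
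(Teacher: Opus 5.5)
Your proposal is correct and follows essentially the same route as the paper. Realize $C^*_{\lambda_X}(G)$ as the quotient of $C^*_r(G)$ by the ideal vanishing on $\supp(\lambda_X)$, use the Penington--Plymen/\cite{CCH16} decomposition of $C^*_r(G)$ into trivial blocks $C_0(i\fa^\ast/W(\lambda),\Compact)$, and restrict each block to the closed piece $i\faq^\ast/W^\sigma(\lambda)$ given by Theorem~\ref{thm:GcGr-main}(2) and Lemma~\ref{lem:relative-Weyl}. (For the weak containment $\lambda_X\prec\lambda_G$, it is cleaner to cite Theorem~\ref{thm:GcGr-main}(2), or Theorem~\ref{thm:tempered spec}, which already place $\supp(\lambda_X)$ inside $\widehat G_{\temp}$; Theorem~\ref{thm:temp-welltemp} carries algebraicity hypotheses that an arbitrary open subgroup $H\subset G^\sigma$ need not satisfy.)
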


\begin{proof}
For connected complex semisimple groups, the block decomposition of
\(C_r^*(G)\) into trivial homogeneous algebras over the tempered parameter
spaces is due to Penington--Plymen \cite{PP83}; it is also recovered from the
\(C^*\)-algebraic parabolic-induction framework in
\cite{CCH16,CHS24,CHST24}. The $C^\ast$-algebra  \(C^*_{\lambda_X}(G)\) identifies with the quotient of $C^\ast_r(G)$ by the annihilator of the unitary representation of $C^\ast_r(G)$ determined by $\lambda_X$. By Theorem~\ref{thm:GcGr-main}(2), the spectrum
of the quotient \(C^*_{\lambda_X}(G)\) of $C^*_r(G)$ is the closed Hausdorff subspace
\(\supp(\lambda_X)\) of $\widehat{G}_\temp$, whose connected pieces are the spaces
\(i\faq^\ast/W^\sigma(\lambda)\).  Passing to the quotient
\(C^*_{\lambda_X}(G)\) restricts the block functions to this closed subspace,
which gives the displayed direct sum.
\end{proof}

Let \(\widehat K_{K\cap H,\mathrm{reg}}\) denote the set of irreducible
\(K\)-representations with nonzero \(K\cap H\)-fixed vectors whose highest
weights are regular for the ambient complex root system.

\begin{corollary}\label{thm_K_GcGr}
Let \(X=G/H\) be a symmetric space of type \(G_{\mathbb C}/G_{\mathbb R}\).
Then the following are equivalent:
\begin{enumerate}[(1)]
\item \(K_\ast(C^*_{\lambda_X}(G))\neq 0\);
\item for a Cartan subspace \(\ftq\subset\fk\cap\fq\), equivalently for every
such Cartan subspace, \(Z_\fk(\ftq)\) is abelian;
\item for a Cartan subspace \(\fah\subset\fp\cap\fh\), equivalently for every
such Cartan subspace, \(Z_\fh(\fah)\) is abelian;
\item the real Lie algebra \(\fh\) is quasi-split.
\end{enumerate}
If these conditions hold, and if \(\ftq\oplus\faq\) is the most compact Cartan
subspace of \(\fq\), then
\[
K_\ast(C^*_{\lambda_X}(G))
\cong
\begin{cases}
\bigoplus_{\widehat K_{K\cap H,\mathrm{reg}}}\bZ,
& \ast\equiv \dim(\faq)\pmod 2,\\[1ex]
0,
& \ast\equiv \dim(\faq)+1\pmod 2.
\end{cases}
\]
\end{corollary}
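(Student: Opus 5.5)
We will compute the $K$-theory block by block from Proposition~\ref{cor:GcGr-Cstar}. Since $\Compact$ is stable, each summand $C_0(i\faq^\ast/W^\sigma(\lambda),\Compact)$ has the $K$-theory of $C_0(i\faq^\ast/W^\sigma(\lambda))$. The first step is to understand the quotient. By the proof of Lemma~\ref{lem:relative-Weyl}, $i\faq^\ast/W^\sigma(\lambda)$ is homeomorphic to a closed convex polyhedral cone: a closed chamber $\faq^{++}$ for the (reduced to $Z_{\fg^\lambda}$) restricted root system acting on $\faq$, times a vector-space factor. Such a cone $C$ is contractible to its apex. If $C\neq\R^d$, meaning the Weyl group $W^\sigma(\lambda)$ acts nontrivially, then $C$ is also contractible at infinity. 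In that case $C^+$ is contractible and $K_\ast(C_0(C))=0$. If instead $W^\sigma(\lambda)$ acts trivially on $i\faq^\ast$, then $C=i\faq^\ast\cong\R^{\dim\faq}$, and Bott periodicity gives $\bZ$ in degree $\dim\faq \bmod 2$ and $0$ in the other degree. So $K_\ast(C^\ast_{\lambda_X}(G))$ is a free abelian group with one generator for each block whose Weyl group $W^\sigma(\lambda)$ acts trivially on $\faq$, sitting in degree $\dim(\faq)$ of that block.

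The second step identifies which blocks contribute. We expect $W^\sigma(\lambda)$ to act trivially exactly when $\lambda$ is regular for all roots of $\fg$ with respect to $\fb$; then $W(\lambda)=1$. Since $\lambda\in i\ftq^\ast$, this requires the roots of $(\fg^\bC,\fb^\bC)$ to be nonvanishing on $\ftq$. The argument in the other direction is as follows. If some root $\alpha$ vanishes on $\lambda$, we would use $Z_{\fg^\lambda}(\faq)=\fb$ to see that $\fg^\lambda$ has a restricted root on $\faq$. Its Weyl group, which surjects from $W^\sigma(\lambda)$ as in the proof of Lemma~\ref{lem:relative-Weyl}, would then act by a nontrivial reflection. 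Regularity of $\lambda$ on $\ftq$ forces $Z_\fg(\ftq)$ to be abelian. Since $\fk=\fh$ as abstract Lie algebras here ($G$ is complex, $\fk\cap\fq=i(\fk\cap\fh)$), this is equivalent to $Z_\fk(\ftq)$ being abelian, and to $Z_\fh(\fah)$ being abelian for $\fah=-i\ftq\subset\fp\cap\fh$ maximal abelian. That last condition is exactly quasi-splitness of $\fh$. This yields the equivalences (2)$\Leftrightarrow$(3)$\Leftrightarrow$(4), which only use standard facts about real forms. It also shows that contributing blocks exist only when (4) holds, so (1)$\Rightarrow$(4). The converse (4)$\Rightarrow$(1) follows once we exhibit at least one regular spherical datum, which the final count does.

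The third step counts the contributing blocks. When $\fh$ is quasi-split, regular $\lambda$ can only live on the most compact Cartan $\fbh$, where $\ftq$ is maximal. The relevant blocks are then the $W^\sigma(\fbh)$-orbits of discrete character data $(TA,\Gamma,\lambda)$ with $\lambda\in i\ftq^\ast$ $G$-regular and $\Gamma|_{T\cap H}=1$. We will match these with $\widehat K_{K\cap H,\mathrm{reg}}$ through the lowest $K$-type, as in Theorem~\ref{thm:ds_para} and Lemma~\ref{lem:G-regular-standard-irreducible}. The lowest $K$-type has highest weight $d\Gamma=\lambda+\rho_G-2\rho_K$, and this is $G$-regular exactly when $\lambda$ is, because $\lambda$ dominant regular makes $\lambda+\rho_G-2\rho_K$ dominant for $\Sigma^+_K$ and shifted regularly. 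Sphericity is given by the torus Cartan--Helgason criterion (Corollary~\ref{cor:appendix-parametrization}). The map from data to $K$-types is injective by Vogan's classification. Surjectivity runs the formula backwards: given regular spherical $\mu$, set $\lambda=\mu-\rho_G+2\rho_K$ with the positive system determined by $\mu$.

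The main obstacle is this regularity transfer between $\lambda$ and the $K$-type highest weight $\mu$ in the non-equal-rank quasi-split case, where $\fa=\faq\neq0$. There, $\rho_G-2\rho_K$ need not be dominant, and the relation between ``$\mu$ regular for the ambient complex root system'' and ``$\lambda$ $G$-regular'' must be checked. We would first try to prove it directly from Lemma~\ref{lem:rho-rho-eq}, pairing $\theta$-conjugate roots. If that fails, we would reduce to the equal-rank Levi via the Cayley-transform picture used in the $\SL(n,\C)$ example.
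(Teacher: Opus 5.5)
Your route is essentially the paper's: the block decomposition of Proposition~\ref{cor:GcGr-Cstar}, vanishing $K$-theory for a block whenever $W^\sigma(\lambda)$ acts nontrivially on $i\faq^\ast$ (a closed chamber times a vector space), Bott periodicity when it acts trivially, identification of those blocks with $G$-regular $\lambda\in i\ftq^\ast$ on the most compact Cartan subspace, and the match with regular spherical lowest $K$-types. The outline is sound, but three points need repair.

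First, $\fk$ is not isomorphic to $\fh$; it is the compact real form of the complex Lie algebra $\fg$. Also, since $\theta$ is antilinear, $\fk\cap\fq=i(\fp\cap\fh)$, not $i(\fk\cap\fh)$. The equivalences you want hold for a different reason. The decomposition $\fg=\fk\oplus i\fk$ gives $Z_\fg(\ftq)=Z_\fk(\ftq)\oplus iZ_\fk(\ftq)$. The decomposition $\fg=\fh\oplus i\fh$, together with $\ftq=i\fah$, gives $Z_\fg(\ftq)=Z_\fg(\fah)=Z_\fh(\fah)\oplus iZ_\fh(\fah)$. Relatedly, the relevant $\fbh$ is the maximally split Cartan subalgebra of $\fh$, and $\fa=\fah\oplus\faq$, not $\faq$.

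Second, the ``main obstacle'' you isolate does not arise, and your first suggested route settles it at once. Every $\theta$-stable Cartan subalgebra of the complex $\fg$ satisfies $\fa=i\ft$. Hence $Z_\fg(\fa)=\fb$ and $\fm=\ft$, so the character-datum condition (Definition~\ref{def-character-datum-G}, via Lemma~\ref{lem:rho_rho_eq}) reads $d\Gamma=\lambda$. Equivalently, Lemma~\ref{lem:rho-rho-eq} gives $\rho_G-2\rho_K=0$ on $\ft$. Thus $\Gamma=e^\lambda$, and by Lemma~\ref{lem:G-regular-standard-irreducible} the lowest $K$-type has highest weight $\lambda$ itself. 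Regularity of $\lambda$ and regularity of the lowest $K$-type are then literally the same condition, and no Cayley-transform reduction is needed. For the reverse direction, fix $\Sigma_K^+$ compatible with a restricted positive system on $\ftq$. Corollary~\ref{cor:appendix-parametrization} then places a spherical highest weight $\mu$ in $i\ftq^\ast$ with $e^\mu|_{T\cap H}=1$, so $(TA,e^\mu,\mu)$ is the required datum.

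Third, a bijection with $\widehat K_{K\cap H,\mathrm{reg}}$ does not by itself prove (4)$\Rightarrow$(1); you must also show this set is nonempty. Take $\Sigma_K^+$ as above. Then $2\rho_K\in i\ftq^\ast$ is integral and regular, and for $t\in T\cap H$ one has $e^{2\rho_K}(t)=e^{2\rho_K}(\sigma t)=e^{-2\rho_K}(t)$. Hence $\mu=4\rho_K$ is a regular highest weight trivial on $T\cap H$, i.e.\ it gives an element of $\widehat K_{K\cap H,\mathrm{reg}}$.
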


\begin{proof}
By Proposition~\ref{cor:GcGr-Cstar}, the \(K\)-theory is the direct sum of the topological \(K\)-theories of the quotient spaces
\(i\faq^\ast/W^\sigma(\lambda)\).  For the Weyl-group quotients
appearing here, a summand contributes trivially to $K$-theory when the
stabilizer \(W^\sigma(\lambda)\) acts non-trivially on $i\faq^\ast$. To see this, using the notations from the proof of Lemma~\ref{lem:relative-Weyl}, by splitting $\faq$ as $\faq^1\oplus \faq^0$ where $\faq^0$ is the intersection of $\faq$ with the center of $\fg^{\lambda}$ on which $W(\lambda)$ acts trivially, $i\faq^\ast/W^\sigma(\lambda)$ is identified as the product of $\faq^0$ and the closed dominant Weyl chamber $\faq^{1,++}$ of $\faq^1$ for the possibly non-reduced root system $\Sigma(\fg^{\lambda}, \faq^1)$. The system is non-empty; therefore, for the half-sum $\rho$ of the positive roots, translation by $t\rho$ for $t\geq0$ shows that $C_0(i\faq^{*}/W^\sigma(\lambda))\cong C_0(\faq^0)\otimes C_0(\faq^{1,++})$ is contractible (homotopic to the zero $C^*$-algebra).

On the other hand, a summand contributes non-trivially when the
stabilizer \(W^\sigma(\lambda)\) acts trivially on \(i\faq^\ast\); in
that case the contribution is \(\bZ\) in parity \(\dim(\faq)\). These summands correspond exactly to the regular \(K\cap H\)-spherical lowest \(K\)-types.  Their
existence is equivalent to the existence of an element of \(\fk\cap\fq\) which
is regular in \(\fg\), or equivalently to \(Z_\fk(\ftq)\) being abelian for a
Cartan subspace \(\ftq\subset\fk\cap\fq\).  Under the correspondence
\(\ftq=i\fah\), this is equivalent to \(Z_\fh(\fah)\) being abelian for a Cartan
subspace \(\fah\subset\fp\cap\fh\).  For a real semisimple Lie algebra this is
the standard quasi-split condition.  This proves the equivalences and the
stated degree shift.
\end{proof}

\section{Appendix: a torus form of the Cartan--Helgason criterion}\label{sec:appendix}

The contents of this appendix, including the final statements in Propo-sition~\ref{prop:appendix-general} and Corollary~\ref{cor:appendix-parametrization}, could be well-known to experts. We provide details, as we have been unable to find a suitable reference in the literature.

We shall spell out a version the compact Cartan--Helgason criterion for possibly disconnected groups. The result is used in the main part of the paper to prove Theorem~\ref{thm:ds_para}. For a connected compact symmetric
pair $(K, H)$, the Cartan–Helgason criterion is the following asssertion: an irreducible representation is $H$-spherical if and
only if its highest weight is trivial on $T_0\cap H$, where $T_0$ is a maximally
$\sigma$-split maximal torus.  One point that matters for us is that $T_0\cap H$
may be disconnected, even when $K$ and $H$ are connected (for instance, for
$SU(2)/SO(2)$ one has $T_0\cap H=\{\pm 1\}$).  We take the connected
case as input and then extend it to possibly disconnected compact groups by
elementary averaging arguments.

\subsection{Setup and Notation}

Let $K$ be a compact Lie group, let $\sigma$ be an involutive automorphism of $K$, and let
$H$ be an open subgroup of $K^\sigma$.  Write
\[
\fk=\fh\oplus\fq
\]
for the $(\pm1)$-eigenspace decomposition of $\sigma$ on $\fk$.  Choose a maximal abelian
subspace $\ft_\fq\subset \fq$, and a maximal abelian subspace
$\ft_\fh\subset Z_{\fh}(\ft_\fq)$.  Put
\[
\ft=\ft_\fh\oplus\ft_\fq,
\qquad
T_0=\exp(\ft),
\]
and write
\[
X^*(T_0):=\operatorname{Hom}_{\mathrm{cts}}(T_0,U(1))
\]
for its character lattice. Fix a positive system
\[
\Sigma^+=\Sigma^+(\fk^\bC,\ft_\fq^\bC)
\]
of restricted roots, and a compatible positive system
\[
\Delta^+=\Delta^+(\fk^\bC,\ft^\bC),
\]
meaning that for every root $\alpha\in \Delta(\fk^\bC,\ft^\bC)$ with
$\alpha|_{\ft_\fq}\neq 0$, one has
\[
\alpha\in \Delta^+
\quad\Longleftrightarrow\quad
\alpha|_{\ft_\fq}\in \Sigma^+.
\]
Let
\[
\fb=\ft^\bC\oplus \sum_{\alpha\in\Delta^+}\fk^\bC_\alpha,
\qquad
\fn=\sum_{\alpha\in\Delta^+}\fk^\bC_\alpha.
\]
Set
\[
T^l:=N_K(\fb)=N_K(\ft,\Delta^+).
\]
Following Vogan \cite[Definition 1.14(e)]{VoganUnitaryBook}, we call \(T^l\) a large Cartan subgroup of \(K\).  When
\(K\) is connected, one has \(T^l=T_0\). For disconnected compact groups, the highest-weight parametrization of
\(\widehat K\) in terms of irreducible representations of \(T^l\) can be found in \cite[Theorem~1.17]{VoganUnitaryBook}: for any irreducible finite-dimensional
representation $(\pi,V)$ of $K$, the space
\[
V^{\fn}:=\{v\in V: d\pi(X)v=0 \text{ for all } X\in \fn\}
\]
is an irreducible representation of $T^l$; let us call it the
\emph{highest weight representation} attached to~$\pi$ (this depends on the choice of $(\ft, \Delta^+$)).  When $K$ is connected, one
has $T^l=T_0$ and $V^\fn$ is the usual highest weight line.

\subsection{The connected case}

\begin{proposition}[Connected compact Cartan--Helgason]
\label{prop:appendix-connected-CH}
Assume that $K$ and $H$ are connected.  Let $(\pi,V)$ be an irreducible
finite-dimensional representation of $K$, and let
$\lambda\in X^*(T_0)$ be its highest weight with respect to~$\Delta^+$.  Then
\[
V^H\neq 0
\quad\Longleftrightarrow\quad
\lambda|_{T_0\cap H}=1.
\]
If these equivalent conditions hold, then $\dim_\bC V^H=1$, and the averaging
operator
\[
P_H(v):=\int_H \pi(h)v\,dh
\]
is nonzero on the highest weight line.
\end{proposition}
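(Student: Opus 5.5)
The plan is to reduce to the classical Cartan--Helgason theorem for the connected compact symmetric pair, in its standard form: if $(K,H)$ is connected and $V$ has highest weight $\lambda$ with respect to a positive system compatible with $\Sigma^+$, then $V^H\neq 0$ if and only if $\lambda$ vanishes on $\ft_\fh$ and $\langle\lambda,\alpha\rangle/\langle\alpha,\alpha\rangle\in\Z_{\ge0}$ for all $\alpha\in\Sigma^+$. In that case $\dim V^H=1$ and the spherical vector has nonzero pairing with the highest weight vector. I would then translate the integrality condition into the torus condition $\lambda|_{T_0\cap H}=1$. This translation is what the proposition really asserts beyond Helgason's theorem.

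I would first record the structure of $T_0\cap H$. Since $\sigma$ acts by $+1$ on $\ft_\fh$ and by $-1$ on $\ft_\fq$, an element $\exp(X_\fh+X_\fq)$ lies in $K^\sigma$ exactly when $\exp(2X_\fq)=e$. Hence
\[
T_0\cap K^\sigma=\exp(\ft_\fh)\cdot F,\qquad F=\{\exp X: X\in\ft_\fq,\ \exp 2X=e\}.
\]
Because $H$ is connected, $H=(K^\sigma)_0$. I would then check that $\exp(\ft_\fh)\cdot F_H\subset H$, where $F_H$ is generated by the elements $\exp(\pi i H_\alpha)$ with $H_\alpha$ the coroot of $\alpha\in\Sigma$ (suitably normalized in $\ft_\fq$). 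Each such element lies in the connected group generated by the $\fh$-parts of the root spaces for $\alpha$, as in the $SU(2)/SO(2)$ computation, so it lies in $H$. The converse, that $T_0\cap H$ is generated by $\exp(\ft_\fh)$ and these elements, is the standard description of $T_0\cap H$ for connected $H$ (Helgason, Ch.~V). I would quote it rather than reprove it.

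With this description the equivalence follows. Triviality of $\lambda$ on $\exp(\ft_\fh)$ means $\lambda|_{\ft_\fh}=0$. Evaluating on $\exp(\pi i H_\alpha)$ gives $e^{\pi i\langle\lambda,H_\alpha\rangle}=1$, which is the evenness condition $\langle\lambda,\alpha^\vee\rangle\in 2\Z$ for $\alpha$ restricted roots. Since $\lambda$ is dominant, this matches Helgason's condition $\langle\lambda,\alpha\rangle/\langle\alpha,\alpha\rangle\in\Z_{\ge0}$. I expect the bookkeeping with non-reduced restricted roots (the factor $2$ versus $1$ for $\alpha$ and $2\alpha$) to be the main obstacle. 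I would handle it rank-one by rank-one, reducing to the subgroups generated by $\ft$ and the root spaces for $\pm\alpha,\pm2\alpha$, where the sphericity condition and the group $T_0\cap H$ can both be read off explicitly.

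The last two claims follow from multiplicity-freeness of compact symmetric pairs, which gives $\dim V^H\le1$, together with Helgason's argument. The parabolic $\fb$ is compatible with $\Sigma^+$, so $\fh^\bC+\fb$ is open (Iwasawa-type decomposition $\fk^\bC=\fh^\bC\oplus\ft_\fq^\bC\oplus\fn$ up to $\ft_\fh$). Hence a nonzero $H$-fixed vector $v_H$ has nonzero pairing with the highest weight vector $v_\lambda$: otherwise the $\fb$- and $\fh$-invariance would force $\langle\pi(K)v_\lambda,v_H\rangle=0$. Therefore $P_Hv_\lambda$, which equals $\langle v_\lambda,v_H\rangle v_H/\|v_H\|^2$, is nonzero.
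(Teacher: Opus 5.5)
Your proof is correct, and it is more explicit than the paper's, which only cites references: Helgason (Theorem~4.1, Corollary~4.2) for the integrality criterion, multiplicity one and the nonvanishing of $P_H$ on the highest weight line, and Goodman--Wallach (Theorem~12.3.13) for the torus formulation $\lambda|_{T_0\cap H}=1$.

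You instead derive the torus formulation from Helgason's integrality condition, using the elements $\gamma_\alpha=\exp(\pi i H_\alpha)$ for $\alpha\in\Sigma$. This shows where connectedness of $H$ enters: $\gamma_\alpha$ lies in the image of $SO(2)\subset SU(2)$, hence in $(K^\sigma)_0$. It also shows why $T_0\cap H$ can be disconnected. The price is some bookkeeping that the citation hides.

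The one step you planned to quote, the inclusion of $T_0\cap H$ in the group generated by $\exp(\ft_\fh)$ and the $\gamma_\alpha$, is not needed.

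Necessity follows from your last paragraph. The identity $\fk^\bC=\fh^\bC+\fb$ gives $V=U(\fh^\bC)v_\lambda$. So an $H$-fixed vector $v_H$ is determined by $\langle v_\lambda,v_H\rangle$, which is therefore nonzero when $v_H\neq0$. This also gives $\dim V^H\le 1$ without a separate appeal to multiplicity-freeness. For $t\in T_0\cap H$ one then has $\lambda(t)\langle v_\lambda,v_H\rangle=\langle\pi(t)v_\lambda,v_H\rangle=\langle v_\lambda,\pi(t)^{-1}v_H\rangle=\langle v_\lambda,v_H\rangle$. Hence $V^H\neq0$ implies $\lambda|_{T_0\cap H}=1$ directly.

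The converse uses only the easy inclusion $\exp(\ft_\fh)\gamma_\alpha\subset H$ together with the sufficiency half of Helgason's theorem. The non-reduced case is handled term by term once you include $\gamma_{2\alpha}$ among the generators. Indeed, for each $\beta\in\Sigma^+$, $\lambda(\gamma_\beta)=1$ if and only if $\langle\lambda,\beta\rangle/\langle\beta,\beta\rangle\in\bZ$.

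The equality of $T_0\cap H$ with the group generated by $\exp(\ft_\fh)$ and the $\gamma_\alpha$ then follows afterwards by duality, rather than being an input. The argument is that dominant characters span the annihilator lattice.
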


\begin{proof}
This is the compact Cartan--Helgason theorem; see
\cite[Theorem~4.1 and Corollary~4.2]{helgason}.  For the torus formulation used here, namely the criterion
\(\lambda|_{T_0\cap H}=1\), see also \cite[Theorem 12.3.13]{GoodmanWallach}.
\end{proof}

\begin{remark}
\label{rem:appendix-TcapH-disconnected}
Even when $H$ is connected, the subgroup $T_0\cap H$ need not be connected.
Thus the condition \(\lambda|_{T_0\cap H}=1\) in Proposition~\ref{prop:appendix-connected-CH} is genuinely stronger than the
infinitesimal condition $d\lambda|_{\ft_\fh}=0$.
\end{remark}

\subsection{Disconnected $K$, connected $H$}

\begin{proposition}
\label{prop:appendix-connected-H}
Assume that $H$ is connected, but allow $K$ to be disconnected.  Let
$(\pi,V)$ be an irreducible finite-dimensional representation of~$K$.  Then the
averaging operator over $H$ restricts to an isomorphism
\[
P_H : (V^\fn)^{T^l\cap H}\xrightarrow{\sim} V^H.
\]
Hence,
\[
\dim_\bC V^H=\dim_\bC (V^\fn)^{T^l\cap H}.
\]
In particular, $V$ is $H$-spherical if and only if the dominant representation
$V^\fn$ contains the trivial representation of $T^l\cap H$.
\end{proposition}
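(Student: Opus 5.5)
The plan is to reduce to the connected case (Proposition~\ref{prop:appendix-connected-CH}) applied to the identity component \(K_0\), and then average over the finite group \(K/K_0\). Since \(H\) is connected and \(\fh\subset\fk_0\), we have \(H\subset K_0\). Restricting \(V\) to \(K_0\), Clifford theory gives a decomposition \(V=\bigoplus_i V_i\) into irreducible \(K_0\)-modules. These summands are permuted transitively by \(K\), and each \(K_0\)-type occurs with the same multiplicity. The pair \((K_0,H)\) is a connected compact symmetric pair, and \(T_0\), \(\Delta^+\) are defined entirely inside \(\fk\). Proposition~\ref{prop:appendix-connected-CH} therefore applies to each \(V_i\). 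It says that \(V_i^H\ne0\) exactly when the highest weight \(\lambda_i\) of \(V_i\) is trivial on \(T_0\cap H\); in that case \(\dim V_i^H=1\), and \(P_H\) is nonzero on the highest weight line \(V_i^\fn\). Since \(V^\fn=\bigoplus_i V_i^\fn\), it follows that \(P_H\) maps \(V^\fn\) onto \(V^H=\bigoplus_i V_i^H\). It is injective on the sum of the lines \(V_i^\fn\) with \(\lambda_i|_{T_0\cap H}=1\) and kills the other lines, because those \(V_i\) have no \(H\)-fixed vectors.

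The remaining task is to identify this span with \((V^\fn)^{T^l\cap H}\). Every element of \(T^l\) normalizes \(\fn\), so \(T^l\) preserves \(V^\fn\) and permutes the lines \(V_i^\fn\) according to the weights \(\lambda_i\). One inclusion is easy. The operator \(P_H\) is \(H\)-invariant, so \(P_H\pi(t)=P_H\) for \(t\in T^l\cap H\). Write a \(T^l\cap H\)-fixed vector \(v=\sum_i v_i\) with \(v_i\in V_i^\fn\). The group \(T^l\cap H\) contains \(T_0\cap H\), which acts on \(V_i^\fn\) by \(\lambda_i\). Hence \(v_i=0\) unless \(\lambda_i|_{T_0\cap H}=1\), and \(v\) lies in the injectivity domain of \(P_H\). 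This proves that \(P_H\) is injective on \((V^\fn)^{T^l\cap H}\).

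For surjectivity, I would show that \(P_H(v)=P_H(\bar v)\), where \(\bar v\) denotes the average of \(v\) over the compact group \(T^l\cap H\). This holds again by \(H\)-invariance of \(P_H\), since \(T^l\cap H\subset H\). Given that, every vector \(P_H(v)\) with \(v\in V^\fn\) already arises from a \(T^l\cap H\)-fixed vector. Because \(P_H(V^\fn)=V^H\), the restriction \(P_H\colon(V^\fn)^{T^l\cap H}\to V^H\) is onto. Combined with injectivity, it is an isomorphism, and the dimension formula and the sphericity criterion follow immediately.

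The step I expect to need the most care is the first: checking that the highest weight lines of the \(V_i\) relative to \(\Delta^+\) really give \(V^\fn\), and that Proposition~\ref{prop:appendix-connected-CH} applies to \((K_0,H)\) with the same \(T_0\). Both points hold because all the data \(\ft_\fq\), \(\ft_\fh\), \(\Delta^+\) live in the Lie algebra and \(H\) is connected. Note that the averaging argument never needs to know how \(T^l\) permutes the \(V_i\). It uses only that \(T^l\cap H\subset H\) and that \(T^l\) preserves \(V^\fn\).
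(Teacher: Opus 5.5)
Your argument is correct. Its first half matches the paper's proof: both use $H\subset K_0$, decompose $V|_{K_0}=\bigoplus_i V_i$, and apply Proposition~\ref{prop:appendix-connected-CH} to each summand. This shows that $P_H$ carries $(V^\fn)^{T_0\cap H}$ isomorphically onto $V^H$.

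The two proofs differ in how they pass from $T_0\cap H$ to $T^l\cap H$. The paper works at the level of groups. In the connected group $K_0$, the Weyl group acts simply transitively on positive systems, so the stabilizer of $(\ft,\Delta^+)$ is $Z_{K_0}(\ft)=T_0$. Hence $T^l\cap K_0=T_0$ and $T^l\cap H=T_0\cap H$, and nothing remains to prove.

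You instead use a sandwich argument. The inclusion $(V^\fn)^{T^l\cap H}\subset(V^\fn)^{T_0\cap H}$ gives injectivity. The identity $P_H=P_H\circ(\text{average over }T^l\cap H)$ gives surjectivity. Together these show, after the fact, that $(V^\fn)^{T^l\cap H}=(V^\fn)^{T_0\cap H}$.

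The paper's route is shorter and gives a stronger, group-level statement: the $T^l\cap H$-condition is literally the torus condition of the connected case. Yours needs no structural fact about $T^l$. It would work verbatim for any compact subgroup of $H$ that contains $T_0\cap H$ and preserves $V^\fn$.

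One small inaccuracy, which you rightly say is never used: when a $K_0$-type occurs with multiplicity in $V$, $T^l$ need not permute the chosen lines $V_i^\fn$. It only permutes the $T_0$-weight spaces of $V^\fn$.
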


\begin{proof}
Since $H$ is connected, one has $H\subset K_0$.  Moreover $T^l\cap K_0=T_0$,
and therefore
$
T^l\cap H=T_0\cap H$.

Decompose $V$ as a finite direct sum $\bigoplus_{j=1}^N V_j$ of irreducible $K_0$-modules. 
Then
\[
V^\fn=\bigoplus_{j=1}^N V_j^\fn,
\qquad
V^H=\bigoplus_{j=1}^N V_j^H.
\]
For each $j$, Proposition~\ref{prop:appendix-connected-CH} shows that
\[
P_H:(V_j^\fn)^{T_0\cap H}\longrightarrow V_j^H
\]
is an isomorphism.
Summing over $j$, we see that $
P_H:(V^\fn)^{T_0\cap H}\xrightarrow{\sim} V^H$ is an isomorphism. 
Since $T^l\cap H=T_0\cap H$, this is exactly the required statement.
\end{proof}

\subsection{The component group of $H$}

The passage from connected to disconnected $H$ is controlled by the following
simple structural lemma.

\begin{lemma}
\label{lem:appendix-components}
$H=H_0\,(T^l\cap H).$
\end{lemma}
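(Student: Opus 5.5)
The plan is to start with an arbitrary $h\in H$ and multiply it on the left by elements of $H_0$ until it lies in $T^l=N_K(\ft,\Delta^+)$. This gives $h\in H_0(T^l\cap H)$. The reverse inclusion is trivial.

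\emph{Step 1: normalize $\ft_\fq$.} Since $\sigma(h)=h$, $\Ad(h)$ preserves both $\fh$ and $\fq$. Hence $\Ad(h)\ft_\fq$ is again a maximal abelian subspace of $\fq$. For the compact symmetric pair $(\fk,\fh)$, maximal abelian subspaces of $\fq$ are conjugate under the analytic subgroup with Lie algebra $\fh$, which is $H_0$. So there is $h_1\in H_0$ with $h_1h\in N_H(\ft_\fq)$.

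\emph{Step 2: normalize the chamber $\Sigma^+$.} The element $h_1h$ permutes the restricted roots $\Sigma(\fk^\bC,\ft_\fq^\bC)$ and carries $\Sigma^+$ to another positive system. I will invoke the standard fact that the little Weyl group $W(\Sigma)$ of the compact symmetric pair is realized by $N_{H_0}(\ft_\fq)$ (Helgason, \cite{helgason}). It acts simply transitively on positive systems, so there is $h_2\in N_{H_0}(\ft_\fq)$ with $h_2h_1h$ preserving $\Sigma^+$. This is the step I expect to be the main obstacle. One has to make sure the realization of the little Weyl group can be taken inside $H_0$ and not merely inside $K$ or $H$. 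The route I would try first is to use the reflections $s_\beta$: each should be realized by $\exp\bigl(\tfrac{\pi}{\beta(\cdot)}(X_\beta+\sigma X_\beta)\bigr)$-type elements with $X_\beta+\sigma X_\beta\in\fh$.

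\emph{Step 3: normalize $\ft_\fh$.} Now $g=h_2h_1h$ normalizes $\ft_\fq$, hence also $Z_\fh(\ft_\fq)$. From $Z_\fq(\ft_\fq)=\ft_\fq$ we get $Z_\fk(\ft_\fq)=Z_\fh(\ft_\fq)\oplus\ft_\fq$. The subspace $\Ad(g)\ft_\fh$ is a maximal abelian subalgebra (a Cartan subalgebra) of the compact Lie algebra $Z_\fh(\ft_\fq)$. By conjugacy of maximal tori, there is $h_3\in Z_{H_0}(\ft_\fq)_0$, the analytic subgroup with Lie algebra $Z_\fh(\ft_\fq)$, such that $h_3g$ normalizes $\ft_\fh$. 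Since $h_3$ centralizes $\ft_\fq$, $h_3g$ still normalizes $\ft_\fq$ and preserves $\Sigma^+$. Thus $h_3g$ normalizes $\ft=\ft_\fh\oplus\ft_\fq$.

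\emph{Step 4: normalize $\Delta^+$.} The element $h_3g$ maps $\Delta^+$ to a positive system $\Delta'$ that is still compatible with $\Sigma^+$. The roots of $\Delta$ that vanish on $\ft_\fq$ are exactly the roots of $Z_\fk(\ft_\fq)^\bC$ with respect to $\ft^\bC$, that is, the roots of $(Z_\fh(\ft_\fq)^\bC,\ft_\fh^\bC)$. Two positive systems compatible with $\Sigma^+$ can differ only on this subsystem. Its Weyl group is realized by $N_{Z_{H_0}(\ft_\fq)_0}(\ft_\fh)$, whose elements fix $\ft_\fq$ pointwise. Choosing $h_4$ in this group to carry $\Delta'$ back to $\Delta^+$ on that subsystem, we get $h_4h_3h_2h_1h\in N_K(\ft,\Delta^+)\cap H=T^l\cap H$. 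Hence $h\in H_0(T^l\cap H)$.
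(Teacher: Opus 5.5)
Your proposal is correct and follows essentially the same route as the paper. The paper first uses $H_0$ to normalize $\ft_\fq$ and then $\Sigma^+$, proving $H=H_0\,N_H(\ft_\fq,\Sigma^+)$; it then uses $Z_H(\ft_\fq)_0$ to normalize $\ft_\fh$ and $\Delta_0^+$, and concludes by compatibility of $\Delta^+$ with $\Sigma^+$, exactly as in your Steps 3--4. The fact you flag in Step 2, that the little Weyl group is realized as $N_{H_0}(\ft_\fq)/Z_{H_0}(\ft_\fq)$, is simply quoted as standard in the paper, and your reflection-by-reflection realization by elements of $\fh$ is the usual way to justify it.
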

\begin{proof}

Set
\[
M':=Z_H(\ft_\fq),
\qquad
M_H^l:=N_H(\ft_\fq,\Sigma^+),
\]
and let \(M'_0\) denote the identity component of \(M'\).

We first show that \(H=H_0M_H^l\). It is enough to prove that $H \subset H_0 M_H^l$. 
Let $h\in H$.  Since $\Ad(h)\ft_\fq$ is maximal abelian in $\fq$,
there exists $h_1\in H_0$ such that
\[
\Ad(h_1h)\ft_\fq=\ft_\fq.
\]
Thus $h_1h\in N_H(\ft_\fq)$.  The restricted Weyl group
\[
W(\fk,\ft_\fq)=N_{H_0}(\ft_\fq)/Z_{H_0}(\ft_\fq)
\]
acts transitively on the Weyl chambers of
$\Sigma(\fk^\bC,\ft_\fq^\bC)$, so there exists
$n\in N_{H_0}(\ft_\fq)$ such that
\[
\Ad(nh_1h)\Sigma^+=\Sigma^+.
\]
Hence $nh_1h\in M_H^l$, and this concludes the proof that $
H=H_0\,M_H^l$.

Now let $m\in M_H^l$.  Since $m$ preserves $\ft_\fq$, it normalizes
$Z_{\fh}(\ft_\fq)=\fm'$.  Therefore $\Ad(m)\ft_\fh$ is a maximal abelian
subspace of $Z_{\fh}(\ft_\fq)$.  By conjugacy of maximal tori in the connected
compact group $M'_0$, there exists $m_1\in M'_0$ such that
\[
\Ad(m_1m)\ft_\fh=\ft_\fh.
\]
Thus $m_1m$ normalizes $\ft_\fh$.  The Weyl group
\[
W(M'_0,\ft_\fh)=N_{M'_0}(\ft_\fh)/Z_{M'_0}(\ft_\fh)
\]
acts transitively on the Weyl chambers of the root system
$\Delta_0=\{\alpha\in \Delta(\fk^\bC,\ft^\bC):\alpha|_{\ft_\fq}=0\}$, so there
exists $n_1\in N_{M'_0}(\ft_\fh)$ such that
\[
\Ad(n_1m_1m)\Delta_0^+=\Delta_0^+,
\]
where
$
\Delta_0^+:=\{\alpha\in \Delta^+:\alpha|_{\ft_\fq}=0\}$.
Set
\[
x:=n_1m_1m.
\]
Because \(n_1,m_1\in M'_0\subset Z_H(\ft_\fq)\), both these elements centralize \(\ft_\fq\) and hence preserve \(\Sigma^+\). Since $m\in M_H^l$, this element also preserves $(\ft_\fq,\Sigma^+)$.
Thus $x$ preserves $(\ft_\fq,\Sigma^+)$.  By construction, $x$ also preserves
$(\ft_\fh,\Delta_0^+)$.  Since the full positive system $\Delta^+$ is compatible
with $\Sigma^+$, these two facts imply that $x$ preserves the pair
$(\ft,\Delta^+)$.  Equivalently,
\[
x\in N_K(\ft,\Delta^+)\cap H=T^l\cap H.
\]
As $n_1m_1\in M'_0$, we obtain
$m\in M'_0\,(T^l\cap H)$.
This proves 
\[
M_H^l\subset M'_0\,(T^l\cap H).
\]

The reverse inclusion is clear: $M'_0$ centralizes $\ft_\fq$, hence lies in
$M_H^l$, and every element of $T^l\cap H$ preserves $(\ft,\Delta^+)$, in
particular preserves $(\ft_\fq,\Sigma^+)$.  Therefore
\[
M_H^l=M'_0\,(T^l\cap H).
\]
Combining this with $H=H_0M_H^l$ gives $
H=H_0\,(T^l\cap H)$.
\end{proof}

\subsection{The general case}

\begin{proposition}
\label{prop:appendix-general}
Let $K$ be a compact Lie group, let $H$ be an open subgroup of~$K^\sigma$, and
let $(\pi,V)$ be an irreducible finite-dimensional representation of~$K$.
Then the averaging map over $H_0$ induces an isomorphism of
$T^l\cap H$-modules
\[
P_{H_0}:(V^\fn)^{T_0\cap H_0}\xrightarrow{\sim} V^{H_0}.
\]
Consequently,
\[
V^H \simeq (V^\fn)^{T^l\cap H},
\qquad
\dim_\bC V^H=\dim_\bC (V^\fn)^{T^l\cap H}.
\]
In particular, $V$ is $H$-spherical if and only if the dominant representation
$V^\fn$ contains the trivial representation of $T^l\cap H$.
\end{proposition}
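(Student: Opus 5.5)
The plan has two stages. First we reduce to the case of connected $H$ handled in Proposition~\ref{prop:appendix-connected-H}. Then we take invariants under the component group using Lemma~\ref{lem:appendix-components}.

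\emph{Step 1.} Apply Proposition~\ref{prop:appendix-connected-H} to the pair $(K,H_0)$; this is legitimate because $H_0$ is an open connected subgroup of $K^\sigma$. In that proof the averaging operator $P_{H_0}$ gives an isomorphism
\[
(V^\fn)^{T^l\cap H_0}\to V^{H_0},
\]
and we also have $T^l\cap H_0=T_0\cap H_0$. This gives the displayed isomorphism $P_{H_0}\colon (V^\fn)^{T_0\cap H_0}\to V^{H_0}$.

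\emph{Step 2.} Check that both sides are $T^l\cap H$-modules and that $P_{H_0}$ intertwines the actions.
\begin{itemize}
\item $T^l$ preserves $\fn$, so it acts on $V^\fn$.
\item $T^l\cap H$ normalizes $H_0$, and it normalizes $T_0$ because $T^l$ normalizes $\ft$. Hence it normalizes $T_0\cap H_0$ and preserves $(V^\fn)^{T_0\cap H_0}$.
\item $H$ normalizes $H_0$, so $T^l\cap H$ preserves $V^{H_0}$.
\item For $t\in T^l\cap H$, conjugation by $t$ preserves the normalized Haar measure of the compact group $H_0$. Therefore $\pi(t)P_{H_0}\pi(t)^{-1}=P_{H_0}$, and $P_{H_0}$ is $T^l\cap H$-equivariant.
\end{itemize}

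\emph{Step 3.} Take $T^l\cap H$-invariants on both sides. By Lemma~\ref{lem:appendix-components}, $H=H_0(T^l\cap H)$, so
\[
V^H=(V^{H_0})^{T^l\cap H}.
\]
On the other side, $T_0\cap H_0\subset T^l\cap H$, so
\[
\bigl((V^\fn)^{T_0\cap H_0}\bigr)^{T^l\cap H}=(V^\fn)^{T^l\cap H}.
\]
An equivariant isomorphism restricts to an isomorphism of invariants, which gives $V^H\simeq (V^\fn)^{T^l\cap H}$. The dimension formula and the sphericity criterion follow directly.

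The main obstacle is the equivariance bookkeeping in Step 2, specifically confirming that $T^l\cap H$ normalizes $T_0\cap H_0$. The concern is that $T^l$ might a priori move $\ft$ relative to $T_0$. This is not a problem, because $T^l=N_K(\ft,\Delta^+)$ normalizes $\ft$ and hence normalizes $T_0=\exp(\ft)$.
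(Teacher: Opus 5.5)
Your proposal is correct and takes the same route as the paper: apply Proposition~\ref{prop:appendix-connected-H} to $H_0$, note that $P_{H_0}$ is $T^l\cap H$-equivariant, and then take $T^l\cap H$-invariants using $H=H_0(T^l\cap H)$ from Lemma~\ref{lem:appendix-components}. The only difference is that your Step~2 writes out the equivariance checks the paper states in one sentence: $T^l\cap H$ normalizes $H_0$, $T_0\cap H_0$ and $\fn$, and conjugation preserves the normalized Haar measure on $H_0$.
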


\begin{proof}
Apply Proposition~\ref{prop:appendix-connected-H} to the connected subgroup
$H_0$.  This gives a $T^l\cap H$-equivariant isomorphism
\[
P_{H_0}:(V^\fn)^{T^l\cap H_0}\xrightarrow{\sim} V^{H_0}.
\]
The equivariance follows because every element of $T^l\cap H$ normalizes both
$H_0$ and the Borel subalgebra $\fb$.

Moreover, by Lemma~\ref{lem:appendix-components} we have
 $
H=H_0\,(T^l\cap H)
$, hence
\[
V^H=(V^{H_0})^{T^l\cap H}.
\]
Taking $T^l\cap H$-fixed vectors in the isomorphism above, we obtain
\[
V^H
\simeq
\bigl((V^\fn)^{T^l\cap H_0}\bigr)^{T^l\cap H}
=
(V^\fn)^{T^l\cap H},
\]
because $T^l\cap H_0\subset T^l\cap H$.
\end{proof}

\begin{corollary}
\label{cor:appendix-parametrization}
Under Vogan's highest-weight classification \cite[Theorem 1.17]{VoganUnitaryBook} for possibly disconnected compact
groups, the assignment
\[
(\pi,V)\longmapsto V^\fn
\]
restricts to a bijection between \(H\)-spherical irreducible representations
of \(K\) and dominant irreducible representations \((\mu, V_\mu)\) of \(T^l\) satisfying
\(V_\mu^{T^l\cap H}\neq0\).

If $\pi$ and $\mu$ are matched by that bijection, then 
\[
\dim_\bC V^H=\dim_\bC V_\mu^{T^l\cap H}.
\]
When $K$ is connected, $\mu$ is a character of $T_0$, and the condition above
reduces to
\[
\mu|_{T_0\cap H}=1.
\]
\end{corollary}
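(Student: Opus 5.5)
The plan is to deduce Corollary~\ref{cor:appendix-parametrization} from Vogan's highest-weight classification combined with Proposition~\ref{prop:appendix-general}. By \cite[Theorem~1.17]{VoganUnitaryBook}, the assignment $(\pi,V)\mapsto V^\fn$ is already a bijection from $\widehat K$ onto the set of dominant irreducible representations of $T^l$. So it suffices to show that, under this bijection, the $H$-spherical representations correspond exactly to those $(\mu,V_\mu)$ with $V_\mu^{T^l\cap H}\neq 0$, and to check the dimension equality.

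Both points follow from Proposition~\ref{prop:appendix-general}. For an irreducible $(\pi,V)$ with $V^\fn\cong V_\mu$ as $T^l$-modules, that proposition gives
\[
\dim_\bC V^H=\dim_\bC (V^\fn)^{T^l\cap H}=\dim_\bC V_\mu^{T^l\cap H}.
\]
Hence $V^H\neq 0$ if and only if $V_\mu^{T^l\cap H}\neq0$. Restricting the Vogan bijection to the subset of $H$-spherical representations therefore gives a bijection onto the stated subset of dominant representations of $T^l$, and the dimension formula comes for free.

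For the connected case, $T^l=T_0$ when $K$ is connected, so $V_\mu$ is a character $\mu$ of the torus $T_0$. The space $V_\mu^{T_0\cap H}$ is then either all of $V_\mu$ or zero, according to whether $\mu|_{T_0\cap H}=1$. This recovers the torus form of Proposition~\ref{prop:appendix-connected-CH}.

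The only delicate point is checking that the $T^l$-module structure on $V^\fn$ used in Proposition~\ref{prop:appendix-general} is the same one that enters Vogan's classification. Both arise by restricting $\pi$ to $T^l=N_K(\fb)$, which preserves $\fn$ and hence $V^\fn$, so the two structures agree. Everything else is a direct combination of the earlier results.
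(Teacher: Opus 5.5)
Your proposal is correct and follows essentially the same route as the paper, which simply declares the corollary immediate from Proposition~\ref{prop:appendix-general} combined with Vogan's highest-weight bijection. Your unpacking of how that bijection restricts, of the dimension count, and of the connected case via $T^l=T_0$ is exactly the intended argument.
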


\begin{proof}
This is immediate from Proposition~\ref{prop:appendix-general}.
\end{proof}

\begin{remark} Let \(K\) be a product of finitely many copies of \(O(2)\). Then
\(\mathfrak k^\mathbb C\) is abelian, so the chosen Borel subalgebra is
\(\mathfrak b=\mathfrak k^\mathbb C\), and hence
\[
T^l=N_K(\mathfrak b)=K.
\]
In this case Corollary~\ref{cor:appendix-parametrization} is tautological.
\end{remark}

\bibliographystyle{plain}
\bibliography{bib_paper1}
\end{document}